\documentclass[11pt,a4paper]{article}
\usepackage{amsmath,amsfonts,amssymb,amsthm,amsbsy,mathrsfs}
\usepackage{appendix}
\usepackage{enumitem}
\usepackage[colorlinks,linkcolor=blue,anchorcolor=blue,citecolor=blue]{hyperref}
\usepackage{color,xcolor}
\usepackage{authblk}
\usepackage{bbm}
\usepackage{algorithm,algorithmic}
\usepackage{breakcites}
\usepackage[left=1in,right=1in,top=1.2in,bottom=1.2in]{geometry}
\numberwithin{equation}{section}
\newtheorem{definition}{Definition}[section]
\newtheorem{theorem}{Theorem}[section]
\newtheorem{lemma}{Lemma}[section]
\newtheorem{remark}{Remark}[section]

\makeatletter
\newenvironment{breakablealgorithm}
  {
    \begin{center}
      \refstepcounter{algorithm}
      \hrule height.8pt depth0pt \kern2pt
      \parskip 0pt
      \renewcommand{\caption}[2][\relax]{
        {\raggedright\textbf{\fname@algorithm~\thealgorithm} ##2\par}%
        \ifx\relax##1\relax 
          \addcontentsline{loa}{algorithm}{\protect\numberline{\thealgorithm}##2}%
        \else 
          \addcontentsline{loa}{algorithm}{\protect\numberline{\thealgorithm}##1}%
        \fi
        \kern2pt\hrule\kern2pt
     }
  }
  {
     \kern2pt\hrule\relax
   \end{center}
  }
\makeatother

\title{Robust mean estimation under star-shaped constraints with heavy-tailed noise}
\author[1]{Tuorui Peng}
\author[2]{Akshay Prasadan}
\author[1]{Matey Neykov}
\affil[1]{Department of Statistics and Data Science, Northwestern University}
\affil[2]{Department of Statistics and Actuarial Science, Simon Fraser University}

\begin{document}

\maketitle

\begin{abstract}
    We study the problem of robust mean estimation with adversarially contaminated data under star-shaped constraints in a heavy-tailed noise setting, where only a finite second moment $ \sigma ^2 $ is assumed. 
    For a contamination level $ \varepsilon$ below some constant, we show that the minimax rate of the squared $ \ell_2 $ loss is $ \max( \delta ^{*2}, \varepsilon \sigma ^2) \wedge d^2 $ for a star-shaped set with diameter $ d $ (set $d = \infty$ if the set is unbounded), with $ \delta ^* $ determined via the local entropy $ \log M^\mathrm{ loc }(\delta ,c) $ as
    \begin{align*}
        \delta ^*:= \sup\bigg\{\delta \geq 0: N\frac{\delta ^2}{\sigma ^2}\leq \log M^\mathrm{ loc }(\delta ,c) \bigg\},  
    \end{align*}
    where $ c $ is a sufficiently large constant. Crucially, we require that the sample size satisfies $N \gtrsim \mathop{ \sup }\limits_{\delta \geq 0}  \log M^\mathrm{ loc }(\delta ,c)$. We also show that the minimax rate is $ \max(\delta^{*2},\varepsilon ^2\sigma ^2) \wedge d^2 $ for known or sign-symmetric distributions, matching the rate achieved in the Gaussian case.
\end{abstract}

\tableofcontents

\bibliographystyle{apalike}
\newcommand{\x}{\frac{\delta ^2}{\sigma ^2}}
\newcommand{\Y}{\mathcal{Y}}

\section{Introduction}

\subsection{Problem Setting}

We first give a formal setup of the model, which is known as the adversarial contamination model, or the strong contamination model \cite{diakonikolas_being_2018,diakonikolas_robust_2019}. 
\begin{definition}\label{definition:model_setup}
    The data $ X $ is drawn from the following process: First, the original uncorrupted data $ \tilde{ X } = \{\tilde{ X_i }\}_{i=1}^N   $ of sample size $ N $ and dimension $ n $ is drawn from a location model $ \tilde{X}_i= \mu +\xi _i $, with the noise term $ \xi _i \mathop{ \sim }\limits^{i.i.d.} \xi $. Then, some fixed proportion $ \varepsilon $ (strictly less than some constant $ <1/2 $) of the data is corrupted by an adversarial algorithm $ \mathcal{C} $, which has oracle knowledge of the data generation model and the estimation method. The algorithm is allowed to examine, and then contaminate, the original data $ \tilde{X} $ to arrive at the observed data $ X $.
\end{definition}

We investigate this problem across multiple settings that vary in their constraints on the mean vector $ \mu  $ and assumptions on the noise distribution $ \xi $. Specifically, we constrain $ \mu  $ to belong to a known star-shaped set $ K\subseteq \mathbbm{R}^n $, which generalizes convexity. We consider both bounded and unbounded cases for $ K $. For the noise distribution, we assume $ \xi $ is heavy-tailed with only finite second moments. More precisely, we write $ \xi \in \Xi_{\sigma ^2} $ where the distribution family $ \Xi_{\sigma ^2} $ has zero mean and finite leading eigenvalue $\lambda_1$ of its covariance matrix: $ \lambda _1 = \sigma ^2 <\infty $. In the case of a known upper bound, we slightly abuse the notation by writing $\lambda _1\leq \sigma ^2 $. We also analyze the special case of known or (sign-)symmetric distributions, denoted by $ \Xi_{\sigma ^2}^\mathrm{ sym }  $.

Our objective is to characterize the minimax rate for estimating the true mean $ \mu  $. The minimax rate with respect to squared $\ell_2$ loss is defined as:
\begin{align*}
    \mathfrak{M}=\mathfrak{M}(\mu ;K, \Xi_{\sigma^2}, \ell_2):= \mathop{ \inf }\limits_{\hat{\mu }} \mathop{ \sup }\limits_{\mu \in K} \mathop{ \sup }\limits_{\xi \in \Xi_{\sigma ^2}}  \mathop{ \sup }\limits_{\mathcal{C}} \mathbbm{E}_{ \mu  }\left[ \left\Vert \hat{\mu }(\mathcal{C}(\tilde{X})) - \mu  \right\Vert ^2 \right] .
\end{align*} 
For the symmetric distribution case, we denote the corresponding minimax rate by $ \mathfrak{M}_\mathrm{sym} $ over the family $ \Xi_{\sigma ^2}^\mathrm{ sym }  $. 

\subsection{Related Work}\label{subsection:related_work}

The foundations of robust statistics were established in the 1960s through the pioneering work of \cite{tukey_survey_1959} and \cite{huber_robust_1964}. Since then, a rich literature has developed around the theory of robust statistics (see \cite{huber_robust_2011} for a comprehensive historical overview). Recently, there has been renewed interest in robust statistics from both the statistics and theoretical computer science communities \cite{lugosi_sub-gaussian_2019,diakonikolas_being_2018}, with a particular focus on robust estimation under adversarial contamination, in contrast to Huber's classical random contamination framework. We now review the literature most relevant to our work. 
    
\cite{diakonikolas_being_2018,diakonikolas_robust_2019} addressed the existence of an efficient algorithm for robust mean estimation in the setting of sub-Gaussian distribution with known covariance. The error rate was dimension-free and optimal up to logarithmic factors.  \cite{lugosi_robust_2019} showed that the trimmed mean estimator can obtain the optimal rate for adversarial contamination models under only finite second moment assumption, though lacking computational efficiency. \cite{diakonikolas_outlier_2021} studied efficient algorithms that achieved the optimal rate in an adversarial contamination model, but with a requirement of known covariance. \cite{diakonikolas_outlier-robust_2022} further developed the algorithm for sparse mean estimation under heavy-tailed setting. However, their result requires mild knowledge of the fourth moment. \cite{bhatt_minimax_2022} based their method on Catoni's estimator and allowed even heavier-tailed distributions. A lower bound of the minimax rate was presented. \cite{depersin_robust_2022} developed an algorithm with better efficiency and allowed heavy-tailed settings with just bounded second moment. We remark that all the above results are obtained as high-probability bounds. 
    
There are other closely related works with slightly different focuses or perspectives. \cite{minsker_uniform_2025} considered the contamination model as an application of their uniform bound on estimation problems and recovered the optimal rate. \cite{novikov_robust_2023} studied the robust mean estimation problem for some special classes of symmetric distributions under heavy-tailed scenarios and showed that Gaussian rates can be achieved in these cases. \cite{oliveira_finite-sample_2025} proved the minimax property of the trimmed mean estimator under even weaker moment assumptions. However, their rate involving $\varepsilon$ is dimension-dependent and thus sub-optimal. The recent work of \cite{neykov_polynomial-time_2025} proposed a polynomial-time algorithm which produced near-optimal rates for multiple estimation tasks including robust mean estimation, constrained on a special class of Type-2 convex body $K$. For a survey of more topics in robust statistics and contamination model, we refer readers to \cite{diakonikolas_algorithmic_2023,lugosi_mean_2019}. 

We also mention some works concerning the trimmed mean estimator, since the trimmed mean estimator plays a crucial role in our methodology. Asymptotic properties of trimmed mean estimators have been extensively studied throughout the last century \cite{stigler_asymptotic_1973,jaeckel_robust_1971,hall_large_1981}. Most recently, non-asymptotic results have emerged, beginning with \cite{oliveira_sub-gaussian_2019} who established the sub-Gaussian performance of trimmed mean estimators under bounded second moment assumptions. Subsequently, \cite{lugosi_robust_2019} proved the statistical optimality of trimmed mean estimators in adversarial contamination models and proposed its multivariate extensions. \cite{rico_optimal_2022} studied the performance of the trimmed mean estimator for the mean of a function. A later work by \cite{oliveira_trimmed_2025} improved the result to the uniform error bound over function classes. \cite{oliveira_finite-sample_2025} studied the tail property of the trimmed mean estimator under various moment assumptions. Two highlights of their work are the sub-Gaussian rate with sharp constants under a finite variance assumption, and the minimax property of the trimmed mean estimator under weaker moment conditions and adversarial contamination.
    
Our work extends \cite{neykov_minimax_2022,prasadan_information_2024} to the more general heavy-tailed setting where we assume only finite second moments. This natural relaxation, combined with star-shaped constraints, encompasses a significantly broader class of distributions and constraint sets. Importantly, we require only the knowledge of the leading eigenvalue (or an upper bound thereof) of the covariance matrix, rather than full knowledge of the covariance structure as in \cite{diakonikolas_being_2018,diakonikolas_outlier_2021}. Another interesting aspect of our work is that the minimax risk can be improved to match the rate in Gaussian settings. Previous work \cite{novikov_robust_2023} showed that this is possible for some special classes of symmetric distributions, while we  extend this result to any (sign-)symmetric distribution. We then present the results when applying to the well-studied cases of $ \ell_0 $- and $ \ell_1 $-ball constraints. Notably, our $ \ell_0 $ result differs from \cite{comminges_adaptive_2021}, who considered the sample size $ N=1 $ case (while we assume a sufficiently large sample size); our $ \ell_1 $ result aligns with existing work \cite{rigollet_high-dimensional_2023,prasadan_facts_2025}. To generalize the results of \cite{prasadan_information_2024}, we employ new techniques to address the challenges posed by heavy-tailed noise distributions. A different estimator is also utilized to characterize the symmetry of the distribution in the corresponding case. Our results shed light on the interplay between adversarial contamination, heavy-tailed noise, and structural constraints. They also provide a theoretical foundation for future research on robust learning under mild distributional assumptions. 

\subsection{Organization of the Paper}

The remainder of this paper is organized as follows. \autoref{section:lower_bounds} establishes lower bounds, which decompose into two components imposed by information-theoretic limits and adversarial contamination, respectively. \autoref{section:framework_for_upper_bound} presents our framework for the upper bounds, introducing the directed tree construction and tournament series algorithm that operates on this tree structure. Using the framework, we derive the bounds in different scenarios and obtain the minimax rates for bounded constraints, unbounded constraints, and symmetric noise distribution cases in \autoref{section:upper_bound_bounded_case}, \ref{section:unbounded_case}, and \ref{section:symmetric_case}, respectively. Then in \autoref{section:examples} we demonstrate the applicability of our results to several canonical models. Finally, \autoref{section:discussion} concludes with a discussion of our findings and directions for future research. 

\subsection{Notations and Definitions}\label{subsection:notation_and_definitions}

For the reader's convenience, \autoref{table:notation} summarizes the notations and definitions used throughout this paper. Subsequently, we define two fundamental concepts for our analysis: the \textit{local metric entropy} of a set $ K $ (with respect to the $ \ell_2 $ norm) and \textit{star-shaped sets}. We also list some key properties of these concepts that will be instrumental in our main results.

\begin{table}[htbp]
    \centering
    \begin{tabular}{|c|c|}
        \hline
        Notation & Description \\ \hline
        $ a\vee b $   & $ \max \{a,b\} $ \\
        $ a\wedge b $ & $ \min \{a,b\} $ \\
        $ [N] $ & $ \{1,2,\ldots,N\} $ for any $ N\in \mathbbm{N} $ \\
        $ \left\Vert \, \cdot \, \right\Vert    $ & the standard $ \ell_2 $ norm in $ \mathbbm{R}^n $   \\
        $ B(\nu ,r) $ & the Euclidean ball of radius $ r $ centered at $ \nu  $: $ \{\mu \in \mathbbm{R}^n : \left\Vert \mu -\nu  \right\Vert \leq r \} $ \\
        $ \mathbbm{1}(\, \cdot \, ) $& indicator function \\
        $\mathcal{N}(\mu, \Sigma)$ & normal distribution with mean $\mu$ and covariance matrix $\Sigma$\\
        $\mathrm{Bin}(n,p)$ & binomial distribution with parameters $ n $ and $ p $ \\
        $\mathbbm{S}^{n-1}$ & the unit $n-1$ dimensional sphere embedded in $\mathbbm{R}^n$\\
        $ w^\top $ & the transpose of a vector $ w $ \\
        $ \Xi_{\sigma ^2} $ & the distribution family with mean $ 0 $ and covariance matrix $ \Sigma \preceq \sigma ^2 I $ \\
        $ \bar{K} $ & the closure of the set $ K $ \\
        $ \mathrm{Diam}(K) $ & the diameter of the set $ K $: $ \sup_{x,y\in K}\left\Vert x-y \right\Vert  $ \\
        \hline
    \end{tabular}
    \caption{Notations and Definitions}
    \label{table:notation}
\end{table}

\begin{definition}[Local Metric Entropy]\label{definition:local_metric_entropy}
    Given a set $ K\subseteq \mathbbm{R}^n $, the $ \eta $-packing number of $ K $ is defined as the maximum cardinality $ M=M(\eta,K) $ of a set $ \{\nu _1,\ldots,\nu _M\}\subset K $ such that $ \left\Vert \nu _i-\nu _j \right\Vert >\eta $ for all $ i\neq j $.

    Then given a fixed constant $ c>0 $, define the maximal local packing of $ K $ as:
    \begin{align*}
        M^{ \mathrm{loc} }_K(\eta ,c) := \mathop{ \sup }\limits_{\nu \in K}M\big(\dfrac{ \eta }{ c } ,B(\nu ,\eta)\cap K\big),
    \end{align*}
    where we may omit the subscript $ K $ when the context is clear, writing simply $ M^{ \mathrm{loc} }(\eta ,c) $ instead. The corresponding local metric entropy is then defined as $ \log M^{ \mathrm{loc} }(\eta ,c) $.
\end{definition}

\begin{definition}[Star-shaped set]\label{definition:star_shaped_set}
    A set $ K\subseteq \mathbbm{R}^n $ is called star-shaped if there exists a point $ k^*\in K $ such that for every $ k\in K $, the entire line segment connecting $ k^* $ and $ k $ lies within $ K $. Formally, this means $ (1-t)k^*+tk\in K $ for all $ t\in [0,1] $ and all $ k \in K $. Any such point $ k^* $ is called a center of the star-shaped set $ K $.
\end{definition}

\begin{remark}
    The star-shaped property is a natural generalization of convexity. Every convex set is star-shaped (with any point in the set serving as a center), but the converse does not hold, making star-shaped sets a substantially richer class of constraint sets. As an example, for studying the problem of sparse estimation, the constraint is an $\ell_0$-ball, which is star-shaped but non-convex.
\end{remark} 

Two useful properties of a star-shaped set are presented here. 
We refer readers to the preceding work \cite{neykov_minimax_2022, prasadan_information_2024} for detailed proofs of these claims.

\begin{lemma}\label{lemma:long_enough_segment_in_K}
    Every bounded star-shaped set $K$ contains line segments of length at least $d/3$, where $d = \mathrm{Diam}(K)$ denotes the diameter of $K$.
\end{lemma}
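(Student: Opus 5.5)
The plan is to use a center $k^*$ of $K$ together with a near-diameter pair, and to apply the triangle inequality to the two segments running from $k^*$ to that pair. Since $d=\mathrm{Diam}(K)$ is a supremum that need not be attained, first fix any small $\eta>0$ and choose points $x,y\in K$ with $\|x-y\|> d-\eta$. (If $d=0$ the statement is trivial, since a single point is a segment of length $0$, so assume $d>0$.) By Definition~\ref{definition:star_shaped_set}, both segments $[k^*,x]$ and $[k^*,y]$ lie entirely in $K$.

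The key step is the triangle inequality
\begin{align*}
    d-\eta < \|x-y\| \leq \|x-k^*\| + \|k^*-y\|,
\end{align*}
which shows that one of the two segments, say $[k^*,x]$, has length greater than $(d-\eta)/2$. Taking $\eta < d/3$ gives $(d-\eta)/2 > d/3$. So $K$ contains a segment of length exceeding $d/3$, and this segment has $k^*$ as one endpoint, which is presumably the form used later when the tree or packing is built along it. Shrinking the segment toward $k^*$ then yields segments in $K$ of every length up to $d/3$, because any sub-segment starting at $k^*$ stays in $K$.

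There is no real obstacle here. The only point needing care is that the diameter may not be attained when $K$ is not closed, which is why the slack $\eta$ is introduced; this is also why the constant $1/3$ rather than $1/2$ is comfortable. If the later use instead requires the segment to lie in $\bar K$ with length exactly $d/2$, the same argument applied to $\bar K$ works, since $\bar K$ is still star-shaped about $k^*$ and its diameter is attained by compactness. For the stated claim, however, the $\eta$-argument above suffices and gives length at least $d/3$ (indeed strictly more than $d/3$ whenever $d>0$).
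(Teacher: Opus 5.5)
Your proof is correct and is essentially the standard argument that the paper defers to in \cite{neykov_minimax_2022, prasadan_information_2024}. There, one takes a near-diametral pair $x,y\in K$, applies the triangle inequality through a center $k^*$ to get $\max(\|x-k^*\|,\|y-k^*\|)\geq d/3$, and uses star-shapedness to place the segment from $k^*$ to the farther point in $K$; your slack $\eta<d/3$ plays exactly the role of choosing $\|x-y\|\geq 2d/3$, which handles a diameter that is not attained.
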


\begin{lemma}\label{lemma:monotonicity_of_local_entropy}
    For any star-shaped set $ K\subseteq \mathbbm{R}^n $ and constant $ c>0 $, the local metric entropy $ \log M^{ \mathrm{loc} }(\eta ,c) $ is non-increasing in $ \eta $.
\end{lemma}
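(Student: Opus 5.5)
The plan is a direct homothety argument using the center of $K$. Fix $0<\eta'<\eta$ and write $t:=\eta'/\eta\in(0,1)$. It suffices to show that for every $\nu\in K$ there is some $\nu'\in K$ with
\begin{align*}
    M\big(\tfrac{\eta}{c},B(\nu,\eta)\cap K\big)\leq M\big(\tfrac{\eta'}{c},B(\nu',\eta')\cap K\big).
\end{align*}
Taking the supremum over $\nu\in K$ on the left then gives $M^{\mathrm{loc}}(\eta,c)\leq M^{\mathrm{loc}}(\eta',c)$, and the claim follows because $\log$ is increasing. This also covers the case where the suprema are infinite: the argument transfers every finite packing, so an unbounded left-hand side forces an unbounded right-hand side.

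To build $\nu'$, let $k^*$ be a center of $K$ as in \autoref{definition:star_shaped_set}. Define the contraction toward the center
\begin{align*}
    \varphi(x):=(1-t)k^*+t x .
\end{align*}
By the star-shaped property, $\varphi(x)\in K$ for every $x\in K$. Moreover $\varphi$ scales all distances exactly by $t$:
\begin{align*}
    \|\varphi(x)-\varphi(y)\|=t\|x-y\|.
\end{align*}
Set $\nu':=\varphi(\nu)\in K$.

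Now take any $\tfrac{\eta}{c}$-packing $\{\nu_1,\ldots,\nu_M\}\subset B(\nu,\eta)\cap K$ and consider the images $\varphi(\nu_i)$. Each image lies in $K$. Each is also within $\eta'$ of $\nu'$, since
\begin{align*}
    \|\varphi(\nu_i)-\nu'\|=t\|\nu_i-\nu\|\leq t\eta=\eta'.
\end{align*}
Distinct images remain separated, since for $i\neq j$
\begin{align*}
    \|\varphi(\nu_i)-\varphi(\nu_j)\|=t\|\nu_i-\nu_j\|>t\eta/c=\eta'/c.
\end{align*}
Hence $\{\varphi(\nu_i)\}_{i=1}^M$ is an $\tfrac{\eta'}{c}$-packing of $B(\nu',\eta')\cap K$ of the same cardinality $M$, which is the desired inequality.

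There is no real obstacle here. The only point needing care is choosing the contraction centered at $k^*$ rather than at $\nu$. A contraction centered at $\nu$ need not map $K$ into itself, whereas star-shapedness guarantees that the contraction toward $k^*$ does.
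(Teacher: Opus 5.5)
Your proof is correct and is essentially the argument the paper relies on: the paper gives no proof of its own but defers to \cite{neykov_minimax_2022, prasadan_information_2024}, where monotonicity is proved by this same contraction $x\mapsto (1-t)k^*+tx$ toward a star center, with $t=\eta'/\eta$. That map sends an $\eta/c$-packing of $B(\nu,\eta)\cap K$ to an $\eta'/c$-packing of $B\bigl((1-t)k^*+t\nu,\eta'\bigr)\cap K$.
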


\section{Lower Bounds}\label{section:lower_bounds}

In this section, we establish fundamental lower bounds on the minimax rate for robust mean estimation under constraints with heavy-tailed noise. Our analysis encompasses two aspects: the information-theoretic lower bound and the contamination-induced lower bound. We study both the rate for general finite-variance distributions and the special case of symmetric distributions.

\paragraph{Information Theoretic Lower Bound.} The following lemma is based on Fano's inequality, providing a lower bound on the minimax risk in terms of the local metric entropy.
\begin{lemma}[Information-Theoretic Lower Bound]\label{lemma:packing_lower_bound}
    Let $ c $ be the constant from \autoref{definition:local_metric_entropy}, which is fixed sufficiently large. Then for any $ \delta  $ satisfying $ \log M^\mathrm{ loc }(\delta,c) > 4\big( \frac{N\delta^2}{2\sigma^2}\vee \log 2\big)$, we have
    \begin{align*}
        \mathfrak{M} \geq  \mathop{ \inf }\limits_{\hat{\mu }} \mathop{ \sup }\limits_{\mu \in K} \mathbbm{E}_{ \tilde{X}\sim \mathcal{N}(\mu ,\sigma ^2I)  }\left\Vert \hat{\mu }(\tilde{X})-\mu  \right\Vert ^2 \geq \dfrac{ \delta ^2 }{ 8c^2 }  .
    \end{align*}
\end{lemma}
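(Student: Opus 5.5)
The first inequality is a reduction by restriction. Gaussian noise $\mathcal{N}(0,\sigma^2 I)$ belongs to $\Xi_{\sigma^2}$, since its covariance $\sigma^2 I$ has leading eigenvalue $\sigma^2$. Among the allowed adversaries is the trivial one, $\mathcal{C}=\mathrm{id}$, which changes nothing. So for every estimator $\hat\mu$, the supremum over $\xi\in\Xi_{\sigma^2}$ and over $\mathcal{C}$ is at least the risk under Gaussian noise with no contamination. Taking the infimum over $\hat\mu$ gives the first inequality. The work is in the second inequality, which is the standard local Fano argument in the style of \cite{neykov_minimax_2022,prasadan_information_2024}.

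\emph{Step 1: packing.} Fix $\delta$ as in the statement. By Definition~\ref{definition:local_metric_entropy}, choose $\nu\in K$ and points $\nu_1,\dots,\nu_M\in B(\nu,\delta)\cap K$ that are pairwise more than $\delta/c$ apart. Take $M$ to be (or, if the supremum is not attained, arbitrarily close to) $M^{\mathrm{loc}}(\delta,c)$; since the hypothesis is a strict inequality, we may still assume $\log M > 4\big(\frac{N\delta^2}{2\sigma^2}\vee\log 2\big)$.

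\emph{Step 2: KL bound.} Let $P_j$ denote the law of $N$ i.i.d.\ $\mathcal{N}(\nu_j,\sigma^2 I)$ samples. Then
\begin{align*}
\mathrm{KL}(P_j\Vert P_k)=\frac{N\|\nu_j-\nu_k\|^2}{2\sigma^2}\le \frac{N(2\delta)^2}{2\sigma^2}.
\end{align*}
It is cleaner to compare each $P_j$ with the centre $P_\nu$, which gives $\mathrm{KL}(P_j\Vert P_\nu)\le N\delta^2/(2\sigma^2)$. The mutual information $I(J;\tilde X)$, with $J$ uniform on $[M]$, is bounded by the average KL divergence to any fixed distribution, so
\begin{align*}
I\le \frac{N\delta^2}{2\sigma^2}.
\end{align*}

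\emph{Step 3: Fano.} Given any estimator $\hat\mu$, define the test $\hat J=\arg\min_j\|\hat\mu-\nu_j\|$. If $\|\hat\mu-\nu_J\|<\delta/(2c)$, the triangle inequality together with the $\delta/c$ separation forces $\hat J=J$. Fano's inequality then gives
\begin{align*}
\mathbb{P}(\hat J\ne J)\ge 1-\frac{I+\log 2}{\log M}.
\end{align*}
Write $a=\frac{N\delta^2}{2\sigma^2}\vee\log 2$. The hypothesis $\log M>4a$ gives $I+\log 2\le 2a<\tfrac12\log M$, so the error probability is at least $1/2$.

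\emph{Step 4: conclusion.} By Markov's inequality,
\begin{align*}
\sup_j \mathbb{E}\|\hat\mu-\nu_j\|^2\ge \Big(\frac{\delta}{2c}\Big)^2\cdot\frac12=\frac{\delta^2}{8c^2}.
\end{align*}
This matches the stated constant exactly. The only delicate points are the bookkeeping of constants (centering the KL bound at $\nu$ rather than using pairwise distances $2\delta$) and ensuring the packing points lie in $K$, which holds by construction. I expect no real obstacle beyond these.
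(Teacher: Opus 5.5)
Your proof is correct and takes essentially the paper's approach. The paper reduces to uncontaminated Gaussian noise via $\mathcal{N}(0,\sigma^2 I)\in\Xi_{\sigma^2}$ and then cites the local Fano bound of \cite[Lemma 2.1]{prasadan_information_2024}; you reproduce that bound directly, using the KL divergence to the centre $\nu$ to get $I\le N\delta^2/(2\sigma^2)$, so the hypothesis gives error probability at least $1/2$ and the exact constant $\delta^2/(8c^2)$.
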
 
The proof follows directly from \cite[Lemma 2.1]{prasadan_information_2024}, which applies to Gaussian distributions, combined with the observation that $\mathcal{N}(0, \sigma^2I) \in \Xi_{\sigma^2}$.

\paragraph{Contamination Lower Bound.} The following two lemmas establish lower bounds arising from adversarial contamination. 

\begin{lemma}[Contamination Lower Bound]\label{lemma:diakonikolas_lower_bound}

    Let $ K $ be a bounded star-shaped set with diameter $ d $. For any $ \varepsilon \in (0,1/2) $, it holds that
    \begin{align*}
        \mathfrak{M} \gtrsim  \varepsilon \sigma ^2 \wedge d^2.
    \end{align*}
\end{lemma}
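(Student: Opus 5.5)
We would prove the bound by a two-point (Le Cam) argument: the adversary makes two distributions in $\Xi_{\sigma^2}$ with means $\mu_0,\mu_1\in K$ far apart produce identically distributed observed data.

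\textbf{Step 1 (choice of the two means).} By \autoref{lemma:long_enough_segment_in_K}, $K$ contains a segment of length at least $d/3$. Choose $\mu_0,\mu_1$ on this segment with $\|\mu_1-\mu_0\| = \Delta := c_0(\sqrt{\varepsilon}\sigma \wedge d/3)$ for a small absolute constant $c_0$. Write $\mu_1-\mu_0=\Delta u$ with $u\in\mathbbm{S}^{n-1}$.

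\textbf{Step 2 (noise distributions).} Take a mixture parameter $p\le \varepsilon/2$. Let $\xi^{(0)}$ be the point mass at $0$. Let $\xi^{(1)}$ be the two-point law putting mass $1-p$ at $-\Delta u$ and mass $p$ at $\Delta u\,(1-p)/p$. This law has mean zero, and its covariance is $\Delta^2\frac{1-p}{p}uu^\top$. Setting $p=\varepsilon/2$ gives leading eigenvalue at most $2\Delta^2/\varepsilon\le \sigma^2$ once $c_0\le 1/\sqrt2$. Thus both noise laws lie in $\Xi_{\sigma^2}$. Under $\mu_1$, a clean sample equals $\mu_0$ with probability $1-p$ and equals the outlier point $\mu_1+\Delta u(1-p)/p$ otherwise. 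Under $\mu_0$, every clean sample equals $\mu_0$.

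\textbf{Step 3 (coupling and contamination).} Let $B\sim\mathrm{Bin}(N,p)$ be the number of outlier points under $\mu_1$. On the event $\{B\le \varepsilon N\}$, the adversary replaces all outliers by $\mu_0$, so the observed data is $N$ copies of $\mu_0$, exactly as under $\mu_0$ with no corruption. Since $\mathbbm{E}B = \varepsilon N/2$, Markov's inequality gives $\mathbbm{P}(B\le\varepsilon N)\ge 1/2$. This is the one place needing care, and the constant probability suffices. On this event the estimator's output is the same function of the same data under both hypotheses. Hence
\begin{align*}
    \max_{j\in\{0,1\}}\mathbbm{E}_j\|\hat\mu-\mu_j\|^2 \;\geq\; \tfrac12\cdot\tfrac12\cdot\tfrac{\Delta^2}{4},
\end{align*}
using $\|\hat\mu-\mu_0\|^2+\|\hat\mu-\mu_1\|^2\ge \Delta^2/2$. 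This yields $\mathfrak{M}\gtrsim \Delta^2\asymp \varepsilon\sigma^2\wedge d^2$.

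\textbf{Expected obstacle.} The main subtlety is the contamination model. If the adversary may corrupt only exactly $\varepsilon N$ points, or only a fixed fraction, it can change extra clean points $\mu_0$ into $\mu_0$ (a no-op). So corrupting up to $\varepsilon N$ points is enough. Small $N$, where $\varepsilon N<1$, is harmless: the event $\{B=0\}$ already has probability at least $1-pN\ge 1/2$, so the argument goes through.
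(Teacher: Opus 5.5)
Your proof is correct and follows essentially the same route as the paper: a two-point argument comparing point-mass noise with a two-point law carrying an $\varepsilon/2$-mass outlier whose bulk sits at the other hypothesis's mean, where the adversary erases the outliers whenever there are at most $\varepsilon N$ of them. The differences are cosmetic. You place both means on the length-$d/3$ segment rather than anchoring the bulk at the star center $k^*$, and you obtain $\mathbbm{P}(B\le \varepsilon N)\ge 1/2$ from Markov's inequality where the paper invokes a binomial median property; your Markov step is, if anything, the cleaner justification.
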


\begin{lemma}[Symmetric Case Contamination Lower Bound]\label{lemma:gaussian_contamination_lower_bound}
    Let $ K $ be a bounded star-shaped set with diameter $ d $. For any $\varepsilon\in (0,1/2)$, it holds that
    \begin{align*}
         \mathfrak{M}_\mathrm{sym} \gtrsim \varepsilon^2 \sigma ^2 \wedge d^2.
    \end{align*}
\end{lemma}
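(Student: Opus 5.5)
We will prove the symmetric-case bound with a two-point (Le Cam) construction. The key point is that $\Xi_{\sigma^2}^{\mathrm{sym}}$ contains Gaussians, so it suffices to exhibit two Gaussian location models that the adversary can make indistinguishable. By \autoref{lemma:long_enough_segment_in_K}, $K$ contains a segment of length at least $d/3$. Along it we choose two points $\mu_0,\mu_1\in K$ with
\begin{align*}
\left\Vert \mu_0-\mu_1 \right\Vert = \Delta := c_0\,(\varepsilon\sigma \wedge d/3)
\end{align*}
for a small constant $c_0$. Let $P_j=\mathcal{N}(\mu_j,\sigma^2 I)$, so that $\mathcal{N}(0,\sigma^2I)\in\Xi^{\mathrm{sym}}_{\sigma^2}$.

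Next we bound the total variation between $P_0$ and $P_1$. Projecting onto the direction $\mu_1-\mu_0$ gives
\begin{align*}
\mathrm{TV}(P_0,P_1)=2\Phi\big(\Delta/(2\sigma)\big)-1\le \frac{\Delta}{\sigma\sqrt{2\pi}}\le \frac{c_0\varepsilon}{\sqrt{2\pi}} \le \frac{\varepsilon}{2}
\end{align*}
once $c_0$ is small enough.

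We then build the adversary by a coupling. Take a maximal coupling $(Y_i^0,Y_i^1)$ of $P_0$ and $P_1$ for each $i$, independently across $i$, so that $\mathbbm{P}(Y_i^0\neq Y_i^1)=\mathrm{TV}\le\varepsilon/2$. Generate the clean sample from the coupled pair. The number of coordinates with $Y_i^0\neq Y_i^1$ is $\mathrm{Bin}(N,\tau)$ with $\tau\le\varepsilon/2$. We define the adversary $\mathcal{C}$ under either hypothesis to output the sequence $Y^0$ with the disagreeing coordinates replaced by a fixed common value, or alternatively by $Y^0$ itself under $\mu_0$ and by $Y^0$ under $\mu_1$. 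This is allowed whenever at most $\varepsilon N$ points change. Both hypotheses then produce the identical observed data on the event
\begin{align*}
E=\{\mathrm{Bin}(N,\tau)\le \varepsilon N\}.
\end{align*}

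The main obstacle is ensuring $\mathbbm{P}(E)$ is bounded below uniformly in $N$. Since $\tau\le\varepsilon/2$, Markov's inequality gives $\mathbbm{P}(E^c)\le 1/2$. If that is too weak, Chernoff gives $\mathbbm{P}(E^c)\le e^{-c\varepsilon N}$. For very small $\varepsilon N$ we shrink $c_0$ so that $N\tau<1/2$ and use $\mathbbm{P}(\text{no disagreement})\ge 1-N\tau$. Either way $\mathbbm{P}(E)\ge 1/2$. On $E$ the estimator's output is the same under both hypotheses, so its error is at least $\Delta/2$ under one of them. Hence
\begin{align*}
\mathfrak{M}_{\mathrm{sym}}\ge \tfrac12\cdot\tfrac12\cdot(\Delta/2)^2 \gtrsim \varepsilon^2\sigma^2\wedge d^2 .
\end{align*}
The case $\varepsilon N<1$, where no contamination is possible, can also be absorbed differently: the information bound with $\Delta\asymp\sigma/\sqrt N\gtrsim\varepsilon\sigma$ then gives the same rate directly via a two-point Gaussian test. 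We would handle this case that way to avoid the budget issue.
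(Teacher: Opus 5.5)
Your proof is correct and follows the paper's route. The paper likewise reduces to Gaussian noise via $\mathcal{N}(0,\sigma^2 I)\in\Xi^{\mathrm{sym}}_{\sigma^2}$, but it simply cites the Gaussian contamination bound \cite[Lemma 2.2]{prasadan_information_2024} instead of re-deriving it through your TV/maximal-coupling two-point construction. Your randomized adversary is harmless, since its risk is an average over deterministic adversaries.

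One simplification: Markov's inequality alone gives $\mathbbm{P}(E^c)\le \tau/\varepsilon\le 1/2$ for every $N$, including $\varepsilon N<1$, where $E$ is just the no-disagreement event. So the Chernoff bound, shrinking $c_0$ with $N$ (which would make the constant non-universal), and the separate $\sigma^2/N$ argument are all unnecessary.
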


We prove \autoref{lemma:diakonikolas_lower_bound} by constructing a specific distribution and a corresponding contamination scheme. The detailed proof is provided in \autoref{section:proof_lower_bound}. The proof of \autoref{lemma:gaussian_contamination_lower_bound} follows from \cite[Lemma 2.2]{prasadan_information_2024}, which applies to Gaussian distributions, and noticing the fact that $\mathcal{N}(0, \sigma^2I) \in \Xi_{\sigma^2}^\mathrm{ sym }$.

\paragraph{Unbounded Case.} We now present corresponding results for the unbounded case, which follow as direct corollaries of the bounded case analysis.

\begin{lemma}\label{lemma:diakonikolas_lower_bound_unbounded}
    Let $ K\subseteq \mathbbm{R}^n $ be a star-shaped set. Then for any $ \varepsilon \in (0,1/2) $, it holds that
    \begin{align*}
        \mathfrak{M} \gtrsim \varepsilon \sigma ^2 .
    \end{align*}
\end{lemma}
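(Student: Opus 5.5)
The plan is to reduce to the bounded case, \autoref{lemma:diakonikolas_lower_bound}, by restricting the supremum over $\mu\in K$ to a suitable bounded star-shaped subset $K'\subseteq K$. Enlarging the parameter set can only increase the worst-case risk. Hence for every bounded star-shaped $K'\subseteq K$ we have $\mathfrak{M}(\mu;K,\Xi_{\sigma^2},\ell_2)\geq \mathfrak{M}(\mu;K',\Xi_{\sigma^2},\ell_2)\gtrsim \varepsilon\sigma^2\wedge \mathrm{Diam}(K')^2$. It therefore suffices to find $K'$ with $\mathrm{Diam}(K')^2\gtrsim \varepsilon\sigma^2$, or with diameter as large as $K$ allows.

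\textbf{Choice of $K'$.} Let $k^*$ be a center of $K$. There are two cases.
\begin{itemize}
\item If $K$ is unbounded, pick $k\in K$ with $\|k-k^*\|\geq \sqrt{\varepsilon}\,\sigma$ and set $K'=\{(1-t)k^*+tk: t\in[0,1]\}$. This segment lies in $K$ by star-shapedness. It is convex, hence star-shaped, bounded, and has diameter at least $\sqrt{\varepsilon}\sigma$. The bounded-case lemma then gives $\mathfrak{M}\gtrsim \varepsilon\sigma^2\wedge\varepsilon\sigma^2=\varepsilon\sigma^2$.
\item If $K$ is bounded, apply \autoref{lemma:diakonikolas_lower_bound} directly to get $\varepsilon\sigma^2\wedge d^2$.
\end{itemize}

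The statement of \autoref{lemma:diakonikolas_lower_bound_unbounded} is meant for $K$ unbounded (with $d=\infty$, matching the convention in the abstract), so the first case is the substantive one. If the bounded case were also intended, the bound necessarily degrades to $\varepsilon\sigma^2\wedge d^2$: a singleton $K$ has zero risk. I would therefore state the lemma for unbounded $K$.

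The only point needing care is that the implicit constant in \autoref{lemma:diakonikolas_lower_bound} does not depend on $K$ beyond its diameter. Otherwise the hidden constant could shrink with the choice of $K'$. Since that lemma is stated uniformly over bounded star-shaped sets, the constant is universal, and the reduction goes through with no further obstacle.
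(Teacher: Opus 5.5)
Your reduction is correct, but it is organized differently from the paper's proof. The paper does not invoke \autoref{lemma:diakonikolas_lower_bound} as a black box. It reopens that lemma's two-point construction: the mixture $M_\varepsilon$ against the point mass at the center $k^*$, with the adversary $\tilde{\mathcal{C}}$ erasing the far atom when it appears at most $\varepsilon N$ times. It then takes $\|\Delta\|\asymp\sigma$ in place of $\|\Delta\|\asymp\sigma\wedge d/\sqrt{\varepsilon}$. This works because an unbounded star-shaped $K$ contains segments from $k^*$ of arbitrary length, so $\mu_{M_\varepsilon}$ stays in $K$ and the separation $\|\mu_{M_\varepsilon}-k^*\|^2\asymp\varepsilon\sigma^2$ is no longer capped by $d^2$.

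You instead use $\mathfrak{M}(K)\geq\mathfrak{M}(K')$ for $K'\subseteq K$ and apply the bounded lemma to the segment $[k^*,k]$ of length at least $\sqrt{\varepsilon}\sigma$. Both arguments rest on the same geometric fact, but yours is modular. It needs no re-checking of the variance and membership conditions inside the construction. The universality of the constant, the one point you rightly flag, is inherited directly from the bounded statement. The same reduction with a segment of length $\varepsilon\sigma$ also gives \autoref{lemma:gaussian_contamination_lower_bound_unbounded} from \autoref{lemma:gaussian_contamination_lower_bound}; the paper handles that case by ``the same adjustment.''

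The paper's route, for its part, shows that the construction never needed boundedness at all, only a segment of length $\asymp\sqrt{\varepsilon}\sigma$ emanating from the center. Your observation that the lemma must be read for unbounded $K$ (a singleton has zero minimax risk) is also correct and matches the paper's intended scope.
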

\begin{lemma}\label{lemma:gaussian_contamination_lower_bound_unbounded}
    Let $ K\subseteq \mathbbm{R}^n $ be a star-shaped set. Then for any $ \varepsilon \in (0,1/2) $, it holds that
    \begin{align*}
         \mathfrak{M}_\mathrm{sym} \gtrsim \varepsilon^2 \sigma ^2 .
    \end{align*}
\end{lemma}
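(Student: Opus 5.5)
The plan is to reduce to the bounded case of \autoref{lemma:gaussian_contamination_lower_bound}, using that restricting the parameter space can only decrease the minimax risk. Fix a center $k^*$ of $K$. If $K$ is bounded with diameter $d$, then $d = \infty$ is not in play and the statement $\mathfrak{M}_\mathrm{sym}\gtrsim \varepsilon^2\sigma^2$ should be read as in the abstract, i.e.\ with $\wedge d^2$; the substantive case is $K$ unbounded. In that case, for every $R>0$ there is $k\in K$ with $\|k-k^*\|\ge R$. Because $K$ is star-shaped about $k^*$, the segment $S_R=\{(1-t)k^*+tk: t\in[0,1]\}$ lies in $K$, has diameter at least $R$, and is itself a bounded star-shaped (indeed convex) set.

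Next I use monotonicity of the minimax risk in the parameter set. Since $S_R\subseteq K$, we have
\begin{align*}
\mathfrak{M}_\mathrm{sym}(\mu;K)=\inf_{\hat\mu}\sup_{\mu\in K}\sup_{\xi}\sup_{\mathcal C}\mathbbm{E}\|\hat\mu-\mu\|^2 \;\ge\; \inf_{\hat\mu}\sup_{\mu\in S_R}\sup_{\xi}\sup_{\mathcal C}\mathbbm{E}\|\hat\mu-\mu\|^2 .
\end{align*}
One subtlety is that an estimator for $K$ may output points outside $S_R$. This does not matter, because the infimum on the right is over all measurable maps into $\mathbbm{R}^n$, exactly as in the bounded-case lemma. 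So the right-hand side is the minimax risk for the bounded star-shaped set $S_R$, and \autoref{lemma:gaussian_contamination_lower_bound} gives that it is $\gtrsim \varepsilon^2\sigma^2\wedge R^2$.

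Choosing $R\ge \varepsilon\sigma$ yields $\mathfrak{M}_\mathrm{sym}\gtrsim\varepsilon^2\sigma^2$. The implicit constant is absolute, so it is uniform in $R$, and letting $R$ be large enough closes the argument.

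The only real point to check is that the constant in \autoref{lemma:gaussian_contamination_lower_bound} does not depend on the set $S_R$, only on absolute quantities. If it did depend on $S_R$, I would instead argue directly with the two-point construction from \cite[Lemma 2.2]{prasadan_information_2024}. That construction uses two Gaussians $\mathcal N(\mu_0,\sigma^2 I)$ and $\mathcal N(\mu_1,\sigma^2 I)$ with $\mu_0,\mu_1\in S_R$ and $\|\mu_0-\mu_1\|\asymp\varepsilon\sigma$. Their total variation distance is $\lesssim\varepsilon$, so the adversary can make the two contaminated laws identical. This gives the bound via Le Cam's two-point method. Such a pair exists inside $S_R$ once $R\gtrsim\varepsilon\sigma$.

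The same argument, using \autoref{lemma:diakonikolas_lower_bound} in place of \autoref{lemma:gaussian_contamination_lower_bound}, also gives \autoref{lemma:diakonikolas_lower_bound_unbounded}.
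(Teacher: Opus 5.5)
Your argument is correct and rests on the same fact the paper uses: an unbounded star-shaped set contains segments of arbitrary length emanating from a center, so the $\wedge d^2$ cap in the bounded-case bound disappears. The paper reruns the bounded construction with the separation no longer capped by $d$, whereas you apply \autoref{lemma:gaussian_contamination_lower_bound} as a black box to a long segment $S_R\subseteq K$ and use monotonicity of the minimax risk in the parameter set; this is the same idea in a slightly cleaner form.
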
 

The proof of \autoref{lemma:diakonikolas_lower_bound_unbounded} follows the same approach as \autoref{lemma:diakonikolas_lower_bound}, with the following modification: In the proof of \autoref{lemma:diakonikolas_lower_bound}, we adopt the convention $ \left\Vert \Delta  \right\Vert \asymp \sigma  $ in place of $\left\Vert \Delta  \right\Vert \asymp \sigma  \wedge d/\sqrt{\varepsilon}$, since an unbounded $ K $ contains segments of arbitrary length. Then the result follows accordingly. Readers can refer to \autoref{section:proof_lower_bound} for the detailed proof. The proof of \autoref{lemma:gaussian_contamination_lower_bound_unbounded} proceeds similarly with the same adjustment.

\section{Framework of Robust Algorithm for Upper Bound}\label{section:framework_for_upper_bound}

Our approach for establishing information-theoretic upper bounds employs a tournament-on-the-tree algorithm originally introduced in \cite{neykov_minimax_2022} and subsequently refined in \cite{prasadan_information_2024}. Two key constructions involved are the directed tree and the tournament, presented in \autoref{subsection:directed_tree_construction_and_properties} and \autoref{subsection:tournament_based_on_robust_comparison}, where some of their important properties are given. Based on the two components, in \autoref{subsection:robust_algorithm_for_bounded_case} and \autoref{subsection:robust_algorithm_for_unbounded_case} we discuss the algorithmic framework under bounded or unbounded constraints, respectively. The output of the algorithms would be a chain along the tree. As we shall see in \autoref{section:upper_bound_bounded_case}-\ref{section:symmetric_case}, with a properly chosen robust comparison estimator to be integrated into the algorithm, we are able to obtain a risk upper bound by taking sufficiently many steps on the chain. Finally, by combining the upper bound with its lower counterpart, we have the desired minimax rate.

\subsection{Directed Tree Construction and Properties}\label{subsection:directed_tree_construction_and_properties}

We begin by reproducing the construction from \cite{prasadan_information_2024} in \autoref{algorithm:tree_construction}, which gives a directed pruned tree $ \mathcal{G} $ rooted at a specified point $ s $. We establish the following notation: For any node $ u \in \mathcal{G} $, we denote its parent as $ \mathcal{P}(u) $, where there exists a directed edge $ \mathcal{P}(u) \to u $. The offspring of $ u $ is denoted $ \mathcal{O}(u) $, representing the set of all nodes with directed edges originating from $ u $, formally $ \mathcal{O}(u) = \{v\in \mathcal{G}:\, \mathcal{P}(v)=u\} $. The layer (or depth) of the directed tree is indexed by $ J $.

\begin{breakablealgorithm}
    \renewcommand{\algorithmicrequire}{\textbf{Input:}}
    \renewcommand{\algorithmicensure}{\textbf{Output:}}
    \caption{Directed Tree Construction}
    \label{algorithm:tree_construction}
   \begin{algorithmic}[1]
        \REQUIRE Set $ K $, Root node $ s\in K $, diameter $ d>0 $, constant $ c $ chosen large enough;

        \ENSURE A directed tree structure $ \mathcal{G} $ where each layer $ J $ is denoted $ \mathcal{L}(J) $;

        \STATE $ \mathcal{L}(1)\leftarrow \{s\} $;

        \STATE Form a maximal $ (d/c) $-packing set of $ B(s,d)\cap K $, then add directed edges from $ s $ to each node in the packing set, i.e., the packing set is the offspring of $ s $, denoted $ \mathcal{O}(s) $;

        \STATE $ \mathcal{L}(2)\leftarrow \mathcal{O}(s)   $;

        \FOR{$ J = 3,4,\ldots $}

            \FORALL{$ u \in \mathcal{L}(J-1)  $}

                \STATE Form a maximal $ (d/2^{J-1}c) $-packing set of $ B(u, d/2^{J-1}) \cap K $, then add directed edges from $ u $ to these nodes, denoted as $ \mathcal{O}(u) $;

            \ENDFOR

            \STATE $ \mathcal{U}_J \leftarrow $ lexicographically ordered set of all offspring nodes $ \mathcal{O}(u) $ for all $ u\in \mathcal{L}(J-1) $, as constructed in the previous step;

            \WHILE{$ \mathcal{U}_J \neq \varnothing $}

                \STATE Pick first element, say $ u_l^J $, of $ \mathcal{U}_J $;

                \STATE $ \mathcal{T}_J(u_l^J) \leftarrow \{u_k^J\in \mathcal{U}_J:\, \left\Vert u_k^J-u_l^J \right\Vert \leq \frac{d}{2^{J-1}c},\, k\neq l \} $;

                \STATE For each $ u_k^J \in \mathcal{T}_J(u_l^J) $, remove directed edge $ \mathcal{P}(u_k^J) \to u_k^J $ and the node $ u_k^J $ from $ \mathcal{G} $, add edge $ \mathcal{P}(u_k^J) \to u_l^J $;

                \STATE Remove $ \{u_l^J\} \cup \mathcal{T}_J(u_l^J) $ from $ \mathcal{U}_J $;

            \ENDWHILE

            \STATE $ \mathcal{L}(k)\leftarrow \bigcup_{u\in \mathcal{L}(J-1)} \mathcal{O}(u) $;            

        \ENDFOR

        \RETURN $ \mathcal{G} $.
    \end{algorithmic} 
\end{breakablealgorithm}

We now present essential properties of the tree structure $ \mathcal{G} $, as established in \cite[Lemma 3.1, 3.2, 3.3]{prasadan_information_2024}:

\begin{lemma}\label{lemma:tree_properties}
    Let $ \mathcal{G} $ be the pruned tree constructed above in \autoref{algorithm:tree_construction}. Then the following properties hold:
    \begin{itemize}[topsep=2pt,itemsep=0pt]
        \item For any $ J\geq 3 $, $ \mathcal{L}(J) $ is a $ \frac{d}{2^{J-1}c} $-packing, and also a $ \frac{d}{2^{J-2}c} $-covering of $ K $;
        \item For any $ J\leq 2 $ and any parent node $ \Y_{J-1}\in \mathcal{L}(J-1) $, its offspring $ \mathcal{O}(\Y_{J-1}) $ form a $ \frac{d}{2^{J-2}c} $-covering set of $ B(\Y_{J-1},\frac{d}{2^{J-2}}) \cap K $. Moreover, its cardinality satisfies $ \mathrm{ card }(\mathcal{O}(\Y_{J-1})) \leq M^\mathrm{ loc }(\frac{d}{2^{J-2}},2c) $;
        \item For any $\mu\in K$ and $ J\geq 2 $, we have
        \begin{align*}
            \mathrm{ card }\big( \mathcal{L}(J-1)\cap B(\mu ,\frac{d}{2^{J-2}}) \big) \leq   M^\mathrm{ loc }(\frac{d}{2^{J-2}},c) \leq M^\mathrm{ loc }(\frac{d}{2^{J-2}},2c);
        \end{align*}
        \item Let $ [\Y_1,\Y_2,\ldots] $ be a winner chain in $ \mathcal{G} $, i.e., $ \Y_{J+1}\in \mathcal{O}(\Y_J) $ for all $ J\in \mathcal{N}^+ $. Then we have
        \begin{align*}
            \left\Vert \Y_{J'} - \Y_J \right\Vert \leq \frac{d(2+4c)}{c2^{J'}},\quad J\geq J'\geq 1. 
        \end{align*}
    \end{itemize}
\end{lemma}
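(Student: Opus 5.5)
We prove the four items in order, each time reading off the packing and covering properties directly from the steps of \autoref{algorithm:tree_construction}. The two ingredients are the maximality of the packings in lines 2 and 6, and the structure of the pruning loop. (In the second item the condition ``$J\leq 2$'' should be read as ``$J\geq 2$''.)

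\textbf{First item.} Fix $J\geq 3$.
\begin{itemize}
\item \emph{Packing.} The pruning loop keeps a node $u_l^J$ and deletes every remaining node of $\mathcal{U}_J$ within distance $d/(2^{J-1}c)$ of it. Hence any two surviving nodes are more than $d/(2^{J-1}c)$ apart.
\item \emph{Covering.} We argue by induction on $J$, the base case being the maximal $(d/c)$-packing of $B(s,d)\cap K\supseteq K$. Take $\mu\in K$. By the inductive hypothesis some $u\in\mathcal{L}(J-1)$ satisfies $\|\mu-u\|\leq d/2^{J-2}$. (For $J=3$ this uses the stronger base-case bound; in general we must check that the radii line up, since the layer $J-1$ covering radius is $d/(2^{J-3}c)\leq d/2^{J-2}$ once $c\geq 2$.) Then $\mu\in B(u,d/2^{J-1})$ after adjusting, or we enlarge the ball radius accordingly.
\item Maximality of the packing $\mathcal{O}(u)$ gives an offspring $v$ with $\|\mu-v\|\leq d/(2^{J-1}c)$. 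If $v$ was pruned, it was pruned because it lies within $d/(2^{J-1}c)$ of a surviving node. The triangle inequality then yields covering radius $2d/(2^{J-1}c)=d/(2^{J-2}c)$.
\end{itemize}
The delicate point here is matching the ball radius $d/2^{J-1}$ in line 6 with the covering radius of the previous layer. If the constants do not close, I would follow \cite{prasadan_information_2024} and use the covering radius $d/(2^{J-2}c)\ll d/2^{J-1}$, which holds for $c\geq 2$.

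\textbf{Second item.} Before pruning, $\mathcal{O}(\Y_{J-1})$ is a maximal $d/(2^{J-2}c)$-packing of $B(\Y_{J-1},d/2^{J-2})\cap K$ (with the appropriate index shift), hence a covering at that radius. Pruning replaces each removed child by a surviving node within the same distance, so the covering radius at most doubles. The final offspring set consists of surviving nodes. They remain pairwise $d/(2^{J-2}c)$-separated, and they lie within $d/2^{J-2}+d/(2^{J-2}c)\leq 2d/2^{J-2}$ of $\Y_{J-1}$. Bounding their number is then a packing count in a ball of comparable radius. Because the separation relative to the radius drops by a factor of about $2$, the bound is $M^{\mathrm{loc}}(\cdot,2c)$, with a harmless adjustment of the radius.

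\textbf{Third item.} By the first item, $\mathcal{L}(J-1)$ is a $d/(2^{J-2}c)$-packing. Its points lying in $B(\mu,d/2^{J-2})\cap K$ therefore form a packing of that local ball at scale $(d/2^{J-2})/c$. By \autoref{definition:local_metric_entropy} their number is at most $M^{\mathrm{loc}}(d/2^{J-2},c)$. The comparison $M^{\mathrm{loc}}(\eta,c)\leq M^{\mathrm{loc}}(\eta,2c)$ holds because a finer separation threshold allows more points.

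\textbf{Fourth item.} Each step of the chain satisfies $\|\Y_{k+1}-\Y_k\|\leq d/2^{k-1}+d/(2^{k}c)$: the first term is the ball radius, and the second accounts for a child being redirected by pruning to a node within the packing radius. Summing the geometric series from $k=J'$ to $J-1$ gives
\begin{align*}
\|\Y_J-\Y_{J'}\|\leq \sum_{k\geq J'}\Big(\frac{d}{2^{k-1}}+\frac{d}{2^kc}\Big)=\frac{4d}{2^{J'}}+\frac{2d}{c2^{J'}}=\frac{d(2+4c)}{c2^{J'}}.
\end{align*}

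I expect the main obstacle to be the constant bookkeeping under pruning, in the first two items. Redirected edges mean a child may not lie inside its parent's ball, and tracking this within factor $2$ is exactly what forces $2c$ in place of $c$.
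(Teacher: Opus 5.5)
The paper gives no argument for this lemma; it imports it from \cite[Lemmas 3.1--3.3]{prasadan_information_2024}. Your items 1, 3 and 4 are essentially right, and the geometric sum in item 4 reproduces $d(2+4c)/(c2^{J'})$ exactly. Item 2, however, fails as written.

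\textbf{The gap in item 2.} You take the pre-pruning children of $\Y_{J-1}$ to be a $d/(2^{J-2}c)$-packing. This contradicts line 11 of the algorithm, whose pruning threshold is $d/(2^{J-1}c)$, and it contradicts your own item 1, where the children of $u\in\mathcal{L}(J-1)$ form a maximal $d/(2^{J-1}c)$-packing. This causes two problems.
\begin{itemize}
\item With your scale, ``the covering radius at most doubles'' gives $d/(2^{J-3}c)$, which is twice the claimed radius.
\item Surviving offspring are only guaranteed to be $d/(2^{J-1}c)$-separated, not $d/(2^{J-2}c)$-separated. With the true separation, your survivors sit in a ball of radius $d/2^{J-2}+d/(2^{J-1}c)$, or $2d/2^{J-2}$ with your cruder bound. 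The ratio of radius to separation is then $2c+1$, or $4c$, not $2c$.
\end{itemize}
No ``harmless adjustment'' turns that into $M^{\mathrm{loc}}(d/2^{J-2},2c)$. \autoref{lemma:monotonicity_of_local_entropy} lets you trade a larger radius for a smaller one at a fixed second argument. It cannot lower the second argument from $4c$ (or $2c+1$) to $2c$.

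\textbf{The repair: count before pruning.} Pruning only merges nodes. Each element of $\mathcal{O}(\Y_{J-1})$ is either an original child of $\Y_{J-1}$ or the redirect target of one. Hence $\mathrm{card}(\mathcal{O}(\Y_{J-1}))$ is at most the number of original children. Those children form a $(d/2^{J-2})/(2c)$-packing of $B(\Y_{J-1},d/2^{J-2})\cap K$, and $\Y_{J-1}\in K$. By \autoref{definition:local_metric_entropy} there are at most $M^{\mathrm{loc}}(d/2^{J-2},2c)$ of them.

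The same scale gives the covering claim. A point of that ball is within $d/(2^{J-1}c)$ of an original child. That child is within $d/(2^{J-1}c)$ of its surviving representative, and the redirected edge makes the representative an offspring of $\Y_{J-1}$.

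So the $2c$ is built into the construction: the packing scale is halved so that pruning's doubling of the covering radius lands exactly on $d/(2^{J-2}c)$. It does not come from redirected children leaving the ball, as your closing remark suggests.

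\textbf{Fix a reading of line 6.} Commit to one reading instead of ``adjusting''. With the literal ball $B(u,d/2^{J-1})$, item 1 needs $c\geq 4$. Worse, item 2's covering of $B(\Y_{J-1},d/2^{J-2})\cap K$ is then false. Take $K$ a segment of length $d$ and a point at distance $d/2^{J-2}$ from $\Y_{J-1}$. Its distance to every offspring is at least $\frac{d}{2^{J-1}}(1-\frac1c)$, which exceeds $\frac{d}{2^{J-2}c}$ once $c>3$. The statement, and its use in \autoref{appendix_lemma:prob_bound_on_tree}, require the ball $B(u,d/2^{J-2})$. That is also what your item 4 already uses.
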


\subsection{Tournament Based on Robust Comparison}\label{subsection:tournament_based_on_robust_comparison}

Another central technique in establishing the upper bounds involves conducting a tournament among candidate points that comprise the local packing set at each layer. The objective is to identify which candidate best represents the observed data. This boils down to a hypothesis testing problem within a pair of separated points, say $ \nu _1 $ and $ \nu _2 $, to determine which one is more representative. We now introduce the notation for such robust comparison and the tournament mechanism built upon it. 

\paragraph{Robust Comparison.} To determine which of two points better represents the possibly corrupted data $X$, we formalize the hypothesis testing problem: Given an ordered pair $(\nu_1,\nu_2)$ of points $ \nu _1,\nu _2\in \mathbbm{R}^n $ satisfying $ \left\Vert \nu _1-\nu _2 \right\Vert \geq C\delta  $ for some constant $ C>2 $, we seek to test whether the true mean $ \mu $ of $ \tilde{X} $ belongs to either ball $ B(\nu _1,\delta) $ or $ B(\nu _2,\delta) $. The testing function is defined as
\begin{align*}
    \psi=\psi_{(\nu _1,\nu _2)}(X) = \begin{cases}
        0,& \text{if deciding } \mu \in B(\nu _1,\delta),\\
        1,& \text{if deciding } \mu \in B(\nu _2,\delta).
    \end{cases}
\end{align*}

We further define the notation $\nu_1 \succ \nu_2$ when $\psi_{(\nu_1,\nu_2)} = 0$ and conversely $\nu_1 \prec \nu_2 $ when $\psi_{(\nu_1,\nu_2)} = 1$. This notation provides a convenient framework for expressing which candidate is favored by the data.

The specific estimator leading to the testing function $ \psi $ depends on the model settings, for example, the assumptions on $ K $ and the distribution family. We discuss these estimators in subsequent sections: \autoref{subsection:robust_estimator} addresses general finite-variance distributions, while \autoref{section:symmetric_case} treats symmetric or known distributions. 

\paragraph{Tournament.} Based on a robust comparison method, we can conduct a tournament over a set of candidates $\mathcal{M}$. This tournament concept, originating from \cite{lecam_convergence_1973} and \cite{birge_approximation_1980}, is formalized in the following definition.
\begin{definition}[Tournament]\label{definition:tournament_T}
    Let $ \mathcal{M} = \{\nu _1,\ldots,\nu _M\} \subset K $. For any $\delta$, $ C>2 $ and $ \nu \in \mathcal{M} $, define $ \tilde{\mathcal{M}}(\nu):=\{ \nu ' \in \mathcal{M}:\, \nu ' \succ \nu  \text{ and } \left\Vert \nu -\nu ' \right\Vert \geq C\delta \}  $ and let
    \begin{align*}
        T(\delta ,\nu ,\mathcal{M}) & := \mathbbm{1}(\tilde{\mathcal{M}}(\nu)\neq \varnothing)\cdot \mathop{ \max }\limits_{\nu '\in \tilde{\mathcal{M}}(\nu)} \left\Vert \nu -\nu ' \right\Vert   .
    \end{align*}
    Then we define the tournament winner as any point $ \nu _i\in \mathcal{M} $ that minimizes $ T(\delta ,\nu _i, \mathcal{M}) $.
\end{definition}

\textbf{Comment: }Intuitively, the minimizer $ \nu_i\in \mathcal{M} $ of $ T(\delta ,\nu _i, \mathcal{M}) $ is the one with minimal worst-case distance to its contenders in the tournament. 

\subsection{Algorithm for Bounded Case}\label{subsection:robust_algorithm_for_bounded_case}
We now present the robust algorithm for bounded sets $ K $, which outputs a winner chain $ \Y = [\Y_1,\Y_2,\ldots] $ within the directed tree $ \mathcal{G} $ constructed in \autoref{subsection:directed_tree_construction_and_properties}. The algorithm applies the tournament from \autoref{definition:tournament_T} at each layer of the tree, traversing it from root to leaves. 

Recall that for each node $ \Y_{J-1}\in \mathcal{L}(J-1) $, its offspring $ \mathcal{O}(\Y_{J-1}) $ constitutes a local packing set of $ B(\Y_{J-1},d/2^{J-2})\cap K $ with packing radius $ d/(2^{J-2}c) $, which decreases geometrically as $ J $ increases. Beginning with $ \Y_1:=s $ as the tree root, at each layer $ J $ we let $ \Y_{J+1} $ be the tournament winner from \autoref{definition:tournament_T} among the offspring $ \mathcal{O}(\Y_J) $. This process yields a tournament winner chain $ \Y = [\Y_1,\Y_2,\ldots] $ where each node $ \Y_J $ optimally represents the data within its local neighborhood. The complete procedure is detailed in \autoref{algorithm:robust_algorithm}.
\begin{breakablealgorithm}
    \renewcommand{\algorithmicrequire}{\textbf{Input:}}
    \renewcommand{\algorithmicensure}{\textbf{Output:}}
    \caption{Robust Tournament Algorithm for an Upper Bound.}
    \label{algorithm:robust_algorithm}

    \begin{algorithmic}[1]
        \REQUIRE Directed tree $ \mathcal{G} $;

        \ENSURE A winner chain $ \Y=[\Y_1,\Y_2,\ldots] $ in $ \mathcal{G} $;

        \STATE Initialize $ J=1 $ and $ \Y = [\Y_1] $;

        \FOR{$ J=1,2,\ldots $}

            \STATE $ \Y_{J+1}\leftarrow \mathop{ \arg\min }\limits_{\nu \in \mathcal{O}(\Y_J)} T\big( \frac{d}{2^k(C+1)}, \,\nu ,\, \mathcal{O}(\Y_J) \big) $, breaking ties lexicographically;

            \STATE $ \Y.\text{append}(\Y_{J+1}) $;

        \ENDFOR

        \RETURN $ \Y=[\Y_1,\Y_2,\ldots] $.
    \end{algorithmic}
\end{breakablealgorithm}
As we will see in later sections, traversing sufficiently many steps along the chain $ \Y $ ensures a controlled upper bound on the estimation error rate.

\subsection{Algorithm for Unbounded Case}\label{subsection:robust_algorithm_for_unbounded_case}

We now turn to the more challenging case of unbounded constraint sets $ K $. Two key components enable us to handle the unboundedness effectively. First, we introduce a large enough radius $ R $ within which we can guarantee, with high probability, that the data concentrates around the true mean $ \mu $. This leads to the construction of a random set $ S(R) $ that contains $ \mu $ with high probability. Second, we develop an appropriate countable covering and packing structure for $ K $ that facilitates a directed tree construction at each covering point. These two elements work in tandem to localize the estimation problem within a manageable region, ultimately enabling us to select an optimal tree on which to execute our tournament-based algorithm from \autoref{subsection:robust_algorithm_for_bounded_case}.

For the unbounded case, we assume $ K $ to be a closed set to ensure the existence of our countable covering and packing construction. When $ K $ is not closed, we use its closure $ \bar{K} $ instead. As discussed in \autoref{lemma:closed_K}, this substitution preserves the minimax rates.

$R$ should be chosen to be larger than some constant $R_0$ depending on $n,\sigma,\varepsilon$. More details concerning the constant selection and the corresponding tail bounds for $ S(R) $ are presented in \autoref{section:unbounded_case} and \autoref{section:proof_unbounded_case}. We focus here on the construction of the algorithm.

\paragraph{Introducing the Random Set $ S(R) $.} We define the random set $ S=S(R) $ as a data-dependent subset of $ K $, determined by the observed data $ X_i $ and a specified radius $ R $.  
\begin{definition}\label{definition:random_set_S}
    For each fixed $ \nu  $ and $ i\in [N] $, let event $ E_{\nu ,i}:= \{\left\Vert X_i - \nu  \right\Vert >R \} $ and define
    \begin{align*}
        S=S(R):= \big\{ \nu \in K:\, \sum_{i=1}^N \mathbbm{1}(E_{\nu ,i}) \leq N/2-1 \big\}, 
    \end{align*}
    that is, the random set of points for which strictly more than half of the observed data points lie within distance $ R $. The set $ S(R) $ possesses the following important properties:
    \begin{enumerate}[topsep=2pt,itemsep=0pt]
        \item $ S(R) $ is nested when $ R $ increases, i.e. $ S(R_1) \subseteq S(R_2) $ for $ R_1<R_2 $.
        \item $ \mathrm{ Diam }(S(R)) \leq 2R $ for any $ R>0 $. 
    \end{enumerate}
    These properties were established in \cite[Lemmas 5.2 and 5.4]{prasadan_information_2024}.
\end{definition}

\paragraph{Countable Packing of $ K $.} We set $ m = \frac{R}{c-1} $ and construct an $ m $-packing set $ S_m $ of $ K $, which simultaneously serves as a $ 2m $-covering of $ K $. The existence of such a construction is guaranteed by \cite[Lemma 5.5]{prasadan_information_2024} under our assumption that $ K $ is closed. 

For each point $ s $ in the $ m $-packing set $ S_m $, we apply \autoref{algorithm:tree_construction} with root $ s $, setting $ d_m = 2m+2R $ in place of $ d $ and restricting to $ K\cap B(s,d_m) $ in place of $ K $. Each resulting tree is denoted $ \mathcal{G}^s $, where $ s $ specifies the root and $ \mathcal{L}^s(J) $ represents its $ J $-th layer. We remark that the `forest' of directed trees $ \{\mathcal{G}^s:\, s\in S_m \}  $ is data-independent, as it relies solely on the set $ K $ and the distance $ R $.

Since the construction is the same except for a few initial conditions, we directly have the following properties for each tree $\mathcal{G}^s$ which are similar to \autoref{lemma:tree_properties}.

\begin{lemma}\label{lemma:tree_properties_unbounded}
    Let $ \mathcal{G}^s $ be the pruned tree constructed as above with root $ s $ and $ d_m = 2m + 2R $. We have the following properties:
    \begin{itemize}[topsep=2pt,itemsep=0pt]
        \item For any $ J\geq 3 $, $ \mathcal{L}^s(J) $ is a $ \frac{d_m}{2^{J-1}c} $-packing, and also a $ \frac{d_m}{2^{J-2}c} $-covering of $ K\cap B(s,d_m) $;
        \item For any $ J\geq 2 $ and parent node $ \Y_{J-1}\in \mathcal{L}^s(J-1) $, its offspring $ \mathcal{O}(\Y_{J-1}) $ form a $ \frac{d_m}{2^{J-2}c} $-covering of $  B(\Y_{J-1},\frac{d_m}{2^{J-2}})\cap  K\cap B(s,d_m) $. Moreover, the cardinality satisfies $ \mathrm{ card }\big( \mathcal{O}(\Y_{J-1}) \big) \leq M^\mathrm{ loc }_K(\frac{d_m}{2^{J-2}}, 2c)   $;
        \item For any $\mu\in K$ and $ J\geq 2 $, we have
        \begin{align*}
            \mathrm{ card }\big(  \mathcal{L}^s(J-1) \cap B(\mu ,\dfrac{ d_m }{ 2^{J-2} } ) \big) \leq M^\mathrm{ loc }_K(\frac{d_m}{2^{J-2}}, c)  \leq M^\mathrm{ loc }_K(\frac{d_m}{2^{J-2}}, 2c);
        \end{align*}
        \item For any winner chain $ [\Y_1,\Y_2,\ldots] $ in the tree $ \mathcal{G}^s $, where $ \Y_{J+1}\in\mathcal{O}(\Y_J) $ for all $ J\in \mathbbm{N} $, we have
        \begin{align*}
            \left\Vert \Y_{J'}-\Y_{J} \right\Vert \leq \dfrac{ d_m(2+4c) }{ c2^{J'
            } }   ,\quad J\geq J'\geq 1.
        \end{align*}
    \end{itemize}
\end{lemma}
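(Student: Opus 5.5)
The plan is a reduction to \autoref{lemma:tree_properties}. The tree $\mathcal{G}^s$ is produced by \autoref{algorithm:tree_construction} applied to the set $K' := K\cap B(s,d_m)$ with diameter parameter $d_m$ in place of $K$ and $d$. The proof of \autoref{lemma:tree_properties} (i.e.\ \cite[Lemmas 3.1--3.3]{prasadan_information_2024}) uses only the mechanics of the construction: maximal packings, geometric radii $d/2^{J-1}$, and the pruning step. It does not use that $d$ equals $\mathrm{Diam}(K)$, only that $K'\subseteq B(s,d_m)$. Here this holds trivially, so the first step is to invoke \autoref{lemma:tree_properties} verbatim for the pair $(K',d_m)$. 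This yields the covering/packing statements (first two items) for $K\cap B(s,d_m)$, and it yields the chain-distance bound (fourth item) with $d$ replaced by $d_m$.

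The cardinality statements are where care is needed, because \autoref{lemma:tree_properties} gives them in terms of $M^{\mathrm{loc}}_{K'}$, while the claim is stated with $M^{\mathrm{loc}}_K$. I would argue by monotonicity of packing numbers under set inclusion. The offspring $\mathcal{O}(\Y_{J-1})$ form a $\frac{d_m}{2^{J-2}c}$-packing (before pruning, and pruning only removes or merges points) of $B(\Y_{J-1},d_m/2^{J-2})\cap K'\subseteq B(\Y_{J-1},d_m/2^{J-2})\cap K$. Since $\Y_{J-1}\in K$, the cardinality is at most $M(\frac{d_m}{2^{J-2}c}, B(\Y_{J-1},\frac{d_m}{2^{J-2}})\cap K)\le M^{\mathrm{loc}}_K(\frac{d_m}{2^{J-2}},c)$, and a fortiori at most the version with $2c$.

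The same argument handles the third item. $\mathcal{L}^s(J-1)$ is a $\frac{d_m}{2^{J-2}c}$-packing of points in $K$, so its intersection with $B(\mu,d_m/2^{J-2})$, for any $\mu\in K$, is a packing of $B(\mu,d_m/2^{J-2})\cap K$. That intersection is therefore bounded by $M^{\mathrm{loc}}_K(\frac{d_m}{2^{J-2}},c)$. For small $J$, where the packing separation inherited from the previous layer differs by a factor of 2, the original proof's handling of the constant $c$ versus $2c$ carries over unchanged.

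The inequality $M^{\mathrm{loc}}(\eta,c)\le M^{\mathrm{loc}}(\eta,2c)$ is immediate, because a $\eta/c$-packing is also a $\eta/(2c)$-packing.

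The main obstacle is verifying that nothing in the proofs of \cite{prasadan_information_2024} secretly uses that $d$ is the diameter of the ambient set. For example, the claim that $\mathcal{O}(s)$ covers all of $K'$ needs $K'\subseteq B(s,d_m)$, which holds by construction. Also, replacing $K'$ by $K$ in local entropies needs the centers to lie in $K$, which holds since all nodes lie in $K'\subseteq K$. With these checks done, the lemma follows.
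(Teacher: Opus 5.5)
Your reduction is the same as the paper's, which simply notes that $\mathcal{G}^s$ is \autoref{algorithm:tree_construction} run on $K\cap B(s,d_m)$ with $d_m$ in place of $d$, and reads the properties off \autoref{lemma:tree_properties}. Your extra step of passing from $M^{\mathrm{loc}}_{K\cap B(s,d_m)}$ to $M^{\mathrm{loc}}_K$ by inclusion of packings is correct. It uses that all nodes and $\mu$ lie in $K$, a detail the paper leaves implicit.

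There is, however, a factor-of-two slip in your treatment of the second item.

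\begin{itemize}[topsep=2pt,itemsep=0pt]
\item For $J\ge 3$ the offspring of $\Y_{J-1}$ are nodes of $\mathcal{L}^s(J)$, generated as a maximal $\frac{d_m}{2^{J-1}c}$-packing. By your own first item they are only $\frac{d_m}{2^{J-1}c}$-separated.
\item The quantity $\frac{d_m}{2^{J-2}c}$ in the statement is their \emph{covering} radius, which pruning makes twice the packing radius.
\item So your claim that the offspring form a $\frac{d_m}{2^{J-2}c}$-packing holds only for $J=2$. The intermediate bound $\mathrm{card}(\mathcal{O}(\Y_{J-1}))\le M^{\mathrm{loc}}_K(\frac{d_m}{2^{J-2}},c)$ is therefore unsupported.
\end{itemize}

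The correct count is
\[
\mathrm{card}\big(\mathcal{O}(\Y_{J-1})\big)\le M\Big(\tfrac{d_m}{2^{J-1}c},\,B\big(\Y_{J-1},\tfrac{d_m}{2^{J-2}}\big)\cap K\Big)\le M^{\mathrm{loc}}_K\Big(\tfrac{d_m}{2^{J-2}},2c\Big),
\]
because $\frac{d_m}{2^{J-1}c}=\frac{d_m/2^{J-2}}{2c}$. This is exactly why the statement carries $2c$ rather than $c$. The factor of two enters here, not at small $J$ in the third item: there, layer $2$ is $\frac{d_m}{c}$-separated, better than the $\frac{d_m}{2c}$ needed.

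Your device of counting the pre-pruning packing is the right way to control the cardinality. Pruning maps each original offspring to itself or its replacement, and merged-in nodes may come from other parents. With the separation corrected, the argument goes through.
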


\paragraph{Robust Algorithm.} Then we can present the robust algorithm, which considers the following two subcases. For details of the estimator and error upper bound see \autoref{theorem:expected_risk_upper_bound_unbounded}.
\begin{itemize}[topsep=2pt,itemsep=0pt]
    \item If $ S = \varnothing $, we directly take the estimator as the lexicographically smallest point in $ S(\hat{R}) $ where $ \hat{R} = \min \{ t>0: S(t)\neq \varnothing \}  $;
    \item If $ S \neq \varnothing $, we pick $ s\in S_m $ to be the point in $ S_m $ that is closest to $ S(R) $ (breaking ties lexicographically).  Then \autoref{algorithm:robust_algorithm} is applied to the tree $ \mathcal{L}^{s}(J) $ to obtain a chain $ \Y = [\Y_1,\Y_2,\ldots] $. Then a node after enough steps along the chain is chosen as the estimator.
\end{itemize}
The estimator determined by the above process ensures a controlled upper bound on the error rate.

\section{Minimax Rate for Bounded Case}\label{section:upper_bound_bounded_case}

In this section, we establish the minimax rate for bounded constraint sets $K$. Our approach proceeds in three stages: First, we specify the robust comparison estimator for bounded constraint case in \autoref{subsection:robust_estimator}. This estimator acts as a key component in the tournaments-on-the-tree framework mentioned in \autoref{section:framework_for_upper_bound}. Using the framework, we then prove the probabilistic error control in \autoref{subsection:prob_error_control}. Finally in \autoref{subsection:bounded_upper_and_minimax_rate}, a matching upper bound in risk is established, then combined with the lower bound to characterize the minimax rate in the case of bounded $ K $. The analysis employs techniques based on Cantelli's inequality to handle the heavy-tailed nature of the data. Also, such nature affects the conditions for the algorithm to work, which yields some interesting insights on how the heavy-tailedness complicates the robust mean estimation problem.

\subsection{Robust Estimator}\label{subsection:robust_estimator}

As discussed in \autoref{subsection:tournament_based_on_robust_comparison}, the testing function $ \psi $ characterizes which of two candidate points better represents the observed data. We now present the construction of an estimator for this testing function.

For technical convenience, we draw auxilliary noise terms $W_i\mathop{ \sim }\limits^{i.i.d.} \mathcal{N}(0, \sigma ^2I)$ and instead consider the datapoints  $\{X_i+W_i\}_{i=1}^N$, following \cite{lugosi_mean_2019}. This ensures the existence of a density, and importantly, only affects the variance by a constant multiplicative factor and thus does not alter the minimax rate.

Now, to characterize the relation between candidate points $(\nu_1,\nu_2)$ and the data, we introduce the following one-dimensional discriminant quantities
\begin{align}\label{equation:definition_V_i}
    \tilde{V}_i:= \dfrac{ \Vert \tilde{X}_i+{W}_i-\nu _1 \Vert ^2 - \Vert \tilde{X}_i+{W}_i-\nu _2 \Vert ^2  }{ \left\Vert \nu _2-\nu _1 \right\Vert  }, \qquad V_i:= \dfrac{ \left\Vert X_i +W_i -\nu _1 \right\Vert ^2 - \left\Vert X_i +W_i -\nu _2 \right\Vert ^2  }{ \left\Vert \nu _2-\nu _1 \right\Vert  },
\end{align}
recalling that $ \tilde{X}_i $ denotes the original uncorrupted data and $ X_i $ denotes the potentially corrupted data. Note that $ (1-\varepsilon )N $ of the $ V_i $ values equal the corresponding $ \tilde{V}_i $ values (from uncorrupted data), while the remaining $ V_i $ values reflect the effect of adversarial contamination.

The discriminant quantity $ V_i $ provides a natural voting mechanism: if $ V_i \leq 0 $, the $ i^{\mathrm{th}} $ data point favors candidate $ \nu _1 $, and vice versa for $ \nu _2 $. Our task therefore reduces to constructing a robust estimator that effectively aggregates the $ V_i $ values. We proceed by first introducing the classical median estimator in \autoref{definition:median_estimator}, then the trimmed mean estimator from \cite{lugosi_robust_2019} in \autoref{definition:trimmed_mean_estimator}. Finally, we combine these approaches to create a unified robust estimator in \autoref{definition:robust_estimator} that powers our tournament mechanism. For convenience and without loss of generality, we assume throughout that the sample size is even and denote it as $ 2N $\footnote{For odd sample sizes, we can simply add one data point, say $\mathbf{0}$, which does not affect the minimax rate. The corruption rate would remain bounded below $1/32$ to satisfy the condition of \autoref{theorem:two_point_prob_bound}, as long as the sample size $N$ is large enough, which is automatically satisfied as in \autoref{theorem:expected_risk_upper_bound}.}.

\paragraph{Median Estimator}

The median serves as a classical robust estimator for univariate data. In our testing framework, the median estimator determines which of two candidates is closer to the majority of the data points. For the even sample size $ 2N $ considered here, we formally define the estimator using the $ N^{\mathrm{th}} $ order statistic of the $ V_i $ values. For notational convenience, we continue to refer to this as the `median' of $ V_i $s.

\begin{definition}[Median Estimator Testing]\label{definition:median_estimator}
    \begin{align*}
        \psi_\mathrm{ Median } &:=  \mathbbm{1}\big( \mathrm{ card } \big(\big\{ i\in [2N] :\, \left\Vert X_i+W_i-\nu _1 \right\Vert \geq \left\Vert X_i+W_i-\nu _2 \right\Vert \big\}\big) \geq N \big).
    \end{align*}
\end{definition}

\paragraph{Trimmed Mean Estimator}

The trimmed mean estimator is also a long-standing robust estimator for univariate data. \cite{lugosi_robust_2019} established its non-asymptotic properties in adversarial contamination settings. The method operates by first estimating appropriate quantiles using half of the data, then trimming the remaining half based on these quantiles to compute the trimmed mean. We employ this approach to construct our trimmed mean estimator, denoted $ \mathrm{ TM }_{\delta _0}(\{V_i\}_{i=1}^{2N})  $. Let $ \delta _0 $ represents the desired Type I error bound, which influences the quantile to be used for trimming. The estimator
guarantees that $\vert \mathrm{ TM }_{\delta _0}(\{V_i\}_{i=1}^{2N}) $ is close to $ \mathbbm{E}_{  }\left[ V_i \right] $ and thus we can effectively determine which candidate is closer to the true mean by looking at the sign of $ \mathrm{ TM }_{\delta _0}(\{V_i\}_{i=1}^{2N})  $.

\begin{definition}[Trimmed Mean Estimator Testing]\label{definition:trimmed_mean_estimator}
    \begin{align*}
     \psi_\mathrm{ TM }&:=  \mathbbm{1}\big( \mathrm{ TM }_{\delta _0}(\{V_i\}_{i=1}^{2N})  \geq 0 \big).
    \end{align*}
\end{definition}

\paragraph{Combined Robust Testing}

As will become apparent, the probabilistic tail bounds for the above two robust estimators are effective only when $ \frac{\delta ^2}{\sigma ^2} $ is either large or small, respectively. For tail bounds that are effective for a broader range of $\frac{\delta ^2}{\sigma ^2}$ so that we can proceed the estimation algorithms, we construct the following hybrid robust estimator that adaptively combines the trimmed mean and median estimators, with the switching boundary related to $ C $ and the rate of the tail probability.

\newcommand{\thenewboundary}{(96C_2)^{-1}}
\begin{definition}[Robust Testing]\label{definition:robust_estimator}
    Given potentially corrupted data $ X =\left\{ X_1,\ldots,X_{2N} \right\} $ and an ordered pair $ (\nu _1, \nu _2) $ of candidate points $ \nu _1,\nu _2\in \mathbbm{R}^n  $ satisfying $ \left\Vert \nu _1-\nu _2 \right\Vert \geq C\delta  $ for some sufficiently large constant $ C>2 $, we define the robust estimator as follows:
    \begin{align*}
        \psi_{(\nu _1,\nu _2),\delta ;\delta _0,C_2}(\{X_i\}_{i=1}^{2N}) = \begin{cases}
            \mathbbm{1}(\mathrm{ TM }_{\delta _0} (\{V_i\}_{i=1}^{2N}) \geq 0) ,& \text{if } \frac{\delta ^2}{\sigma ^2}\leq \thenewboundary ,\\
            \mathbbm{1}(\mathrm{ Median }(\{V_i\}_{i=1}^{2N}) \geq 0)  ,& \text{if } \frac{\delta ^2}{\sigma ^2}> \thenewboundary ,
        \end{cases} 
    \end{align*}
    where $ V_i $ is defined in \autoref{equation:definition_V_i}.
    $C_2$ is a constant determined later as in \autoref{lemma:useful_constants_1}, which concerns the boundary between the two estimators above.
    We simplify notation by omitting auxiliary subscripts and writing $ \psi_{(\nu _1,\nu _2)} $ or $\psi$ when the context is clear.
\end{definition}

\subsection{Probabilistic Error Control}\label{subsection:prob_error_control}

The robust estimator developed above enables us to determine the preference relation $ \nu _1 \mathop{ \succ  }\limits_{\prec } \nu_2 $ between any candidate pair $ (\nu _1, \nu _2) $. Crucially, we can establish control over the Type I error of the robust estimator from \autoref{definition:robust_estimator}, as formalized in \autoref{theorem:two_point_prob_bound}. 

\begin{theorem}[Robust Testing Tail Probability]\label{theorem:two_point_prob_bound}
    There exist universal constants $C>2$, $C_5>0$ with the following properties: For any $\sigma>0$ and $ \varepsilon \in (0,1/32)$, consider the following hypothesis testing problem:
    \begin{align*}
        H_0:\, \mu \in B(\nu _1,\delta ) \text{ v.s. } H_1:\, \mu \in B(\nu _2,\delta ) \text{ for } \left\Vert \nu _1-\nu _2 \right\Vert \geq C\delta ,
    \end{align*}
    where $ \nu _1,\nu _2\in K $. Let $\psi_{(\nu_1,\nu_2)}$ be the robust test from \autoref{definition:robust_estimator}. If $\delta$ satisfies $\x \gtrsim N^{-1}\vee \varepsilon$, then 
    \begin{align*}
        \mathop{ \sup }\limits_{\mu :\left\Vert \mu-\nu _1 \right\Vert \leq \delta } \mathbbm{P}_\mu \left( \psi=1 \right)  \vee\mathop{ \sup }\limits_{\mu :\left\Vert \mu-\nu _2 \right\Vert \leq \delta } \mathbbm{P}_\mu \left( \psi=0 \right) &\leq   \exp\left[ -C_5N\log (1+\x)  \right].
    \end{align*}
\end{theorem}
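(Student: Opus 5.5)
By symmetry (swap $\nu_1,\nu_2$, which flips the sign of every $V_i$) it suffices to bound $\mathbbm{P}_\mu(\psi=1)$ uniformly over $\|\mu-\nu_1\|\le\delta$. I would first write the uncorrupted discriminant explicitly. Set $u=(\nu_2-\nu_1)/\|\nu_2-\nu_1\|$ and $D=\|\nu_2-\nu_1\|\ge C\delta$. Then
\[
\tilde V_i = 2u^\top(\xi_i+W_i) + 2u^\top(\mu-\nu_1) - D .
\]
This gives $\mathbbm{E}\tilde V_i \le 2\delta - D \le -(C-2)\delta$ and $\mathrm{Var}(\tilde V_i)\le 8\sigma^2$, since $W_i$ contributes a variance at most $\sigma^2$ in direction $u$. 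The problem is therefore one-dimensional: among the $2N$ values $V_i$, at most $2\varepsilon N$ are adversarial, and the rest are i.i.d. with negative mean of order $D$ and variance $\lesssim\sigma^2$. I would then treat the two regimes of \autoref{definition:robust_estimator} separately.

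\textbf{Median regime} ($\delta^2/\sigma^2>(96C_2)^{-1}$). Let $p=\mathbbm{P}(\tilde V_i\ge 0)$. By Cantelli's inequality,
\[
p\le \frac{8\sigma^2}{8\sigma^2+(C-2)^2\delta^2}\lesssim \frac{1}{1+\delta^2/\sigma^2},
\]
with a constant made small by taking $C$ large. The test outputs $1$ only if at least $N$ of the $V_i$ are $\ge 0$. That requires at least $N(1-2\varepsilon)\ge \tfrac{15}{16}N$ uncorrupted indices with $\tilde V_i\ge0$, i.e. $\mathrm{Bin}(2N,p)\ge \tfrac{15}{16}N$. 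A Chernoff bound in relative-entropy form gives
\[
\mathbbm{P}\le \exp\!\big[-2N\,\mathrm{kl}(\tfrac{15}{32}\,\|\,p)\big]\le \exp\!\big[-cN\log(1/p)\big]+\ldots \lesssim \exp\!\big[-C_5N\log(1+\delta^2/\sigma^2)\big].
\]
This works because $\mathrm{kl}(a\|p)\ge a\log(1/p)-\log 2$. When $\delta^2/\sigma^2$ is at least a fixed constant, $\log(1/p)\gtrsim\log(1+\delta^2/\sigma^2)$ and it dominates the $\log 2$ term, provided $C$ is large relative to $C_2$. This is the role of the boundary $(96C_2)^{-1}$.

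\textbf{Trimmed-mean regime} ($\delta^2/\sigma^2\le(96C_2)^{-1}$). Here $\log(1+\delta^2/\sigma^2)\asymp\delta^2/\sigma^2$, so the target is $\exp(-C_5N\delta^2/\sigma^2)$. I would invoke the guarantee of \cite{lugosi_robust_2019} for the trimmed mean under adversarial contamination. With probability at least $1-\delta_0$,
\[
|\mathrm{TM}_{\delta_0}-\mathbbm{E}\tilde V_i|\le C'\sigma\Big(\sqrt{\varepsilon}+\sqrt{\log(1/\delta_0)/N}\Big).
\]
I choose $\delta_0=\exp(-C_5N\delta^2/\sigma^2)$, so the second term is $\asymp\sqrt{C_5}\,\delta$. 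The assumption $\delta^2/\sigma^2\gtrsim\varepsilon$ makes the first term at most a small multiple of $\delta$. The assumption $\delta^2/\sigma^2\gtrsim N^{-1}$ makes $\log(1/\delta_0)$ large enough for the trimmed-mean lemma's sample-size and $\varepsilon$ conditions ($\varepsilon<1/32$) to hold. Taking $C$ large so that $(C-2)\delta$ exceeds the deviation gives $\mathrm{TM}<0$, hence $\psi=0$, outside an event of probability $\delta_0$.

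\textbf{Main obstacle.} The delicate step is matching constants across the boundary, i.e. choosing $C$, $C_2$, $C_5$ and the implicit constants in $\delta^2/\sigma^2\gtrsim N^{-1}\vee\varepsilon$ consistently. The median argument needs $\delta^2/\sigma^2$ bounded below by a constant, and the trimmed-mean argument needs $\delta$ small relative to $\sigma$ together with $\log(1/\delta_0)\lesssim N$. I would fix $C_2$ from the trimmed-mean lemma first, then choose $C$ large enough for the Cantelli/Chernoff step at level $(96C_2)^{-1}$, and finally take $C_5$ as the minimum of the two exponents.
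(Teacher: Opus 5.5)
Your proposal is correct and follows the paper's proof: it uses the same split at $\delta^2/\sigma^2=(96C_2)^{-1}$, Cantelli plus a binomial Chernoff bound in the median regime, and the \cite{lugosi_robust_2019} trimmed-mean guarantee with $\delta_0$ of order $\exp[-cN\delta^2/\sigma^2]$ otherwise; your bound $\mathrm{kl}(a\Vert p)\ge a\log(1/p)-\log 2$ with $a=15/32$ is just a streamlined version of the paper's $g(\alpha)$, $\varrho$, $\varepsilon<\alpha(1-\varrho)$ bookkeeping. One small correction of attribution: $\delta_0\ge e^{-N/4}$ comes from the upper bound $\delta^2/\sigma^2\le(96C_2)^{-1}$, whereas $\delta^2/\sigma^2\gtrsim N^{-1}$ is what absorbs the additive $\log 4/N$ terms in both the deviation bound and the trimming level $8\varepsilon+12\log(4/\delta_0)/N$.
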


Building upon the tail bound for the two-point comparison, we can extend the analysis following \cite[Theorem 4.5]{prasadan_information_2024} to obtain a union bound over the covering set. This yields an error control for the tournament introduced in \autoref{definition:tournament_T}.

\begin{theorem}[Tournament Tail Probability]\label{theorem:covering_set_prob_bound}
    Let $ C,C_5 $ be the constants from \autoref{theorem:two_point_prob_bound}, and let $ \mathcal{M}=\{\nu _1,\ldots,\nu _M\} $ be a $ \delta $-covering of $ K'\subset K $ with cardinality $M$. Consider the tournament function $ T(\delta ,\nu ,\mathcal{M}) $ from \autoref{definition:tournament_T} with the robust test from \autoref{definition:robust_estimator} applied. Assume $ \mu \in K' $ and denote $ i^*\in \mathop{ \arg\min }\limits_{i\in [M]} T(\delta ,\nu _i,\mathcal{M})  $. Under the condition $ \x \gtrsim N^{-1}\vee \varepsilon  $, we have
    \begin{align*}
        \mathbbm{P}\left( \left\Vert \nu _{i^*} - \mu  \right\Vert \geq (C+1)\delta  \right) \leq M\exp\left[ -C_5N\log (1+\dfrac{ \delta ^2 }{ \sigma ^2 } ) \right] .
    \end{align*}
\end{theorem}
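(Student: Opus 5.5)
The plan is the standard Le Cam--Birgé tournament argument, using \autoref{theorem:two_point_prob_bound} as the only probabilistic input together with a union bound over the $M$ candidates.

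Since $\mathcal{M}$ is a $\delta$-covering of $K'$ and $\mu\in K'$, we can fix an index $i_0$ with $\|\nu_{i_0}-\mu\|\le \delta$. Define the good event
\begin{align*}
\mathcal{E}:=\bigcap_{j\in[M]:\,\|\nu_j-\nu_{i_0}\|\ge C\delta}\{\nu_{i_0}\succ \nu_j\},
\end{align*}
where $\nu_{i_0}\succ\nu_j$ refers to the test $\psi_{(\nu_{i_0},\nu_j)}$. For each such $j$, the true mean satisfies $\mu\in B(\nu_{i_0},\delta)$ and $\|\nu_{i_0}-\nu_j\|\ge C\delta$. Hence \autoref{theorem:two_point_prob_bound}, under the assumption $\delta^2/\sigma^2\gtrsim N^{-1}\vee\varepsilon$, gives
\begin{align*}
\mathbbm{P}(\nu_{i_0}\prec\nu_j)\le \exp\big[-C_5N\log(1+\delta^2/\sigma^2)\big].
\end{align*}
The tournament definition looks at ordered comparisons $\nu'\succ\nu$, so we have to be careful that the test for the pair $(\nu_j,\nu_{i_0})$ is consistent with the one for $(\nu_{i_0},\nu_j)$. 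If the paper's convention does not make the two orderings antisymmetric, we bound the error of whichever ordered test appears. The two-point theorem controls both types of error, and $\mu\in B(\nu_{i_0},\delta)$ lies in the correct hypothesis for either ordering, so each relevant comparison still fails with at most that probability. A union bound over the at most $M$ indices $j$ then gives $\mathbbm{P}(\mathcal{E}^c)\le M\exp[-C_5N\log(1+\delta^2/\sigma^2)]$.

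Next we work on $\mathcal{E}$ and argue deterministically. On $\mathcal{E}$ no $\nu'$ with $\|\nu'-\nu_{i_0}\|\ge C\delta$ beats $\nu_{i_0}$, so $\tilde{\mathcal{M}}(\nu_{i_0})=\varnothing$ and $T(\delta,\nu_{i_0},\mathcal{M})=0$. The minimizer $i^*$ must therefore also satisfy $T(\delta,\nu_{i^*},\mathcal{M})=0$, i.e.\ $\tilde{\mathcal{M}}(\nu_{i^*})=\varnothing$. Suppose $\|\nu_{i^*}-\nu_{i_0}\|\ge C\delta$. On $\mathcal{E}$ we have $\nu_{i_0}\succ\nu_{i^*}$, so $\nu_{i_0}\in\tilde{\mathcal{M}}(\nu_{i^*})$, a contradiction. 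Hence $\|\nu_{i^*}-\nu_{i_0}\|<C\delta$, and the triangle inequality gives $\|\nu_{i^*}-\mu\|<(C+1)\delta$. Thus $\{\|\nu_{i^*}-\mu\|\ge(C+1)\delta\}\subseteq\mathcal{E}^c$, which proves the bound.

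The only delicate point is the orientation issue above. The argument needs the comparison deciding whether $\nu_{i_0}\in\tilde{\mathcal{M}}(\nu_{i^*})$ to be the one controlled by \autoref{theorem:two_point_prob_bound} with $\nu_{i_0}$ as the hypothesis containing $\mu$. This holds because the theorem bounds both $\sup_{H_0}\mathbbm{P}(\psi=1)$ and $\sup_{H_1}\mathbbm{P}(\psi=0)$. Everything else is the deterministic tournament logic of \cite[Theorem 4.5]{prasadan_information_2024}.
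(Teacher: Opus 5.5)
Your proof is correct and is essentially the paper's argument. The paper also fixes the covering point within $\delta$ of $\mu$ and observes that $\|\nu_{i^*}-\nu_{i_0}\|\ge C\delta$ forces $\max\{T(\delta,\nu_{i^*},\mathcal{M}),T(\delta,\nu_{i_0},\mathcal{M})\}\ge C\delta$, hence $T(\delta,\nu_{i_0},\mathcal{M})\ge C\delta$ by minimality of $i^*$. It then union-bounds the at most $M-1$ comparisons via \autoref{theorem:two_point_prob_bound} and applies the triangle inequality, so your good-event formulation is just the contrapositive of that indicator chain.

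One small correction to your orientation remark concerns the case where the two orderings of a pair are genuinely different tests, which does happen for the median test on a tie at exactly $N$ of $2N$ votes. There your good event must contain both $\psi_{(\nu_{i_0},\nu_j)}=0$ and $\psi_{(\nu_j,\nu_{i_0})}=1$ for every far $j$, so the union bound gives $2(M-1)$ terms rather than $M$. The stated constant $M$ therefore relies on using one test per unordered pair, which the paper's own first indicator inequality also tacitly assumes.
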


\subsection{Upper Bound and Minimax Rate}\label{subsection:bounded_upper_and_minimax_rate}

Having established the error control for individual tournament, we now derive an upper bound for the overall risk of our robust algorithm. The key insight is that by choosing the traverse length appropriately in the winner chain from \autoref{algorithm:robust_algorithm}, we can achieve the desired risk upper bound.

\begin{theorem}[Bounded Heavy-Tailed Upper Bound]\label{theorem:expected_risk_upper_bound}
    Let $ C,C_5 $ be the constants from \autoref{theorem:two_point_prob_bound} where $C$ is chosen large enough, and set $ c=2(C+1) $ as the constant in the local metric entropy. Let $\Y$ be the tournament winner chain from the algorithm in \autoref{algorithm:robust_algorithm}. We construct the estimator as follows:    
    For any $J\in \mathbbm{N}$, define $\delta_J = \frac{d}{2^{J-1}(C+1)}$. Let $ J^*\geq 1 $ be the largest $ J $ such that
    \begin{align}\label{eq:J_star_condition}
        C_5N\log(1+ \frac{\delta_J^2}{\sigma^2}) &\geq 4\log M^{\mathrm{ loc } }(c\delta_J,2c) \vee \log 2,
    \end{align}
    and set $ J^*=1 $ if this condition is never satisfied. Then at some step $J^{**}\geq J^*$ we use its output as the estimator $ \nu ^{**}=\Y_{J^{**}+1} $.

    Under the condition $ N\gtrsim \mathop{ \sup }\limits_{\delta >0} \log M^{\mathrm{ loc } }(\delta ,2c) $ we have the following risk upper bound:
    \begin{align*}
        \mathbbm{E}_W\mathbbm{E}_X\left\Vert \nu^{**}-\mu  \right\Vert ^2 \lesssim  \max\{\delta_{J^*}^2, \varepsilon \sigma ^2\} \wedge d^2.
    \end{align*}
\end{theorem}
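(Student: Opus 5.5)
The argument follows the tournament-on-the-tree analysis of \cite{prasadan_information_2024}, with the Gaussian two-point bound replaced by \autoref{theorem:two_point_prob_bound} and \autoref{theorem:covering_set_prob_bound}. First, the trivial bound $\Vert \nu^{**}-\mu\Vert\le d$ holds always, since $\nu^{**}\in K$. This gives the $\wedge d^2$ part, so it suffices to show that the risk is $\lesssim \max\{\delta_{J^*}^2,\varepsilon\sigma^2\}$.

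Next I would reduce to the regime where every tournament we use satisfies $\delta_J^2/\sigma^2\gtrsim N^{-1}\vee\varepsilon$. Let $\bar J$ be the largest $J\le J^*$ with $\delta_J^2\ge C'\varepsilon\sigma^2$ for a suitable constant $C'$. For $J\le J^*$, condition \eqref{eq:J_star_condition} gives $C_5N\log(1+\delta_J^2/\sigma^2)\ge\log 2$. Since $\log(1+x)\le x$, this yields $\delta_J^2/\sigma^2\gtrsim N^{-1}$. Hence the hypothesis of \autoref{theorem:covering_set_prob_bound} holds for all $J\le \bar J$. Because $\delta_J$ halves at each step, we get $\delta_{\bar J}^2\lesssim \max\{\delta_{J^*}^2,\varepsilon\sigma^2\}$. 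This is the source of the $\varepsilon\sigma^2$ term.

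The core is an induction along the chain. Let $A_J$ be the event that $\Vert\Y_{J}-\mu\Vert\le d/2^{J-2}$, up to constants matching $c=2(C+1)$. At $J=1$ this holds trivially since $\mu\in B(s,d)$. On $A_J$, \autoref{lemma:tree_properties} shows that $\mathcal{O}(\Y_J)$ is a $\delta_{J}$-type covering of $B(\Y_J,d/2^{J-1})\cap K$, which contains $\mu$. Its cardinality is at most $M^{\mathrm{loc}}(c\delta_J,2c)$ after matching indices. \autoref{theorem:covering_set_prob_bound} then gives $\Vert\Y_{J+1}-\mu\Vert\le (C+1)\delta_J$, which re-establishes $A_{J+1}$, except on an event of probability at most $M^{\mathrm{loc}}(c\delta_J,2c)\exp[-C_5N\log(1+\delta_J^2/\sigma^2)]$. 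By \eqref{eq:J_star_condition} and the monotonicity in \autoref{lemma:monotonicity_of_local_entropy}, this is at most $\exp[-\tfrac34 C_5N\log(1+\delta_J^2/\sigma^2)]$.

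Summing over $J\le\bar J$ and using the last property in \autoref{lemma:tree_properties}, on $\bigcap_{J\le \bar J}A_J$ we have $\Vert\nu^{**}-\mu\Vert\lesssim\delta_{\bar J}$ for any $J^{**}\ge \bar J$. On the complement, let $J$ be the first failure. Then $\Vert\nu^{**}-\mu\Vert\lesssim\delta_{J-1}$, since the earlier steps succeeded and the chain stays within $O(\delta_{J-1})$. The risk is therefore bounded by
\[
\delta_{\bar J}^2+\sum_{J\le\bar J}\delta_{J-1}^2\exp\Big[-\tfrac34C_5N\log\big(1+\tfrac{\delta_J^2}{\sigma^2}\big)\Big].
\]

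The main obstacle is showing that this sum is $\lesssim\delta_{\bar J}^2$, because the tail $(1+\delta_J^2/\sigma^2)^{-cN}$ is only polynomial, not Gaussian. I would split into two cases.
\begin{itemize}
\item When $\delta_J^2\ge\sigma^2$, the factor is at most $(\delta_J^2/\sigma^2)^{-cN}\delta_J^2\lesssim\sigma^2 2^{-cN}$. This needs $N$ at least a constant so that $cN>1$ and the geometric growth of $\delta_J$ is beaten.
\item When $\delta_J^2<\sigma^2$, the factor is about $e^{-cN\delta_J^2/\sigma^2}$, and the Gaussian-style argument of \cite{prasadan_information_2024} applies. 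Since $N\delta_J^2/\sigma^2$ at least doubles per step going backward from $\bar J$, the terms $\delta_{J}^2 e^{-cN\delta_J^2/\sigma^2}$ form a sum dominated by a geometric series anchored at $\delta_{\bar J}^2$.
\end{itemize}

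Both cases rely on the assumption $N\gtrsim\sup_\delta\log M^{\mathrm{loc}}(\delta,2c)$. It guarantees that the covering-size factor never dominates even near $J=1$, where $\delta_J\asymp d$ may be large compared with $\sigma$. In that regime the only control comes from $N\log(1+\delta^2/\sigma^2)$, which exceeds $\log M$ precisely because $N$ is large.
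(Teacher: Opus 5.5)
Your skeleton matches the paper's. You use the same decomposition into first-failure events along the chain and apply \autoref{theorem:covering_set_prob_bound} at each level. The step that fails as written is the per-level failure probability. You apply \autoref{theorem:covering_set_prob_bound} to the candidate set $\mathcal{O}(\Y_J)$ on the event $A_J$. That theorem concerns a fixed candidate set, but $\Y_J$ is a function of the same sample that the next tournament uses (all levels reuse the data). So $\mathcal{O}(\Y_J)$ is random and correlated with the test outcomes.

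What one can actually bound is $\mathbbm{P}(\text{failure at level } J+1,\ A_J)\le\sum_u\mathbbm{P}(\text{the tournament on }\mathcal{O}(u)\text{ fails})$. The sum runs over all possible values $u\in\mathcal{L}(J)\cap B(\mu,d/2^{J-1})$ of $\Y_J$. By the third item of \autoref{lemma:tree_properties}, there are at most $M^{\mathrm{loc}}(c\delta_{J+1},2c)$ such $u$. This is what the paper does, and it costs a second factor of the local packing number. The transition into level $J+1$ then fails with probability at most $[M^{\mathrm{loc}}(c\delta_{J+1},2c)]^2\exp[-C_5N\log(1+\delta_{J+1}^2/\sigma^2)]$. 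The factor $4$ in \eqref{eq:J_star_condition} exists precisely to absorb this square. It leaves $\exp[-\frac12C_5N\log(1+\delta_{J+1}^2/\sigma^2)]$ rather than your exponent $\frac34$, and the rest of your argument survives with $\frac12$.

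There are two smaller corrections. First, from the $\vee\log 2$ in \eqref{eq:J_star_condition} you only get $\delta_J^2/\sigma^2\ge\log 2/(C_5N)$. But \autoref{theorem:two_point_prob_bound} is proved under $\delta^2/\sigma^2\ge C_0N^{-1}$ with the specific $C_0$ of \autoref{lemma:useful_constants_1}, and $\log 2/C_5$ need not exceed $C_0$. For example, if $C_5=C_2$ it does not, since $C_0>\log 4/C_2$. So $\bar J$ must also be capped at the last level with $\delta_J^2/\sigma^2\ge C_0N^{-1}$ (the paper's condition \textbf{B}). This is harmless, because $\delta_{J^*}^2\ge\log 2\,\sigma^2/(C_5N)$ whenever \eqref{eq:J_star_condition} holds at $J^*$.

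Second, your last paragraph misplaces the role of $N\gtrsim\sup_\delta\log M^{\mathrm{loc}}(\delta,2c)$. For all $J\le J^*$ the covering factor is already controlled by \eqref{eq:J_star_condition}, which is monotone in $J$. In your argument the assumption is only needed to make $C_5N$ exceed an absolute constant, which your geometric summation requires. The paper gets this by taking $C$ large.

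Once repaired, your finish is a genuine variant of the paper's. The paper uses \autoref{appendix_lemma:sum_of_sequence} to turn the level bounds into a tail bound $\mathbbm{P}(\Vert\mu-\nu^{**}\Vert>\omega x)\le 2\exp[-\frac12C_5N\log(1+x^2/4\sigma^2)]$ for every $x\ge\delta_{J^\dagger}$. It integrates this to get $\delta_{J^\dagger}^2+\sigma^2/N$, then does a case analysis on which of conditions \textbf{A}, \textbf{B}, \textbf{C} fails first. You instead weight each first-failure event by the $O(\delta_{J-1}^2)$ error it can cause and sum directly. This yields $\delta_{\bar J}^2$ without a separate $\sigma^2/N$ term, in exchange for the two-regime summation you sketch. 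That summation is sound given $\delta_{\bar J}^2\gtrsim\sigma^2/N$ and $C_5N$ above a constant.
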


\begin{remark}\label{remark:J_star_condition_modified}
    We observe that \autoref{eq:J_star_condition} uses $ \tilde{c}:= 2c $ as the second parameter in the local metric entropy, which differs from the constant $ c $ in the metric entropy lower bound from \autoref{lemma:packing_lower_bound}. However, replacing $ c $ with $ \tilde{c}=2c $ in the lower bound only changes it by a constant factor. Consequently, we can harmonize the constants by using the same second parameter $ \tilde{c} $ in both the lower bound condition of \autoref{lemma:packing_lower_bound} and \autoref{eq:J_star_condition}. Therefore, without loss of generality, we can replace \autoref{eq:J_star_condition} with the following equivalent condition: let $ J^*\geq 1 $ be the maximal integer $ J $ such that
    \begin{align}\label{eq:J_star_condition_modified}
        C_5N\log(1+ \frac{\delta_J^2}{\sigma^2}) &\geq 4\log M^{\mathrm{ loc } }(\frac{c}{2}\delta_J,c) \vee \log 2,
    \end{align}
    and set $ J^*=1 $ if the above condition is never satisfied.
\end{remark}

We now combine our upper and lower bounds to establish the minimax rate for bounded constraint sets:

\begin{theorem}[Bounded Heavy-Tailed Minimax Rate]\label{theorem:expected_risk_rate_bounded_case}
    Let $\varepsilon \in (0,\frac{1}{32})$.  Define
    \begin{align}\label{equation:definition_of_delta_star}
        \delta^*=\sup\big\{ \delta\geq 0:\, N \frac{\delta^2}{\sigma^2} \leq \log M^\mathrm{loc}(\delta,c) \big\}.
    \end{align}
    For sufficiently large $c$ and under the assumption $N\gtrsim \sup_\delta \log M^\mathrm{loc}(\delta,c)$, the minimax rate is $ \mathfrak{M}\asymp\max(\delta^{*2}, \varepsilon\sigma^2)\wedge d^2$.
\end{theorem}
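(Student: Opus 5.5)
The proof combines the upper bound of \autoref{theorem:expected_risk_upper_bound} with the lower bounds of \autoref{lemma:packing_lower_bound} and \autoref{lemma:diakonikolas_lower_bound}. The contamination parts on the two sides already agree: both give $\varepsilon\sigma^2\wedge d^2$. So the real work is to show that $\delta_{J^*}^2\wedge d^2 \asymp \delta^{*2}\wedge d^2$, where $\delta_{J^*}$ is defined through \autoref{eq:J_star_condition_modified} and $\delta^*$ through \autoref{equation:definition_of_delta_star}. Once both bounds are expressed in terms of $\delta^*$, we use $\max(a,b)\wedge d^2\asymp \max(a\wedge d^2, b\wedge d^2)$ and the fact that the minimax risk is at least each lower bound separately. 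This gives $\mathfrak{M}\asymp \max(\delta^{*2},\varepsilon\sigma^2)\wedge d^2$.

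\textbf{Upper bound.} We show $\delta_{J^*}\lesssim \delta^*$ whenever $\delta_{J^*}\lesssim d$ is nontrivial. If $J^*=1$, then $\delta_1\asymp d$ and the claimed bound is just $\lesssim d^2$, which is trivial. Otherwise, by maximality of $J^*$, condition \autoref{eq:J_star_condition_modified} fails at $J^*+1$. With $\delta'=\delta_{J^*+1}=\delta_{J^*}/2$, this failure reads
\[
C_5N\log(1+\delta'^2/\sigma^2) < 4\log M^{\mathrm{loc}}(\tfrac c2\delta',c)\vee\log 2 .
\]
We split according to the size of $\delta'^2/\sigma^2$.
\begin{itemize}
\item If $\delta'^2/\sigma^2\lesssim1$, then $\log(1+x)\asymp x$, so we obtain $N\delta'^2/\sigma^2\lesssim \log M^{\mathrm{loc}}(\tfrac c2\delta',c)$.
\item If $\delta'^2/\sigma^2\gtrsim1$, then $\log(1+x)\gtrsim1$, and the failed condition gives $N\lesssim \log M^{\mathrm{loc}}$. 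This contradicts the standing assumption $N\gtrsim \sup_\delta\log M^{\mathrm{loc}}(\delta,c)$ once the implicit constant there is chosen large enough. (The $\log 2$ branch is handled the same way, since $N$ is large.) So this case cannot occur.
\end{itemize}
We also need the constants inside and outside the entropy to match. Rescale $\tilde\delta = a\delta'$ for a small constant $a$, and use the monotonicity of local entropy in \autoref{lemma:monotonicity_of_local_entropy}, which gives $\log M^{\mathrm{loc}}(\tfrac c2\delta',c)\le \log M^{\mathrm{loc}}(\tilde\delta,c)$ whenever $\tilde\delta\le \tfrac c2\delta'$. This yields $N\tilde\delta^2/\sigma^2\le \log M^{\mathrm{loc}}(\tilde\delta,c)$, so $\tilde\delta\le\delta^*$ and hence $\delta_{J^*}\lesssim\delta^*$.

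\textbf{Lower bound.} We show $\delta^{*2}\wedge d^2\lesssim \mathfrak{M}$. Take $\delta$ slightly below $\delta^*$ with $N\delta^2/\sigma^2\le\log M^{\mathrm{loc}}(\delta,c)$. Lemma~\ref{lemma:packing_lower_bound} needs the strict condition $\log M^{\mathrm{loc}} > 4(N\delta^2/2\sigma^2\vee\log 2)$. Shrinking $\delta$ by a constant factor $b$ handles the first term: by monotonicity, $\log M^{\mathrm{loc}}(b\delta,c)\ge \log M^{\mathrm{loc}}(\delta,c)\ge N\delta^2/\sigma^2 = b^{-2}N(b\delta)^2/\sigma^2$, which beats $2N(b\delta)^2/\sigma^2$ for small $b$. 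The $\log2$ term requires the local entropy to exceed a constant. When it does not, we are in the regime $\delta^*\lesssim \sigma/\sqrt N$ or $\delta\gtrsim d$. In that regime we fall back either on a two-point lower bound of order $\sigma^2/N\wedge d^2$ (Le Cam, Gaussian, along a segment of length $d/3$ from \autoref{lemma:long_enough_segment_in_K}) or on the trivial comparison with $d^2$. This yields $\mathfrak{M}\gtrsim \delta^{*2}/c^2\wedge d^2$.

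\textbf{Main obstacle.} The hardest step is the constant bookkeeping between the two definitions of $\delta^*$. The difficulties are the factor $c/2$ inside the entropy, the use of $2c$ versus $c$ as the second parameter (handled via \autoref{remark:J_star_condition_modified}), and the $\log 2$ and $\log(1+x)$ nonlinearities. Each of these must be absorbed by monotonicity of the local entropy and by the sample-size assumption. The degenerate case where the local entropy is bounded by a constant is where I expect to need the extra two-point argument.
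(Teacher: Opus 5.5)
Your argument is correct, but it is organized differently from the paper's.

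\textbf{How the paper proceeds.} The paper splits on whether $N\delta^{*2}/\sigma^2$ exceeds $8\log 2$. In the small case it imports the argument of \cite{prasadan_information_2024} to get $d\le 12\delta^*$. It then uses the trivial risk bound $d^2$ for the upper bound, and Fano at scale $d/(12\sqrt2)$ for the lower bound, taking $c$ large so that the local entropy there exceeds $8\log 2$. In the large case it argues forward. It sets $\tilde\delta=D\delta^*$ and applies the linearization of \autoref{appendix_lemma:cancel_the_log_sign_star}, where $\phi_{c,D}\to\infty$. Together with the definition of $\delta^*$ at $2\delta^*$, this shows $\tilde\delta$ satisfies \autoref{eq:J_star_condition_modified}, and monotonicity then gives $\delta_{J^*+1}<\tilde\delta$.

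\textbf{How your route differs.} For the upper bound you run the contrapositive. From the failure of the condition at $J^*+1$, the sample-size assumption lets you linearize $\log(1+x)$ at $\delta_{J^*+1}$, and a constant shrink lands you in the set defining $\delta^*$. This needs neither the case split nor the growing-constant lemma. For the degenerate lower-bound regime you replace the paper's Case I with a Gaussian Le Cam two-point bound $\sigma^2/N\wedge d^2$ along a segment from \autoref{lemma:long_enough_segment_in_K}. That is a standard tool not stated in the paper.

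\textbf{What each buys.} The paper's route keeps the entropy lower bound entirely inside \autoref{lemma:packing_lower_bound}. Yours avoids the external $d\le 12\delta^*$ step and treats all regimes of the upper bound uniformly.

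\textbf{Three statements to correct.} None of them affects the conclusion.
\begin{enumerate}
\item The case $J^*=1$ is not ``trivial''. A risk bound of order $d^2$ only matches the target if $d\lesssim\max(\delta^*,\sqrt{\varepsilon}\,\sigma)$. Your main argument covers this case anyway, since the condition fails at $J=2$ whether $J^*=1$ arises by maximality or by convention.
\item The $\log 2$ branch of the failed condition is never active, and the reason is not that $N$ is large. We have $\delta_{J^*+1}\le d/(2(C+1))$ and $K$ contains a segment of length $d/3$, so $M^{\mathrm{loc}}(\tfrac c2\delta_{J^*+1},c)\ge 2$.
\item In the lower bound, the alternative ``$\delta\gtrsim d$, trivial comparison with $d^2$'' is neither valid nor needed; no lower bound of $d^2$ is trivial. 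Since $\log M^{\mathrm{loc}}(b\delta,c)\ge N\delta^2/\sigma^2$, failure of the $\log 2$ requirement already forces $N\delta^2/\sigma^2\le 4\log 2$. The two-point bound alone then covers that case.
\end{enumerate}
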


\begin{remark}\label{remark:volumn_argument1}
    By the volume argument as presented in \cite[Lemma 5.7]{wainwright_high-dimensional_2019}, for any $ \delta >0 $ we have $ \log M^\mathrm{loc}(\delta ,c) \leq n\log(1+2c) \asymp n $. Therefore, $ N\gtrsim n $ acts as a sufficient condition for $ N\gtrsim \sup_\delta \log M^\mathrm{loc}(\delta,c) $. On the other hand, in the special case that $K$ contains an interior point, we have $\sup_\delta \log M^\mathrm{loc}(\delta,c) \asymp n$. Thus the condition $ N\gtrsim \sup_\delta \log M^\mathrm{loc}(\delta,c) $ simply boils down to $ N\gtrsim n $ in this scenario.
\end{remark}

\section{Minimax Rate for Unbounded Case}\label{section:unbounded_case}

Having established the minimax rate for bounded constraint sets, we now turn to the more challenging case of unbounded sets $ K $. As mentioned in \autoref{subsection:robust_algorithm_for_unbounded_case}, the analysis requires additional technical machinery to localize the problem within an appropriate finite region, characterized by the $S(R)$ from \autoref{definition:random_set_S}, to handle the unbounded nature of $ K $. With a proper $R$, we are able to obtain the estimator directly or determine a tree in the forest on which we can run the tournament-on-the-tree algorithm, depending on the behavior of $S(R)$.

The choice of the localization radius $ R $ and the resulting probability bounds are formalized as follows. 

\begin{theorem}[Set $S$ Tail Probability]\label{theorem:random_set_S_prob_bound}
    Let $ S=S(R) $ be as in \autoref{definition:random_set_S}. There exists an absolute constant $\gamma$, and constant $ R_0 $ depending on $ n,\sigma,\varepsilon  $ such that for any $ R\geq R_0 $ it holds that
    \begin{align*}
        \mathbbm{P}_{  }\left( \mu \not\in S \right) \leq  \exp\left[ -\dfrac{ N(1/2-\varepsilon )\gamma  }{ 16 } \log(1+\frac{R^2}{\sigma ^2}) \right] \wedge \frac{1}{2}.
    \end{align*}
\end{theorem}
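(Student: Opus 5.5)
Since $\mu\in S$ means $\sum_i \mathbbm{1}(\|X_i-\mu\|>R)\le N/2-1$, the plan is to control the number of uncorrupted points far from $\mu$ and then add the corrupted ones. The adversary changes at most $\varepsilon N$ points, so
\begin{align*}
\sum_{i=1}^N \mathbbm{1}(\|X_i-\mu\|>R) \le \varepsilon N + \sum_{i=1}^N \mathbbm{1}(\|\tilde X_i-\mu\|>R) = \varepsilon N + \sum_{i=1}^N \mathbbm{1}(\|\xi_i\|>R).
\end{align*}
Hence $\{\mu\notin S\}\subseteq\{B > (1/2-\varepsilon)N - 1\}$, where $B:=\sum_i \mathbbm{1}(\|\xi_i\|>R)\sim \mathrm{Bin}(N,p)$ and $p=\mathbbm{P}(\|\xi\|>R)$. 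The $-1$ will be absorbed by using $(1/2-\varepsilon)N/2$ as a threshold, which needs $N$ larger than a constant. (The auxiliary noise $W_i$, if it is added, only replaces $\sigma^2$ by $2\sigma^2$ and costs constants.)

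Next I bound $p$. The only moment information is $\mathbbm{E}\|\xi\|^2=\mathrm{tr}\,\Sigma\le n\sigma^2$, so Chebyshev gives $p\le n\sigma^2/R^2$. This is why $R_0$ depends on $n$ and $\sigma$.

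Then I apply the standard binomial tail bound
\begin{align*}
\mathbbm{P}(B\ge k)\le \binom{N}{k}p^k\le (eNp/k)^k.
\end{align*}
With $k=\lceil (1/2-\varepsilon)N/2\rceil$ this gives $\mathbbm{P}(\mu\notin S)\le \exp\{-k\log(k/(eNp))\}$. Since $k/N\ge (1/2-\varepsilon)/2$, we have
\begin{align*}
\log\frac{k}{eNp}\ge \log\frac{(1/2-\varepsilon)R^2}{2e\,n\sigma^2}.
\end{align*}
Choose $R_0$ so that $R_0^2/\sigma^2\ge \big(2e\,n/(1/2-\varepsilon)\big)^2$, and also large enough that $1+R^2/\sigma^2\le 2R^2/\sigma^2$. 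Then for $R\ge R_0$ the logarithm above is at least $\tfrac12\log(R^2/\sigma^2)\ge \tfrac14\log(1+R^2/\sigma^2)$. This yields
\begin{align*}
\mathbbm{P}(\mu\notin S)\le\exp\left[-\tfrac{(1/2-\varepsilon)N}{8}\log(1+R^2/\sigma^2)\right],
\end{align*}
which is the claimed bound with some absolute $\gamma\le 2$.

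The $\wedge\frac12$ part follows by enlarging $R_0$ further, so that the exponential is at most $1/2$ even for $N=1$. Since $\varepsilon<1/32$, this only requires $\log(1+R^2/\sigma^2)\gtrsim 1$.

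The main obstacle is the discrete bookkeeping, which is where the absolute constant $\gamma$ gets fixed. One issue is the $-1$ in the definition of $S$, together with the ceiling in $k$. The other is small $N$, where $(1/2-\varepsilon)N-1$ may be at most $0$. I would handle small $N$ by noting that the theorem is only used for $N$ larger than a constant, or else by shrinking $\gamma$ so that the bound is trivially $\ge$ the true probability there. The other point to check is that the Chebyshev step, using only the trace bound, suffices. It does, because the dimension dependence is allowed to enter through $R_0$.
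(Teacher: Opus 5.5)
Your argument is correct for $N$ above a small constant (see the second paragraph), but it takes a different route from the paper.

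\textbf{Comparison with the paper.} The paper bounds the one-sample tail $p=\mathbbm{P}(\|\tilde X_i-\mu\|>R)$ using a $\tfrac12$-net of $\mathbbm{S}^{n-1}$ and Cantelli's inequality in each direction, which gives $p\le 5^n(1+R^2/4\sigma^2)^{-1}$. It then absorbs the $5^n$ into $R_0$ by requiring $\log(1+R_0^2/\sigma^2)\gtrsim n/(1-\gamma)$, so that $p\le\varrho/2$ with $\varrho=(1+R^2/\sigma^2)^{-\gamma/8}$. It finishes with the Chernoff--KL bound for the binomial. This reuses the estimate $\mathrm{KL}\ge g(\varepsilon)+(\tfrac12-\varepsilon)\log(1/\varrho)$ from the median test, together with $\log(1/\varrho)>-2g(\varepsilon)/(\tfrac12-\varepsilon)$.

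You instead apply Markov to $\|\xi\|^2$ with $\mathbbm{E}\|\xi\|^2=\mathrm{tr}\,\Sigma\le n\sigma^2$, and use the elementary bound $\mathbbm{P}(B\ge k)\le\binom Nk p^k\le(eNp/k)^k$. Your route is shorter and self-contained. It needs only $R_0\asymp \sigma n/(\tfrac12-\varepsilon)$, rather than $\log(1+R_0^2/\sigma^2)\gtrsim n$, and it gives the exponent with $\gamma=2$. Since the bound weakens as $\gamma$ decreases, it also holds for the $\gamma\in(0,1)$ close to $1$ that the paper imposes later. The paper's route buys uniformity with the machinery ($g$, $\varrho$, the KL lower bound) it already built for the median part. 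Its exponentially larger $R_0$ costs nothing there, because $R_0$ is enlarged anyway when verifying the conditions at $J=1$ in the unbounded upper bound.

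\textbf{The small-$N$ issue.} The statement is in fact false for $N=1$. There the defining condition $\mathbbm{1}(E_{\nu,1})\le -\tfrac12$ can never hold, so $S(R)=\varnothing$ deterministically and $\mathbbm{P}(\mu\notin S)=1$. The paper's proof hides this by replacing $\{\sum_i\mathbbm{1}(E_{\mu,i})>N/2-1\}$ with $\{\sum_i\mathbbm{1}(E_{\mu,i})>N/2\}$, which is not a superset of $\{\mu\notin S\}$. Consequently only your first remedy is viable: restrict to $N\ge N_0$. With your threshold trick, $N\ge 2/(\tfrac12-\varepsilon)$, i.e.\ $N\ge 5$ since $\varepsilon<1/32$, suffices. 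This is all that is used downstream, where $C_8N>2$ is assumed. Shrinking $\gamma$ cannot work, because the right-hand side is at most $\tfrac12$ while the probability equals $1$. For the same reason, drop the phrase ``even for $N=1$'' from your treatment of the $\wedge\tfrac12$ part.
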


With the probability bound for the set $ S $, we can now present the minimax upper bound for the robust estimator in the unbounded case from \autoref{subsection:robust_algorithm_for_unbounded_case}.

\begin{theorem}[Unbounded Heavy-Tailed Upper Bound]\label{theorem:expected_risk_upper_bound_unbounded}
    Let $ C,C_5 $ be the constants from \autoref{theorem:two_point_prob_bound} where $C$ is chosen large enough, and take $ c=2(C+1) $ as the constant in the local metric entropy. Let $\Y$ be the tournament winner chain in \autoref{subsection:robust_algorithm_for_unbounded_case}.
    For any $J\in \mathbbm{N}$, define $\delta_J = \frac{d_m}{2^{J-1}(C+1)}$ where $m$ and $d_m$ are as defined in \autoref{subsection:robust_algorithm_for_unbounded_case}. Let $ J^*\geq 1 $ be the largest $ J $ such that
    \begin{align}\label{eq:J_star_condition_unbounded}
        C_5N\log(1+ \frac{\delta_J^2}{\sigma^2})& \geq 6\log M^{\mathrm{ loc } }(c\delta_J,2c) \vee \log 2 ,
    \end{align}
    and set $ J^*=1 $ if this condition is never satisfied. 
    
    The algorithm output is defined as follows: If $ S(R)\neq \varnothing $, at some step $J^{**}\geq J^*$ we use its output as the estimator $\nu^{**} = \Y_{J^{**}+1}$, let $ \nu ^{**} $ be the lexicographically smallest point in $ S(\hat{R}) $, where $ \hat{R} = \min \{t>0 : S(t)\neq \varnothing \} $. Under the condition $ N\gtrsim  \sup_{\delta >0}\log M^\mathrm{ loc }(\delta ,2c) $, we have the following risk upper bound:
    \begin{align*}
        \mathbbm{E}_{ W }\mathbbm{E}_{ X } \left\Vert \nu ^{**}-\mu  \right\Vert ^2 \lesssim  \max\{\delta_{J^*}^2, \varepsilon \sigma ^2\} .
    \end{align*}
\end{theorem}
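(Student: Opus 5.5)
We prove \autoref{theorem:expected_risk_upper_bound_unbounded} by splitting the risk according to the behaviour of the random set $S=S(R)$, with $R\geq R_0$ chosen as in \autoref{theorem:random_set_S_prob_bound}. We also choose $R\asymp \sigma$ up to constants, enlarging it to $R\gtrsim \sigma\sqrt{\varepsilon}^{-1}$ only if needed, so that $d_m=2m+2R\asymp R$ while $R_0$ is respected.

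On the event $\{\mu\in S\}$, $S$ is nonempty and $\mathrm{Diam}(S)\leq 2R$. The chosen root $s\in S_m$ is within $2m$ of $S$, because $S_m$ is a $2m$-covering of $K$. Hence $\Vert s-\mu\Vert\leq 2m+2R=d_m$, so $\mu\in K\cap B(s,d_m)$. We now run exactly the bounded-case analysis of \autoref{theorem:expected_risk_upper_bound} on the tree $\mathcal{G}^s$, using \autoref{lemma:tree_properties_unbounded} in place of \autoref{lemma:tree_properties}. At each layer $J\leq J^*$, \autoref{theorem:covering_set_prob_bound} with $\delta=\delta_J$ and $M\leq M^\mathrm{loc}(c\delta_J,2c)$ shows the winner stays within $(C+1)\delta_J$ of $\mu$, except with probability $\exp[-C_5N\log(1+\delta_J^2/\sigma^2)/2]$ by \eqref{eq:J_star_condition_unbounded}. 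This requires $\delta_J^2/\sigma^2\gtrsim N^{-1}\vee\varepsilon$; when that fails we stop at the layer where $\delta_J^2\asymp\varepsilon\sigma^2\vee \sigma^2/N$, which yields the $\varepsilon\sigma^2$ term.

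There is one subtlety: $s$ is data dependent. We handle it with a union bound over the roots $s\in S_m$ that can be selected on $\{\mu\in S\}$. These lie in $B(\mu,d_m)$, and their number is bounded by a local packing number $M^\mathrm{loc}(d_m,c)$. This is why the constant in \eqref{eq:J_star_condition_unbounded} is $6$ rather than $4$: the extra factor absorbs this union. Summing the geometric tail over layers and using the chain property (the last item of \autoref{lemma:tree_properties_unbounded}) then gives a contribution $\lesssim\max\{\delta_{J^*}^2,\varepsilon\sigma^2\}$ on this event, as in the bounded proof.

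On $\{\mu\notin S\}$ the estimator is either a tree node near $S(R)$ or a point of $S(\hat R)$, and the error can be large. We bound it by integrating tails: for $t\geq R$, the estimator lies within $O(t)$ of $S(t)$, and by nestedness $S(t)\ni\mu$ fails with probability at most $\exp[-c'N\log(1+t^2/\sigma^2)]$ by \autoref{theorem:random_set_S_prob_bound}. Moreover, if $\mu\in S(t)$ then $\Vert\nu^{**}-\mu\Vert\lesssim t$: when $S(R)=\varnothing$, $\hat R\leq t$ and $\mathrm{Diam}(S(t))\leq 2t$, and when $S(R)\neq\varnothing$ the node lies within $O(d_m)$ of $S(R)\subseteq S(t)$. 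Therefore
\begin{align*}
\mathbbm{E}\Vert\nu^{**}-\mu\Vert^2\mathbbm{1}(\mu\notin S)\lesssim R^2\,\mathbbm{P}(\mu\notin S(R))+\int_R^\infty t\,(1+t^2/\sigma^2)^{-c'N}\,dt .
\end{align*}
Since $N$ is large, this is $\lesssim \sigma^2 (1+R^2/\sigma^2)^{-c'N+1}$, which is negligible against $\varepsilon\sigma^2$ and $\delta_{J^*}^2$ once $N\gtrsim\log(1/\varepsilon)$; this is implied by the sample-size assumption after adjusting constants.

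The main obstacle is this last step. It must reconcile $R\geq R_0(n,\sigma,\varepsilon)$, which may exceed $\sigma$, with keeping $R^2\,\mathbbm{P}(\mu\notin S)$ small. The plan is to exploit the $\log(1+R^2/\sigma^2)$ factor in the exponent, so that the polynomial decay in $R$ to the power $N$ dominates the $R^2$ prefactor. A secondary difficulty is making the union over roots rigorous.
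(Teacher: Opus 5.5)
Your proposal is correct in outline. On the good event it is the paper's argument. On $\{\mu\in S(R)\}$ the root $s$ lies in $S_m\cap B(\mu,d_m)$, and you union-bound over such roots and over nodes $u\in\mathcal{L}^s(j-1)\cap B(\mu,d_m/2^{j-2})$. The resulting cube of $M^{\mathrm{loc}}$ is exactly what the constant $6$ in \eqref{eq:J_star_condition_unbounded} absorbs. One correction: the root count is $M^{\mathrm{loc}}(d_m,2c)$, not $M^{\mathrm{loc}}(d_m,c)$, because $S_m$ is only $m$-separated and $d_m/m=2c$.

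Where you genuinely differ is the bad event. The paper splits on $\{S(R)\neq\varnothing\}$ and proves tail bounds \emph{conditional} on that event, dividing by $\mathbbm{P}(S\neq\varnothing)\geq 1/2$. It glues a large-$x$ regime to the tree regime with a harmonizing constant $h$. It then treats $\{S(R)=\varnothing\}$ separately by peeling $\hat R$ into shells $(2^{k-1}R,2^kR]$, bounding each conditional second moment by $\sigma^2\exp[\frac{16}{N(1/2-\varepsilon)\gamma}\log(1/p_k)]$ and summing against $p_k$.

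You split instead on $\{\mu\in S(R)\}$. There $\mu\in B(s,d_m)$ holds deterministically, so no conditioning is needed and the error is $O(d_m)$ beyond the first layer. You handle the complement with the single inclusion $\{\Vert\nu^{**}-\mu\Vert>C't\}\subseteq\{\mu\notin S(t)\}$ for $t\geq R$. This holds for both branches of the estimator, by nestedness and $\mathrm{Diam}(S(t))\leq 2t$. Your route is shorter and avoids the peeling entirely. The paper's route only buys conditional bounds given $S\neq\varnothing$, which the theorem does not need.

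Three things need repair.

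\textbf{The final comparison.} It is justified incorrectly: $N\gtrsim\log(1/\varepsilon)$ is \emph{not} implied by $N\gtrsim\sup_\delta\log M^{\mathrm{loc}}(\delta,2c)$, since $\varepsilon$ may be arbitrarily small. Compare with $\sigma^2/N$ instead. Because $1+R^2/\sigma^2\geq 2$, we get $\sigma^2(1+R^2/\sigma^2)^{-(c'N-1)}\leq \sigma^2 2^{-(c'N-1)}\lesssim\sigma^2/N$. When \eqref{eq:J_star_condition_unbounded} holds at $J^*$, combining it with $\log(1+x)\leq x$ gives $N\delta_{J^*}^2/\sigma^2\geq(\log 2)/C_5$. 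Otherwise $J^*=1$ and $\delta_1\asymp R\gtrsim\sigma$. Either way the bad-event term is $\lesssim\delta_{J^*}^2$ regardless of $\varepsilon$, which is also how the paper closes.

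\textbf{The choice $R\asymp\sigma$.} This is not available: $R\geq R_0$ forces $\log(1+R^2/\sigma^2)\gtrsim n$, because of the $5^n$ factor in the one-sample bound. Nothing in your argument actually uses $R\asymp\sigma$, so just drop it.

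\textbf{Starting the case analysis at $J=1$.} There is no trivial $d^2$ fallback in the unbounded case, so you must say why the bounded-case analysis can begin at $J=1$. The paper verifies conditions A, B, C at $J=1$ from $R\geq R_0$. Alternatively, your deterministic bound $\Vert\nu^{**}-\mu\Vert=O(d_m)=O(\delta_1)$ on $\{\mu\in S\}$ already covers every case in which a condition fails at $J=1$.
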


\begin{remark}\label{remark:J_star_condition_unbounded_modified}
    As noted in \autoref{remark:J_star_condition_modified}, we can use the following modified condition while obtaining the same upper bound rate:
    \begin{align}\label{eq:J_star_condition_unbounded_modified}
        C_5N\log(1+\frac{\delta _J^2}{\sigma ^2}) \geq 6\log M^\mathrm{ loc }(\frac{c}{2}\delta _J, c)\vee \log 2     ,
    \end{align}    
    and set $ J^* = 1 $ if the above condition is never satisfied.
\end{remark}

Now we can establish the minimax rate for unbounded constraint sets, combining the upper and lower bounds. The treatment of non-closed sets by working with their closures $ \bar{K} $ is addressed in \autoref{lemma:closed_K}.

\begin{theorem}[Unbounded Heavy-Tailed Minimax Rate]\label{theorem:expected_risk_rate_unbounded_case}
    Let $ \varepsilon \in (0,\frac{1}{32}) $, and $\delta^*$ as in \autoref{equation:definition_of_delta_star}.
    For sufficiently large $ c $ and under the assumption $ N\gtrsim \sup_\delta \log M^\mathrm{ loc }(\delta ,c)  $, the minimax rate is $ \mathfrak{M}\asymp \max(\delta^{*2}, \varepsilon\sigma^2)$.
\end{theorem}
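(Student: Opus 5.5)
The proof has two halves: a lower bound and a matching upper bound.

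\emph{Lower bound.} Since $K$ is unbounded and star-shaped, \autoref{lemma:diakonikolas_lower_bound_unbounded} gives $\mathfrak{M}\gtrsim \varepsilon\sigma^2$. For the $\delta^{*2}$ term we use \autoref{lemma:packing_lower_bound} applied at a suitable $\delta$.

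\begin{itemize}
\item Fix $\delta<\delta^*$ close to $\delta^*$, say $\delta\geq \delta^*/2$, with $N\delta^2/\sigma^2\le \log M^{\mathrm{loc}}(\delta,c)$.
\item Set $\delta'=\delta/\sqrt{8}$. By monotonicity (\autoref{lemma:monotonicity_of_local_entropy}),
\[
\log M^{\mathrm{loc}}(\delta',c)\geq \log M^{\mathrm{loc}}(\delta,c)\geq N\delta^2/\sigma^2 = 8N\delta'^2/\sigma^2 > 4\cdot\frac{N\delta'^2}{2\sigma^2}.
\]
\item Condition of \autoref{lemma:packing_lower_bound} when the $\log 2$ term dominates: we need $\log M^{\mathrm{loc}}(\delta',c)>4\log 2$. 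Suppose the local entropy is below this threshold at all scales. Then either $\delta^{*2}\lesssim \sigma^2/N$, or the entropy is constant, and a two-point (Le Cam) argument along a long segment of $K$ gives a $\sigma^2/N$ lower bound directly. In both cases $\delta^{*2}\lesssim \sigma^2/N$, which is itself a lower bound via the Gaussian two-point argument on a segment. This holds because $K$ contains arbitrarily long segments (\autoref{lemma:long_enough_segment_in_K} applied to truncations of $K$).
\item Hence $\mathfrak{M}\gtrsim \delta^{*2}$.
\item The case $\delta^*=\infty$ occurs only if the local entropy stays above $N\delta^2/\sigma^2$ for all $\delta$. This contradicts $N\gtrsim\sup_\delta\log M^{\mathrm{loc}}$ unless we interpret the rate as infinite, in which case the lower bound is trivially consistent.
\end{itemize}

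\emph{Upper bound.} We invoke \autoref{theorem:expected_risk_upper_bound_unbounded} together with \autoref{remark:J_star_condition_unbounded_modified}, which give risk $\lesssim\max\{\delta_{J^*}^2,\varepsilon\sigma^2\}$. It remains to show $\delta_{J^*}\lesssim \delta^*\vee \sigma\sqrt{\varepsilon}$; write $\delta:=\delta_{J^*+1}=\delta_{J^*}/2$. Take $R$ large enough that $d_m$ exceeds the relevant scales, so $J^*\geq 1$ is not the binding case; if $J^*=1$ is binding, $R$ can be enlarged. Maximality of $J^*$ means condition \eqref{eq:J_star_condition_unbounded_modified} fails at $J^*+1$:
\[
C_5N\log(1+\delta^2/\sigma^2)<6\log M^{\mathrm{loc}}(\tfrac c2\delta,c)\vee\log 2 .
\]
We split into cases.
\begin{itemize}
\item If $\delta^2\lesssim \sigma^2/N$: then $\delta\lesssim\delta^*$ by the small-scale two-point reasoning, using the convention that $\delta^{*2}\gtrsim\sigma^2/N$ whenever the entropy at small scales is at least $\log 2$.
\item If $\delta^2\geq\sigma^2$: then $\log(1+\delta^2/\sigma^2)\geq\log 2$, and $N\gtrsim\sup\log M^{\mathrm{loc}}$ with a large implicit constant contradicts the failed inequality. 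So this case cannot occur.
\item If $\sigma^2/N\lesssim\delta^2\leq\sigma^2$: then $\log(1+x)\asymp x$, so $N\delta^2/\sigma^2\lesssim \log M^{\mathrm{loc}}(\tfrac c2\delta,c)$. Rescale $\delta''=\kappa\delta$ for a small constant $\kappa$. Using monotonicity to compare $M^{\mathrm{loc}}(\tfrac c2\delta,c)$ with $M^{\mathrm{loc}}(\delta'',c)$, the two exponents line up (possibly after adjusting $c$, as in \autoref{remark:J_star_condition_modified}), giving $N\delta''^2/\sigma^2\le\log M^{\mathrm{loc}}(\delta'',c)$. Hence $\delta''\le\delta^*$, so $\delta_{J^*}\lesssim\delta^*$.
\end{itemize}

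\emph{Main obstacle.} The delicate step is reconciling the scale mismatch in the upper bound. The failed condition involves $M^{\mathrm{loc}}$ at scale $\tfrac c2\delta$, which is larger than $\delta$, whereas $\delta^*$ is defined with matching scales. Monotonicity only allows decreasing the scale inside $M^{\mathrm{loc}}$, which increases the entropy, so the direction is correct. The remaining question is whether the constant factor in $N\delta^2/\sigma^2$ can be absorbed; scaling $\delta$ down by $c/2$ costs a factor $c^2/4$ in $N\delta^2/\sigma^2$, which helps rather than hurts. I would also verify that replacing $c$ by $2c$ inside $M^{\mathrm{loc}}$ only changes $\delta^*$ by constants, since the paper uses $c$ and $2c$ interchangeably up to constants.
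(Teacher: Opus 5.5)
Your proposal is correct. Its skeleton matches the paper's: the lower bound combines Fano via \autoref{lemma:packing_lower_bound} with the contamination lemma, and the upper bound combines \autoref{theorem:expected_risk_upper_bound_unbounded} (with the modified condition) with a comparison $\delta_{J^*}\lesssim\delta^*$. Both comparison steps, however, are carried out differently.

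\textbf{Upper bound.} The paper argues forward. It shows that $\tilde\delta=D\delta^*$ itself satisfies \eqref{eq:J_star_condition_unbounded_modified}, using the supremum definition of $\delta^*$ at scale $2\delta^*$ and the linearization $\log(1+D^2x)\ge\phi_{c,D}\,x$ from \autoref{appendix_lemma:cancel_the_log_sign_star}. Monotonicity of the condition in $J$ then gives $\delta_{J^*+1}<\tilde\delta$. You argue the contrapositive, starting from the failure of the condition at $\delta=\delta_{J^*+1}$:
\begin{itemize}
\item you rule out $\delta\ge\sigma$ using the sample-size assumption;
\item you linearize $\log(1+x)$ on $[0,1]$;
\item you use monotonicity of the local entropy to place $\kappa\delta$ in the feasible set defining $\delta^*$.
\end{itemize}
This avoids the auxiliary constant $\phi_{c,D}$ and the tuning of $D$, at the cost of a case split. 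Both routes rest on the same two facts: monotonicity of $\log M^{\mathrm{loc}}$, and $\delta/\sigma\lesssim 1$, which is forced by $N\gtrsim\sup\log M^{\mathrm{loc}}$.

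\textbf{Lower bound.} The paper disposes of the $\log 2$ requirement by taking $c$ large enough that $N\delta^{*2}/\sigma^2>8\log 2$. You instead fall back on a Le Cam two-point bound of order $\sigma^2/N$. That works, but the write-up of this step should simply read as follows. If $\log M^{\mathrm{loc}}(\delta',c)\le 4\log 2$, then $N\delta^2/\sigma^2\le 4\log 2$. Hence $\delta^{*2}\le 4\delta^2\lesssim\sigma^2/N$, which is dominated by the two-point bound on a segment of length $\sigma/\sqrt N$.

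\textbf{Minor loose ends.}
\begin{itemize}
\item The ``convention'' in your first upper-bound case is actually a fact. For unbounded star-shaped $K$ and $c>1$, $M^{\mathrm{loc}}(\eta,c)\ge 2$ at every scale $\eta$, so $\delta^{*2}\ge\sigma^2\log 2/N$. It also follows that the $\vee\log 2$ in the condition never binds.
\item Enlarging $R$ is unnecessary. The condition fails at $J^*+1$ whether $J^*$ is a genuine maximizer or the default value $1$.
\item Your ``main obstacle'' remark about a factor $c^2/4$ is muddled: matching scales at $\tfrac c2\delta$ would put that factor on the wrong side. Your case~3 already avoids this correctly by moving down to the smaller scale $\kappa\delta$.
\end{itemize}
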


\begin{remark}
    The setting of the theorem is the same as \autoref{theorem:expected_risk_upper_bound}. We also recall that the argument from \autoref{remark:volumn_argument1} of a sufficient condition $N\gtrsim n$ still applies here.
\end{remark}

\section{Minimax Rate for Symmetric or Known Distribution Case}\label{section:symmetric_case}

In this section, we present the interesting result that when the noise distribution is sign-symmetric, or the distribution of the noise is known to the statistician, the dependence on the contamination fraction $ \varepsilon $ is improved to match the optimal rate in the Gaussian case. This phenomenon was first observed in \cite{novikov_robust_2023} where they investigated some special cases of symmetric distributions and gave computationally efficient estimators. Later, \cite{prasadan_information_2024} showed that such improvement holds for \textit{any} symmetric or known sub-Gaussian noise in the minimax risk framework. Similar results for sub-Gaussian distribution are also presented in \cite{minasyan_statistically_2023}.
We extend this result to the heavy-tailed setting and obtain a minimax risk rate by employing the Huber's loss based estimator from \cite{novikov_robust_2023} (for the one dimensional case), demonstrating that this improvement persists even under the weaker assumption of finite second moments. The improvement for symmetric distributions also applies to the case of known distributions, since such problem can be reduced to the symmetric case by considering its difference with an independent copy of the noise. 

The key technical component is the following robust comparison estimator for symmetric distributions, which adapts the approach from \cite{novikov_robust_2023} to construct our testing function.
\begin{definition}[Symmetric Case Estimator]\label{definition:novikov_estimator}
    Given potentially corrupted data $ X=\{X_1,\ldots,X_N\} $ and an ordered pair $ (\nu _1,\nu _2) $ of candidate points $\nu_1,\nu_2 \in \mathbbm{R}^n $ satisfying $ \left\Vert \nu _1 - \nu _2 \right\Vert \geq C\delta $ for some sufficiently large constant $ C>2 $. Let $ \hat{\mu }_\mathrm{ Novikov } $ be the estimator defined in \cite{novikov_robust_2023}[Theorem 1.5].    
    We define the robust estimator for symmetric distributions as follows:
    \begin{align*}
    \psi^\mathrm{ sym }_{(\nu _1,\nu _2), \delta }(\{V_i\}_{i=1}^N) := \mathbbm{1}\left( \hat{\mu }_\mathrm{ Novikov }\big( \{V_i\}_{i=1}^N \big) >0  \right) ,
    \end{align*}
    where the one-dimensional discriminant quantities $ V_i $ are from \autoref{equation:definition_V_i}. We remind readers that the symmetry of $ \tilde{X}_i $ implies the symmetry of $ \tilde{V}_i $ around its mean by the definition. We write $\psi^\mathrm{sym}$ for brevity in the following.
\end{definition} 

The performance of this estimator is characterized by the following result, which serves as a counterpart to \autoref{theorem:two_point_prob_bound}. 

\begin{theorem}[Symmetric Case Tail Probability]\label{theorem:novikov_prob_bound}
    There exist universal constants $ C $, $ Q $, $ C_{12} $ with the following properties: 
    Assume $ \varepsilon \leq 1/Q $ and $ N\geq Q $. For any $ C\delta $-separated points $ \nu _1,\nu _2\in K $ with $ \mu \in B(\nu_1,\delta ) $ and $ \delta $ satisfying $ \frac{\delta ^2}{\sigma ^2}\gtrsim N^{-1} \vee \varepsilon ^2 $, 
    then $ \hat{\mu }_\mathrm{ Novikov } $ satisfies
    \begin{align*}
        \mathbbm{P}_{ \mu \in B(\nu_1,\delta ) }\left( \hat{\mu }_\mathrm{ Novikov }\big( \{V_i\}_{i=1}^N \big) >0  \right) &\leq \exp\left[ - C_{12}N\frac{\delta ^2}{\sigma ^2} \right]  .
    \end{align*}
\end{theorem}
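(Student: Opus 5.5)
Because $\tilde V_i$ is a one-dimensional symmetric location sample, the plan is to reduce the statement to the one-dimensional guarantee of \cite{novikov_robust_2023}[Theorem 1.5] applied to $\{V_i\}$.

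\textbf{Step 1: law of the clean discriminants.} Expanding \autoref{equation:definition_V_i} with $\tilde X_i=\mu+\xi_i$ gives
\begin{align*}
\tilde V_i = \frac{2\langle \nu_2-\nu_1,\, \mu+\xi_i+W_i\rangle + \|\nu_1\|^2-\|\nu_2\|^2}{\|\nu_2-\nu_1\|}.
\end{align*}
Set $w=(\nu_2-\nu_1)/\|\nu_2-\nu_1\|$. Then
\begin{align*}
\tilde V_i = m + 2w^\top(\xi_i+W_i), \qquad m=\mathbbm{E}\tilde V_i = 2w^\top\mu - \frac{\|\nu_2\|^2-\|\nu_1\|^2}{\|\nu_2-\nu_1\|}.
\end{align*}
Writing $\mu=\nu_1+h$ with $\|h\|\le\delta$, one gets $m = -\|\nu_2-\nu_1\| + 2w^\top h \le -C\delta+2\delta = -(C-2)\delta$, which is strictly negative for $C>2$.

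For the noise, $w^\top(\xi_i+W_i)$ is sign-symmetric: $\xi$ is sign-symmetric and $W_i$ is Gaussian and independent of it. Its variance is at most $\sigma^2+\sigma^2=2\sigma^2$. Hence the $\tilde V_i$ are i.i.d. from a distribution symmetric about $m$ with variance at most $8\sigma^2$. The observed $V_i$ agree with $\tilde V_i$ except on an $\varepsilon$-fraction chosen adversarially, which is exactly the one-dimensional strong contamination model treated in \cite{novikov_robust_2023}.

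\textbf{Step 2: invoke the one-dimensional estimator.} Theorem 1.5 of \cite{novikov_robust_2023} should give, for $\varepsilon\le 1/Q$, $N\ge Q$ and confidence level $\eta$,
\begin{align*}
|\hat\mu_{\mathrm{Novikov}}-m| \lesssim \sigma\Big(\varepsilon+\sqrt{\log(1/\eta)/N}\Big) \quad \text{with probability at least } 1-\eta.
\end{align*}
This is the Huber-loss estimator and only needs symmetry and the variance bound. I will choose $\eta=\exp(-C_{12}N\delta^2/\sigma^2)$, so that $\sqrt{\log(1/\eta)/N}=\sqrt{C_{12}}\,\delta/\sigma$.

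The hypothesis $\delta^2/\sigma^2\gtrsim\varepsilon^2$, with a large enough implicit constant, gives $\sigma\varepsilon\le c'\delta$. Taking $C_{12}$ small then makes the total error at most $(C-2)\delta$. Since $m\le -(C-2)\delta$, this forces $\hat\mu_{\mathrm{Novikov}}\le 0$, so $\hat\mu_{\mathrm{Novikov}}>0$ has probability at most $\eta$, as claimed.

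\textbf{Step 3: the main obstacle.} The hard part is checking that the cited theorem holds in the regime needed, with the exponent carrying no extra constraint beyond those assumed. Typically such results need $\log(1/\eta)\lesssim N$ and sometimes $\varepsilon\gtrsim \log(1/\eta)/N$ or a lower bound on the confidence level.

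If $\log(1/\eta)=C_{12}N\delta^2/\sigma^2$ exceeds the allowed range, i.e.\ $\delta^2/\sigma^2$ is large, I would cap $\eta$ at $e^{-c''N}$. Obtaining the full exponent $N\delta^2/\sigma^2$ there requires a separate argument. In that regime I would fall back on a median-type bound. Because the noise is symmetric with variance at most $8\sigma^2$, Chebyshev gives $\mathbbm{P}(\tilde V_i>0)\le \tfrac12\cdot 8\sigma^2/((C-2)\delta)^2$, which is small. A binomial Chernoff bound then shows that a majority of votes is wrong with probability at most $\exp(-cN\log(\delta^2/\sigma^2))$.

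This yields only a logarithmic exponent, which is weaker than $\exp(-C_{12}N\delta^2/\sigma^2)$ when $\delta^2/\sigma^2$ is large. So it is exactly here that I would need to re-examine whether the stated bound is intended only for bounded $\delta^2/\sigma^2$. Alternatively, I would check whether the Huber estimator's proof, whose uniform-in-scale analysis conditions on the clean points, already yields the linear exponent directly.
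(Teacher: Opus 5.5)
Your Steps 1 and 2 are exactly the paper's proof. The paper reduces to the one-dimensional $V_i$ and uses $m\le -(C-2)\delta$ and $\sigma_V^2\le 8\sigma^2$. It takes $\rho=\tfrac{10}{\sqrt{99}}\sigma_V$, so that Chebyshev gives the $1/100$ small-ball condition needed by \cite{novikov_robust_2023}. It then applies their Theorem~1.5 at confidence level $\delta_0=\exp[-C_{12}N\delta^2/\sigma_V^2]$. Finally it absorbs $Q_1\sigma_V\big(\sqrt{(1+C_{12}N\delta^2/\sigma_V^2)/N}+\varepsilon\big)$ into $(C-2)\delta$ using $\delta^2/\sigma_V^2\ge C_{10}N^{-1}\vee C_{11}^2\varepsilon^2$ and $C$ large. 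The paper does not perform the check you raise in Step~3. It uses that confidence level for every admissible $\delta$, with no bound on $\log(1/\delta_0)/N=C_{12}\delta^2/\sigma_V^2$.

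Your suspicion is correct, and no argument can close this gap, because the statement is false for large $\delta^2/\sigma^2$, even with $\varepsilon=0$. Take $\|\nu_2-\nu_1\|=C\delta$ and $\mu=\nu_1$. Let $\xi$ equal $0$ with probability $1-p$ and $\pm(\nu_2-\nu_1)$ with probability $p/2$ each, where $p=\sigma^2/(C^2\delta^2)$ (valid once $\delta\ge\sigma/C$). This $\xi$ is sign-symmetric with top covariance eigenvalue $\sigma^2$. With probability $(p/2)^N$ every $\tilde X_i=\nu_2$. On that event $V_i=C\delta+2\hat\nu^\top W_i$ are i.i.d.\ $\mathcal N(C\delta,4\sigma^2)$, which is exactly the law of clean data under $\mu=\nu_2$, $\xi\equiv 0$. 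Apply the cited guarantee to that model at constant confidence: the estimator is within $O(\sigma)$ of $C\delta$, hence positive, with probability at least $1/2$. Therefore
\[
\mathbbm{P}_{\mu=\nu_1}\big(\hat\mu_{\mathrm{Novikov}}>0\big)\ \ge\ \tfrac12\Big(\frac{\sigma^2}{2C^2\delta^2}\Big)^{N}=\tfrac12\exp\Big[-N\log\frac{2C^2\delta^2}{\sigma^2}\Big],
\]
which exceeds $\exp[-C_{12}N\delta^2/\sigma^2]$ once $\delta^2/\sigma^2$ is large. Le Cam's two-point bound gives the same obstruction for any test, so the two-sided error bound stated right after the theorem also fails. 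It follows that the Novikov et al.\ guarantee must carry a restriction $\log(1/\eta)\lesssim N$, as you anticipated; otherwise your Step~2 would prove an impossible bound.

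What Steps 1--2 (yours and the paper's) actually establish is the claim for $\delta^2/\sigma^2\le\tau$, with $\tau$ a constant and $C_{12}$ small enough that $C_{12}\tau$ lies in the allowed range. For $\delta^2/\sigma^2>\tau$ the best possible order is $\exp[-cN\log(1+\delta^2/\sigma^2)]$. Your median fallback attains it: it is \autoref{appendix_lemma:median_part_proof}, which needs only $\varepsilon<1/32$. The correct statement is therefore $\exp[-cN\log(1+\delta^2/\sigma^2)]$ under $\delta^2/\sigma^2\gtrsim N^{-1}\vee\varepsilon^2$. This has the same form as \autoref{theorem:two_point_prob_bound}, so the tournament machinery built for that bound can be reused downstream. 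The exponential-tail shortcut the paper invokes is not available. The limitation you hit is forced by the statement, not a flaw in your reasoning.
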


As a direct result, we have the following Type I error control: 
\begin{align*}
    \mathop{ \sup }\limits_{\mu :\left\Vert \mu-\nu _1 \right\Vert \leq \delta } \mathbbm{P}_\mu \left( \psi^\mathrm{sym}=1 \right)  \vee\mathop{ \sup }\limits_{\mu :\left\Vert \mu-\nu _2 \right\Vert \leq \delta } \mathbbm{P}_\mu \left(  \psi^\mathrm{sym}=0 \right) &\leq   \exp\left[ -C_{12}N\frac{\delta ^2}{\sigma ^2} \right].
\end{align*}

The remainder of the analysis follows the same framework as in \autoref{theorem:covering_set_prob_bound}, \ref{theorem:expected_risk_upper_bound}, and \ref{theorem:expected_risk_rate_bounded_case} for bounded $ K $, and \autoref{theorem:expected_risk_rate_unbounded_case} for unbounded $ K $. The key improvements are: (i) the condition on $\delta$ is now $ \frac{\delta ^2}{\sigma ^2}\gtrsim N^{-1} \vee \varepsilon ^2 $ rather than $ \frac{\delta ^2}{\sigma ^2}\gtrsim N^{-1} \vee \varepsilon $ due to the symmetry assumption, and (ii) we obtain exponential rather than polynomial tail bounds. The exponential tail behavior can be handled directly as in \cite[Section 4.2]{prasadan_information_2024}. We present the final minimax rate below.

\begin{theorem}[Symmetric Heavy-Tailed Minimax Rate]\label{theorem:exptected_risk_rate_symmetric}
    There exist sufficiently large universal constants $ c $ and $ Q $ such that the following holds: Let $ \varepsilon \in (0,1/Q) $, and $\delta^*$ as defined in \autoref{equation:definition_of_delta_star}. Assume $ N\gtrsim \sup_\delta  \log M^\mathrm{loc}(\delta,c) $. Then the minimax rate is $\mathfrak{M}_\mathrm{ sym }\asymp \max(\delta^{*2}, \varepsilon^2\sigma^2)\wedge d^2$ for bounded $ K $, and $\max(\delta^{*2}, \varepsilon^2\sigma^2)$ for unbounded $ K $.
\end{theorem}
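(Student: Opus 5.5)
The plan is to sandwich $\mathfrak{M}_\mathrm{sym}$ between matching lower and upper bounds. The lower bound is almost immediate. Since $\mathcal{N}(0,\sigma^2 I)\in\Xi^\mathrm{sym}_{\sigma^2}$, the restriction to Gaussian noise in \autoref{lemma:packing_lower_bound} also bounds $\mathfrak{M}_\mathrm{sym}$ from below.

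For the $\delta^*$ term, I would take $\delta$ slightly below $\delta^*$ (the supremum in \autoref{equation:definition_of_delta_star}). By the monotonicity in \autoref{lemma:monotonicity_of_local_entropy} and a constant rescaling of $\delta$, the condition $\log M^\mathrm{loc}(\delta,c)>4(N\delta^2/(2\sigma^2)\vee\log 2)$ holds, giving $\mathfrak{M}_\mathrm{sym}\gtrsim\delta^{*2}$. Here $\delta^{*2}\lesssim d^2$ automatically, since the local packing is trivial once $\delta$ exceeds $d$. The degenerate case where $\log M^\mathrm{loc}$ is too small to beat $\log 2$ only happens when $\delta^*$ is already dominated by another term, and I would dispose of it separately. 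The $\varepsilon^2\sigma^2\wedge d^2$ term comes from \autoref{lemma:gaussian_contamination_lower_bound} in the bounded case and from \autoref{lemma:gaussian_contamination_lower_bound_unbounded} in the unbounded case. Taking the maximum of the two gives the lower bound.

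For the upper bound, I would rerun the tournament-on-the-tree argument, replacing $\psi$ by $\psi^\mathrm{sym}$ from \autoref{definition:novikov_estimator}.

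\emph{Tournament step.} The first step is the analogue of \autoref{theorem:covering_set_prob_bound}. Using the two-point bound of \autoref{theorem:novikov_prob_bound} and a union bound over the covering, the tournament winner is within $(C+1)\delta$ of $\mu$ except with probability $M\exp(-C_{12}N\delta^2/\sigma^2)$. This is valid whenever $\delta^2/\sigma^2\gtrsim N^{-1}\vee\varepsilon^2$.

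\emph{Chain step.} Next I would define $J^*$ as the largest $J$ with
\begin{align*}
C_{12}N\delta_J^2/\sigma^2\geq 4\log M^\mathrm{loc}(\tfrac{c}{2}\delta_J,c)\vee\log 2,
\end{align*}
where $\delta_J=d/(2^{J-1}(C+1))$. I would stop the chain at the first $J^{**}$ where either $J$ exceeds $J^*$ or $\delta_J^2/\sigma^2$ drops below a constant times $N^{-1}\vee\varepsilon^2$. Following \cite[Section 4.2]{prasadan_information_2024}, I would bound the risk by summing, over layers $J\le J^{**}$, the probability that the chain first fails at layer $J$ times the squared error $\lesssim\delta_{J-1}^2$ supplied by the last bullet of \autoref{lemma:tree_properties}. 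The counting bullet of \autoref{lemma:tree_properties} gives $\mathrm{card}(\mathcal{O})\le M^\mathrm{loc}(c\delta_J,2c)$. Because the tails are exponential in $N\delta_J^2/\sigma^2$, which grows geometrically (by a factor of 4) as $J$ decreases, the failure probabilities form a geometric series. The sum is then dominated by the final scale, giving
\begin{align*}
\mathbbm{E}\|\nu^{**}-\mu\|^2\lesssim\max(\delta_{J^*}^2,\varepsilon^2\sigma^2,\sigma^2/N)\wedge d^2.
\end{align*}

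\emph{Absorbing the extra terms.} The term $\sigma^2/N$ is absorbed into $\delta^{*2}$: for $\delta^2\asymp\sigma^2/N$ we have $N\delta^2/\sigma^2\asymp 1\le\log M^\mathrm{loc}$ whenever $K$ is nontrivial at that scale, and otherwise $d^2$ is the smaller term. To compare $\delta_{J^*}$ with $\delta^*$, I would use the maximality of $J^*$: at $J^*+1$ the defining inequality fails, which by monotonicity puts $\delta_{J^*+1}$ in the set defining $\delta^*$ up to constant rescalings of $c$, as in \autoref{remark:J_star_condition_modified}. Hence $\delta_{J^*}\lesssim\delta^*$. The hypothesis $N\gtrsim\sup_\delta\log M^\mathrm{loc}$ guarantees that the condition defining $J^*$ is satisfiable at the coarse scales. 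For unbounded $K$, I would reuse the localization of \autoref{theorem:random_set_S_prob_bound} and the forest $\{\mathcal{G}^s\}$ exactly as in \autoref{theorem:expected_risk_upper_bound_unbounded}, since that part is noise-agnostic beyond second moments. The probability that $\mu\notin S(R)$ is polynomially small in $R$, which is enough when integrated against the $O(R^2)$ error for large $R_0$.

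The main obstacle is the failure mode at scales where $\delta^2/\sigma^2$ is comparable to $\varepsilon^2$ or $N^{-1}$. There \autoref{theorem:novikov_prob_bound} no longer applies, so the truncation of the chain must be chosen so that the residual error $\lesssim\varepsilon^2\sigma^2+\sigma^2/N$ is paid only once. This requires checking that the fixed-$\varepsilon$ requirement $\varepsilon\le 1/Q$ and $N\ge Q$ are compatible with the union bounds at every layer.
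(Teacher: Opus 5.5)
Your proposal is correct and follows essentially the paper's route. The lower bounds come from the Gaussian reductions \autoref{lemma:packing_lower_bound} and \autoref{lemma:gaussian_contamination_lower_bound} (resp.\ \autoref{lemma:gaussian_contamination_lower_bound_unbounded}), and the upper bound reruns the tournament-on-the-tree with $\psi^\mathrm{sym}$ (plus the $S(R)$ localization for unbounded $K$), summing the exponential tails of \autoref{theorem:novikov_prob_bound} layer by layer as in \cite[Section 4.2]{prasadan_information_2024}. The degenerate case you defer is exactly Case I in the proof of \autoref{theorem:expected_risk_rate_bounded_case} (show $d\lesssim\delta^*$, then apply Fano at scale $\asymp d$ with $c$ large), and in this symmetric setting the hypothesis $N\gtrsim\sup_\delta\log M^\mathrm{loc}(\delta,c)$ chiefly serves to make $N$ exceed the needed absolute constants (e.g.\ $N\geq Q$), not to make the $J^*$ condition satisfiable, since the default $J^*=1$ already yields the right rate.
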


\begin{remark}
    We make a side note about the condition on the sample size. Using the same argument as the proof of \autoref{theorem:expected_risk_upper_bound}, $ \log M^\mathrm{ loc }  $ can be made arbitrarily large by choosing $ c $ large enough. Thus, the condition $ N \geq Q $ of \autoref{theorem:novikov_prob_bound} can be absorbed into the condition $ N\gtrsim \sup_\delta \log M^\mathrm{ loc }(\delta ,c)  $.
\end{remark}

\section{Examples}\label{section:examples}

In this section, we illustrate our minimax rate results through two canonical constraint sets: the $ \ell_0 $ (sparsity) and $ \ell_1 $ ball constraints. These examples demonstrate the applicability of \autoref{theorem:expected_risk_rate_bounded_case} and \autoref{theorem:expected_risk_rate_unbounded_case} to well-studied problems in high-dimensional statistics. Our framework readily extends to other important constraint families, including general $ \ell_p $ balls ($ p\geq 0 $), polytopes, etc. For tidiness of the results, we set $ \sigma = 1 $ in the remaining part of this section. 

\subsection{Sparse Vector Constraint}\label{subsection:sparse_vector_constraint}

We first consider the fundamental problem of $ s $-sparse vector estimation, where the constraint set is the unbounded $ K = B_0(s) := \{x \in \mathbbm{R}^n: \left\Vert x \right\Vert _0 \leq s \} $ and the integer $ 1 \leq s \leq n/8 $ denotes the sparsity level. The local metric entropy for this constraint set was established in \cite[Lemma 5.14]{prasadan_information_2024}: 
\begin{align*}
    \log M^\mathrm{ loc }_{B_0(s)}(\delta ,c)\asymp s\log(1+\frac{n}{2s}).
\end{align*}
Applying \autoref{theorem:expected_risk_rate_unbounded_case}, we obtain the minimax rate 
\begin{align*}
\mathfrak{M}\asymp \max\left( \frac{ s\log\left(1+n/2s\right) }{ N }, \varepsilon  \right).
\end{align*}

This rate differs from that obtained in \cite{comminges_adaptive_2021}, who established rates of $  O\big(s\log(en/s) \big) $ for sub-Gaussian distributions and $ O(n) $ for finite-variance distributions. This discrepancy likely stems from our assumption of moderately large sample sizes $ N\gtrsim \sup_\delta \log M^\mathrm{ loc }(\delta ,c) \asymp   s\log(1+\frac{n}{2s}) $ in the finite-variance setting, in contrast to their $N=1$ setting. 

\subsection{\texorpdfstring{$\ell_1$}{ell1} Ball Constraint}

Our second example considers the $ \ell_1 $ ball constraint $ K = B_1(1) := \{x \in \mathbbm{R}^n: \left\Vert x \right\Vert _1 \leq 1 \} $. The local metric entropy for this constraint set was characterized in \cite[Lemma 14]{prasadan_facts_2025} as follows:
\begin{align*}
    \log M^\mathrm{ loc }_{B_1(1)}(\delta ,c)\asymp \begin{cases}
        \frac{\log(\delta ^2n)}{\delta ^2}, & \delta \gtrsim 1/\sqrt{n},\\
        n, & \delta \lesssim 1/\sqrt{n}\text{ or } \delta \asymp 1/\sqrt{n}.
    \end{cases}
\end{align*}
The sample size condition becomes $ N\gtrsim n $ in this setting. The following result characterizes the minimax rate across different parameter regimes.
\begin{lemma}\label{lemma:ell_1_ball_minimax_rate}
    For $ K = B_1(1) $, the minimax rate is $ \mathfrak{M}\asymp\max\big( \sqrt{\frac{\log (n/\sqrt{N})}{N}}, \varepsilon  \big) \wedge 1 $ if $ n\lesssim N \lnsim n^2 $, and $ \mathfrak{M}\asymp\max\big( \frac{n}{N}, \varepsilon  \big)\wedge 1 $ if $ N\gtrsim n^2 $.
\end{lemma}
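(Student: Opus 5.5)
This is a direct application of \autoref{theorem:expected_risk_rate_bounded_case} with $\sigma=1$ and $K=B_1(1)$, so $d=\mathrm{Diam}(K)=2$. The sample size condition $N\gtrsim \sup_\delta \log M^\mathrm{loc}(\delta,c)\asymp n$ holds in both regimes. It therefore remains to compute
\begin{align*}
    \delta^*=\sup\{\delta\ge 0:\, N\delta^2\le \log M^\mathrm{loc}(\delta,c)\}
\end{align*}
using the two-regime formula from \cite[Lemma 14]{prasadan_facts_2025}, and then to read off $\max(\delta^{*2},\varepsilon)\wedge 4$. Since $\log M^\mathrm{loc}$ is non-increasing in $\delta$ by \autoref{lemma:monotonicity_of_local_entropy}, $\delta^*$ is the crossing point of the increasing map $\delta\mapsto N\delta^2$ with the entropy curve. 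Constants can be absorbed, because $\delta^*$ is determined only up to constant factors by the $\asymp$ in the entropy formula. The quantity $\delta^{*2}\wedge 4$ is insensitive to such constant rescalings, and monotonicity lets us move between $\delta$ and $C\delta$.

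\textbf{Regime $N\gtrsim n^2$.} First check the small-$\delta$ branch $\log M^\mathrm{loc}\asymp n$. Solving $N\delta^2\asymp n$ gives $\delta^2\asymp n/N$. This is consistent with $\delta\lesssim 1/\sqrt n$ exactly when $n/N\lesssim 1/n$, that is, $N\gtrsim n^2$. On the other branch, $\delta\gtrsim 1/\sqrt n$ gives $N\delta^2\gtrsim N/n\gtrsim n$, while there $\log(\delta^2 n)/\delta^2\lesssim n$ because $\log x/x\lesssim 1$ for $x=\delta^2 n\ge 1$. So with suitably large constants the inequality $N\delta^2\le \log M^\mathrm{loc}$ fails there. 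Hence $\delta^{*2}\asymp n/N$, which yields the second claim.

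\textbf{Regime $n\lesssim N\lesssim n^2$.} Now solve $N\delta^2\asymp \log(\delta^2 n)/\delta^2$, i.e.\ $\delta^4\asymp \log(\delta^2 n)/N$. Try $\delta^2\asymp \sqrt{\log(n/\sqrt N)/N}$. Then $\delta^2 n\asymp (n/\sqrt N)\sqrt{\log(n/\sqrt N)}$, so $\log(\delta^2 n)=\log(n/\sqrt N)+\tfrac12\log\log(n/\sqrt N)+O(1)\asymp \log(n/\sqrt N)$. The step is self-consistent, using $n/\sqrt N\gtrsim$ a large constant, which is what the strict ``$\lnsim$'' provides. Consistency with the branch condition $\delta^2\gtrsim 1/n$ follows since $\sqrt{\log(n/\sqrt N)/N}\gtrsim 1/n$ whenever $N\lesssim n^2$. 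I will also check that the small-$\delta$ branch does not produce a larger crossing. For $\delta\lesssim 1/\sqrt n$ we have $N\delta^2\lesssim N/n\lesssim n$, so these $\delta$ satisfy the defining inequality. They are, however, below the crossing point found above, so the supremum is attained on the first branch.

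\textbf{Main obstacle.} The hardest part is making the ``solve up to constants'' argument rigorous. The entropy is only known up to $\asymp$, and it involves a $\log$, so I will bracket the supremum explicitly. I will show that $\delta=a\,\delta_0$ satisfies the inequality for small $a$ and that $\delta=A\,\delta_0$ violates it for large $A$, where $\delta_0^2:=\sqrt{\log(n/\sqrt N)/N}$. Monotonicity of local entropy then confines $\delta^*$ to $[a\delta_0,A\delta_0]$. In this bracketing, the $\log$ of a constant multiple of $\log(n/\sqrt N)$ must be dominated by $\log(n/\sqrt N)$ itself, which requires $n/\sqrt N$ to exceed a sufficiently large constant. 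The boundary case $N\asymp n^2$ is consistent with both formulas, since both give $\delta^{*2}\asymp 1/n$, and the final $\wedge 1$ comes from $d^2=4$.
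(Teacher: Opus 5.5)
Your proposal is correct and follows essentially the same route as the paper: apply the bounded minimax theorem with $d=2$, solve $N\delta^2\asymp\log M^{\mathrm{loc}}(\delta,c)$ on each branch of the entropy formula, and discard the branch that is inconsistent with the regime. Your bracketing of $\delta^*$ between $a\delta_0$ and $A\delta_0$ via monotonicity of local entropy makes rigorous the ``solve up to constants'' step that the paper leaves implicit. One side remark of yours is wrong but not needed: at $N\asymp n^2$ the first formula does not give $\delta^{*2}\asymp 1/n$, because $\log(n/\sqrt N)$ is then $O(1)$ and can even vanish; this is precisely why the statement uses the strict $\lnsim$.
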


\section{Discussion}\label{section:discussion}

In this work, we studied the problem of constrained robust mean estimation under adversarial corruption with only a finite second moment assumption. To extend the sub-Gaussian case in \cite{prasadan_information_2024}, we adopted the authors' directed tree construction and generalized the testing function to a heavy-tailed setting, thereby accommodating the more realistic finite second moment assumption. Novel techniques were developed to handle the challenges posed by heavy-tailed distributions. Our main results, presented in \autoref{theorem:expected_risk_rate_bounded_case} and \autoref{theorem:expected_risk_rate_unbounded_case}, established the minimax rates for both bounded and unbounded constraint sets $ K $. These rates are characterized by the local metric entropy of $ K $ and exhibit a dependence on the contamination proportion $ \varepsilon $ of order $ O(\varepsilon) $, which is shown to be optimal through matching lower bounds. Furthermore, we investigated the case with extra distributional structure in \autoref{theorem:exptected_risk_rate_symmetric}, specifically assuming symmetric or known noise distributions. We refined the testing functions using an estimator from \cite{novikov_robust_2023} to achieve improved rates. The result indicates that the minimax rate reduces to the Gaussian distribution case of $O(\varepsilon^2)$ with the extra knowledge of distribution. This finding highlights the significant impact of distributional assumptions on the achievable rates in robust estimation problems. Finally, we illustrated the applicability of our results through two canonical examples: sparse vector estimation and $ \ell_1 $ ball constrained estimation.

There are several possible extensions of this work that merit further investigation. First, the tournament-series-on-tree framework summarized in \autoref{section:framework_for_upper_bound} can be applied to other related estimation problems beyond mean estimation, such as regression and density estimation, potentially under contamination and various constraints. Moreover, while we specifically focused on squared $ \ell_2 $ loss in this work, other loss functions may also be considered. Second, in the sparse vector estimation example of \autoref{subsection:sparse_vector_constraint}, we noted that there is a gap between our result and existing results from \cite{comminges_adaptive_2021}, potentially due to different sample size assumptions. Further research is needed to clarify this discrepancy. There are also possible extensions on the practical side. Our results established the theoretical limits but lack computational efficiency. It would be interesting to investigate whether a computationally efficient algorithm could achieve the same rate. A recent work of \cite{neykov_polynomial-time_2025} made progress by proposing a polynomial-time algorithm which gives poly-logarithmic dependence rate, where the constraint set $ K $ belongs to a special class of Type-2 convex bodies. It is worth exploring whether their method can be extended to star-shaped or general $K\subseteq \mathbbm{R}$ and achieve better rates. Additionally, Lepskii's scheme \cite{lepskii_problem_1991} may be applied to enable adaptation to unknown $ \sigma $ and $ \varepsilon $ scenarios. Covariance estimation methods may also be combined with our approach for the unknown $ \sigma $ case. 

\bibliography{Citation}

\appendix

\section{Proof for Lower Bound}\label{section:proof_lower_bound}

\begin{proof}[Proof of \autoref{lemma:diakonikolas_lower_bound}]\label{proof:lemma:diakonikolas_lower_bound}

First, we construct a noise distribution setting that has finite variance while being sufficiently dispersed. In this proof, for alignment with traditional notation, we use $ \delta _x $ to denote the Dirac delta distribution at a point $ x\in \mathbbm{R}^n $. Let $ k^* $ be a center of $ K $, and consider the following mixture distribution:
    \begin{align*}
        M_\varepsilon := (1-\dfrac{ \varepsilon  }{ 2 }  )\delta _{k^*} + \dfrac{ \varepsilon  }{ 2 }  \delta _{k^* + \frac{\Delta }{\sqrt{\varepsilon /2\cdot  (1-\varepsilon /2)}}} ,
    \end{align*}
    such that the mean of $M_\varepsilon$ is
    \begin{align*}
        \mu _{\mathcal{M}_\varepsilon } := k^* + \Delta\frac{ \sqrt{\varepsilon/2 } }{\sqrt{1-\varepsilon/2 }}.
    \end{align*}
    In the definition of the distribution, $ \Delta \in \mathbbm{R}^n $ is a fixed vector such that $ \left\Vert \Delta  \right\Vert \asymp \sigma \wedge \frac{d}{\sqrt{\varepsilon} } $, which is always achievable according to \autoref{lemma:long_enough_segment_in_K} by noting that $\Vert \mu_{\mathcal{M}_\varepsilon} - k^*\Vert \asymp \left\Vert \Delta \right\Vert \sqrt{\varepsilon}$. This condition also ensures that $ M_\varepsilon $ has variance bounded by $ \sigma ^2 $, and $\mu _{\mathcal{M}_\varepsilon } \in K$.

    Next, we construct a corruption procedure $ \tilde{\mathcal{C}} $ that enables us to lower bound the minimax rate. The procedure $ \tilde{\mathcal{C}} $ operates as follows: let $ W $ be the number of times $ k^* + \frac{\Delta }{\sqrt{\varepsilon /2 (1-\varepsilon /2)}} $ is drawn in place of $ k^* $. The corruption procedure then performs:
    \begin{itemize}[topsep=2pt,itemsep=0pt]
        \item If $ W\leq \varepsilon N $, convert all occurrences of $ k^* + \frac{\Delta }{\sqrt{\varepsilon /2 (1-\varepsilon /2)}} $ to $ k^* $;
        \item If $ W>\varepsilon N $, do nothing.
    \end{itemize}
    It is clear that $ W $ follows a binomial distribution $ W\sim \mathrm{Binom}(N, \frac{ \varepsilon  }{ 2 } ) $. By the median property from \cite[Theorem 1]{kaas_mean_1980}, for any $ \varepsilon <1/2 $, we have that $ \mathbbm{P}\left( W\geq \frac{\varepsilon }{2}N \right)>\frac{1}{2} $ and $ \mathbbm{P}\left( W\leq \frac{\varepsilon }{2}N \right)>\frac{1}{2} $. For any fixed estimator $ \tilde{\mu } $, we have:
    \begin{align*}
        \mathfrak{M}&=\mathop{ \sup }\limits_{\hat{\mu}}\mathop{ \sup }\limits_{\mu \in K} \mathop{ \sup }\limits_{\tilde{X}\sim\xi \in \Xi_{\sigma ^2}  }  \mathop{ \sup }\limits_{\mathcal{C}} \mathbbm{E}_{  }\left[ \left\Vert \hat{\mu }(\mathcal{C}(\tilde{X})) - \mu  \right\Vert ^2 \right] \\
        &\geq \mathop{ \sup }\limits_{\mu \in K} \mathop{ \sup }\limits_{\tilde{X}\sim\xi \in \Xi_{\sigma ^2}  }  \mathop{ \sup }\limits_{\mathcal{C}} \mathbbm{E}_{  }\left[ \left\Vert \tilde{\mu }(\mathcal{C}(\tilde{X})) - \mu  \right\Vert ^2 \right] \\
        &\geq\frac{1}{2}\underbrace{\mathop{ \sup }\limits_{\mathcal{C}
        } \mathbbm{E}_{ \tilde{X}\sim\mathcal{M}_\varepsilon^{\otimes N}  }\left[ \left\Vert \tilde{\mu }(\mathcal{C}(\tilde{X})) - \mu_{\mathcal{M}_\varepsilon }  \right\Vert ^2 \right]}_{\text{I}} + \frac{1}{2}\underbrace{\mathop{ \sup }\limits_{\mathcal{C}
        } \mathbbm{E}_{ \tilde{X}\sim\delta _{k^*} ^{\otimes N}  }\left[ \left\Vert \tilde{\mu }(\mathcal{C}(\tilde{X})) - k^*   \right\Vert ^2 \right]}_{\text{II}}.
    \end{align*}
    We deal with the two parts separately: 
    \begin{itemize}[topsep=2pt,itemsep=0pt]
        \item[I] We have:
        \begin{align*}
             \text{I}&=  \mathop{ \sup }\limits_{\mathcal{C}
             } \mathbbm{E}_{ \tilde{X}\sim\mathcal{M}_\varepsilon^{\otimes N}  }\left[ \left\Vert \tilde{\mu }(\mathcal{C}(\tilde{X})) - \mu_{\mathcal{M}_\varepsilon }  \right\Vert ^2 \right]\\
              &\mathop{ \geq }\limits^{(i)}  \mathbbm{E}_{ \tilde{X}\sim\mathcal{M}_\varepsilon^{\otimes N}  }\left[ \left\Vert \tilde{\mu }(\tilde{\mathcal{C}}(\tilde{X})) - \mu_{\mathcal{M}_\varepsilon }  \right\Vert ^2 \right]\\
             \geq& \mathbbm{P}_{  }\left( W\leq N\frac{\varepsilon }{2} \right) \mathbbm{E}_{ \tilde{X}\sim\delta _{k^*}^{\otimes N}  }\left[ \left\Vert \tilde{\mu }(\tilde{X}) - \mu_{\mathcal{M}_\varepsilon }  \right\Vert ^2 \right]\\
             \mathop{ \geq }\limits^{(ii)}&  \dfrac{ 1 }{ 2 }  \mathbbm{E}_{ \tilde{X}\sim\delta _{k^*} ^{\otimes N}  }\left[ \left\Vert \tilde{\mu }(\tilde{X}) - \mu_{\mathcal{M}_\varepsilon }  \right\Vert ^2 \right],
        \end{align*}
            where in $ (i) $ we choose the corruption procedure to be $ \tilde{\mathcal{C}} $, and in $ (ii) $ we utilize the binomial property of $ W $.
        \item[II] We have:
        \begin{align*}
            \text{II}&=\mathop{ \sup }\limits_{\mathcal{C}
            } \mathbbm{E}_{ \tilde{X}\sim\delta _{k^*} ^{\otimes N}  }\left[ \left\Vert \tilde{\mu }(\mathcal{C}(\tilde{X})) - k^*   \right\Vert ^2 \right]\\
            &\geq  \dfrac{ 1 }{ 2 } \mathop{ \sup }\limits_{\mathcal{C}
            } \mathbbm{E}_{ \tilde{X}\sim\delta _{k^*} ^{\otimes N}  }\left[ \left\Vert \tilde{\mu }(\mathcal{C}(\tilde{X})) -k^*   \right\Vert ^2 \right] \\
            &\geq \dfrac{ 1 }{ 2 } \mathbbm{E}_{ \tilde{X}\sim\delta _{k^*} ^{\otimes N}  }\left[ \left\Vert \tilde{\mu }(\tilde{X}) -k^*   \right\Vert ^2 \right].
        \end{align*}
    \end{itemize}

    Combining the two parts, we obtain:
    \begin{align*}
        \mathop{ \sup }\limits_{\mu \in K} \mathop{ \sup }\limits_{\tilde{X}\sim\xi \in \Xi_{\sigma ^2}  }  \mathop{ \sup }\limits_{\mathcal{C}} \mathbbm{E}_{  }\left[ \left\Vert \tilde{\mu }(\mathcal{C}(\tilde{X})) - \mu  \right\Vert ^2 \right]  &\geq \dfrac{ 1 }{ 4 }  \mathbbm{E}_{ \tilde{X}\sim\delta _{k^*} ^{\otimes N}  }\left[ \left\Vert \tilde{\mu }(\tilde{X}) - \mu_{\mathcal{M}_\varepsilon }  \right\Vert ^2 \right] + \dfrac{ 1 }{ 4 }\mathbbm{E}_{ \tilde{X}\sim\delta _{k^*} ^{\otimes N}  }\left[ \left\Vert \tilde{\mu }(\tilde{X}) -k^*   \right\Vert ^2 \right]\\
        &\geq \dfrac{ 1 }{ 8 }  \mathbbm{E}_{ \tilde{X}\sim\delta _{k^*} ^{\otimes N}  }\left[ \left\Vert {k^*} - \mu _{\mathcal{M}_\varepsilon } \right\Vert ^2 \right]\\
        &\asymp \left\Vert {k^*} - \mu _{\mathcal{M}_\varepsilon } \right\Vert ^2 \\
        &\asymp  \dfrac{ \varepsilon /2 }{ 1-\varepsilon /2 }\cdot (\sigma ^2\wedge \dfrac{ d^2 }{ \varepsilon  } ) \\
        &\gtrsim  \varepsilon \sigma ^2 \wedge d^2.
    \end{align*}

    Since the above argument holds for any $ \tilde{\mu }$, taking the infimum over all possible $ \tilde{\mu }$ yields:
    \begin{align*}
        \mathfrak{M}&= \mathop{ \inf }\limits_{\hat{\mu }} \mathop{ \sup }\limits_{\mu \in K} \mathop{ \sup }\limits_{\xi \in \Xi_{\sigma ^2}}  \mathop{ \sup }\limits_{\mathcal{C}} \mathbbm{E}_{ \mu  }\left[ \left\Vert \hat{\mu }(\mathcal{C}(\tilde{X})) - \mu  \right\Vert ^2 \right]  \gtrsim \varepsilon \sigma ^2 \wedge d^2.
    \end{align*}
    
\end{proof}

\section{Proof for Bounded Case}\label{section:proof_upper_bound_bounded_case}

\subsection*{Proof of \autoref{theorem:two_point_prob_bound}}\label{subsection:proof_two_point_prob_bound}

The proof is divided into two parts: the trimmed mean part and the median part, corresponding to the two cases $ \frac{\delta ^2}{\sigma ^2} \leq\thenewboundary $ and $ \frac{\delta ^2}{\sigma ^2} \geq \thenewboundary $, respectively.

We first verify that the discriminant quantity $ \tilde{V}_i $, as defined in \autoref{equation:definition_V_i}, has finite variance and establish a useful property in \autoref{appendix_lemma:V_i_finite_variance}. We then demonstrate the existence of appropriate constants in \autoref{lemma:useful_constants_1}. Subsequently, we derive the probability tail bounds for the two cases in \autoref{appendix_lemma:trimmed_mean_part_proof} and \autoref{appendix_lemma:median_part_proof}, respectively. Finally, we combine the two cases to complete the proof of the theorem.

\begin{lemma}\label{appendix_lemma:V_i_finite_variance}
    Let discriminant quantities $\tilde{V}_i$ be as defined in \autoref{equation:definition_V_i}, in which $\Vert \nu_1-\nu_2 \Vert \geq C\delta$. Then the uncorrupted $ \tilde{V}_i $ has variance $ \sigma _V^2\leq 8\sigma ^2 $. Further if (without loss of generality) the underlying truth is $ \mu \in B(\nu _1,\delta) $, we have
    \begin{align*}
        \tilde{V}_i\leq  2(\tilde{X}_i+W_i-\mu )^\top \hat{\nu} - (C-2)\delta ,
    \end{align*}
    where $ \hat{\nu }:= \frac{\nu_2-\nu_1}{\left\Vert \nu_2-\nu_1 \right\Vert } $. Using the expression we have $m:= \mathbbm{E}(\tilde{V}_i) \leq -(C-2)\delta  $ as a direct consequence.
\end{lemma}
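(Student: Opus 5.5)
The plan is to expand the squared norms in the definition of $\tilde V_i$ and reduce everything to a linear functional of the noise. Write $Z_i := \tilde X_i + W_i - \mu = \xi_i + W_i$. This vector has mean zero, and since $\xi_i$ and $W_i$ are independent, its covariance is $\Sigma_\xi + \sigma^2 I \preceq 2\sigma^2 I$. Set $D := \|\nu_2-\nu_1\|$ and $\hat\nu := (\nu_2-\nu_1)/D$. Using
\begin{align*}
\|a-\nu_1\|^2-\|a-\nu_2\|^2 = 2a^\top(\nu_2-\nu_1) + \|\nu_1\|^2-\|\nu_2\|^2 ,
\end{align*}
and dividing by $D$, we get
\begin{align*}
\tilde V_i = 2 Z_i^\top \hat\nu + \frac{\|\mu-\nu_1\|^2 - \|\mu-\nu_2\|^2}{D}.
\end{align*}
Here the second term is deterministic. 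The identity is checked by substituting $a = \mu + Z_i$: the cross terms in $Z_i$ combine to $2Z_i^\top(\nu_2-\nu_1)$, and the remaining terms are exactly $\|\mu-\nu_1\|^2-\|\mu-\nu_2\|^2$.

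\textbf{Variance bound.} Since the deterministic shift does not affect the variance,
\begin{align*}
\sigma_V^2 = 4\,\hat\nu^\top(\Sigma_\xi+\sigma^2 I)\hat\nu \le 4\cdot 2\sigma^2 = 8\sigma^2,
\end{align*}
using $\|\hat\nu\|=1$ and $\Sigma_\xi \preceq \sigma^2 I$. If the auxiliary noise in \autoref{equation:definition_V_i} is not independent of $\xi_i$, one can instead use $\mathrm{Var}(A+B)\le 2\mathrm{Var}A+2\mathrm{Var}B$. That gives the bound $16\sigma^2$, which differs only by a constant. Independence holds here by construction, so the stated bound $8\sigma^2$ follows.

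\textbf{Deterministic term.} Assume $\|\mu-\nu_1\|\le\delta$. Write
\begin{align*}
\|\mu-\nu_1\|^2-\|\mu-\nu_2\|^2 = (\|\mu-\nu_1\|-\|\mu-\nu_2\|)(\|\mu-\nu_1\|+\|\mu-\nu_2\|).
\end{align*}
By the triangle inequality, $\|\mu-\nu_2\| \ge D-\delta$, so the first factor is at most $\delta-(D-\delta) = 2\delta - D$. This is negative because $D\ge C\delta>2\delta$. Again by the triangle inequality, the second factor is at least $\|\nu_1-\nu_2\| = D$. A negative number times something at least $D$ is at most that negative number times $D$, so the product is at most $(2\delta-D)D$. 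Dividing by $D$ gives
\begin{align*}
\frac{\|\mu-\nu_1\|^2 - \|\mu-\nu_2\|^2}{D} \le 2\delta - D \le -(C-2)\delta .
\end{align*}
This yields the stated inequality $\tilde V_i \le 2(\tilde X_i+W_i-\mu)^\top\hat\nu-(C-2)\delta$. Taking expectations, $\mathbb E Z_i = 0$ gives $m \le -(C-2)\delta$.

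The only step needing care is the sign handling in the factorization. The bound on the product relies on $2\delta - D < 0$, which holds because $C>2$. With that in place the rest is routine.
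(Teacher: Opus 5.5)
Your proof is correct and follows essentially the paper's route: you split $\tilde V_i$ into the centered term $2(\tilde X_i+W_i-\mu)^\top\hat\nu$, whose variance is at most $4\cdot 2\sigma^2=8\sigma^2$ by independence of $\xi_i$ and $W_i$, plus the deterministic offset $(\|\mu-\nu_1\|^2-\|\mu-\nu_2\|^2)/\|\nu_2-\nu_1\| = 2(\mu-\nu_1)^\top\hat\nu-\|\nu_2-\nu_1\|$, which you bound by $2\delta-\|\nu_2-\nu_1\|\le-(C-2)\delta$. The only difference is how you bound the offset: you factor it as a difference of squares and use the triangle inequality, while the paper applies Cauchy--Schwarz to $2(\mu-\nu_1)^\top\hat\nu$. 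Your expansion is also cleaner than the paper's displayed one, which has sign typos in the constant term.
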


\begin{proof}

    We have
    \begin{align*}
        \tilde{V}_i  &=\dfrac{ \left\Vert \tilde{X}_i+W_i-\nu _1 \right\Vert ^2 - \left\Vert \tilde{X}_i+W_i-\nu _2 \right\Vert ^2  }{ \left\Vert \nu _2-\nu _1 \right\Vert  }\\
        &\mathop{ = }\limits^{(i)} 2(\tilde{X}_i+W_i-\mu )^\top \hat{\nu} + 2(\mu -\nu_1)^\top \hat{\nu} + \dfrac{ \left\Vert \nu _1 \right\Vert ^2 + \left\Vert \nu_2 \right\Vert ^2 + 2\nu _1^\top(\nu _2-\nu _1) }{ \left\Vert \nu _2-\nu _1 \right\Vert  }
    \end{align*}
    where in $ (i) $ we define $ \hat{\nu} = \frac{\nu_2-\nu_1}{\left\Vert \nu_2-\nu_1 \right\Vert } $. Since $ \tilde{X}_i $ and $ W_i $ each have variance $ \sigma ^2 $ and are independent, and $ \left\Vert \hat{\nu } \right\Vert = 1 $, we conclude that $ \tilde{V}_i $ has variance $ \sigma _V^2\leq  8\sigma ^2 $. 

    Under the condition that $ \left\Vert \mu -\nu _1 \right\Vert \leq \delta $, we have
    \begin{align*}
        \tilde{V}_i& \leq  2(\tilde{X}_i+W_i-\mu )^\top \hat{\nu} + \left\Vert \mu -\nu _1 \right\Vert  + \left\Vert \nu _2-\nu _1 \right\Vert\\
        \mathop{ \leq }\limits^{(ii)} &  2(\tilde{X}_i+W_i-\mu )^\top \hat{\nu} +(-1+\frac{2}{C}) \left\Vert \nu _2-\nu _1 \right\Vert \\
        \leq&2(\tilde{X}_i+W_i-\mu )^\top \hat{\nu} - (C-2)\delta 
    \end{align*}
    where in $ (ii) $ we use $ \left\Vert \mu -\nu _1 \right\Vert \leq \delta $.

\end{proof}

\renewcommand{\x}{\frac{ \delta ^2 }{ \sigma_V ^2 } }
\begin{lemma}[Useful Constants]\label{lemma:useful_constants_1}
    There exists positive absolute constants $ C, C_0, C_1, C_2, C_3 $ and $ \alpha \in (0,\frac{1}{2}) $ with the following properties: 
    
    Let $ \sigma >0 $, $ \varepsilon \in (0,1/32) $ be given. Define the function:
    \begin{align*}
        g(t)&:=(\dfrac{ 1 }{ 2 } +t)\log (\dfrac{ 1 }{ 2 } +t) + (\dfrac{ 1 }{ 2 } -t)\log (1-2t).
    \end{align*}
    Denote $ D_1 := 4\sqrt{2}\cdot\sqrt{C_2+C_0^{-1}\log 4}  $; for any $ \delta >0 $, denote $ \varrho := \exp\left[ -C_3\log (1+\frac{\delta ^2}{\sigma ^2}) \right] $. 
    
    The choice of the constants are such that the following properties hold:
    \begin{itemize}[topsep=2pt,itemsep=0pt]
        \item[(i)] \hypertarget{constant_property_1}{}$ D_1+6\sqrt{ \frac{ 128 }{ C_1^2 } + 6D_1^2 } \leq C-2  $;
        \item[(ii)] \hypertarget{constant_property_2}{}$ \thenewboundary\cdot \frac{12\log 4}{C_0} < \frac{1}{8} $; 
        \item[(iii)] \hypertarget{constant_property_3}{}$ \big( 1+ \frac{(C-2)^2}{8} \xi \big)^{-1} \leq \frac{1}{2}\exp[-C_3\log(1+\xi )] $ for $ \xi \geq \thenewboundary $; 
    \end{itemize}
    Further, if $ \frac{\delta ^2}{\sigma ^2} \geq \thenewboundary$, the following holds:
    \begin{itemize}[topsep=2pt,itemsep=0pt]
        \item[(iv)] \hypertarget{constant_property_4}{}$ (1/2-\alpha )\log (1/\varrho ) \geq -2g(\alpha ) $;
        \item[(v)] \hypertarget{constant_property_5}{}$ \varepsilon <\alpha (1-\varrho) $.
    \end{itemize}

    Also, we remark that $C$ can be chosen arbitrarily large without influencing other constants.
\end{lemma}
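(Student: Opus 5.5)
The plan is to fix the constants in a definite order so that each property involves only constants already chosen, plus at most one free quantity that we can control uniformly. Note that $g$ is continuous on $[0,1/2)$ with $g(0)=\tfrac12\log\tfrac12<0$.

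\textbf{Step 1: $C_0$, $C_1$, $C_2$.} Fix $C_0$ and $C_1$ to be whatever absolute constants the trimmed-mean analysis of \cite{lugosi_robust_2019} requires. Then choose $C_2$ large enough that (ii) holds, i.e. $\frac{12\log 4}{96 C_2 C_0}<\frac18$. After this, $D_1$ is a fixed absolute number.

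\textbf{Step 2: $\alpha$ and $C_3$.} Choose $\alpha\in(1/16,1/2)$, say $\alpha=1/4$, so that (v) reduces to a condition on $\varrho$: since $\varepsilon<1/32$, it suffices that $1-\varrho\ge 1/8$, i.e. $\varrho\le 7/8$. For (iv) we need
\[
\log(1/\varrho)=C_3\log(1+\xi)\ge \frac{-2g(\alpha)}{1/2-\alpha},
\]
where $g(\alpha)<0$ (check numerically for $\alpha=1/4$, or simply pick $\alpha$ near $0$ with $\alpha>1/16$). On the region $\xi=\delta^2/\sigma^2\ge (96C_2)^{-1}$, $\log(1+\xi)$ is bounded below by the positive constant $\log(1+(96C_2)^{-1})$. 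Hence a sufficiently large $C_3$ makes both (iv) and $\varrho\le 7/8$ hold uniformly, which gives (v).

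\textbf{Step 3: $C$ last, for (i) and (iii).} Property (i) is a lower bound on $C$ in terms of the already fixed $D_1$ and $C_1$. For (iii), write $h(\xi):=\tfrac12(1+\xi)^{-C_3}\bigl(1+\tfrac{(C-2)^2}{8}\xi\bigr)$; we need $h(\xi)\ge1$ for all $\xi\ge(96C_2)^{-1}$. This is the main obstacle, because (iii) must hold for \emph{all} large $\xi$. As $\xi\to\infty$ the left side of (iii) decays only like $\xi^{-1}$, while the right side decays like $\xi^{-C_3}$. So (iii) forces $C_3\le 1$ (indeed $C_3<1$, or $C_3=1$ with $(C-2)^2/8\ge2$). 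Step 2 must therefore be compatible with this cap, and that compatibility is the crux.

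\textbf{Resolving the conflict in Step 2.} To make $C_3\le1$ work, choose $\alpha$ close enough to $1/2$ that $-2g(\alpha)/(1/2-\alpha)$ is small. As $t\to1/2$, $g(t)\to0$, and one checks that $g(t)/(1/2-t)\to-(1+\log 2)+\text{(the limit of }\log(1-2t)\text{ term)}$. Since $(1/2-t)\log(1-2t)\to0$ but divided by $(1/2-t)$ it diverges to $-\infty$, this ratio is not small. Instead, pick $C_2$ larger so that the threshold region, and hence $\log(1+(96C_2)^{-1})$, is what matters, and take $C_3=1$. Then (iv) reads $\log(1+\xi)\ge c_\alpha:=-2g(\alpha)/(1/2-\alpha)$, and we choose $\alpha$ to minimize $c_\alpha$. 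Note $g(t)\approx \tfrac12\log\tfrac12+t(1+\log\tfrac12)+O(t^2)+t\log$-free terms, so $g$ increases toward $0$ near $t=0$ only slowly.

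If (iv) with $C_3=1$ cannot hold down to $\xi=(96C_2)^{-1}$, the fallback is to reduce $C_3$ and $\alpha$ in tandem with the boundary. I expect this balancing to be the delicate part: verify numerically that a single triple $(\alpha,C_3,C_2)$ satisfies (ii), (iv), (v) together with $C_3<1$.

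With that triple fixed, $h$ is increasing in $C$ and tends to infinity uniformly on $[(96C_2)^{-1},\infty)$ because $C_3<1$. So taking $C$ large gives (iii), and (i) follows by enlarging $C$ further. Since $C$ enters no other property, it can be enlarged arbitrarily without affecting the other constants.
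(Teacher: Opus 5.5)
Your treatment of (v) (with $\alpha=1/4$ it suffices that $\varrho\le 7/8$) is fine, as is taking $C$ last to get (i) and (iii). However, the proof never produces admissible $(\alpha,C_3,C_2,C_0)$, and the order you fix in Step~1 can make this impossible. The obstruction is quantitative:
\begin{itemize}
\item Property (iii) forces $C_3\le 1$, as you observe.
\item $c_\alpha:=-2g(\alpha)/(1/2-\alpha)$ is strictly increasing on $(0,1/2)$, with $c_\alpha>\lim_{t\to 0}c_t=\log 4$.
\item So (iv), evaluated at the left endpoint $\xi_0=(96C_2)^{-1}$ of the median regime, requires $\log(1+\xi_0)\ge c_\alpha/C_3>\log 4$. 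That means $\xi_0>3$, i.e.\ $C_2<1/288$.
\item But (ii) is equivalent to $C_0C_2>\log 4$.
\end{itemize}
You fix $C_0$ first and then take $C_2$ \emph{large} to get (ii). In your resolution you propose making $C_2$ larger still, which shrinks $\log(1+\xi_0)$ and breaks (iv). Tuning $\alpha$ cannot rescue this, since $c_\alpha$ never drops below $\log 4$, and lowering $C_3$ only makes (iv) harder. So the triple you ask to ``verify numerically'' need not exist: it exists only if $C_0>288\log 4$, which your ordering does not ensure.

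The missing idea is to reverse the roles of $C_2$ and $C_0$.
\begin{enumerate}
\item Fix $\alpha$ (e.g.\ $1/4$) and any $C_3\in(0,1)$.
\item Take $C_2$ \emph{small} enough that $C_3\log\bigl(1+(96C_2)^{-1}\bigr)\ge c_\alpha$. Then the median regime only starts at a large value of $\delta^2/\sigma^2$, and (iv) holds on all of it.
\item Only afterwards choose $C_0$ large to enforce (ii). This is harmless because $C_0$ enters nowhere else except $D_1$, which decreases as $C_0$ grows.
\item Take $C_1$ arbitrary and choose $C$ last for (i) and (iii), exactly as in your Step~3.
\end{enumerate}
With this order, (v) follows from (iv) as you intended: $\varrho\le e^{-c_\alpha}<1/4$, so $\alpha(1-\varrho)>3/16>\varepsilon$. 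This is the order the paper uses.
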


\begin{proof}

    We can determine the constants and derived quantities as follows: 
    \begin{enumerate}[topsep=2pt,itemsep=0pt]
        \item pick $ \alpha \in (0,\frac{1}{2}) $ such that $
            \alpha \big( 1- \exp\left[ \frac{ 2g(\alpha ) }{ 1/2-\alpha  }  \right] \big) > \frac{1}{16}, $
        which is possible by the properties of $ g $ mentioned in \cite[Lemma B.1 (ix)]{prasadan_information_2024};
        \item pick $ C_3\in (0,1) $;
        \item pick $ C_2 $ such that $ C_3\log (1+\thenewboundary)> \frac{-2g(\alpha )}{1/2-\alpha } $. Here by the property that $ \lim_{x\to 0}\frac{-2g(x)}{1/2-x}=\log 4 $ and $ \lim_{x\to 1/2} \frac{-2g(x)}{1/2-x}= \infty$ with the continuity of $ g $, we have the right-hand side of the inequality $  \frac{-2g(\alpha )}{1/2-\alpha }\in (\log 4,\infty) $.
        Such choice is always possible by some $ C_2 $ small enough;
        \item given the above $ C_2,C_3 $, we can find a line with intercept $ \frac{1}{2} $, say denoted $ \xi \mapsto a_{C_2,C_3}\xi +\frac{1}{2} $ such that $ a_{C_2,C_3}\xi +\frac{1}{2} \geq (1+\xi )^{C_3} $ for all $ \xi \geq \thenewboundary $. We use the subscript $ C_2,C_3 $ to emphasize that the slope $ a_{C_2,C_3} $ depends on the choice of $ C_2,C_3 $. Such slope is possible since $ C_3\in (0,1) $.
        \item pick $ C_0 $ such that $ \thenewboundary\cdot \frac{12\log 4}{C_0} < \frac{1}{8} $;
        \item pick $ C $ and $ C_1 $ such that
        \begin{align*}
            \begin{cases}
                C-2 \geq D_1 + 6\sqrt{ \frac{ 128 }{ C_1^2 } + 6D_1^2 }, \\
                \dfrac{ (C-2)^2 }{ 16 }\geq a_{C_2,C_3},
            \end{cases}
        \end{align*}
        where note that the second condition gives $ \frac{ (C-2)^2 }{ 16 }\xi + \frac{1}{2}\geq a_{C_2,C_3}\xi +\frac{1}{2} \geq (1+\xi )^{C_3} $ for $ \xi \geq \thenewboundary $.
    \end{enumerate}
    
    We remark that the above construction of constants already satisfies the conditions \hyperlink{constant_property_1}{(i)}, \hyperlink{constant_property_2}{(ii)}, \hyperlink{constant_property_3}{(iii)}. In the following we verify that conditions \hyperlink{constant_property_4}{(iv)} and \hyperlink{constant_property_5}{(v)} are satisfied when $ \frac{\delta ^2}{\sigma ^2} \geq \thenewboundary $.

    With the construction of $ C_2 $, we have for $ \frac{\delta ^2}{\sigma ^2} \geq \thenewboundary $:
    \begin{align*}
        \log(1/\varrho  ) &= C_3\log(1+\frac{\delta ^2}{\sigma ^2}) \geq C_3\log(1+\thenewboundary)> \frac{-2g(\alpha )}{1/2-\alpha },
    \end{align*}
    which recovers \hyperlink{constant_property_4}{(iv)}. Further we have
    \begin{align*}
        \alpha (1-\varrho ) \geq \alpha \big(1-\exp\left[ \dfrac{ 2g(\alpha ) }{ 1/2-\alpha  }  \right]\big) > \dfrac{ 1 }{ 16 }> \varepsilon  ,
    \end{align*}
    by the construction of $ \alpha  $. Thus using the above process, all the five conditions are satisfied.

\end{proof}

\begin{lemma}\label{appendix_lemma:trimmed_mean_part_proof}
    \textbf{(Trimmed Mean Part) }There exists positive absolute constants $ C>2, C_0, C_1,C_2 $ such that: if $ \varepsilon \in (0,{1/32}) $, for any $ C\delta  $ separated $ \nu _1,\nu _2\in K $ with $ \mu \in B(\nu _1,\delta ) $ and with $ C_0N^{-1}\vee C_1^2\varepsilon \leq \frac{\delta ^2}{\sigma ^2} \leq \thenewboundary $. We have
    \begin{align*}
        \mathbbm{P}_{\mu\in B(\nu_1,\delta)} \left( \mathrm{ TM }_{\delta _0} (\{V_i\}_{i=1}^{2N}) >0  \right) \leq \exp\left[ -C_2N\log(1+\dfrac{ \delta ^2 }{ \sigma ^2 })  \right] .
    \end{align*}
\end{lemma}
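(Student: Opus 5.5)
We will invoke the non-asymptotic guarantee of the trimmed mean estimator from \cite{lugosi_robust_2019} (their univariate adversarial result). For i.i.d. real variables with mean $m$ and variance $\sigma_V^2$, $2N$ samples with an $\varepsilon$-fraction adversarially corrupted, and confidence level $\delta_0$ with $\delta_0 \geq e^{-N}/4$ (roughly), it states that with probability at least $1-\delta_0$,
\begin{align*}
    \big| \mathrm{TM}_{\delta_0}(\{V_i\}_{i=1}^{2N}) - m \big| \leq D\,\sigma_V\Big( \sqrt{\varepsilon} + \sqrt{\tfrac{\log(4/\delta_0)}{N}} \Big)
\end{align*}
for an absolute constant $D$. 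By \autoref{appendix_lemma:V_i_finite_variance}, the uncorrupted $\tilde V_i$ are i.i.d. with $\sigma_V^2\le 8\sigma^2$ and mean $m\le -(C-2)\delta$ under $\mu\in B(\nu_1,\delta)$. The corrupted $V_i$ differ from the $\tilde V_i$ in at most an $\varepsilon$-fraction of indices, so the result applies directly, and the sample split used by $\mathrm{TM}$ is already built into it. It therefore suffices to choose $\delta_0$ so that the deviation bound is strictly less than $(C-2)\delta$. Then $\mathrm{TM}\le m + (\text{deviation}) < 0$, and the event $\{\mathrm{TM}>0\}$ has probability at most $\delta_0$.

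The choice will be $\delta_0 := \exp[-C_2 N\log(1+\delta^2/\sigma^2)]$, which is the target tail. Two checks follow.

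\emph{Admissibility of $\delta_0$.} We need $\log(4/\delta_0)\lesssim N$. Since $\delta^2/\sigma^2\le (96C_2)^{-1}$, we have $C_2N\log(1+\delta^2/\sigma^2)\le C_2N\delta^2/\sigma^2 \le N/96$, which is comfortably within range.

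\emph{Deviation size.} Use $\log(4/\delta_0)=\log 4 + C_2N\log(1+\delta^2/\sigma^2)\le \log 4 + C_2 N\delta^2/\sigma^2$. The lower bound $\delta^2/\sigma^2\ge C_0/N$ gives $\log 4/N \le C_0^{-1}\log 4\cdot \delta^2/\sigma^2$. Hence
\begin{align*}
    \sqrt{\tfrac{\log(4/\delta_0)}{N}} \le \sqrt{C_2 + C_0^{-1}\log 4}\,\tfrac{\delta}{\sigma}.
\end{align*}
Multiplying by $\sigma_V\le 2\sqrt2\,\sigma$ and absorbing constants yields a term of order $D_1\delta$, matching the definition of $D_1$ in \autoref{lemma:useful_constants_1}. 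For the contamination term, $\varepsilon\le \delta^2/(C_1^2\sigma^2)$ gives $\sigma_V\sqrt{\varepsilon}\lesssim \delta/C_1$.

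The total deviation is therefore at most $\big(D_1 + c'\sqrt{1/C_1^2 + D_1^2}\big)\delta$ for some absolute $c'$. Property (i) of \autoref{lemma:useful_constants_1} makes this at most $(C-2)\delta$, and we arrange a strict inequality by enlarging $C$ slightly, which that lemma allows. We then conclude $\mathbb P(\mathrm{TM}>0)\le\delta_0$.

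The main obstacle is matching the precise form and constants of the trimmed-mean bound in \cite{lugosi_robust_2019}, namely its requirements on $\varepsilon$ (here $\varepsilon<1/32$ should suffice) and on $\delta_0$, and the exact numerical factors that produce $128/C_1^2$ and $6D_1^2$. If that theorem is stated with the corruption term $\sigma_V\sqrt{\varepsilon}$ and the confidence term combined differently, we will re-derive it. That derivation has three steps. First, quantile estimates from the first half are shown to be accurate via a binomial/Chernoff bound with Chebyshev-based quantile control. Second, a Bernstein bound is applied to the truncated variables. Third, the corrupted points are handled by noting that each is clipped to the quantile range, so it shifts the mean by at most $\varepsilon\cdot\sigma_V/\sqrt{\varepsilon}$. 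Only absolute constants change in this route, and these are absorbed into $C$.
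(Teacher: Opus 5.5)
Your proposal matches the paper's proof essentially step for step. Both apply \cite[Theorem 1]{lugosi_robust_2019} to the $V_i$, bound the confidence term by $D_1\delta$ via $\delta^2/\sigma^2\ge C_0/N$ and the contamination term via $\delta^2/\sigma^2\ge C_1^2\varepsilon$, invoke property (i), and conclude from $m\le -(C-2)\delta$. The only cosmetic difference is your choice $\delta_0=\exp[-C_2N\log(1+\delta^2/\sigma^2)]$, where the paper takes $\exp[-C_2N\delta^2/\sigma^2]$ and converts at the end via $\log(1+x)\le x$.

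The one admissibility check you leave implicit is the quantile-level condition $8\varepsilon+12\log(4/\delta_0)/N\le 1/2$. It follows from $\varepsilon<1/32$, from $12C_2\,\delta^2/\sigma^2\le 1/8$, and from property (ii) of \autoref{lemma:useful_constants_1}. Property (ii) bounds $12\log 4/N$ by $1/8$, since $N\ge C_0\sigma^2/\delta^2\ge 96C_0C_2$.
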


\begin{proof}

    Let $ C, C_0, C_1 ,C_2 $ and $ D_1 $ be the constants from \autoref{lemma:useful_constants_1}. Denote the confidence level $ \delta _0:=\exp\left[ -C_2N\frac{\delta ^2}{\sigma ^2} \right] $; the quantile level $ \tilde{\varepsilon } := 8\varepsilon + 12\log(4/\delta _0)/N= 8\varepsilon +12\frac{ \log (4/\exp\left[ -C_2N\delta ^2/\sigma^2 \right]) }{ N } $. We first  verify the following two conditions so that we can apply \cite[Theorem 1]{lugosi_robust_2019}.

    \begin{itemize}[topsep=2pt,itemsep=0pt]
        \item \textit{The confidence level satisfies $\delta_0 \geq e^{-N/4} $}: This is guaranteed by the condition that $ \frac{\delta ^2}{\sigma ^2} \leq \thenewboundary $ and thus
        \begin{align*}
             \delta_0 = \exp\left[ -C_2N\frac{\delta ^2}{\sigma ^2} \right] &\geq  \exp\left[ -C_2N\cdot \thenewboundary \right] > \exp[-N/4].
        \end{align*}
        \item \textit{The quantile level $ \tilde{\varepsilon } $ is valid}: Indeed we have
        \begin{align*}
            \tilde{\varepsilon }& = 8\varepsilon + 12\dfrac{ \log (4/\exp\left[ -C_2N\frac{\delta ^2}{\sigma ^2} \right]) }{ N }\\
            &= 8\varepsilon + \dfrac{ 12\log 4 }{ N } + 12C_2\frac{\delta ^2}{\sigma ^2}\\
            &\mathop{ \leq }\limits^{(i)}  8\varepsilon +  \frac{\delta ^2}{\sigma ^2}\cdot (\frac{ 12\log 4 }{ C_0 } + 12C_2)\\
            &\mathop{ \leq }\limits^{(ii)} 8\varepsilon + \thenewboundary\cdot (\frac{ 12\log 4 }{ C_0 } + 12C_2)\\
            &\mathop{ \leq }\limits^{(iii)}  8\varepsilon + \dfrac{ 1 }{ 4 }  \\
            &\mathop{ \leq }\limits^{(iv)}  \dfrac{ 1 }{ 2 } ,
        \end{align*}
        where in $ (i) $ we use the condition that $ \frac{\delta ^2}{\sigma ^2} \geq C_0N^{-1} $, in $ (ii) $ we use the condition that $ \frac{\delta ^2}{\sigma ^2} \leq \thenewboundary $, in $ (iii) $ we use the condition \hyperlink{constant_property_2}{(ii)} in \autoref{lemma:useful_constants_1} and in $ (iv) $ we use the condition that $ \varepsilon <1/32 $. Also, from the second line we observe that $\tilde{\varepsilon} >0$ since $C_2>0$.
    \end{itemize}

    With the above regularity conditions on $\delta_0$ and $\tilde{\varepsilon }$ verified, we apply \cite[Theorem 1]{lugosi_robust_2019} to the discriminant quantity $ V_i $. With probability at least $ 1-\delta_0 $, we have
    \begin{align*}
        \left\vert \mathrm{ TM }(\{V_i\}_{i=1}^{2N}) - m  \right\vert \leq &  \underbrace{2\sigma_V \sqrt{\dfrac{ \log (4/\delta _0) }{ N } }}_{\mathrm{ I } } + \underbrace{3\mathcal{E}\big( 4\tilde{\varepsilon } \big)}_{\mathrm{ II } },
    \end{align*}
    where we recall that $ m = \mathbbm{E}_{  }( \tilde{V}_i )$, and $\mathcal{E}(\, \cdot \, ) $ is from \cite{lugosi_robust_2019}. For our bounded second moment setting and with the constants chosen in \autoref{lemma:useful_constants_1}, we have the following bounds for the two terms $ \mathrm{ I } $ and $ \mathrm{ II } $ respectively.
    \begin{itemize}[topsep=2pt,itemsep=0pt]
        \item[I] First we have
        \begin{align}\label{equation:trimmed_mean_bound_log_delta}
                \dfrac{ \log (4/\delta _0) }{ N } &= N^{-1}\log 4 + C_2\frac{\delta ^2}{\sigma ^2}\notag \\
                &\mathop{ \leq }\limits^{(i)}      \frac{\delta ^2}{\sigma ^2}\big( C_0^{-1} \log 4 + C_2 \big)\notag \\
                 &\mathop{ = }\limits^{(ii)} \dfrac{ D_1^2 }{ 32 }\frac{\delta ^2}{\sigma ^2},
        \end{align}
        where $ (i) $ is from the condition that $ \frac{\delta ^2}{\sigma ^2} \geq C_0N^{-1} $; $ (ii) $ is by the definition of $ D_1 $ in \autoref{lemma:useful_constants_1}. Then we have
        \begin{align*}
            2\sigma_V \sqrt{\dfrac{ \log (4/\delta _0) }{ N } }& \mathop{ \leq }\limits^{(i)}  
            4\sqrt{2}\sigma \sqrt{\dfrac{ \log (4/\delta _0) }{ N } }
            \mathop{ \leq }\limits^{(ii)}  D_1\delta ,
        \end{align*}
        where $ (i) $ is by the fact that $ \sigma_V^2\leq 8\sigma ^2 $ as shown in \autoref{appendix_lemma:V_i_finite_variance} and $ (ii) $ is by \autoref{equation:trimmed_mean_bound_log_delta}. 
        \item[II] As remarked in \cite{lugosi_robust_2019}[Theorem 1] , we have for finite variance case that
        \begin{align*}
            3\mathcal{E}\big( 4\tilde{\varepsilon } \big) &\mathop{ \leq }\limits^{(i)}  3 \sigma _V \sqrt{8\tilde{\varepsilon } }
            \mathop{ \leq }\limits^{(ii)}  24\sigma \sqrt{\tilde{\varepsilon } }\\
            &= 24\sigma \sqrt{ 8\varepsilon + 12\frac{ \log (4/\delta _0) }{ N } }\\
            &\mathop{ = }\limits^{(iii)}  24\sigma \sqrt{ 8\varepsilon + \dfrac{ 3D_1^2 }{ 8 }\frac{\delta ^2}{\sigma ^2}}\\
            &\mathop{ \leq }\limits^{(iv)}   24  \sqrt{ \frac{8}{C_1^2}\delta^2 + \dfrac{ 3D_1^2 }{ 8 }\delta^2 }
        \end{align*}
        where $ (i) $ is from their Theorem 1; $ (ii) $ is by the fact that $ \sigma_V^2\leq 8\sigma ^2 $; $ (iii) $ is by \autoref{equation:trimmed_mean_bound_log_delta} and $ (iv) $ is by the condition that $ \frac{\delta ^2}{\sigma ^2} \geq C_1^2\varepsilon  $.
    \end{itemize}

    Putting the above together we have
    \begin{align*}
         {2\sigma_V \sqrt{\dfrac{ \log (4/\delta _0) }{ N } }} + {3\mathcal{E}\big( 4\tilde{\varepsilon } \big)}&\mathop{ \leq }\limits^{(i)}  D_1\delta + 24 \sqrt{ \frac{8}{C_1^2}\delta^2 + \dfrac{ 3D_1^2 }{ 8 }\delta^2 } \\
         &=  \delta \big( D_1 + 6\sqrt{ \dfrac{ 128 }{ C_1^2 } + 6D_1^2 } \big)\\
         &\mathop{ \leq }\limits^{(ii)}  (C-2)\delta ,
    \end{align*}
    where $ (i) $ is by the above discussion of the two terms; $ (ii) $ is by the condition \hyperlink{constant_property_1}{(i)} in \autoref{lemma:useful_constants_1}. 

    Now we obtain the probabilistic tail bound
    \begin{align}\label{equation:reduce_from_mean_to_zero}
        \mathbbm{P}_{\mu\in B(\nu_1,\delta)} \left( \mathrm{ TM }_{\delta _0} (\{V_i\}_{i=1}^{2N}) >0  \right) &\mathop{\leq}^{(i)}  \mathbbm{P}_{\mu\in B(\nu_1,\delta)} \left(\left\vert \mathrm{ TM }(\{V_i\}_{i=1}^{2N}) - m  \right\vert > (C-2)\delta   \right) \\
        &\leq\mathbbm{P}_{\mu\in B(\nu_1,\delta)} \left(\left\vert \mathrm{ TM }(\{V_i\}_{i=1}^{2N}) - m  \right\vert >  3\mathcal{E}\big( 4\tilde{\varepsilon } \big) + 2\sigma_V \sqrt{\dfrac{ \log (4/\delta _0) }{ N } }  \right)\\
        &\mathop{\leq}^{(ii)}\delta _0 =  \exp\left[ -C_2N\frac{\delta ^2}{\sigma ^2} \right] ,
    \end{align}
    where in $(i) $ we use the fact that $m<-(C-2)\delta  $ which is a direct consequence of \autoref{appendix_lemma:V_i_finite_variance}; in $(ii)$ we apply \cite{lugosi_robust_2019}[Theorem 1]. Finally by the relation $\frac{\delta ^2}{\sigma ^2} \geq \log (1+\frac{\delta ^2}{\sigma ^2})$ we have the desired result.

\end{proof}

\begin{lemma}[Median Part]\label{appendix_lemma:median_part_proof}
    There exists positive absolute constants $ C>2, C_4 $ such that: if $ \varepsilon \in (0,{1/32}) $, for any $ C\delta  $ separated $ \nu _1,\nu _2\in K $ with $ \mu \in B(\nu _1,\delta ) $ and $  \thenewboundary \leq \frac{\delta ^2}{\sigma ^2}   $, we have
    \begin{align*}
        \mathbbm{P}_{\mu\in B(\nu_1,\delta)} \left(\mathrm{ Median }\big( \{V_i\}_{i=1}^{2N} \big) \geq 0 \right) \leq \exp\left[ -C_4N\log (1+\frac{\delta ^2}{\sigma ^2})  \right] .
    \end{align*}
\end{lemma}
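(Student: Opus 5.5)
The plan is to reduce the event $\{\mathrm{Median}(\{V_i\}_{i=1}^{2N})\geq 0\}$ to a binomial tail event for the uncorrupted data. Cantelli's inequality gives the per-sample probability, and properties (iii)--(v) of \autoref{lemma:useful_constants_1} turn the binomial tail into the stated bound.

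\emph{Step 1 (reduction to a count).} By \autoref{definition:median_estimator}, $\mathrm{Median}(\{V_i\})\geq 0$ means that at least $N$ of the $2N$ values $V_i$ are nonnegative. The adversary controls at most $2\varepsilon N$ indices. Hence on this event at least $N-2\varepsilon N=(1/2-\varepsilon)\cdot 2N$ of the \emph{uncorrupted} $\tilde V_i$ satisfy $\tilde V_i\geq 0$. It therefore suffices to bound
\begin{align*}
\mathbbm{P}\Big(\textstyle\sum_{i=1}^{2N}\mathbbm{1}(\tilde V_i\geq 0)\geq (1/2-\varepsilon)2N\Big).
\end{align*}
The indicators are i.i.d.\ Bernoulli$(p)$, so this is a binomial tail.

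\emph{Step 2 (single-sample bound via Cantelli).} By \autoref{appendix_lemma:V_i_finite_variance}, $\tilde V_i\geq 0$ implies $Z_i:=2(\tilde X_i+W_i-\mu)^\top\hat\nu\geq (C-2)\delta$. Here $Z_i$ is centered with variance at most $8\sigma^2$. Cantelli's one-sided inequality then gives
\begin{align*}
p\leq \frac{8\sigma^2}{8\sigma^2+(C-2)^2\delta^2}=\Big(1+\frac{(C-2)^2}{8}\frac{\delta^2}{\sigma^2}\Big)^{-1}.
\end{align*}
Since $\delta^2/\sigma^2\geq (96C_2)^{-1}$, property (iii) of \autoref{lemma:useful_constants_1} yields $p\leq \varrho/2\leq\varrho$, where $\varrho=\exp[-C_3\log(1+\delta^2/\sigma^2)]$.

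\emph{Step 3 (binomial tail).} Use the Chernoff bound $\mathbbm{P}(\mathrm{Bin}(2N,p)\geq 2Nq)\leq \exp[-2N\,\mathrm{KL}(q\Vert p)]$ with $q=1/2-\varepsilon$, valid since $q>p$. Because $\varepsilon<\alpha(1-\varrho)<\alpha$ by property (v), we have $q\geq 1/2-\alpha$, and we can lower bound the relative entropy as
\begin{align*}
\mathrm{KL}(q\Vert p)\geq q\log\frac{1}{\varrho}+q\log q+(1-q)\log(1-q).
\end{align*}
Here we drop $(1-q)\log\frac{1}{1-p}\geq 0$. Monotonicity in $q$ (or a direct comparison at $q=1/2-\alpha$, using that the tail is nonincreasing in the threshold) reduces this to the expression $(1/2-\alpha)\log(1/\varrho)+g(\alpha)$ up to bookkeeping. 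The function $g$ of \autoref{lemma:useful_constants_1} is designed to absorb exactly these entropy terms. Property (iv), $(1/2-\alpha)\log(1/\varrho)\geq -2g(\alpha)$, then shows that the exponent is at least a constant multiple of $(1/2-\alpha)\log(1/\varrho)=(1/2-\alpha)C_3\log(1+\delta^2/\sigma^2)$. This gives the claim with $C_4\asymp (1/2-\alpha)C_3$.

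The main obstacle is Step 3. The precise matching of the entropy term to $g(\alpha)$, including the factor $2$ and whether $q$ or $1/2-\alpha$ is used, must be done carefully. I would first try the crude bound $\binom{2N}{k}p^k\leq \exp[2N H(k/2N)]\varrho^k$ at $k=\lceil (1/2-\alpha)2N\rceil$. If that does not line up exactly with $g$, I would fall back on property (v) to ensure $q-p$ is bounded below and use a Hoeffding/KL comparison instead.
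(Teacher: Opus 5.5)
Your proposal is correct and follows essentially the same route as the paper: reduce via the contamination count to a binomial tail of the uncorrupted $\tilde V_i$, use Cantelli plus property (iii) to get $\mathbbm{P}(\tilde V_i\geq 0)\leq \varrho/2$, apply a Chernoff/KL bound, and finish with properties (iv)--(v) to get $C_4=(1/2-\alpha)C_3$. For the bookkeeping you flag, keep $p\leq \varrho/2$ rather than weakening it to $p\leq\varrho$; then at $q=1/2-\alpha$ your lower bound is exactly $(1/2-\alpha)\log(1/\varrho)+g(\alpha)$, which is the KL bound the paper uses. The paper sets the threshold at $1/2-\alpha(1-\varrho)$ instead of using $\varepsilon<\alpha$ plus monotonicity in $q$, but this is an equivalent, cosmetic difference.
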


\begin{proof}

    It suffices to prove for arbitrary $ N' $ that
    \begin{align*}
         \mathbbm{P}_{\mu\in B(\nu_1,\delta)} \left( \mathrm{ card }(  \{i\in[N']:\, \left\Vert X_i+W_i-\nu _1 \right\Vert \geq \left\Vert X_i+W_i-\nu _2 \right\Vert \}) \geq \frac{N'}{2} \right) \leq \exp\left[ -\frac{C_4N'}{2}\log(1+\frac{\delta ^2}{\sigma ^2})  \right] ,
    \end{align*}
    so that the desired statement follows by taking $ N' = 2N $.

    Let the constants $ \alpha,C,\{C_i\}_{i=0}^3,D_1 $ be as in \autoref{lemma:useful_constants_1}, which are the same as in the previous lemma. We begin by establishing the probability tail property of the uncorrupted $ \tilde{V_i} $ and then demonstrate that the error rate of the desired median estimator based on the corrupted $ V_i $ is bounded by its counterpart based on the uncorrupted $ \tilde{V_i} $. We have for $ \tilde{V}_i $ that:
    \begin{align*}
        \mathbbm{P}_{\mu\in B(\nu_1,\delta)}\left( \tilde{V}_i>0   \right)&\mathop{ \leq }\limits^{(i)}  \mathbbm{P}_\mu\left( 2(\tilde{X}_i+W_i - \mu )^\top \hat{\nu} > (C-2)\delta  \right)\\
        &\mathop{ \leq } \limits^{(ii)} \dfrac{ 1 }{ 1+(C-2)^2\frac{\delta ^2}{8\sigma ^2}  }\\
        &\mathop{ \leq }\limits^{(iii)}  \dfrac{ 1 }{ 2 }\exp\left[ -C_3\log(1+\frac{\delta ^2}{\sigma ^2} ) \right] \\
        &\mathop{ := }\limits^{(iv)}  \dfrac{ 1 }{ 2 }\varrho,
    \end{align*}
    where $ (i) $ follows from \autoref{appendix_lemma:V_i_finite_variance} where recall that $\hat{\nu}=\frac{\nu_2-\nu_1}{\Vert \nu_2-\nu_1\Vert}$; $ (ii) $ using the Cantelli's inequality; $ (iii) $ is by the condition \hyperlink{constant_property_3}{(iii)} in \autoref{lemma:useful_constants_1} by taking $ \xi = \frac{\delta ^2}{\sigma ^2} $ and noticing that $ \xi \geq \thenewboundary $; $ (iv) $ is by the definition of $\varrho$ in \autoref{lemma:useful_constants_1}.
    
    Next, we apply the method from the second case of the Gaussian setting's testing result (binomial concentration) in \cite[proof of Theorem 3.6]{prasadan_information_2024}. The details are as follows: Define the events
    \begin{align*}
        A_i&:= \{\Vert \tilde{X}_i+W_i - \nu _1 \Vert \geq \Vert \tilde{X}_i+W_i - \nu _2 \Vert \}, \\
        B_i&:= \{\left\Vert X_i+W_i - \nu _1 \right\Vert \geq \left\Vert X_i+W_i - \nu _2 \right\Vert \} ,
    \end{align*}
    where $ \tilde{X}_i $ denotes the uncorrupted samples while $ X_i $ denotes the possibly corrupted samples. Using the notation we directly have $\mathbbm{P}_{\mu\in B(\nu_i,\delta)}(A_i)\leq \varrho/2$ by the above argument. Then we define
    \begin{align*}
        \psi &:= \mathbbm{1}(\text{at least }\frac{N'}{2}\text{ of }B_i\text{ are true})\\
        \tilde{\psi} &:= \mathbbm{1}(\text{at least }\frac{N'}{2}-N'\alpha (1-\varrho)\text{ of }A_i\text{ are true}),
    \end{align*}
    where notice that $ \psi=\psi_\mathrm{ Median } $ is the median estimator in \autoref{definition:median_estimator}.

    Fix any $ \mu $ such that $ \left\Vert \mu -\nu _1 \right\Vert \leq \delta $. For a given $ \delta $ satisfying $ \frac{\delta ^2}{\sigma ^2}\geq \thenewboundary $, we have 
    \begin{align}\label{equation:median_estimator_binomial_concentration}
        \mathbbm{P}\left( \tilde{\psi} =1\right) &\mathop{=}^{(i)} \mathbbm{P}_{  }\left( \mathrm{ Bin }(N', \mathbbm{P}\left( A_i \right) ) \geq  \frac{N'}{2}-N'\alpha (1-\varrho) \right) \notag  \\
        &\mathop{=}^{(ii)} \mathbbm{P}_{  }\left( \mathrm{ Bin }(N', \mathbbm{P}\left( A_i^\complement \right) ) \leq N'(p-\zeta) \right) \notag \\
        &\mathop{\leq}^{(iii)} \mathbbm{P}_{  }\left( \mathrm{ Bin }(N', p ) \leq N'(p-\zeta) \right) \notag \\
        &\mathop{\leq}^{(iv)} \exp\left[ -N'\cdot \mathrm{KL}((p-\zeta)\Vert p)  \right],
    \end{align}
    where $(i)$ since $\mathbbm{1}(A_i)$ are IID Bernoulli random variables so we can express $\tilde{\psi}$ in the form of a binomial random variable; in $(ii)$ we define $ p:= 1-\varrho /2$, $ \zeta = (1/2-\alpha )(1-\varrho) $ for convenience; $(iii)$ we use the stochastic dominance that $\mathrm{Bin}(N',p)\leq_{\mathrm{st}}\mathrm{Bin}(N',\mathbbm{P}(A_i^\complement))$, meaning that we have $\mathbbm{P}(\mathrm{Bin}(N',p) \geq x ) \leq \mathbbm{P}(\mathrm{Bin}(N',\mathbbm{P}(A_i^\complement))\geq x)$ for any $x\in \mathbbm{R}$;  
    $(iv)$ by the Chernoff bound of Binomial random variable and here we denote by $ \mathrm{ KL }(w\Vert z)  $ the Kullback-Leibler divergence between two Bernoulli distribution with parameters $ w $ and $ z $, respectively.

    Now we lower bound $\mathrm{KL}(p-\zeta\Vert p)$ using a fact from \cite{prasadan_information_2024}[Proof of Theorem 3.6] which gives
    \begin{align*}
        \mathrm{KL}(p-\zeta\Vert p) &\geq  g(\alpha )+(\frac{1}{2}-\alpha )\log(1-\varrho).
    \end{align*}
    Substituting the above in \autoref{equation:median_estimator_binomial_concentration} we have
    \begin{align*}
        \mathbbm{P}\left( \tilde{\psi} =1\right) &\leq \exp\left[ -N'\big( g(\alpha )+(\frac{1}{2}-\alpha )\log(1-\varrho) \big)  \right]\\
        &\mathop{\leq}^{(i)} \exp\left[ -N'\cdot \frac{1}{2}(\frac{1}{2}-\alpha )\log (1/\varrho )  \right]\\
        &\mathop{=}^{(ii)} \exp\left[ -\frac{C_4N'}{2}\log (1+\frac{\delta ^2}{\sigma ^2} )  \right],
    \end{align*}
    where $ (i) $ is by the condition \hyperlink{constant_property_3}{(iv)} in \autoref{lemma:useful_constants_1}; $(ii)$ we substitute $\varrho$ and define $ C_4 := (\frac{1}{2}-\alpha )C_3 $.

    Finally, we complete the proof by arguing that $\mathbbm{P}_{\mu\in B(\nu_1,\delta)}( \psi=1 ) \leq \mathbbm{P}_{\mu\in B(\nu_1,\delta)}( \tilde{\psi}=1 )$: The portion of the corrupted samples is at most $\varepsilon$, for which we have $A_i \neq B_i$, while the non-corrupted ones remain $A_i=B_i$. As a result, if $ {\psi}=1 $, then at least $\frac{ N' }{ 2 }- N'\varepsilon  $ of $A_i$ can occur, for which we have the inequality
    \begin{align*}
         \dfrac{ N' }{ 2 }- N'\varepsilon\geq \dfrac{ N' }{ 2 }  -N'\alpha (1-\varrho) 
    \end{align*}
    by \hyperlink{constant_property_5}{(v)} of \autoref{lemma:useful_constants_1}. So consequently we must have $\tilde{\psi}=1$, and the desired bound follows.

\end{proof}
\renewcommand{\x}{\frac{ \delta ^2 }{ \sigma ^2 } }

With the above two lemmas, which are the two components of the robust test $ \psi $ defined in \autoref{definition:robust_estimator}, we can directly obtain the desired two-point testing error bound in \autoref{theorem:two_point_prob_bound}, with $ C_5 = C_2\wedge C_4 $ that only depends on $C_3$ and $\alpha$.

\subsection*{Proof of \autoref{theorem:covering_set_prob_bound}}\label{subsection:proof_covering_set_prob_bound}

Since $ \mathcal{M}=\{\nu _i\}_{i=1}^M $ is a $\delta$-covering of $ K'\ni \mu $, we can without loss of generality assume that $ \nu _1 $ is the point closest to $ \mu $ and thus $ \mu \in B(\nu _1, \delta ) $. Denoting $ T_i:= T(\delta ,\nu _i,\mathcal{M}) $, we have by the definition of $ T(\, \cdot \, ) $ and $i^*$ that
\begin{align*}
    \mathbbm{1}_{\left\Vert \nu _{i^*} -\nu _1 \right\Vert \geq C\delta } \leq \mathbbm{1}_{\max\{ T_{i^*},T_1 \} \geq C\delta } \leq \mathbbm{1}_{T_1 \geq C\delta } .
\end{align*} 
In the case where $ T_1\geq C\delta $, there exists some $ \nu _j $ such that $ \nu _j\succ \nu _1 $. By the condition $ \x\geq C_0N^{-1}\vee C_1^2\varepsilon $, we can apply \autoref{theorem:two_point_prob_bound}. Applying the union bound over $ M-1 $ possible $ \nu _j $, we obtain
\begin{align*}
    \mathbbm{P}\left( \left\Vert \nu _{i^*}-\nu _1 \right\Vert  \geq C\delta  \right) \leq \mathbbm{P}\left( T_1\geq C\delta  \right) \leq (M-1) \exp\left[ -C_5N\log(1+\x)  \right].
\end{align*}
Applying the triangle inequality yields
\begin{align*}
    \mathbbm{P}\left( \left\Vert \nu _{i^*}-\mu  \right\Vert  \geq (C+1)\delta  \right) \leq M \exp\left[ -C_5N\log(1+\x)  \right].
\end{align*}

\subsection*{Proof of \autoref{theorem:expected_risk_upper_bound}}\label{subsection:proof_expected_risk_upper_bound}

In this section, we prove \autoref{theorem:expected_risk_upper_bound}, which establishes the risk upper bound in bounded $K$ case. We first establish the probability tail bound on the error rate of the tree estimator $ \Y_J $ in \autoref{appendix_lemma:prob_bound_on_tree}. We then integrate the probability bound to obtain a bound on the expected risk in \autoref{appendix_lemma:tail_bound_for_interation_and_risk_bound}. Finally, the risk upper bound is obtained by carefully studying when the algorithm ends.

\renewcommand{\x}{\frac{\delta_J^2}{\sigma^2}}
\begin{lemma}\label{appendix_lemma:prob_bound_on_tree}
    Let the constants $ C,C_5 $ be as given in \autoref{theorem:two_point_prob_bound}, $C_0, C_1$ from \autoref{lemma:useful_constants_1}, and set $ c=2C+2 $. For any $J\in \mathbbm{Z}$, denote $\delta_J = \frac{d}{2^{J-1}(C+1)}$. Let $J^{\dagger}\in \mathbbm{N}$ be the maximal $J$ such that \autoref{eq:J_star_condition}:
    \begin{align*}
        C_5N\log(1+ \x) \geq 4\log M^\mathrm{loc}(c\delta_J,2c) \vee \log 2
    \end{align*}
    holds, \textbf{and} additionally $ \frac{\delta_{J}^2}{\sigma^2} \geq C_0N^{-1}\vee C_1^2\varepsilon$. We set $ J^\dagger = 1 $ if the above conditions never hold for any $ J\in \mathbbm{N} $. Assume $C_5N >2$, then for all $ 1\leq J\leq J^\dagger $, we have
    \begin{align*}
        \mathbbm{P}\left( \left\Vert \Y_J-\mu  \right\Vert > (C+1)\delta_J \right) &\leq 2\cdot\mathbbm{1}(J>1) \cdot \exp\left[ -\frac{1}{2}C_5N\log(1+ \x)  \right] ,
    \end{align*}
    where $\Y_J$ is $J^\mathrm{ th }$ element in the tournament winner chain from \autoref{algorithm:robust_algorithm}.

    \textbf{Note:} In the case where the above conditions to $\delta_J$ never hold, we adopt the convention that $ J^\dagger = 1 $ and the desired inequality holds trivially.
\end{lemma}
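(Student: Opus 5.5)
We argue by induction on $J$, following \cite[Lemma 4.6/Theorem 4.5]{prasadan_information_2024}. The base case $J=1$ is trivial: $\Y_1=s$ and $\mu\in K\subseteq B(s,d)$, so $\|\Y_1-\mu\|\le d<(C+1)\delta_1 = d$ fails to be violated and the right-hand side is $0$. (If the strict inequality causes trouble at the boundary we only need $\|\Y_1-\mu\|\le d=(C+1)\delta_1$, which holds.) For $2\le J\le J^\dagger$ we first record that the conditions defining $J^\dagger$ are monotone in $J$. The quantity $\log(1+\delta_J^2/\sigma^2)$ decreases in $J$, while by \autoref{lemma:monotonicity_of_local_entropy} $\log M^{\mathrm{loc}}(c\delta_J,2c)$ is non-increasing in $\delta_J$, hence non-decreasing in $J$. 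The extra condition $\delta_J^2/\sigma^2\ge C_0N^{-1}\vee C_1^2\varepsilon$ is also monotone. So all conditions hold for every $J'\le J^\dagger$, and in particular \autoref{theorem:two_point_prob_bound} and \autoref{theorem:covering_set_prob_bound} apply at every such level.

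For the inductive step we bound $\mathbb{P}(\|\Y_J-\mu\|>(C+1)\delta_J)$ by two terms. The first is $\mathbb{P}(\|\Y_{J-1}-\mu\|>(C+1)\delta_{J-1})$. The second is $\mathbb{P}$ of the event that $\|\Y_{J-1}-\mu\|\le (C+1)\delta_{J-1}$ and yet the tournament at layer $J$ fails. On the good event, $\mu\in B(\Y_{J-1},(C+1)\delta_{J-1})$, and $(C+1)\delta_{J-1}=d/2^{J-2}$. By \autoref{lemma:tree_properties}, $\mathcal O(\Y_{J-1})$ is then a $d/(2^{J-2}c)$-covering of $B(\Y_{J-1},d/2^{J-2})\cap K\ni\mu$. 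With $c=2(C+1)$ this covering radius equals $\delta_{J}$ (up to the indexing convention of the tournament parameter in \autoref{algorithm:robust_algorithm}, which we align so that the tournament is run at scale $\delta_J$).

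Conditioning on $\Y_{J-1}$ is problematic, since it is data-dependent. We avoid this with a union bound over all possible parents $u\in\mathcal L(J-1)\cap B(\mu,d/2^{J-2})$, whose number is at most $M^{\mathrm{loc}}(d/2^{J-2},2c)=M^{\mathrm{loc}}(c\delta_J,2c)$ by \autoref{lemma:tree_properties}. For each fixed $u$, \autoref{theorem:covering_set_prob_bound} with $\mathcal M=\mathcal O(u)$, whose cardinality is at most $M^{\mathrm{loc}}(c\delta_J,2c)$, gives failure probability at most $M^{\mathrm{loc}}(c\delta_J,2c)\exp[-C_5N\log(1+\delta_J^2/\sigma^2)]$. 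Altogether the second term is at most
\begin{align*}
M^{\mathrm{loc}}(c\delta_J,2c)^2\exp\big[-C_5N\log(1+\tfrac{\delta_J^2}{\sigma^2})\big]\le \exp\big[-\tfrac12 C_5N\log(1+\tfrac{\delta_J^2}{\sigma^2})\big],
\end{align*}
using $2\log M^{\mathrm{loc}}\le \tfrac12 C_5N\log(1+\delta_J^2/\sigma^2)$, which follows from \autoref{eq:J_star_condition}.

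Unrolling the recursion gives $\sum_{j=2}^{J}\exp[-\tfrac12 C_5N\log(1+\delta_j^2/\sigma^2)]$. The main obstacle is showing this sum is at most twice its last term. Since $\delta_{j}=2^{J-j}\delta_J$, the ratio of consecutive terms is $\big((1+\delta_J^2/\sigma^2)/(1+4\delta_J^2/\sigma^2)\big)^{C_5N/2}$ for the top pair. In general $(1+4x)/(1+x)$ is bounded below by $1$ but may be close to $1$ when $x$ is large. We handle this via $(1+4^kx)/(1+x)\ge \min(4^k,\,\cdot)$ and the fact that the $j$-th term is at most $(1+4^{J-j}x)^{-C_5N/2}\le (1+x)^{-C_5N/2}\,\big((1+4^{J-j}x)/(1+x)\big)^{-C_5N/2}$. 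For $x\ge$ const the factor is at least $4^{J-j}/\text{const}$-like, and for small $x$ it is at least $1+c'(4^{J-j}-1)x$. In both regimes we use $C_5N>2$ together with $x\ge C_0/N$ to make the tail geometric with ratio at most $1/2$, so the total is at most $2\exp[-\tfrac12C_5N\log(1+\delta_J^2/\sigma^2)]$. This summation step is where we expect to spend the most care, and it is the reason for the hypotheses $C_5N>2$ and $\delta_J^2/\sigma^2\ge C_0N^{-1}$.
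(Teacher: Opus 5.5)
Your structure is the paper's proof. You use the same telescoping bound $\mathbbm{P}(A_J)\le \mathbbm{P}(A_1)+\sum_{j=2}^J\mathbbm{P}(A_j\cap A_{j-1}^\complement)$ and the same union bound over parents $u\in\mathcal{L}(j-1)\cap B(\mu,d/2^{j-2})$. You apply \autoref{theorem:covering_set_prob_bound} to $\mathcal{O}(u)$ at scale $\delta_j=d/(2^{j-2}c)$, which gives the factor $[M^{\mathrm{loc}}(c\delta_j,2c)]^2$, and absorb that factor through \eqref{eq:J_star_condition}. The step that is not right as written is the geometric summation you flagged.

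\textbf{The summation step.} Your regime diagnosis is reversed. The ratio $(1+4x)/(1+x)$ tends to $1$ as $x\to 0$, not as $x$ grows; for $x\ge 2$ it is at least $3$, so there $C_5N>2$ alone makes the ratio of terms smaller than $1/3$. In the small-$x$ regime the working hypothesis is not $x\ge C_0/N$. That route would need a lower bound on $C_0C_5$, which is not among the properties in \autoref{lemma:useful_constants_1}; the condition $x\ge C_0N^{-1}\vee C_1^2\varepsilon$ is only there to license \autoref{theorem:two_point_prob_bound} at each level. What does the work is the $\log 2$ clause of \eqref{eq:J_star_condition}. Every consecutive ratio satisfies $\frac{1+4^l x}{1+4^{l+1}x}\le\frac{1+x}{1+4x}\le\max\{\frac{1}{1+x},\frac13\}$. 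Hence with exponent $C_5N$ the ratio of consecutive terms is at most $\max\{2^{-1},3^{-2}\}=\frac12$, using $C_5N\log(1+x)\ge\log 2$ and $C_5N>2$.

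\textbf{The order of steps.} You absorb $M^2$ level by level before summing, so your exponent is already halved to $C_5N/2$. The same estimate then gives only ratio $\le 2^{-1/2}$, i.e.\ a constant $(1-2^{-1/2})^{-1}\approx 3.4$ instead of $2$. You can avoid this by also using $M^{\mathrm{loc}}(c\delta_J,2c)\ge 2$, so that \eqref{eq:J_star_condition} gives $C_5N\log(1+x)\ge 4\log 2$. The paper avoids it by reordering:
\begin{itemize}
\item bound every $[M^{\mathrm{loc}}(c\delta_j,2c)]^2$ by the last one, $[M^{\mathrm{loc}}(c\delta_J,2c)]^2$, via \autoref{lemma:monotonicity_of_local_entropy};
\item sum $\exp[-C_5N\log(1+\delta_j^2/\sigma^2)]$ with the full exponent via \autoref{appendix_lemma:sum_of_sequence}, which gives at most twice the last term;
\item absorb $M^2$ once, using \eqref{eq:J_star_condition} at level $J$.
\end{itemize}
With that reordering (or the extra $M^{\mathrm{loc}}\ge 2$ observation), your proof is complete.
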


\begin{proof}

    Define the event of interest by $A_j:= \big\{\left\Vert \Y_j - \mu  \right\Vert > \frac{ d }{ 2^{j-1} }\big\}$. By \cite[Lemma B.4 (i)]{prasadan_information_2024}, we have for $ 1\leq J\leq J^\dagger $ that
    \begin{align*}
        \mathbbm{P}_{  }\left( A_J \right) \leq \underbrace{\mathbbm{P}_{  }\left( A_1 \right) }_{\mathrm{ I }} + \underbrace{\mathbbm{P}_{  }\left( A_2\cap A_1^\complement \right) }_{\mathrm{ II }} + \sum_{j=3}^J \underbrace{\mathbbm{P}_{  }\left( A_j\cap A_{j-1}^\complement \right) }_{\mathrm{ III }},
    \end{align*}
    that is, we decompose the probabilistic tail bound on the $J$-th layer into the tail bound on each layer $j\leq J$ of the tree. Since all $j$ satisfying $1\leq j\leq J\leq J^\dagger $ have $ \frac{\delta_j^2}{\sigma^2} \geq C_0N^{-1}\vee C_1^2\varepsilon $, we can apply the result of \autoref{theorem:two_point_prob_bound} to all such $j$ layers. In the following we first analyze the general case $3\leq j\leq J$ (term III), then address the edge cases $j=1,2$ (terms I and II).

    \begin{itemize}[topsep=2pt,itemsep=0pt]
        \item[\textbf{III}] By our tree construction, for $ 3\leq j \leq J$, we have
        \begin{align*}
            \mathbbm{P}&\left(A_j\cap A_{j-1}^\complement \right)  \\
            &\mathop{\leq}^{(i)} \sum_{u\in \mathcal{L}(j-1)\cap B(\mu , \frac{d}{2^{j-2}})} \mathbbm{P}\left( \left\Vert \Y_j-u \right\Vert  \geq \frac{d}{2^{j-1}} , \Y_{j-1}=u \right) \\
            &\mathop{=}^{(ii)} \sum_{u\in \mathcal{L}(j-1)\cap B(\mu , \frac{d}{2^{j-2}})} \mathbbm{P}\left( \left\Vert \mathop{ \arg\min }\limits_{\nu \in \mathcal{O}(u)} T( \delta_j ,\nu ,\mathcal{O}(u) ) - \mu  \right\Vert \geq (C+1)\delta_j , \Y_{j-1}=u \right) \\
            &\leq \sum_{u\in \mathcal{L}(j-1)\cap B(\mu , \frac{d}{2^{j-2}})} \mathbbm{P}\left( \left\Vert \mathop{ \arg\min }\limits_{\nu \in \mathcal{O}(u)} T( \delta_j ,\nu ,\mathcal{O}(u) ) - \mu  \right\Vert \geq (C+1)\delta_j \right)\\
            &\mathop{\leq}^{(iii)}\sum_{u\in \mathcal{L}(j-1)\cap B(\mu , \frac{d}{2^{j-2}})} M^{\mathrm{ loc } }(\frac{d}{2^{j-2}},2c) \cdot \exp \left[ -C_5N\log(1+\frac{\delta_j^2}{\sigma^2})  \right] \\
            &\mathop{\leq}^{(iv)} \big[M^{\mathrm{ loc } }(\frac{d}{2^{j-2}},2c)\big]^2 \cdot \exp \left[ -C_5N\log(1+\frac{\delta_j^2}{\sigma^2})  \right],
        \end{align*}
        where $(i)$ since we must have $\Y_{j-1}$ being some point $u$ in the discrete set $ \mathcal{L}(j-1)\cap B(\mu , \frac{d}{2^{j-2}})$; $(ii)$ since we choose $\Y_j$ as the winner of the tournament, according to \autoref{definition:tournament_T}; $(iii)$ since $ \mathcal{O}(u) $ is a $ \delta_j = d/2^{j-1}(C+1) $ covering set using \autoref{lemma:tree_properties} and we apply \autoref{theorem:covering_set_prob_bound}; finally $(iv)$ since $ \mathrm{ card }( \mathcal{L}(j-1)\cap B(\mu , \frac{d}{2^{j-2}})) \leq M^\mathrm{ loc }(\frac{d}{2^{j-2}},2c) $ by \autoref{lemma:tree_properties}.
        \item[\textbf{I}] We have $ \mathbbm{P}\left( A_1 \right) = 0 $.
        \item[\textbf{II}] Level 2 is a $ d/c $-covering set of $ K\cap B(\nu ,d) $. Applying \autoref{theorem:covering_set_prob_bound} and repeating the above process, we have
        \begin{align*}
            \mathbbm{P}_{  }\left( A_2\cap A_1^\complement \right) &= \mathbbm{P}_{  }\left( A_2 \right) \\
            &\leq  M^{\mathrm{ loc } }(d,c) \cdot \exp \left[ -C_5N\log(1+\frac{\delta_2^2}{\sigma^2})  \right]\\
             &\mathop{ \leq }\limits^{(i)} \big[M^{\mathrm{ loc } }(d,2c)\big]^2 \cdot \exp \left[ -C_5N\log(1+\frac{\delta_2^2}{\sigma^2})  \right] ,
        \end{align*}
        where $ (i) $ we use the fact that $ M^{\mathrm{ loc } }(d,2c)$ is an integer. The result turns out to be consistent with case $\text{III}$.
    \end{itemize}

    Combining the above results, we have
    \begin{align}\label{equation:applying_summation}
        \mathbbm{P}_{  }\left( \left\Vert \Y_J-\mu  \right\Vert > (C+1)\delta _J \right) &= \mathbbm{P}_{  }\left( A_J \right) = \mathbbm{P}(A_1)+\mathbbm{P}(A_2\cap A_1^\complement) + \sum_{j=3}^J \mathbbm{P}(A_j\cap A_{j-1}^\complement) \notag \\
        &\leq \sum_{j=2}^J \big[M^{\mathrm{ loc } }(\frac{d}{2^{j-2}},2c)\big]^2 \cdot \exp \left[ -C_5N\log(1+ \frac{\delta_j^2}{\sigma^2})  \right] \notag \\
        &\leq  \big[M^{\mathrm{ loc } }(\frac{d}{2^{J-2}},2c)\big]^2 \cdot \sum_{j=2}^J \exp \left[ -C_5N\log(1+ \frac{\delta_j^2}{\sigma^2})  \right] \notag\\
        &\mathop{ \leq }\limits^{(i)}  2\cdot\mathbbm{1}(J>1)[M^{\mathrm{ loc } }(\frac{d}{2^{J-2}},2c)]^2 \cdot \exp \left[ -C_5N\log(1+ \x)  \right] \notag\\
        &\mathop{ \leq }\limits^{(ii)} 2\cdot\mathbbm{1}(J>1) \cdot \exp\left[ -\frac{1}{2}C_5N\log(1+ \x)  \right].
    \end{align}
    Step (i) involves a non-trivial summation of which the details are provided in \autoref{appendix_lemma:sum_of_sequence}. Step $(ii)$ follows since the choice of $ J $ is such that \autoref{eq:J_star_condition} holds.

\end{proof}

\begin{definition}\label{appendix_definition:three_conditions}
    It turns out that the following three conditions are crucial for the probability tail bound to hold, and finally for the risk upper bound. We restate them here and define some related notations for later use in the proof of \autoref{theorem:expected_risk_upper_bound}.

    Let the constants $ C,C_0,C_1,C_5 $ be as in \autoref{appendix_lemma:prob_bound_on_tree}, and set $c=2C+2 $. For any $J\in \mathbbm{Z}$, denote $\delta_J = \frac{d}{2^{J-1}(C+1)}$. We define the following three conditions:
    \begin{itemize}[topsep=2pt,itemsep=0pt]
        \item[\textbf{A}] $ C_5N\log(1+\x) \geq 4\log M^{\mathrm{ loc } }(\delta c,2c) \vee \log 2 $;
        \item[\textbf{B}] $ \x \geq C_0N^{-1}  $;
        \item[\textbf{C}] $\x \geq  C_1^2\varepsilon $.
    \end{itemize}
    We define $J^A$ as the largest $J$ such that condition \textbf{A} holds, and similarly $J^B$ and $J^C$ for conditions \textbf{B} and \textbf{C}, respectively.
\end{definition}

\begin{remark}\label{appendix_remark:three_conditions_A_existence}
    We remark that condition \textbf{A} would always hold when $J$ is sufficiently small and thus some $J^A\in\mathbbm{Z}$ always exists: Recall that as we argued in \autoref{remark:volumn_argument1} that we have $\log M^{\mathrm{ loc } }(\delta ,2c) \lesssim n$ for any $\delta >0$, while the left hand side of condition \textbf{A} can be made arbitrarily large by choosing small $J$. As a result some such $J^A$ always exists. 
\end{remark}

\begin{lemma}\label{appendix_lemma:tail_bound_for_interation_and_risk_bound}
    Let the constants $ C,C_0,C_1,C_5 $ be as in \autoref{appendix_lemma:prob_bound_on_tree}, and set $c=2C+2 $. Let $ J^A, J^B, J^C $ be as defined in \autoref{appendix_definition:three_conditions}. 
    Following these definitions, we have $J^* = J^A\vee 1$ and $J^\dagger = \min\{J^A, J^B, J^C\}\vee 1$ where $J^*$ is from \autoref{theorem:expected_risk_upper_bound} and $J^\dagger$ is from \autoref{appendix_lemma:prob_bound_on_tree}.
    Denote $\nu^* = \Y_{J^*} $, and then $ \nu ^{**} $ be the output after at least $ J^* $ iterations (say, at step $ J^{**}\geq J^* $ so $\nu^{**}=\Y_{J^{**}+1}$). Assume $C_5N>2$, then there exists a constant $ \omega $ depending only on $C$ such that for any $ x\geq \delta _{J^\dagger} = \frac{ d }{ 2^{J^\dagger - 1}(C+1)} $, we have:
    \begin{align*}
        \mathbbm{P}\left( \left\Vert \mu -\nu ^{**}   \right\Vert> \omega x  \right) &\leq 2\cdot\mathbbm{1}(J^*>1) \cdot \exp\left[ -\frac{1}{2}C_5N\log(1+ \dfrac{ x^2 }{ 4\sigma ^2} )  \right].
    \end{align*}
    As a result, for any $ \delta \geq \delta _{J^\dagger} $, it holds that:
    \begin{align}\label{eq:integrated_tail_bound}
        \mathbbm{E}_W\mathbbm{E}_X\left\Vert \mu -\nu ^{**} \right\Vert ^2& \lesssim \delta^2 + \frac{\sigma^2}{N}.
    \end{align}
\end{lemma}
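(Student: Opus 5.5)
The first two claims are bookkeeping. Condition \autoref{eq:J_star_condition} is exactly condition \textbf{A}, so $J^*=J^A\vee 1$. The defining condition of $J^\dagger$ in \autoref{appendix_lemma:prob_bound_on_tree} is the conjunction of \textbf{A}, \textbf{B} and \textbf{C}. Each of the three conditions is monotone in $J$, since $\delta_J$ decreases in $J$ and $\log M^\mathrm{loc}$ is non-increasing by \autoref{lemma:monotonicity_of_local_entropy}, so $\log M^\mathrm{loc}(c\delta_J,2c)$ increases in $J$. Hence each condition holds exactly for $J$ below its threshold, and the maximal $J$ satisfying all three is $\min\{J^A,J^B,J^C\}\vee 1$. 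In particular $J^\dagger\le J^*\le J^{**}$.

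For the tail bound I fix $x\ge\delta_{J^\dagger}$ and pick the largest $J\le J^\dagger$ with $\delta_J\ge x/2$ (or $J=1$ if none exists). Then $x/2\le \delta_J\le x$ when $J>1$, and $\delta_1\ge x/2$ otherwise. On the event $\|\Y_J-\mu\|\le (C+1)\delta_J$, the chain property in \autoref{lemma:tree_properties} gives
\begin{align*}
\|\nu^{**}-\mu\|\le (C+1)\delta_J+\|\Y_{J^{**}+1}-\Y_J\|\le (C+1)\delta_J+\frac{d(2+4c)}{c2^{J}},
\end{align*}
and the last term equals $\frac{(2+4c)(C+1)}{2c}\delta_J$. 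So $\|\nu^{**}-\mu\|\le \omega' \delta_J$ for a constant $\omega'$ depending only on $C$. If $J=1$, then $\|\nu^{**}-\mu\|\le d\lesssim \delta_1$, and since $\delta_1\le$ const$\cdot x$ in the relevant regime (or else the event is empty because $\|\nu^{**}-\mu\|\le d$), this is also fine. Taking $\omega$ larger than these constants, the event $\{\|\nu^{**}-\mu\|>\omega x\}$ is contained in $\{\|\Y_J-\mu\|>(C+1)\delta_J\}$. \autoref{appendix_lemma:prob_bound_on_tree} bounds the probability of the latter by $2\cdot\mathbbm{1}(J>1)\exp[-\tfrac12C_5N\log(1+\delta_J^2/\sigma^2)]$. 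Using $\delta_J\ge x/2$, this is at most $2\cdot\mathbbm{1}(J^*>1)\exp[-\tfrac12 C_5N\log(1+x^2/(4\sigma^2))]$, where $J>1$ forces $J^*\ge J^\dagger>1$. The one delicate point is the choice of $J$: the largest $J$ with $\delta_J\ge x/2$ must not exceed $J^\dagger$, and when $x$ is large that $J$ may be $1$. That is why the indicator and the $J=1$ branch have to be handled separately.

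For the risk bound I integrate the tail. For $\delta\ge\delta_{J^\dagger}$,
\begin{align*}
\mathbbm{E}\|\mu-\nu^{**}\|^2 \le \omega^2\delta^2+\int_{\omega^2\delta^2}^\infty \mathbbm{P}(\|\mu-\nu^{**}\|^2>t)\,dt .
\end{align*}
Substituting $t=\omega^2x^2$ bounds the integral by a constant times $\int_\delta^\infty x\,(1+x^2/4\sigma^2)^{-C_5N/2}dx$. This integral equals $\frac{4\sigma^2}{C_5N-2}(1+\delta^2/4\sigma^2)^{1-C_5N/2}\lesssim \sigma^2/N$, using the assumption $C_5N>2$. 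In fact $C_5N\ge 4$ would be comfortable; otherwise I use $C_5N/2-1\gtrsim N$ as granted by the large-sample assumption. Together these give \autoref{eq:integrated_tail_bound}. Finally, I take the expectation over $W$, which is harmless because every bound above holds conditionally on $W$.
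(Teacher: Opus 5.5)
Your proof is correct and follows the paper's route. You pick a level $J\le J^\dagger$ with $\delta_J\asymp x$, use the chain-distance bound of \autoref{lemma:tree_properties} to reduce $\{\|\mu-\nu^{**}\|>\omega x\}$ to the event controlled by \autoref{appendix_lemma:prob_bound_on_tree}, and integrate the tail; bounding $\|\Y_{J^{**}+1}-\Y_J\|$ directly instead of passing through $\nu^*=\Y_{J^*}$, as the paper does, only gives a smaller $\omega$. Two cosmetic points: in the $J=1$ branch your own choice of $J$ always gives $\delta_1\le x$, so the event is empty since $\|\nu^{**}-\mu\|\le d=(C+1)\delta_1$; and the tail bounds hold jointly in $(X,W)$ rather than conditionally on $W$, which is exactly what the tail-integral formula for $\mathbbm{E}_W\mathbbm{E}_X$ needs.
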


\begin{proof}

    For any given $ x \geq  \delta_{J^\dagger} $, we can find some $ \tilde{J}\leq J^\dagger $ such that $ x \in [\delta_{\tilde{J}},\delta_{\tilde{J}-1}) $. Let $ J = \tilde{J}\vee 1 $ and thus we have $ 1\leq J \leq J^\dagger \leq J^* \leq J^{**} $. Now to bound the tail probability of $ \left\Vert \mu -\nu ^{**} \right\Vert $, we consider the following decomposition: 
    \begin{align*}
        \mu -\nu ^{**} = \mu -\Y_J + \Y_J -\nu ^* + \nu ^* -\nu ^{**} ,
    \end{align*}
    where $ \nu ^* = \Y_{J^*} $ and $ \nu ^{**} = \Y_{J^{**}+1} $. In the decomposition of $ \mu -\nu ^{**} $, we have the following using \autoref{lemma:tree_properties}:
    \begin{equation}\label{eq:distance_relation_between_Js}
        \begin{aligned}
            \left\Vert \Y_J-\nu ^* \right\Vert &\leq  \dfrac{ d(2+4c) }{ c\cdot 2^J }= \dfrac{ 5+4C }{ 1+C }\dfrac{ d }{ 2^{J} } , \\
        \left\Vert \nu ^*-\nu ^{**} \right\Vert &= \left\Vert \Y_{J^*} - \Y_{J^{**}+1} \right\Vert \\
        \leq& \dfrac{ d(2+4c) }{ c\cdot 2^{J^*} }  = \dfrac{ 5+4C }{ 1+C } \dfrac{ d }{ 2^{J^*} } .
        \end{aligned}
    \end{equation}
    
    Now define $\omega:= 6+5C $, and we have
    \begin{align*}
        \mathbbm{P}\left( \left\Vert \mu -\nu ^{**}   \right\Vert> \omega  x  \right) &\leq  \mathbbm{P}_{  }\left( \left\Vert \mu -\Y_J \right\Vert + \left\Vert \Y_J-\nu ^* \right\Vert + \left\Vert \nu ^*-\nu ^{**} \right\Vert > \omega  x  \right) \\
        &\mathop{ \leq }\limits^{(i)}  \mathbbm{P}_{  }\left( \left\Vert \Y_J- \mu \right\Vert +\dfrac{ 5+4C }{ 1+C }\dfrac{ d }{ 2^{J} } + \dfrac{ 5+4C  }{ 1+C } \dfrac{ d }{ 2^{J^*} } > \omega x  \right)\\
        &\mathop{ \leq }\limits^{(ii)}  \mathbbm{P}_{  }\left( \left\Vert \Y_J- \mu \right\Vert > \dfrac{ d }{ 2^{J-1} } \right)\\
        &\mathop{ \leq }\limits^{(iii)}   2\cdot\mathbbm{1}(J>1) \cdot \exp\left[ -\frac{1}{2}C_5N\log(1+ \x)  \right]\\
        &\mathop{ = }\limits^{(iv)}   2\cdot\mathbbm{1}(J>1) \cdot \exp\left[ -\frac{1}{2}C_5N\log(1+ \frac{\delta _{\tilde{J}}^2}{\sigma ^2})  \right]\\
         &\mathop{ \leq }\limits^{(v)} 2\cdot\mathbbm{1}(J>1) \cdot \exp\left[ -\frac{1}{2}C_5N\log(1+ \dfrac{ x^2 }{ 4\sigma ^2 } )  \right],
    \end{align*}
    where $ (i) $ uses the relation between $ \Y_J,\nu ^*,\nu ^{**} $ discussed in \autoref{eq:distance_relation_between_Js}; step $ (iii) $ applies \autoref{appendix_lemma:prob_bound_on_tree}; $ (iv) $ notes that in the case $ J>1 $ we have $ \delta _J=\delta _{\tilde{J}} $ (and if $ J\leq 1 $ the inequality holds trivially since we must have $\mathbbm{P}\left( \left\Vert \mu -\nu ^{**}   \right\Vert> \omega  x  \right)  = 0$); $ (v) $ uses $ x < \delta_{\tilde{J}-1} = 2\delta_{\tilde{J}} $ and the monotonicity of $ t\mapsto \log(1+t) $; $ (ii) $ follows from the definition of $ \omega  $:
    \begin{align*}
        \omega x &= \frac{1}{2}\big(  2+\dfrac{ 5+4C }{ 1+C } + \dfrac{ 5+4C }{ 1+C }    \big)(C+1)x\\
        &\mathop{\geq}^{(i)} \big(  2+\dfrac{ 5+4C }{ 1+C } + \dfrac{ 5+4C }{ 1+C }    \big)\dfrac{ d }{ 2^{J} } \\
        &\mathop{\geq}^{(ii)} \dfrac{ d }{ 2^{J-1} } + \dfrac{ 5+4C }{ 1+C }\dfrac{ d }{ 2^{J} } + \dfrac{ 5+4C  }{ 1+C } \dfrac{ d }{ 2^{J^*} },
    \end{align*}
    where in $(i)$ by the fact that $ x \geq \delta _{\tilde{J}}\geq \delta _{J} $ and in $(ii)$ by $\delta_J\geq \delta_{J^*}$.  

    Note that $ \mathbbm{1}(J>1)\leq \mathbbm{1}(J^*>1) $, and thus we can conclude the desired result:
    \begin{align*}
        \mathbbm{P}\left( \left\Vert \mu -\nu ^{**}   \right\Vert> \omega x  \right) &\leq 2\cdot\mathbbm{1}(J^*>1) \cdot \exp\left[ -\frac{1}{2}C_5N\log(1+ \dfrac{ x^2 }{ 4\sigma ^2} )  \right] ,\qquad x\geq \delta_{J^\dagger}.
    \end{align*}

    Next, we integrate the tail probability over $ x $ to obtain the bound on the expectation: 
    \begin{align*}
        \mathbbm{E}_W\mathbbm{E}_X\left[ \left\Vert \mu -\nu ^{**} \right\Vert ^2 \right] &= \int_0^\infty \mathbbm{P}\left( \left\Vert \mu -\nu ^{**}   \right\Vert ^2 > u  \right) \,\mathrm{d}u \\
        &= 2\omega^2\int_0^\infty  u\cdot \mathbbm{P}\left( \left\Vert \mu -\nu ^{**}   \right\Vert  > \omega  u  \right)\,\mathrm{d}u\\
        &\leq 2\omega ^2\Big[ \int_0^{\delta }  u \,\mathrm{d}u + \int_{\delta }^\infty  u\cdot \mathbbm{P}\left( \left\Vert \mu -\nu ^{**}   \right\Vert > \omega  u  \right) \,\mathrm{d}u\Big] \\
        &\leq \omega ^2\delta ^2 + 4\omega ^2\sigma ^2 \mathbbm{1}(J^*>1) \int_{\delta / \sigma  }^\infty  \dfrac{ u }{ (1+u^2/4)^{C_5N/2-1} }\,\mathrm{d}u\\
        &=  \omega ^2\delta ^2 + \omega ^2\mathbbm{1}(J^*>1)\dfrac{ 8\sigma ^2 }{ C_5N/2-1 }  \cdot \dfrac{ 1 }{ (1+\delta ^2/4\sigma ^2)^{C_5N/2-1} },\quad \delta \geq \delta _{J^\dagger} ,
    \end{align*} 
    and the desired rate upper bound follows.

\end{proof}

\begin{lemma}\label{appendix_lemma:cancel_the_log_sign}
    Let the constants $ C,C_0,C_1,C_5 $ be as in \autoref{appendix_lemma:prob_bound_on_tree}, conditions \textbf{A}, \textbf{B}, \textbf{C} and corresponding $ J^A $, $ J^B $, $ J^C \in \mathbbm{Z} $ be as in \autoref{appendix_definition:three_conditions}. 
    We recall that $ \delta_J = \dfrac{ d }{ 2^J(C+1) } $.
    
    If $ N\gtrsim \mathop{ \sup }\limits_{\delta >0}\log M^\mathrm{ loc }(\delta ,2c) $, then we have:
    \begin{align*}
        N(\dfrac{ \delta _{J^A} }{ \sigma  } )^2 \asymp \log M^\mathrm{ loc }(\frac{\delta _{J^A}}{2}c,2c)\vee 1. 
    \end{align*}
    Similarly, we define $ J^B $ and $ J^C $ as the largest $ J $ such that conditions \textbf{B} and \textbf{C} hold, respectively. We then have $(\dfrac{ \delta _{J^B} }{ \sigma  })^2\asymp  N^{-1}$ and $(\dfrac{ \delta _{J^C} }{ \sigma  })^2 \asymp  \varepsilon$. 
    
    Furthermore, we have the relation $\delta _{J^A}\gtrsim \delta _{J^B} $.    
\end{lemma}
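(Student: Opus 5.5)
The plan is to exploit the geometric spacing $\delta_{J+1}=\delta_J/2$ together with maximality of $J^A$, $J^B$, $J^C$. Failure of each condition at $J+1$ gives an upper bound, and validity at $J$ gives a lower bound; the two differ by at most a constant factor.

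\emph{Conditions B and C.} These are straightforward. By definition $\delta_{J^B}^2/\sigma^2\ge C_0N^{-1}$. Since condition \textbf{B} fails at $J^B+1$,
\begin{align*}
\frac{\delta_{J^B}^2}{4\sigma^2}=\frac{\delta_{J^B+1}^2}{\sigma^2}<C_0N^{-1},
\end{align*}
so $\delta_{J^B}^2/\sigma^2\asymp N^{-1}$. The same argument gives $\delta_{J^C}^2/\sigma^2\asymp\varepsilon$. Here $J$ ranges over $\mathbbm{Z}$, so the maximal $J$ exists because $\delta_J\to\infty$ as $J\to-\infty$ and $\delta_J\to0$ as $J\to\infty$.

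\emph{Condition A.} Write $L(\delta):=\log M^{\mathrm{loc}}(\delta c,2c)\vee\log2$; by \autoref{lemma:monotonicity_of_local_entropy} it is non-increasing in $\delta$. First I remove the logarithm. Because $N\gtrsim\sup_\delta\log M^{\mathrm{loc}}(\delta,2c)$ with a large enough implied constant, validity of \textbf{A} at $J^A$ gives
\begin{align*}
C_5N\log(1+\delta_{J^A}^2/\sigma^2)\ge 4L(\delta_{J^A}),
\end{align*}
and this is at most a small multiple of $N$. Hence $\log(1+\delta_{J^A}^2/\sigma^2)$ is bounded by a small constant, so $\delta_{J^A}^2/\sigma^2\lesssim1$ and $\log(1+t)\asymp t$ on this range. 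This yields
\begin{align*}
N\delta_{J^A}^2/\sigma^2\gtrsim L(\delta_{J^A})\ge L(\delta_{J^A}/2)\cdot(\text{const})^{-1}.
\end{align*}
The last step needs care, since monotonicity only goes the other way. I will instead state the lower bound in terms of $\delta_{J^A}$ and the upper bound in terms of $\delta_{J^A+1}=\delta_{J^A}/2$, and then argue that the target expression $\log M^{\mathrm{loc}}(\frac{\delta_{J^A}}{2}c,2c)$ is exactly the one produced by the failure at $J^A+1$.

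For the upper bound, failure of \textbf{A} at $J^A+1$ gives
\begin{align*}
C_5N\log(1+\delta_{J^A}^2/(4\sigma^2))<4L(\delta_{J^A}/2).
\end{align*}
If $\delta_{J^A}^2/\sigma^2\le1$, then $\log(1+t)\asymp t$ and we get $N\delta_{J^A}^2/\sigma^2\lesssim L(\delta_{J^A}/2)$. The case $\delta_{J^A}^2/\sigma^2>1$ is excluded by the first step, since that step already showed $\delta_{J^A}^2/\sigma^2$ is at most a constant.

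For the matching lower bound against $L(\delta_{J^A}/2)$, I compare local entropies at scales differing by a factor of 2. A standard packing argument, covering $B(\nu,2\eta)$ by $\eta$-balls, shows $M^{\mathrm{loc}}(\eta,2c)\le M^{\mathrm{loc}}(2\eta,2c)^{O(1)}$ up to a multiplicative constant, or more simply that $\log M^{\mathrm{loc}}(\eta,2c)\lesssim\log M^{\mathrm{loc}}(2\eta,4c)$. This step is the main obstacle, because changing the second argument $c$ is delicate. If it fails, I would fall back on the remark-style harmonization (\autoref{remark:J_star_condition_modified}): constants in the second argument only change the quantities by constant factors. Combining the two directions then gives $N\delta_{J^A}^2/\sigma^2\asymp L(\delta_{J^A}/2)$, where the $\vee 1$ absorbs the $\log2$.

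\emph{The relation $\delta_{J^A}\gtrsim\delta_{J^B}$.} Since $L\ge\log2$, the relation $N\delta_{J^A}^2/\sigma^2\gtrsim1\asymp N\delta_{J^B}^2/\sigma^2$ follows directly from the two displayed asymptotics.
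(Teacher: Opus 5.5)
\textbf{The gap.} The step you flag as the main obstacle is a genuine gap. Neither of your remedies closes it, because the lower half of the claimed $\asymp$ is false in general.

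\emph{Why the fixes fail.} The inequality $M^{\mathrm{loc}}(\eta,2c)\le M^{\mathrm{loc}}(2\eta,2c)^{O(1)}$ fails even for convex $K$. The inequality $\log M^{\mathrm{loc}}(\eta,2c)\le\log M^{\mathrm{loc}}(2\eta,4c)$ is true (same packing scale, larger ball), but it only moves the difficulty to comparing second arguments $4c$ and $2c$ at a fixed radius, which also fails pointwise. And \autoref{remark:J_star_condition_modified} is about the critical radius, a threshold quantity, not about the entropy at one fixed scale.

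\emph{A counterexample.} Take $K=\mathrm{conv}\big(\{te_1:0\le t\le L\}\cup B(0,r)\big)$, which lies in the cylinder of radius $r$ around the $e_1$-axis. Consider two points of $K$ whose first coordinates differ by at most $r$ and whose distance exceeds $2r$. Their transverse parts have inner product $<-r^2/2$, and at most two such vectors can exist. Hence $\log M^{\mathrm{loc}}(4cr,2c)\le\log(16c+2)$. On the other hand, $B(0,r)\subseteq K$ gives $\log M^{\mathrm{loc}}(\rho,2c)\gtrsim n$ for every $\rho\le 2cr$. Now choose $L$ so that $\delta_{J_0}=4r$ for some $J_0\ge 1$, take $N$ a large multiple of $n$, and choose $\sigma$ with $C_5N\log(1+16r^2/\sigma^2)=4\log(16c+2)$. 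Then \textbf{A} holds at $J_0$ and fails for every larger $J$ once $n\gg\log c$. So $J^A=J_0$, and $N\delta_{J^A}^2/\sigma^2\approx 4\log(16c+2)/C_5$ stays bounded as $n\to\infty$, whereas $\log M^{\mathrm{loc}}(\tfrac{c}{2}\delta_{J^A},2c)\gtrsim n$.

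\emph{The paper has the same hole.} In the step it calls ``the original condition \textbf{A}'', the paper writes $\ge 4\log M^{\mathrm{loc}}(\tfrac{\delta_{J^A}}{2}c,2c)\vee\log 2$. But \textbf{A} at $J^A$ only controls $\log M^{\mathrm{loc}}(c\delta_{J^A},2c)$, and monotonicity runs the other way, as you noticed.

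\textbf{What can be proved.} The correct statement is the sandwich
\[
\log M^{\mathrm{loc}}(c\delta_{J^A},2c)\vee 1\;\lesssim\; N\delta_{J^A}^2/\sigma^2\;\lesssim\;\log M^{\mathrm{loc}}(\tfrac{c}{2}\delta_{J^A},2c)\vee 1 .
\]
This is all the later proofs use. They need only $\delta_{J^A}\gtrsim\delta_{J^B}$, which follows from $C_5N\delta_{J^A}^2/\sigma^2\ge\log 2$ as in your last paragraph, together with the $J^B,J^C$ asymptotics, which you handle correctly.

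The left half needs no log removal: $t\ge\log(1+t)$ gives $C_5N\delta_{J^A}^2/\sigma^2\ge 4L(\delta_{J^A})$ directly.

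For the right half, your ``first step'' runs the inequality the wrong way. Validity of \textbf{A} at $J^A$ is a \emph{lower} bound on $\log(1+\delta_{J^A}^2/\sigma^2)$. The smallness of $L(\delta_{J^A})$ relative to $N$ says nothing about how large $\delta_{J^A}/\sigma$ can be. Boundedness must instead come from the failure of \textbf{A} at $J^A+1$:
\[
C_5N\log\big(1+\delta_{J^A}^2/(4\sigma^2)\big)<4L(\delta_{J^A}/2)\le 4\big(\sup_\delta\log M^{\mathrm{loc}}(\delta,2c)\vee\log 2\big)\lesssim N .
\]
This caps $\delta_{J^A}^2/\sigma^2$ by a constant. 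A secant bound for $\log(1+t/4)$ on that range then gives $N\delta_{J^A}^2/\sigma^2\lesssim L(\delta_{J^A}/2)$. This is how the paper obtains the upper bound. Your upper-bound paragraph already contains the failure inequality, so the repair is just to use it in place of the ``first step''.
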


\begin{proof}
    Note by condition \textbf{A} we have
    \begin{align*}
            C_5\log(1+ (\dfrac{ \delta _{J^A} }{ \sigma  } )^2 ) \lesssim C_5\log(1+ (\dfrac{ \delta _{J^A} }{ 2 \sigma  } )^2 ) \leq \dfrac{ 4\log M^\mathrm{ loc }(\frac{\delta _{J^A}}{2}c,2c)\vee \log 2  }{ N }, 
    \end{align*}
    By the condition $ N\gtrsim \mathop{ \sup }\limits_{\delta >0}\log M^\mathrm{ loc }(\delta ,2c) $ and trivially $N\geq 1$, it follows that:
    \begin{align*}
        C_5\log(1+ (\dfrac{ \delta _{J^A} }{ \sigma  } )^2 ) &\lesssim  \dfrac{ 4\log M^\mathrm{ loc }(\frac{\delta _{J^A}}{2}c,2c)\vee \log 2 }{ \mathop{ \sup }\limits_{\delta >0}\log M^\mathrm{ loc }(\delta ,2c)  \vee 1 } \lesssim 1,
    \end{align*} 
    where, say, the the upper bound constant is $ B $. Then we have $\log(1+ (\dfrac{ \delta _{J^A} }{ \sigma  } )^2 ) \leq  \dfrac{ B }{ C_5 }$. Now we argue the following: notice that the function $ x\mapsto \log(1+x) $ lies above its secant line passing through $(0,0)$ and $(t,\log(1+t))$ on $[0,t]$, meaning that $\log(1+x)\ge \frac{x\log(1+t)}{t}$ for $x\in[0,t]$. Now set $t = \exp(B/C_5)-1$ and $x=\delta_{J^A}^2/\sigma^2$, which we just showed satisfies $x\in[0,t].$ This yields:
    \begin{align*}
        \log (1 + \frac{\delta_{J^A}^2}{\sigma^2} ) \geq \dfrac{ B/C_5 }{ \exp(B/C_5)-1 }\cdot \dfrac{ \delta _{J^A}^{ 2} }{ \sigma^2  }:= \varpi \dfrac{ \delta _{J^A}^{ 2} }{ \sigma^2  }.
    \end{align*}
    where we denote $\varpi:=\dfrac{ \frac{ B }{ C_5 } }{ \exp \frac{ B }{ C_5 } -1 }$. Then using this relation, since the critical $ \delta _{J^A} $ satisfies condition \textbf{A}, we have:
    \begin{align*}
        C_5\varpi N(\dfrac{ \delta _{J^A} }{ \sigma  } )^2 \leq 4\log M^\mathrm{ loc }(\frac{\delta _{J^A}}{2}c,2c)\vee \log 2.
    \end{align*}
    Combining this with the original condition \textbf{A}, which gives 
    \begin{align*}
        C_5N(\frac{ \delta _{J^A} }{ \sigma  } )^2 \geq C_5N\log (1+ (\frac{ \delta _{J^A} }{ \sigma  } )^2 ) \geq 4\log M^\mathrm{ loc }(\frac{\delta _{J^A}}{2}c,2c)\vee\log 2,
    \end{align*}
    we have the desired result:
    \begin{align*}
        N(\dfrac{ \delta _{J^A} }{ \sigma  } )^2 \asymp \log M^\mathrm{ loc }(\frac{\delta _{J^A}}{2}c,2c)\vee 1.
    \end{align*}
    The argument for $ J^B $ and $ J^C $ follows using a similar process since $\frac{\delta_{J^B}^2}{\sigma^2}\geq C_0N^{-1}\geq \frac{\delta_{J^B+1}^2}{\sigma^2}$ while $\delta_{J^B}=2\delta_{J^B+1}$ (and similarly for $J^C$).

    Finally, we prove the relation between $ \delta _{J^A} $ and $ \delta _{J^B} $. We have
    \begin{align*}
        (\dfrac{ \delta _{J^A} }{ \sigma  } )^2 \asymp \dfrac{  \log M^\mathrm{ loc }(\frac{\delta _{J^A}}{2}c,2c)\vee 1 }{ N } \geq N^{-1} \asymp (\dfrac{ \delta _{J^B} }{ \sigma  } )^2.
    \end{align*}

\end{proof}

With the above lemmas, we can now proceed to the proof of \autoref{theorem:expected_risk_upper_bound}. 

\begin{proof}[Proof of \autoref{theorem:expected_risk_upper_bound}.]

    The idea of the proof is as follows: By \autoref{appendix_lemma:tail_bound_for_interation_and_risk_bound}, the risk upper bound holds non-trivially  in the situation that all three conditions \textbf{A}, \textbf{B}, \textbf{C} from \autoref{appendix_definition:three_conditions} hold for some $J\geq 1$. However, using \autoref{algorithm:robust_algorithm}, when we traverse the tournament winner chain $\Y$ and as $ J $ grows, we would eventually have one of the three conditions break at some $ J^\dagger $ (where $J^\dagger = 1$ for the case that some of the conditions are violated at the very beginning). In the following we first focus on the edge case that any of the three conditions are violated at the beginning that $ J=1 $. Then we proceed to that all three conditions hold initially.

    \textbf{Part 1}: 
    For the edge case that any of the three conditions is violated at $ J=1 $, we will prove that the risk upper bound is just the trivial bound $ d^2 $. The discussion is separated into three sub-cases depending on which condition is violated at $ J=1 $. 

    \begin{itemize}[topsep=2pt,itemsep=0pt]
        \item If condition \textbf{A} is violated at $ J=1 $, meaning that $\delta_{J^*}^2 = \delta_{J^A}^2\wedge \delta_1^2 = \delta_1^2 \asymp d^2$ and thus
        \begin{align*}
             \mathbbm{E}_W\mathbbm{E}_X\left[ \left\Vert \mu -\nu ^{**} \right\Vert ^2 \right] \leq d^2 = \max\{ d^2, \varepsilon \sigma ^2 \} \wedge d^2 \asymp \max\{ \delta _{J^*}^2, \varepsilon\sigma ^2 \} \wedge d^2.
        \end{align*}
        \item If condition \textbf{C} is violated at $ J=1 $, meaning that $ d^2\lesssim \varepsilon \sigma ^2 $, so
        \begin{align*}
            \mathbbm{E}_W\mathbbm{E}_X\left[ \left\Vert \mu -\nu ^{**} \right\Vert ^2 \right] \leq d^2 \asymp \max\{ \delta _{J^*}^2, \varepsilon\sigma ^2 \} \wedge d^2.
        \end{align*}
        \item Since we have proved the case that either condition \textbf{A} or \textbf{C} is violated at $ J=1 $, it suffices to discuss the case that condition \textbf{B} is violated while conditions \textbf{A} and \textbf{C} hold at $ J=1 $. In this case we have $ \delta_{J^A}^2\lesssim d^2 \lesssim \delta_{J^B}^2 $ since $J^B<1\leq J^A$, however by \autoref{appendix_lemma:cancel_the_log_sign} we have $\delta_{J^A}^2 \gtrsim \delta_{J^B}^2$. Putting these together we have $ \delta_{J^A}^2 \asymp  d^2\asymp \delta_{1}^2$. Then we have $ \delta_{J^*}^2 = \delta_{J^A}^2 \wedge \delta_1^2 \asymp d^2 $,        
        which gives the desired risk upper bound:       
        \begin{align*}
             \mathbbm{E}_W\mathbbm{E}_X\left[ \left\Vert \mu -\nu ^{**} \right\Vert ^2 \right] \leq d^2 \asymp \max\{ \delta _{J^*}^2, \varepsilon\sigma ^2 \} \wedge d^2.
        \end{align*}        
    \end{itemize}

    \textbf{Part 2}: 
    With the above discussion, we conclude that the desired upper bound would hold if any of the conditions is never satisfied. So in the following part we turn to the case that all three conditions are satisfied at the beginning when $J=1$, so we have $ J^A,\, J^B,\, J^C \geq 1 $. Thus we have $ J^*=J^A $ and $ J^\dagger = \min \{ J^*, J^B, J^C\} $. Moreover, $d^2\gtrsim \max\{ \delta_{J^*}^2, \frac{\sigma^2}{N}, \varepsilon\sigma^2 \}$ using \autoref{appendix_lemma:cancel_the_log_sign} and that $\delta_1^2 \asymp d^2$.
    
    As $ J $ grows, one of the conditions would break at $ J^\dagger = \min\{J^*,\, J^B,\, J^C\} <\infty $ so the final bound would be determined by the relation between $ J^*, J_B, J_C $ (or equivalently, $ \delta _{J^*}^2 $, $ \frac{\sigma ^2}{N} $, and $ \varepsilon \sigma ^2 $, up to constants), according to \autoref{appendix_lemma:tail_bound_for_interation_and_risk_bound}, which gives
    \begin{align*}
        \mathbbm{E}_W\mathbbm{E}_X\left[ \left\Vert \mu -\nu ^{**} \right\Vert ^2 \right] &\lesssim \delta_{J^\dagger}^2 + \dfrac{ \sigma ^2 }{ N } .
    \end{align*}
    We first address the condition $ \frac{1}{2}C_5N > 1 $ so that we can apply \autoref{appendix_lemma:tail_bound_for_interation_and_risk_bound}: Note that in the proof of \autoref{theorem:two_point_prob_bound} we have $C_5$ depend only on $C_3$ and $\alpha$, and in \autoref{lemma:useful_constants_1} we remarked that the value of $C$ does not influence other constants. On the other hand, the local packing number can be made arbitrarily large if we choose a sufficiently large $C$ (a way to see this is to select a segment of length $ \delta $ in $ B(\nu ,\delta )\cap K $ and partition it into small segments). As a result, with large enough $C$, we can always have $N>2/C_5$ hold automatically when assuming $ N\gtrsim \mathop{ \sup }\limits_{\delta >0}\log M^\mathrm{ loc }(\delta ,2c) $, then the lemma can be applied. 

    Now we proceed to the discussion that one of the conditions breaks at some $ J^\dagger \geq 1 $. 
    \begin{itemize}[topsep=2pt,itemsep=0pt]
        \item If condition \textbf{A} is the first to break, we have $ \delta_{J^*}^2=\delta_{J^A}^2=\delta_{J^\dagger}^2\geq \delta_{J^C}^2\asymp \varepsilon \sigma ^2 $, then
        \begin{align*}
            \mathbbm{E}_W\mathbbm{E}_X\left[ \left\Vert \mu -\nu ^{**} \right\Vert ^2 \right] &\lesssim  \delta _{J^*}^2 + \dfrac{ \sigma ^2 }{ N } \mathop{ \lesssim  }\limits^{(i)}  \delta _{J^*}^2 \mathop{ \lesssim  }\limits^{(ii)} \max\{ \delta _{J^*}^2, \varepsilon\sigma ^2 \} ,
        \end{align*} 
        where $(i)$ follows since by \autoref{appendix_lemma:cancel_the_log_sign} we have $ \delta_{J^A}^2 \gtrsim \delta_{J^B}^2 \asymp \sigma^2/N $, and $ \delta_{J^*}^2 = \delta_{J^A}^2$ as argued in the beginning of this case; and $ (ii) $ since $ \varepsilon \sigma ^2 \lesssim \delta _{J^*}^2 $;
        \item If condition \textbf{B} is the first to break, we have $ \delta_{J^B}^2=\delta_{J^\dagger}^2\geq \delta_{J^*}^2\vee \delta_{J^C}^2 $. Thus we obtain $ \delta_{J^B}^2 \geq \delta_{J^C}^2 \asymp \varepsilon \sigma ^2 $; also \autoref{appendix_lemma:cancel_the_log_sign} gives $ \delta_{J^*}\gtrsim \delta_{J^B} $ , so together we have $ \delta_{J^*}^2 \asymp \delta_{J^B}^2 \asymp \sigma^2/N \gtrsim \varepsilon \sigma^2 $. Thus we obtain
        \begin{align*}
            \mathbbm{E}_W\mathbbm{E}_X\left[ \left\Vert \mu -\nu ^{**} \right\Vert ^2 \right] &\lesssim  \delta _{J^B}^2 + \dfrac{ \sigma ^2 }{ N } \mathop{ \asymp  } \max\{ \delta _{J^*}^2, \varepsilon\sigma ^2 \}.
        \end{align*} 
        \item If condition \textbf{C} is the first to break, we have $ \delta_{J^C}^2=\delta_{J^\dagger}^2\geq \delta_{J^*}^2\vee \delta_{J^B}^2 $. Thus $ \delta_{J^C}^2 \geq \delta_{J^B}^2\asymp \sigma^2/N $, and also $ \varepsilon \sigma ^2 \asymp \delta_{J^C}^2 \geq \delta_{J^*}^2$. Hence
        \begin{align*}
            \mathbbm{E}_W\mathbbm{E}_X\left[ \left\Vert \mu -\nu ^{**} \right\Vert ^2 \right]  &\lesssim \delta _{J^C}^2 + \dfrac{ \sigma ^2 }{ N } \mathop{ \lesssim  }  \delta _{J^C}^2\asymp \varepsilon \sigma ^2 \mathop{ \asymp  } \max\{ \delta _{J^*}^2, \varepsilon\sigma ^2 \}.
        \end{align*}
    \end{itemize}
    As we argued in the beginning of this part, we have $ d^2\gtrsim \max\{ \delta _{J^*}^2, \frac{\sigma ^2}{N}, \varepsilon \sigma ^2 \} $. Thus, we have the  upper bound $ \max\{ \delta _{J^*}^2, \varepsilon\sigma ^2 \}\wedge d^2 $. 
\end{proof}

The following lemma is useful for the proof of \autoref{appendix_lemma:prob_bound_on_tree}, which details the  computation justifying step $(i)$ of \autoref{equation:applying_summation}.

\begin{lemma}\label{appendix_lemma:sum_of_sequence}
    Let the setting be the same as in \autoref{appendix_lemma:prob_bound_on_tree}. We have
    \begin{align*}
        \sum_{j=2}^J \exp \left[ -C_5N\log(1+ (\frac{d}{2^{j-1}(C+1)\sigma })^2)  \right] 
        &\leq  2\cdot\mathbbm{1}(J>1)\exp \left[ -C_5N\log(1+ (\frac{d}{2^{J-1}(C+1)\sigma })^2)  \right] .
    \end{align*}   
\end{lemma}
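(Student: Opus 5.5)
The plan is to compare consecutive terms of the sum and show that each one is at most half of the next, so that the sum is dominated by a geometric series ending at the last term $j=J$. For $J=1$ the sum is empty and both sides vanish, so assume $2\le J\le J^\dagger$. Write
\begin{align*}
x_j := \Big(\frac{d}{2^{j-1}(C+1)\sigma}\Big)^2, \qquad a_j := \exp\big[-C_5N\log(1+x_j)\big] = (1+x_j)^{-C_5N}.
\end{align*}
Then $x_{j-1}=4x_j$, and
\begin{align*}
\frac{a_{j-1}}{a_j} = \exp\Big[-C_5N\log\frac{1+4x_j}{1+x_j}\Big].
\end{align*}
It therefore suffices to show that $C_5N\log\frac{1+4x_j}{1+x_j}\ge \log 2$ for every $2\le j\le J$. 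Iterating then gives $a_j\le 2^{-(J-j)}a_J$, and hence $\sum_{j=2}^J a_j\le a_J\sum_{k\ge0}2^{-k}=2a_J$.

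To verify this inequality I split into two regimes of $x_j$.

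\emph{Regime $x_j\ge 1$.} Here $\frac{1+4x}{1+x}$ is increasing in $x$, so it is at least $5/2$. Using the standing assumption $C_5N>2$ from \autoref{appendix_lemma:prob_bound_on_tree}, we get $C_5N\log(5/2)\ge 2\log(5/2)>\log 2$.

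\emph{Regime $0\le x_j\le 1$.} Here $(1+x)^2=1+2x+x^2\le 1+4x$ (valid for $x\le 2$), which gives $\log\frac{1+4x_j}{1+x_j}\ge\log(1+x_j)$. Since $x_j\ge x_J$ for $j\le J$, it then suffices to have $C_5N\log(1+x_J)\ge\log 2$. This is exactly what condition \autoref{eq:J_star_condition} provides at the index $J$: it holds because $J\le J^\dagger$, and its right-hand side is at least $\log 2$.

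The only delicate point is the small-$x$ regime. There the ratio $a_{j-1}/a_j$ can be close to $1$, so geometric decay must come from the defining condition of $J^\dagger$ rather than from the doubling structure alone. The elementary inequality $(1+x)^2\le 1+4x$ is what transfers the lower bound at level $J$ to every earlier level. I do not expect any other step to cause trouble.
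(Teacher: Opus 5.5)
Your proof is correct and is essentially the paper's argument. You dominate the sum by a geometric series with ratio at most $1/2$, using $C_5N>2$ when the argument is large, and using the $\log 2$ part of \autoref{eq:J_star_condition} together with $(1+x)^2\le 1+4x$ when it is small. The only cosmetic difference is that the paper first bounds every consecutive ratio uniformly by the one at $m=x_J$, namely $\bigl(\frac{1+m}{1+4m}\bigr)^{C_5N}$, and then splits at $m=2$ (where $\frac{1+m}{1+4m}\le\frac13$), rather than splitting term by term at $x_j=1$.
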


\begin{proof}

    If $ J=1 $, the inequality holds trivially. Now we focus on the case that $ J\geq 2 $.
    Let $ m=(\frac{d}{2^{J-1}(C+1)\sigma })^2 $. We have: 
    \begin{align*}
        \sum_{j=2}^J \exp\left[ -C_5N\log(1+ (\frac{d}{2^{j-1}(C+1)\sigma })^2)  \right]  &= \sum_{l=0}^{J-2}\exp\left[ -C_5N\log (1+m\cdot 4^l \right]\\
        &\leq  \sum_{l=0}^\infty \exp\left[ -C_5N\log(1+ m \cdot 4^l)  \right] \\
        &= \sum_{l=0}^\infty \Big( \dfrac{ 1 }{ 1+m\cdot 4^l }  \Big) ^{C_5N} .
    \end{align*}
    We now verify that this sequence is bounded by a geometric series. Specifically, for any $ l \in \mathbbm{N} $:
    \begin{align*}
        \Big( \dfrac{ 1 }{ 1+m\cdot 4^{l+1} }  \Big) ^{C_5N}&\leq   \Big( \dfrac{ 1 }{ 1+m\cdot 4^{l} }  \Big) ^{C_5N} \cdot \Big( \dfrac{  1+m }{1+4m}  \Big) ^{C_5N} \\
        \Leftrightarrow 1+m\cdot 4^{l} &\leq  (1+m\cdot 4^{l+1}) \cdot \Big( \dfrac{  1+m }{1+4m}  \Big) \\
        \Leftrightarrow m\cdot 4^{l} &\leq  m\cdot 4^{l+1} - (1+m\cdot 4^{l+1}) \cdot \Big(\dfrac{ 3m }{ 4m+1 }   \Big) \\
        \Leftrightarrow (4m+1)\cdot 4^{l}& \geq 1+m\cdot 4^{l+1} \\
        \Leftrightarrow 4m+1& \geq 4m+ \dfrac{ 1 }{ 4^l } ,
    \end{align*} 
    which holds for all $ l\geq 0 $. Therefore, $ \big\{(1+m\cdot 4^{l})^{-C_5N}\}_{l=0}^\infty $ is upper bounded by a geometric series with ratio $ (\frac{ 1+m }{1+4m})^{C_5N}$. Applying the geometric series summation formula:
    \begin{align*}
        \sum_{j=2}^J \exp\left[ -C_5N\log(1+ (\frac{d}{2^{j-1}(C+1)\sigma })^2)  \right] &\leq \Big( \dfrac{ 1 }{ 1+m }  \Big) ^{C_5N} \cdot \sum_{l=0}^\infty \Big( \dfrac{ 1+m }{1+4m}  \Big) ^{C_5Nl} \\
        &\leq \Big( \dfrac{ 1 }{ 1+m }  \Big) ^{C_5N} \cdot \dfrac{ 1 }{ 1-\Big( \dfrac{ 1+m }{1+4m}  \Big) ^{C_5N} }. 
    \end{align*}
    Observing that $ \log(\frac{1+m}{1+4m})\leq \log(\frac{1}{1+m}) \vee -\log 3 $ we have 
    \begin{align*}
        C_5N\log (\frac{1+m}{1+4m})&\leq  C_5N\log(\frac{1}{1+m}) \vee -C_5N\log 3 \\
        &\mathop{ \leq }\limits^{(i)} -\log 2 \vee -2\log 3 = -\log 2 , 
    \end{align*}
    where $(i)$ follows by \autoref{eq:J_star_condition}, and $C_5N>2$ by assumption. Consequently, we obtain the desired inequality:
    \begin{align*}
        \sum_{j=2}^J \exp\left[ -C_5N\log(1+ (\frac{d}{2^{j-1}(C+1)\sigma })^2)  \right] &\leq   \Big( \dfrac{ 1 }{ 1+m }  \Big) ^{C_5N} \cdot \dfrac{ 1 }{ 1-\Big( \dfrac{ 1+m }{1+4m}  \Big) ^{C_5N} } \\
        &\leq  2\cdot \Big( \dfrac{ 1 }{ 1+m }  \Big) ^{C_5N} \\
        &= 2\cdot \exp\left[ -C_5N\log(1+ (\frac{d}{2^{J-1}(C+1)\sigma })^2)  \right].
    \end{align*}
\end{proof}

\subsection*{Proof of  \autoref{theorem:expected_risk_rate_bounded_case}}\label{subsection:proof_expected_risk_rate_bounded_case}

\renewcommand{\x}{\frac{\delta ^{*2}}{\sigma ^2}}
\begin{lemma}\label{appendix_lemma:cancel_the_log_sign_star}
    Let $ \delta ^* $ be as defined in \autoref{theorem:expected_risk_rate_bounded_case}, and let $ D $ be some constant depending only on $ c $. Under the condition $ N\gtrsim \mathop{ \sup }\limits_{\delta >0} \log M^\mathrm{ loc }(\delta ,c) $, then there exists a constant $ \phi_{c,D}\in (0,1) $ such that
    \begin{align*}
        \log (1+D^2 \x) \geq \phi_{c,D} \x,
    \end{align*}
    Furthermore, $ \phi_{c,D} $ is increasing in $ D $ and we have $\phi_{c,D}\xrightarrow[]{D\to\infty}\infty $.   
\end{lemma}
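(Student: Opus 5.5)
The plan is to reuse the secant-line argument from the proof of \autoref{appendix_lemma:cancel_the_log_sign}. The first goal is to show that $\delta^{*2}/\sigma^2$ is bounded above by a constant depending only on $c$. By the definition of $\delta^*$ in \autoref{equation:definition_of_delta_star} and the monotonicity of $\log M^\mathrm{loc}(\cdot,c)$ (\autoref{lemma:monotonicity_of_local_entropy}), every $\delta<\delta^*$ arbitrarily close to $\delta^*$ satisfies $N\delta^2/\sigma^2\le \log M^\mathrm{loc}(\delta,c)\le \sup_{\delta'>0}\log M^\mathrm{loc}(\delta',c)$. Letting $\delta\uparrow\delta^*$ gives
\begin{align*}
    \frac{\delta^{*2}}{\sigma^2}\le \frac{\sup_{\delta'}\log M^\mathrm{loc}(\delta',c)}{N}\le B,
\end{align*}
where $B$ is the implicit constant in the hypothesis $N\gtrsim\sup_\delta\log M^\mathrm{loc}(\delta,c)$. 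By \autoref{remark:volumn_argument1} the supremum is finite (at most $n\log(1+2c)$), so this is a genuine constant bound. The edge case $\delta^*=0$ is trivial, and the case where the defining set is unbounded is excluded by the same display.

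Next, set $x=\delta^{*2}/\sigma^2\in[0,B]$ and $t=D^2B$. Since $\log(1+\cdot)$ is concave, it lies above its secant through $(0,0)$ and $(t,\log(1+t))$ on $[0,t]$. Applying this at the point $D^2x\in[0,t]$ gives
\begin{align*}
    \log(1+D^2x)\ge \frac{\log(1+D^2B)}{D^2B}\,D^2x=\frac{\log(1+D^2B)}{B}\,x.
\end{align*}
So one can take $\phi_{c,D}:=\log(1+D^2B)/B$. This depends only on $B$ (hence on $c$ and the implicit constant) and on $D$. It is increasing in $D$ and tends to $\infty$ as $D\to\infty$, which gives the monotonicity and divergence claims.

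The main obstacle is the stated range $\phi_{c,D}\in(0,1)$, which conflicts with $\phi_{c,D}\to\infty$. I would resolve it by noting that the formula yields $\phi_{c,D}\in(0,1)$ exactly when $\log(1+D^2B)<B$, i.e.\ for $D$ in a bounded range. Since only a lower bound is ever used, one may also replace $\phi_{c,D}$ by $\min(\phi_{c,D},1)$ wherever membership in $(0,1)$ is needed. The care required is in the first step: making the constant $B$ explicit and uniform, and handling the supremum in the definition of $\delta^*$ (which need not be attained) by the limiting argument above.
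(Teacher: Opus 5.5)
Your proposal is correct and takes essentially the same route as the paper, which omits the proof and points to the secant-line argument of \autoref{appendix_lemma:cancel_the_log_sign}: first bound $\delta^{*2}/\sigma^2$ by a constant using the hypothesis $N\gtrsim\sup_\delta\log M^\mathrm{loc}(\delta,c)$, then compare $\log(1+\cdot)$ with its secant on $[0,D^2B]$, giving $\phi_{c,D}=\log(1+D^2B)/B\asymp\log(D^2)$, as the paper remarks. You are also right that the range $(0,1)$ is inconsistent with $\phi_{c,D}\to\infty$; however, the proof of \autoref{theorem:expected_risk_rate_bounded_case} chooses $D$ so that $\phi_{c,D}\geq 16/C_5>1$, so the $(0,1)$ claim should simply be dropped, and your suggested cap at $1$ would break that application.
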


The derivation follows the same approach as in \autoref{appendix_lemma:cancel_the_log_sign} and is therefore omitted. Using the same calculation one can see that $\psi_{c,D} \asymp \log (D^2)$.

\newcommand{\boundaryA}{8\log 2}
\begin{proof}[Proof of \textbf{\autoref{theorem:expected_risk_rate_bounded_case}}.]

    We first consider the trivial case $ \delta ^*=0 $. In this case, $ M^\mathrm{ loc }(\delta ,c)=0 $ for any $ \delta >0 $, which implies that $ K $ is a single point and $ d=0 $. The algorithm output is then the trivial single point, yielding a minimax rate of $ 0 $. In the following, we assume $ \delta ^*>0 $ and divide the proof into two cases based on whether $ N\x \gtrless \boundaryA $.

    \begin{itemize}[topsep=2pt,itemsep=0pt]
        \item[\textbf{I}] If $ N\x\leq  \boundaryA$.
        
        We first argue that in this case $ d\lesssim \delta ^* $. Indeed we have 
            \begin{align*}
                \boundaryA &\geq  N\x = \dfrac{ N }{ 4 }\frac{(2\delta ^{*})^2}{\sigma ^2} \geq \dfrac{ 1 }{ 4 }\log M^\mathrm{ loc }(2\delta ^{*},c) .
            \end{align*}
            Then by the same argument as in \cite[proof of Theorem 3.10, case 3]{prasadan_information_2024}, we obtain $ d\leq 12 \delta ^* $.
        
        \paragraph{Lower bound} Define $ \tilde{\delta }:=  \frac{d}{12\sqrt{2}} $. 
            Choosing $ c $ sufficiently large so that $ \log M^\mathrm{ loc }(\tilde{\delta },c) \geq \boundaryA $ (which is always possible by the same argument as in the proof of \autoref{theorem:expected_risk_upper_bound}), and we have
            \begin{align*}
                \log M^\mathrm{ loc }(\tilde{\delta },c) & \geq \boundaryA \mathop{ \geq }\limits^{(i)}  \boundaryA \vee 2N\frac{ \tilde{\delta }^2 }{ \sigma ^2 } > 4\big( \dfrac{ N\tilde{\delta }^2 }{ 2\sigma ^2 } \vee \log 2  \big),
            \end{align*}
            where $ (i) $ since by definition we have $\tilde{\delta }\leq \delta ^*/\sqrt{2} $ and thus $ \boundaryA \geq  N \x  \geq  2N\frac{ \tilde{\delta }^2 }{ \sigma ^2 }$. Therefore, by \autoref{lemma:packing_lower_bound} we have
            \begin{align*}
                \mathfrak{M}\geq \dfrac{ \tilde{\delta }^2 }{ 8c^2 } \asymp d^2 \asymp \delta ^{*2}\wedge d^2.  
            \end{align*} 
            Combining with \autoref{lemma:diakonikolas_lower_bound} that $\mathfrak{M}\geq \varepsilon \sigma ^2 \wedge d^2 $ we have
            \begin{align*}
                \mathfrak{M}\gtrsim  (d^2\wedge \delta ^{*2}) \vee (\varepsilon \sigma ^2 \wedge d^2) = \max(\delta ^{*2},\varepsilon \sigma ^2) \wedge d^2.
            \end{align*} 

            \paragraph{Upper bound} As noted previously, $ d^2 $ is always a trivial upper bound. Thus using $ d^2\lesssim \delta ^{*2} $ we have
            \begin{align*}
                \mathfrak{M}\leq d^2 \asymp \max(\delta ^{*2},\varepsilon \sigma ^2) \wedge d^2.
            \end{align*}   

        \item[\textbf{II}] If $ N\x > \boundaryA $.
        
        \paragraph{Lower bound} We have
            \begin{align*}
                \log M^\mathrm{ loc }(\frac{\delta ^*}{2},c)&\mathop{ \geq }\limits^{(i)}   \log M^\mathrm{ loc }(\delta ^*,c)\mathop{ \geq }\limits^{(ii)}  N\x \geq 8\log 2 \vee N\x > 4\big( \frac{N(\delta ^*/2)^2}{2\sigma ^2} \vee \log 2 \big)  .
            \end{align*}
            where $ (i) $ by the monotonicity of local metric entropy from \autoref{lemma:monotonicity_of_local_entropy}; $ (ii) $ since by the definition of $\delta^*$ we have
            \begin{align*}
                \log M^{\operatorname{loc}}(\frac{\delta^{\ast}}{2},c)\ge \lim_{\eta\downarrow 0}\log M^{\operatorname{loc}}(\delta^{\ast}-\eta,c)\ge  \lim_{\eta\downarrow 0}N\frac{(\delta^*-\eta)^2}{\sigma^2} = N\frac{\delta^{\star 2}}{\sigma^2}.
            \end{align*}
            Then by \autoref{lemma:packing_lower_bound}, we have
            \begin{align*}
                \mathfrak{M}\geq \dfrac{ (\delta ^*/2)^2 }{ 8c^2 } \gtrsim \delta ^{*2} \geq \delta ^{*2} \wedge d^2.
            \end{align*}
            Combined with \autoref{lemma:diakonikolas_lower_bound}, we have the desired lower bound as before.

            \paragraph{Upper bound} To apply \autoref{theorem:expected_risk_upper_bound} to prove the upper bound, it suffices to show that $\delta^*\gtrsim \delta_{J^*}$ where $ J^* $ is from \autoref{theorem:expected_risk_upper_bound}. We do so using an intermediate quantity $\tilde{\delta} := D\delta^*$ and prove that $\tilde{\delta}\gtrsim \delta_{J^*}$ with $D$ chosen properly.

            We choose $ D $ to satisfy $D\geq \frac{4}{c}\vee \mathop{\inf}\{\tilde{D}:\phi_{c,\tilde{D}} \geq \frac{16}{C_5}\}$, where $ \phi_{c,D} $ is the constant from \autoref{appendix_lemma:cancel_the_log_sign_star}. Such choice of $ D $ is always possible by the property of $ \phi_{c,D} $ as established in the lemma. We then have
        \begin{align*}
            C_5N\log (1+\frac{\tilde{\delta }^2}{\sigma ^2}) &=  C_5N\log (1+D^2\frac{\delta ^{*2}}{\sigma ^2}) \\
            &\mathop{ \geq }\limits^{(i)}  C_5N\phi_{c,D}\frac{\delta ^{*2}}{\sigma ^2} \\
            &\mathop{ \geq  }\limits^{(ii)}  4N \frac{(2\delta ^{*})^2}{\sigma ^2} \\
            &\mathop{ \geq }\limits^{(iii)}  4\log M^\mathrm{ loc }(2\delta ^{*},c) \\
            &\mathop{ \geq }\limits^{(iv)}  4\log M^\mathrm{ loc }(\frac{c}{2}D\delta ^*,c)\\
            &= 4\log M^\mathrm{ loc }(\frac{c}{2}\tilde{\delta },c) ,
        \end{align*}
        where $(i)$ follows from \autoref{appendix_lemma:cancel_the_log_sign_star}, $(ii)$ from the choice of $ D $, $(iii)$ from the definition of $ \delta ^*$, and $(iv)$ from the monotonicity of $ M^\mathrm{ loc }(\delta ,c) $ in $ \delta $ established in \autoref{lemma:monotonicity_of_local_entropy}. Comparing the left-hand side and (ii), we also have
        \begin{align*}
            C_5N\log (1+\frac{\tilde{\delta }^2}{\sigma ^2}) &\geq  4N\frac{(2\delta ^{*})^2}{\sigma ^2}\geq 16\cdot 8\log 2 > \log 2,
        \end{align*} 
        Therefore, $ \tilde{\delta } $ satisfies \autoref{eq:J_star_condition_modified}, and by the monotonicity of $ \delta \mapsto C_5N\log(1+\frac{\delta ^2}{\sigma ^2}) - 4\log M^\mathrm{ loc }(\frac{c}{2}\delta ,c) \vee\log 2 $, we obtain that
        \begin{align*}
             \delta ^*\asymp \tilde{\delta }\mathop{>}^{(i)} \delta _{J^*+1} \asymp \delta _{J^*}，
        \end{align*} 
        where $(i)$ since $J^*$ is the greatest $J \geq 1$ such that \autoref{eq:J_star_condition_modified} holds (and in the edge case that $J^*=1$ the inequality still holds).
        Then by \autoref{theorem:expected_risk_upper_bound} we have
        \begin{align*}
            \mathfrak{M}\lesssim \max\{ \delta_{J^*}^2 , \varepsilon \sigma ^2\} \wedge d^2 \lesssim \max\{ \delta^{\ast 2} , \varepsilon \sigma ^2\} \wedge d^2.
        \end{align*}
    \end{itemize}
    
\end{proof}

\section{Proof for Unbounded Case}\label{section:proof_unbounded_case}

\subsection*{Proof of \autoref{theorem:random_set_S_prob_bound}}\label{subsection:proof_random_set_S_prob_bound}

We begin by establishing a fundamental tail bound that governs the concentration of the uncorrupted data around the true mean.

\renewcommand{\x}{\frac{R^2}{\sigma ^2}}   
\begin{lemma}\label{lemma:first_step_one_sample_prob_bound}
    For any $ R>0 $ we have 
    \begin{align*}
        \mathbbm{P}_{  }\left( \left\Vert \tilde{X}_i - \mu  \right\Vert \geq R \right) &\leq 5^n \exp\left[ -\frac{1}{4}\log(1+\x) \right].
    \end{align*}
\end{lemma}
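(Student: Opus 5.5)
The plan is to reduce the norm tail to finitely many one-dimensional tails via a net on the sphere, and then apply Cantelli's inequality to each projection, as in the median part (\autoref{appendix_lemma:median_part_proof}). The factor $5^n$ in the statement is the cardinality of a $1/2$-net of $\mathbbm{S}^{n-1}$. The exponent $-\frac14\log(1+\frac{R^2}{\sigma^2})$ should come from a slightly lossy conversion of the Cantelli bound $(1+\frac{R^2}{4\sigma^2})^{-1}$.

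\textbf{Step 1 (net reduction).} Let $\mathcal{N}\subset\mathbbm{S}^{n-1}$ be a $1/2$-net. By the volume argument of \cite[Lemma 5.7]{wainwright_high-dimensional_2019}, we may take $\mathrm{card}(\mathcal{N})\le 5^n$. Write $\xi_i=\tilde X_i-\mu$; if $\xi_i=0$ there is nothing to prove. Otherwise set $v=\xi_i/\Vert\xi_i\Vert$ and pick $u\in\mathcal{N}$ with $\Vert u-v\Vert\le 1/2$. Then
\begin{align*}
u^\top\xi_i = \Vert\xi_i\Vert + (u-v)^\top\xi_i \ge \tfrac12\Vert\xi_i\Vert .
\end{align*}
Hence $\{\Vert\xi_i\Vert\ge R\}\subseteq\bigcup_{u\in\mathcal{N}}\{u^\top\xi_i\ge R/2\}$, and a union bound gives
\begin{align*}
\mathbbm{P}(\Vert\xi_i\Vert\ge R)\le 5^n\max_{u\in\mathcal{N}}\mathbbm{P}(u^\top\xi_i\ge R/2).
\end{align*}

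\textbf{Step 2 (Cantelli).} For each unit vector $u$, the variable $u^\top\xi_i$ has mean zero and variance $u^\top\Sigma u\le\lambda_1\le\sigma^2$, since $\xi\in\Xi_{\sigma^2}$. Cantelli's inequality then yields
\begin{align*}
\mathbbm{P}(u^\top\xi_i\ge R/2)\le \frac{\sigma^2}{\sigma^2+R^2/4}=\Big(1+\frac{R^2}{4\sigma^2}\Big)^{-1}.
\end{align*}

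\textbf{Step 3 (elementary inequality).} It remains to check that $(1+x/4)^{-1}\le(1+x)^{-1/4}$ for all $x\ge0$, where we take $x=R^2/\sigma^2$. This is equivalent to $(1+x)^{1/4}\le 1+x/4$. That inequality is Bernoulli's inequality for exponent $1/4\in(0,1)$, or equivalently the concavity of $t\mapsto t^{1/4}$ together with its tangent line at $t=1$. Substituting gives $5^n\exp[-\frac14\log(1+\frac{R^2}{\sigma^2})]$, as claimed.

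The only mildly delicate point is the net reduction in Step 1, namely getting the constant $1/2$ and the cardinality $5^n$ to match the statement. The remaining steps are routine.
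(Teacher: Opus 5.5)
Your proof is correct and follows the paper's argument essentially step for step: a $1/2$-net (maximal packing) of $\mathbbm{S}^{n-1}$ of size at most $5^n$, the reduction $u^\top\xi_i\ge\frac12\Vert\xi_i\Vert$, Cantelli's inequality applied to each projection, and a union bound. The only differences are minor: you explicitly justify the last step $(1+\frac{R^2}{4\sigma^2})^{-1}\le(1+\frac{R^2}{\sigma^2})^{-1/4}$ via Bernoulli's inequality, which the paper leaves implicit, and you work with $\ge R$ throughout as in the statement, whereas the paper's proof switches to $>R$.
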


\begin{proof}

    For convenience, let $ Y_i:= \tilde{X}_i - \mu $, which has mean zero and variance bounded by $ \sigma ^2 $.
    For any $ t\in (0,1) $, we construct a maximal $ t $-packing set $ \mathcal{M}(t,\mathbbm{S}^{n-1}) $ of the unit sphere $ \mathbbm{S}^{n-1} $. By \cite[Corollary 4.2.13]{vershynin_high-dimensional_2018}, we have $ \mathrm{card}\big(\mathcal{M}(t,\mathbbm{S}^{n-1}) \big)\leq (1+\frac{2}{t})^n $. For any $ v \in \mathcal{M}(t,\mathbbm{S}^{n-1}) $ we have by Cantelli's inequality that
    \begin{align*}
        \mathbbm{P}_{  }\left( v^\top Y_i > R \right) \leq \dfrac{ \sigma ^2 }{ \sigma ^2 + R^2 } = \exp\left[ -\log (1+\x) \right] ,
    \end{align*}
    Applying a union bound over all $ v\in \mathcal{M}(t,\mathbbm{S}^{n-1}) $, we obtain
    \begin{align*}
        \mathbbm{P}_{  }\left( \mathop{ \sup }\limits_{v\in M(t,\mathbbm{S}^{n-1}) } v^\top Y_i > R  \right) \leq (1+\frac{2}{t})^n \exp\left[ -\log (1+\x) \right]  .
    \end{align*}

    Since $ \mathcal{M}(t,\mathbbm{S}^{n-1}) $ is a maximal $ t $-packing set, for any unit vector $ \frac{Y_i}{\left\Vert Y_i \right\Vert } \in \mathbbm{S}^{n-1} $, there exists $ v\in \mathcal{M}(t,\mathbbm{S}^{n-1}) $ such that $ \left\Vert v - \frac{Y_i}{\left\Vert Y_i \right\Vert } \right\Vert < t $. We then have
    \begin{align*}
        \left\vert v^\top Y_i - \left\Vert Y_i \right\Vert  \right\vert  &= \left\Vert Y_i \right\Vert \left\vert v^\top\big( \dfrac{ Y_i }{ \left\Vert Y_i \right\Vert  }  - v \big) \right\vert\leq \left\Vert Y_i \right\Vert \left\Vert \frac{Y_i}{\left\Vert Y_i \right\Vert } - v \right\Vert \leq t\left\Vert Y_i \right\Vert   .
    \end{align*} 
    Rearranging the formula we have $(1-t)\left\Vert Y_i \right\Vert \leq v^\top Y_i $. Consequently,
    \begin{align*}
        \mathbbm{P}_{  }\left( \left\Vert Y_i \right\Vert >R \right) &= \mathbbm{P}_{  }\left( (1-t)\left\Vert Y_i \right\Vert >(1-t)R \right) \\
        &\leq \mathbbm{P}_{  }\left( \exists\, v \in \mathcal{M}(t,\mathbbm{S}^{n-1}): v^\top Y_i > (1-t)R \right) \\
        &=  \mathbbm{P}_{  }\left( \mathop{ \max }\limits_{v\in \mathcal{M}(t,\mathbbm{S}^{n-1}) } v^\top Y_i > (1-t)R  \right) \\
        &\leq  (1+\frac{2}{t})^n \exp\left[ -\log (1+(1-t)^2\x) \right].
    \end{align*} 
    Setting $ t = \frac{1}{2} $ yields
    \begin{align*}
        \mathbbm{P}_{  }\left( \left\Vert \tilde{X}_i - \mu  \right\Vert > R \right) &\leq  5^n\exp\left[ -\log (1+ \frac{R^2}{4\sigma ^2}) \right] \leq 5^n \exp\left[ -\frac{1}{4}\log(1+\x) \right].
    \end{align*} 
\end{proof}

Now we state our choice of some useful constants. We choose a constant $ \gamma \in (0,1) $ satisfying
\begin{align}\label{eq:condition_for_gamma}
    \gamma \geq \max\big\{ 1- \dfrac{ C_5 }{ 6\log 2 }, 1- \dfrac{ 4\log 5 }{ (C+1)^2C_0 }, 1-\dfrac{ 8\log 5 }{ (C+1)^2C_1^2 } \}  ,
\end{align}
where $ C>2,C_0,C_1,C_5 $ are the constants from \autoref{lemma:useful_constants_1} and we recall that $ C>2 $ is chosen sufficiently large. We then select $ R_0 $ such that
\begin{align}\label{eq:condition_for_R}
    \log(1+\frac{R_0^2}{\sigma ^2}) &= \max \big\{ \dfrac{(4\log 5 \vee (\frac{2C+1}{4})^2) n }{ 1-\gamma }, \dfrac{ -16g(\varepsilon ) }{ \gamma (1/2-\varepsilon ) } ,\dfrac{  16\log 2 }{ \gamma(1/2-\varepsilon ) }, \log(1+\frac{(2C+1)^2}{\sigma^2}) \big\} ,
\end{align}
where $ g(\, \cdot \, ) $ is defined in \autoref{lemma:useful_constants_1}, and we recall that we have $\frac{-2g(\varepsilon)}{\gamma(1/2-\varepsilon)} > \log 4 $ as argued in \autoref{lemma:useful_constants_1} so the second component is also non-trivial. Note that determining $ R_0 $ requires knowledge of the parameters $ n,\sigma,\varepsilon $. By selecting $ R\geq  R_0 $, the random set $ S=S(R) $ from \autoref{definition:random_set_S} captures the true mean $ \mu $ with high probability, as formalized \autoref{theorem:random_set_S_prob_bound}. We will explain the details about these conditions on the constants later in the proof of the theorem.

\begin{proof}[Proof of \textbf{\autoref{theorem:random_set_S_prob_bound}}.]

    We use the notation $ E_{\mu ,i} $ from \autoref{definition:random_set_S} (as well as its counterpart $ \tilde{E}_{\mu ,i} $ for uncorrupted data), so that $ S= \{\nu \in K:\, \sum_{i=1}^N \mathbbm{1}(E_{\nu ,i}) \leq N/2-1 \} $. Using this notation, we have
    \begin{align*}
        \mathbbm{P}_{  }\left( \tilde{E}_{\mu ,i} \right)&  =\mathbbm{P}_{  }\left( \left\Vert \tilde{X}-\mu  \right\Vert > R \right) \\
        &\mathop{ \leq }\limits^{(i)}  5^n \exp\left[ -\frac{1}{4}\log(1+\x) \right] \\
        &\mathop{ \leq }\limits^{(ii)}  \exp\left[ -\frac{\gamma }{4}\log(1+\x) \right] \\
        &\mathop{ \leq }\limits^{(iii)}  \frac{1}{2}\exp\left[ -\frac{\gamma }{8}\log(1+\x) \right] := \varrho /2,
    \end{align*} 
    where $(i)$ follows from \autoref{lemma:first_step_one_sample_prob_bound}; $(ii)$ from the first component of \autoref{eq:condition_for_R}; in $(iii)$ we have
    \begin{align*}
        \log(1+\x) \geq \frac{-16g(\varepsilon)}{\gamma (1/2-\varepsilon)} = \frac{8}{\gamma}\cdot \frac{-2 g(\varepsilon)}{(1/2-\varepsilon)} \geq \frac{8}{\gamma}\cdot \log 4 > \frac{8\log 2}{\gamma},
    \end{align*}
    using the second component of \autoref{eq:condition_for_R} and the fact that $\frac{-2g(\varepsilon)}{1/2-\varepsilon}\geq \log 4$.
    For convenience, we define $\varrho:=\exp\left[ -\frac{\gamma }{8}\log(1+\x) \right] $. 

    We now turn to the event $ \{\mu \not\in S\} $ by collecting the events $ E_{\mu ,i} $ for all $ i\in [N] $ (and as before, we use $\tilde{E}_{\nu,i}$ for the uncorrupted data version). Since the data contains at most $ \varepsilon $ fraction of corrupted samples, we have 
    \begin{align*}
        \mathbbm{P}_{  }\left( \mu \not\in S \right) &\leq  \mathbbm{P}_{  }\left( \text{more than }N/2\text{ of }E_{\mu ,i}\text{ occur} \right)  \\
        &\leq  \mathbbm{P}_{  }\left( \text{more than }N(\frac{1}{2}-\varepsilon )\text{ of }\tilde{E}_{\mu ,i}\text{ occur} \right)  \\
        &\mathop{ = }\limits^{(i)}  \mathbbm{P}_{  }\left( \mathrm{ Bin }\big(N, \mathbbm{P}_{  }\left( \tilde{E}_{\mu ,i} \right) \big) \geq N(\frac{1}{2}-\varepsilon )  \right) \\
        &= \mathbbm{P}_{  }\left( \mathrm{ Bin }\big(N, 1-\mathbbm{P}_{  }\left( \tilde{E}_{\mu ,i} \right) \big) \leq N(\frac{1}{2}+\varepsilon ) \right) \\
        &\mathop{ \leq }\limits^{(ii)}  \mathbbm{P}_{  }\left( \mathrm{ Bin }\big(N, p \big) \leq N(p-\zeta ) \right) \\
        &\leq \exp\left[ -N\cdot \mathrm{KL}(p-\zeta \Vert p) \right],
    \end{align*}
    where $(i)$ follows since the events $ \tilde{E}_{\mu ,i} $ are independently identically distributed; $(ii)$ by setting $ p:= 1-\varrho / 2 $ and $ \zeta := \frac{1}{2}-\varepsilon -\frac{\varrho }{2}=p-\frac{1}{2}-\varepsilon $, and again we use the stochastic dominance argument $\mathrm{Bin}(N,p)\leq_{\mathrm{st}}\mathrm{Bin}(N,1-\mathbbm{P}(\tilde{E}_{\mu,i}))$ as in $(iii)$ of \autoref{equation:median_estimator_binomial_concentration}. We also recall that $\mathrm{KL}(\,\cdot\,\Vert\,\cdot\,)$ represents the Kullback-Leibler divergence between two Bernoulli distribution, as previously defined in \autoref{appendix_lemma:median_part_proof}. Then following the same argument as in \autoref{appendix_lemma:median_part_proof}, we obtain
    \begin{align*}
        \mathrm{KL}(p-\zeta \Vert p)& \mathop{ \geq }\limits^{(i)}   g(\varepsilon )+(\frac{1}{2} - \varepsilon )\log (1/\varrho)\\
        & \mathop{ \geq }\limits^{(ii)} \frac{1}{2}(\frac{1}{2}-\varepsilon )\log (1/\varrho) ,
    \end{align*}
    where $(i)$ uses the function $ g(\, \cdot \, ) $ as defined in \autoref{lemma:useful_constants_1} and using the same logic as in \cite{prasadan_information_2024}[proof of Lemma 5.3]; $(ii)$ applies the second component of \autoref{eq:condition_for_R}, which gives
    \begin{align*}
        \log(1/\varrho) &= \frac{\gamma }{8}\log(1+\x) > \dfrac{ -2g(\varepsilon ) }{ 1/2-\varepsilon  }. 
    \end{align*}

    Combining these results, we obtain the desired bound:
    \begin{align*}
        \mathbbm{P}_{  }\left( \mu \not\in S \right) &\leq  \exp\left[ -N\cdot \mathrm{KL}(p-\zeta \Vert p) \right] \\
        &\leq  \exp\left[ -N\cdot \frac{1}{2}(\frac{1}{2}-\varepsilon )\log (1/\varrho) \right] \\
        &= \exp\left[ -\dfrac{ N(1/2-\varepsilon )\gamma  }{ 16 } \log(1+\x) \right].
    \end{align*}

    Further, using the third component of \autoref{eq:condition_for_R}, the above probability is less than $1/2$, finishing the proof.
    
\end{proof}

\subsection*{Proof of \autoref{theorem:expected_risk_upper_bound_unbounded}}\label{subsection:proof_expected_risk_rate_bounded_case_unbounded}

We begin by establishing the probability tail bound for the error rate of the tree estimator in \autoref{appendix_lemma:prob_bound_on_tree_unbounded}. We then integrate this result to obtain the expected risk rate for non-empty $ S(R) $ case in \autoref{appendix_lemma:tail_bound_for_interation_and_risk_bound_nonemptyS_unbounded}. Subsequently, we address the case of empty $S(R)$ in \autoref{appendix_lemma:tail_bound_for_interation_and_risk_bound_emptyS_unbounded} and combine these results to derive the final expected risk rate in \autoref{appendix_lemma:total_expectation_for_risk_bound_unbounded}. Finally, we present the proof of \autoref{theorem:expected_risk_upper_bound_unbounded}.

\renewcommand{\x}{\frac{\delta _J^2}{\sigma ^2}}
\begin{lemma}\label{appendix_lemma:prob_bound_on_tree_unbounded}
    Let constants $ C,C_0,C_1,C_5 $ be as given in \autoref{lemma:useful_constants_1} and set $ c=2C+2 $. Take $ R\geq R_0 $ where $ R_0 $ is from \autoref{theorem:random_set_S_prob_bound}. Let $ S=S(R) $ be as defined in \autoref{definition:random_set_S}. For any $ J\in \mathbbm{Z} $, denote $ \delta _J = \frac{d_m}{2^{J-1}(C+1)} $, where $ d_m = 2m+2R = 2R\cdot\frac{c}{c-1} $.

    Let $ J^\dagger\in\mathbbm{N} $ be the largest integer such that $ \delta _{J} $ satisfies \autoref{eq:J_star_condition_unbounded}:
    \begin{align*}
         C_5N\log(1+\frac{\delta _{J}^2}{\sigma ^2}) \geq 6\log M^\mathrm{ loc }(c\delta _{J},c) \vee \log 2 ,
    \end{align*}
    and also $ \frac{\delta _{J^\dagger}^2}{\sigma ^2} \geq C_0N^{-1}\vee C_1^2\varepsilon $. We set $ J^\dagger = 1 $ if the above condition is never satisfied for any $ J\in \mathbbm{N} $. Assume $C_5N>2$, then for all $ 1\leq J \leq J^\dagger $, we have  
    \begin{align*}
        \mathbbm{P}_{  }\left( \left\Vert \Y_J - \mu  \right\Vert >(C+1)\delta_J | S\neq \varnothing \right) &\leq \exp\left[ -\dfrac{ N(1/2-\varepsilon )\gamma  }{ 16 } \log(1+\frac{C^2\delta _J^2}{16\sigma ^2}) \right] \\ &\,\, + 4 \cdot \mathbbm{1}(J>1) \exp\left[ -\frac{C_5N}{2}\log(1+\frac{\delta _J^2}{\sigma ^2}) \right].
    \end{align*}    
\end{lemma}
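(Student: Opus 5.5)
The plan is to mirror the proof of \autoref{appendix_lemma:prob_bound_on_tree}, adding one extra failure event: that the chosen root $s$ is far from $\mu$. Condition on $S\neq\varnothing$ and let $s\in S_m$ be the root selected by the algorithm. The first term in the bound comes from the event $\mu\notin S(R')$ for a suitable radius $R'\le R$; the second comes from the tournaments along the chain in the tree $\mathcal{G}^s$.

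\textbf{Step 1: localization.} Set $R' := C\delta_J/4$ when this is at least $R_0$; otherwise use $R'=R_0$. Since $(C+1)\delta_J$ is comparable to $d_m/2^{J-1}$, we have $R'\le R$ for $J\ge 1$ once $C$ is large. By the nesting property in \autoref{definition:random_set_S}, on $\{\mu\in S(R')\}$ we get $\mu\in S(R)$.

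\begin{itemize}[topsep=2pt,itemsep=0pt]
\item Because $\mathrm{Diam}(S(R))\le 2R$ and $s$ is the point of $S_m$ closest to $S(R)$, and $S_m$ is a $2m$-covering, we get $\|s-\mu\|\le 2m+2R=d_m$. Hence $\mu\in K\cap B(s,d_m)$, the set covered by the tree $\mathcal{G}^s$.
\item The probability of $\mu\notin S(R')$ is bounded by \autoref{theorem:random_set_S_prob_bound} applied at radius $R'$. This gives $\exp[-\tfrac{N(1/2-\varepsilon)\gamma}{16}\log(1+\tfrac{C^2\delta_J^2}{16\sigma^2})]$.
\item Conditioning on $S\neq\varnothing$ does not hurt, since $\mathbb{P}(S\neq\varnothing)\ge \mathbb{P}(\mu\in S)\ge 1/2$. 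This costs at most a factor $2$, which explains the $4$ in place of $2$ in the second term.
\end{itemize}

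\textbf{Step 2: tree recursion.} On the good event, $s$ is random but takes values in the countable set $S_m$, so the tree $\mathcal{G}^s$ is data-dependent. I would handle this in one of two ways.

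\begin{itemize}[topsep=2pt,itemsep=0pt]
\item Union over the candidate roots within distance $d_m$ of $\mu$. Their number is at most about $M^{\mathrm{loc}}(d_m,c)$ by the packing property of $S_m$, since $m=R/(c-1)$. This is exactly why the constant $6$ replaces $4$ in \autoref{eq:J_star_condition_unbounded}: the extra factor of $M^{\mathrm{loc}}$ is absorbed.
\item Alternatively, bound $\sum_{s}\mathbb{P}(\text{chain fails in }\mathcal{G}^s,\ \text{root}=s)$ directly by the union.
\end{itemize}

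For each fixed root, repeat the decomposition $\mathbb{P}(A_J)\le\sum_j\mathbb{P}(A_j\cap A_{j-1}^\complement)$, using the following ingredients:

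\begin{itemize}[topsep=2pt,itemsep=0pt]
\item \autoref{lemma:tree_properties_unbounded} in place of \autoref{lemma:tree_properties};
\item \autoref{theorem:covering_set_prob_bound}, valid because $\delta_j^2/\sigma^2\ge C_0N^{-1}\vee C_1^2\varepsilon$ for $j\le J^\dagger$;
\item the summation \autoref{appendix_lemma:sum_of_sequence}, which needs $C_5N>2$.
\end{itemize}

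Together these give at most $2[M^{\mathrm{loc}}]^3\exp[-C_5N\log(1+\delta_J^2/\sigma^2)]$. Condition \autoref{eq:J_star_condition_unbounded} with the constant $6$ then reduces this to $2\exp[-\tfrac{C_5N}{2}\log(1+\delta_J^2/\sigma^2)]$.

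\textbf{Main obstacle.} The hard part is Step 1's bookkeeping. First, the random root must be rigorously localized so that $\mu$ lies in the region covered by $\mathcal{G}^s$. Second, the radius $R'$ must be matched so that the tail reads with $C^2\delta_J^2/16$. This requires checking $R'\ge R_0$; when $R'<R_0$ I would instead use $R_0\le R$ directly and absorb the resulting term, possibly relying on the last component of \autoref{eq:condition_for_R}, namely $R_0\ge 2C+1$.
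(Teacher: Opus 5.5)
Your proposal is correct and follows essentially the paper's argument. You localize the root through $S(R)\subseteq B(\Y_1,d_m)$ and pay a factor $2$ for conditioning on $S\neq\varnothing$ in the tree part. You then union-bound over the at most $M^{\mathrm{loc}}_K(d_m,2c)$ admissible roots (note $d_m/m=2c$, so it is $2c$ rather than $c$) and over the tree nodes, and absorb the resulting $[M^{\mathrm{loc}}]^3$ using the constant $6$.

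The auxiliary radius $R'$ and your ``main obstacle'' are unnecessary. The paper applies \autoref{theorem:random_set_S_prob_bound} at $R$ itself and uses $\delta_J=\frac{4R}{2^{J-1}(2C+1)}\le\frac{4R}{C}$; a larger radius only makes the tail bound smaller. The paper also keeps coefficient $1$ on that term, which you should make explicit: since $\{\mu\in S\}\subseteq\{S\neq\varnothing\}$, one has $\mathbbm{P}(\mu\notin S\mid S\neq\varnothing)\le\mathbbm{P}(\mu\notin S)$, so the factor $2$ is not paid there.
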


\begin{proof}
    Let $ A_j:= \{ \left\Vert \Y_j - \mu  \right\Vert > \frac{d_m}{2^{j-1}} \} $, so the event of interest is $ \{A_J|S\neq \varnothing \} $. Using \cite[Lemma B.4 (i)]{prasadan_information_2024}, we have
    \begin{align*}
        \mathbbm{P}_{  }\left( A_J| S\neq \varnothing \right) &\leq  \underbrace{ \mathbbm{P}_{  }\left( A_1 | S\neq \varnothing  \right)  }_\text{I} + \underbrace{ \mathbbm{P}_{  }\left( A_2\cap A_1^\complement | S\neq \varnothing \right)  }_{\text{II}} + \sum_{j=3}^J\underbrace{\mathbbm{P}_{  }\left( A_j\cap A_{j-1}^\complement \cap A_1^\complement | S\neq \varnothing \right) }_\text{III}.
    \end{align*}
    This follows a similar approach to \autoref{appendix_lemma:prob_bound_on_tree} by decomposing the probability bound on $ J $ into tail bounds for each layer $j:\, 1\leq j\leq J $. The key difference is that we now have a non-empty $ A_1 $, and we need to deal with the conditional probability with respect to $ S\neq \varnothing $.
    We first handle the general case $ 3\leq j\leq J $ (part $ \text{III} $) and then turn to the edge cases $ j=1,2 $ (parts $ \text{I} $ and $ \text{II} $). 

    \begin{itemize}[topsep=2pt,itemsep=0pt]
        \item[\textbf{III}] For $ 3\leq j\leq J $, we have that
        \begin{align*}
            \mathbbm{P}&\left( A_j\cap A_{j-1}^\complement \cap A_1^\complement | S\neq \varnothing \right) \\
            &= \frac{1}{\mathbbm{P}_{  }\left( S\neq \varnothing \right)} \mathbbm{P}_{  }\left( \left\Vert \Y_j - \mu  \right\Vert \geq \frac{d_m}{2^{j-1}}, \left\Vert \Y_{j-1}-\mu  \right\Vert \leq \dfrac{ d_m }{ 2^{j-2} } , \left\Vert Y_1-\mu  \right\Vert \leq d_m , S\neq \varnothing \right) \\
            &\mathop{ \leq }\limits^{(i)}  2\mathbbm{P}_{  }\left( \left\Vert \Y_j - \mu  \right\Vert \geq \frac{d_m}{2^{j-1}}, \left\Vert \Y_{j-1}-\mu  \right\Vert \leq \dfrac{ d_m }{ 2^{j-2} } , \left\Vert Y_1-\mu  \right\Vert \leq d_m , S\neq \varnothing \right) \\
            &\mathop{ \leq }\limits^{(ii)} 2 \sum_{\substack{s\in S_m \cap \\ B(\mu ,d_m) }}\sum_{\substack{u\in \mathcal{L}^s(j-1) \cap \\ B(\mu , \frac{d_m}{2^{j-2}})}}     \mathbbm{P}_{  }\left( \left\Vert \Y_j - \mu  \right\Vert > \dfrac{ d_m }{ 2^{j-1} } , \Y_{j-1}=u, \Y_1 = s, S\neq \varnothing \right)\\
            &\mathop{ = }\limits^{(iii)}   2 \sum_{\substack{s\in S_m \cap \\ B(\mu ,d_m) }}\sum_{\substack{u\in \mathcal{L}^s(j-1) \cap \\ B(\mu , \frac{d_m}{2^{j-2}})}}  \mathbbm{P}_{  }\Big( \left\Vert \mathop{ \arg\min }\limits_{\nu \in \mathcal{O}(u)} T(\delta_j ,\nu, \mathcal{O}(u)) -\mu  \right\Vert >(C+1)\delta_j ,\\
            & \hspace{20em} \Y_{j-1}=u, Y_1 = s , S\neq \varnothing \Big) \\
            &\leq  2 \sum_{\substack{s\in S_m \cap \\ B(\mu ,d_m) }}\sum_{\substack{u\in \mathcal{L}^s(j-1) \cap \\ B(\mu , \frac{d_m}{2^{j-2}})}}  \mathbbm{P}_{  }\left( \left\Vert \mathop{ \arg\min }\limits_{\nu \in \mathcal{O}(u)} T(\delta_j ,\nu, \mathcal{O}(u)) -\mu  \right\Vert >(C+1)\delta_j \right)
        \end{align*}
        where $(i)$ follows from \autoref{theorem:random_set_S_prob_bound}; $(ii)$ follows from our construction of the tree $ \mathcal{L}^s(j) $ with root at $ s $, which enables a union bound; and $(iii)$ follows from the construction of $ T(\delta_j ,\nu,S) $ as described in \autoref{theorem:covering_set_prob_bound} and the fact that $ \mathcal{O}(u) $ is a $ \frac{d_m}{2^{j-1}} $ covering set of $ \mathcal{L}^s(j-1) $ using \autoref{lemma:tree_properties_unbounded}.

        For the two summations, we observe the following:
        \begin{align*}
            \mathrm{ card }\big( S_m\cap B(\mu ,d_m) \big) &\mathop{ \leq }\limits^{(i)}  M^\mathrm{ loc }_K (d_m ,\frac{d_m}{m}) =    M^\mathrm{ loc }_K (d_m ,2c)\mathop{ \leq }\limits^{(ii)}  M^\mathrm{ loc }_K (\frac{d_m}{2^{j-2}} ,2c),\\
            \mathrm{ card }\big( \mathcal{L}^s(j-1)\cap B(\mu ,\frac{d_m}{2^{j-2}}) \big) &\mathop{ \leq }\limits^{(iii)}  M^\mathrm{ loc }_K (\frac{d_m}{2^{j-2}} ,2c),
        \end{align*}
        where $(i)$ follows from the maximality of $ M^\mathrm{ loc } $, $(ii)$ from the monotonicity of $ M^\mathrm{ loc }_K (\delta ,c) $ in $ \delta $ as established in \autoref{lemma:monotonicity_of_local_entropy}, and $(iii)$ as stated in \autoref{lemma:tree_properties_unbounded}. Combining these results, we obtain
        \begin{align*}
             \mathbbm{P}&\left( A_j\cap A_{j-1}^\complement \cap A_1^\complement | S\neq \varnothing \right)\\
             &\leq  2 \big[ M^\mathrm{ loc }_K (\frac{d_m}{2^{j-2}} ,2c) \big]^2\cdot \mathbbm{P}_{  }\left( \left\Vert \mathop{ \arg\min }\limits_{\nu \in \mathcal{O}(u)} T(\delta_j ,\nu, \mathcal{O}(u)) -\mu  \right\Vert >(C+1)\delta_j \right)\\
             &\mathop{ \leq }\limits^{(i)}  2 \big[ M^\mathrm{ loc }_K (\frac{d_m}{2^{j-2}} ,2c) \big]^3 \exp\left[ -C_5N\log (1+\frac{\delta _j^2}{\sigma ^2}) \right],
        \end{align*}
        where $(i)$ follows from the union bound as in \autoref{theorem:covering_set_prob_bound}.

        \item[\textbf{I}] Since $ s=\Y_1 $ is chosen as the closest point to $ S(R) $ in the $ 2m $-covering set of $ K\supseteq S(R) $, there always exists some $ s' \in S(R) $ such that $ \left\Vert s' - \Y_1 \right\Vert \leq 2m $.  Moreover, by $\mathrm{Diam}(S(R))\leq 2R$ from \autoref{definition:random_set_S}[property (ii)], for any point $ s''\in S(R) $, we have $ \left\Vert s'-s'' \right\Vert \leq 2R $. Together, these imply $ \left\Vert s'' - \Y_1 \right\Vert \leq 2m + 2R = d_m $, and thus $ S(R)\subseteq B(\Y_1, d_m) $. Now we have
        \begin{align*}
            \mathbbm{P}\left( A_1 | S \neq \varnothing \right) &=  \mathbbm{P}_{  }\left( \mu \not\in B(\Y_1, d_m) | S \neq \varnothing \right) \\
            &\leq  \mathbbm{P}_{  }\left( \mu \not\in S(R) | S \neq \varnothing \right) \\
            &=  1 - \mathbbm{P}_{  }\left( \mu \in S(R) | S \neq \varnothing \right)\\
            &\mathop{\leq}^{(i)} 1 - \mathbbm{P}( \mu\in S(R) )  \\
            &\mathop{=}^{(ii)}\exp\left[ -\dfrac{ N(1/2-\varepsilon )\gamma  }{ 16 } \log(1+\frac{R^2}{\sigma ^2}) \right].
        \end{align*} 
        where in $(i)$ we observe that $\{\mu\in S\}\subset \{S\neq \varnothing\}$; $(ii)$ we apply \autoref{theorem:random_set_S_prob_bound}.
        
        \item[\textbf{II}] Level $ 2 $ is a $ \frac{d_m}{c} $ covering set of $ K\cap B(s,d_m) $. We therefore follow the same argument as in \textbf{III} except for some constant differences:
        \begin{align*}
            \mathbbm{P}&\left( A_2\cap A_1^\complement | S\neq \varnothing \right)  \\
            &= \frac{1}{\mathbbm{P}_{  }\left( S\neq \varnothing \right)} \mathbbm{P}_{  }\left( \left\Vert \Y_2 - \mu  \right\Vert > \frac{d_m}{2}, \left\Vert Y_1-\mu  \right\Vert \leq d_m , S\neq \varnothing  \right)\\
            &\leq  2\sum_{\substack{s\in S_m \cap \\ B(\mu ,d_m) }} \mathbbm{P}_{  }\left( \left\Vert \Y_2 - \mu  \right\Vert \geq d_m/2, \Y_1 = s, S\neq \varnothing \right) \\
            &= 2\sum_{\substack{s\in S_m \cap \\ B(\mu ,d_m) }} \mathbbm{P}_{  }\left( \left\Vert \mathop{ \arg\min }\limits_{\nu \in \mathcal{O}(u)} T( \frac{d_m}{c} ,\nu, \mathcal{O}(u)) -\mu  \right\Vert >(C+1)\frac{d_m}{c} , \Y_1 = s, S\neq \varnothing \right) \\
            &\leq 2\sum_{\substack{s\in S_m \cap \\ B(\mu ,d_m) }} \mathbbm{P}_{  }\left( \left\Vert \mathop{ \arg\min }\limits_{\nu \in \mathcal{O}(u)} T( \frac{d_m}{c} ,\nu, \mathcal{O}(u)) -\mu  \right\Vert >(C+1)\frac{d_m}{c} \right).
        \end{align*}
        Similarly, we have 
        \begin{align*}
            \mathrm{ card }\big( S_m \cap B(\mu ,d_m) \big)  &\leq  M^\mathrm{ loc }_K (d_m ,\frac{d_m}{m}) \leq M^\mathrm{ loc }_K (d_m,2c).
        \end{align*}
        Therefore, by \autoref{theorem:covering_set_prob_bound}, 
        \begin{align*}
             \mathbbm{P}\left( A_2\cap A_1^\complement | S\neq \varnothing \right) &\leq  2M^\mathrm{ loc }_K (d_m,2c)  \mathbbm{P}_{  }\left( \left\Vert \mathop{ \arg\min }\limits_{\nu \in \mathcal{O}(u)} T( \frac{d_m}{c} ,\nu, \mathcal{O}(u)) -\mu  \right\Vert >(C+1)\frac{d_m}{c} \right)\\
             &\leq  2\big[ M^\mathrm{ loc }_K (d_m,2c)  \big]^2 \exp\left[ -C_5N\log (1+\frac{d_m^2}{c^2\sigma ^2}) \right] \\
             &\leq  2\big[ M^\mathrm{ loc }_K (d_m,2c)  \big]^3 \exp\left[ -C_5N\log (1+\frac{d_m^2}{c^2\sigma ^2}) \right] \\
             &=  2\big[ M^\mathrm{ loc }_K (d_m,2c)  \big]^3 \exp\left[ -C_5N\log (1+\frac{\delta _2^2}{\sigma ^2}) \right].
        \end{align*}
        We note that this bound is consistent with that obtained in \textbf{III}.
    \end{itemize}

    Putting the above together we have
    \begin{align*}
        \mathbbm{P}_{  }\left( A_J | S\neq \varnothing \right) &\leq  \exp\left[ -\dfrac{ N(1/2-\varepsilon )\gamma  }{ 16 } \log(1+\frac{R^2}{\sigma ^2}) \right] \\
        & + 2 \sum_{j=2}^J \big[ M^\mathrm{ loc }_K (\frac{d_m}{2^{j-2}} ,2c)  \big]^3 \exp\left[ -C_5N\log (1+\frac{\delta _j^2}{\sigma ^2}) \right] \\
        &\leq  \exp\left[ -\dfrac{ N(1/2-\varepsilon )\gamma  }{ 16 } \log(1+\frac{R^2}{\sigma ^2}) \right] \\
        & + 2 \big[ M^\mathrm{ loc }_K (\frac{d_m}{2^{J-2}} ,2c)  \big]^3\sum_{j=2}^J  \exp\left[ -C_5N\log (1+\frac{\delta _j^2}{\sigma ^2}) \right] \\
        &\mathop{ \leq }\limits^{(i)}   \exp\left[ -\dfrac{ N(1/2-\varepsilon )\gamma  }{ 16 } \log(1+\frac{R^2}{\sigma ^2}) \right]\\
        & + 4 \cdot \mathbbm{1}(J>1) \big[ M^\mathrm{ loc }_K (\frac{d_m}{2^{J-2}} ,2c)  \big]^3 \cdot \exp\left[ -C_5N\log(1+\frac{\delta _J^2}{\sigma ^2}) \right]\\
        &\mathop{ \leq }\limits^{(ii)}  \exp\left[ -\dfrac{ N(1/2-\varepsilon )\gamma  }{ 16 } \log(1+\frac{R^2}{\sigma ^2}) \right] + 4 \cdot \mathbbm{1}(J>1) \cdot \exp\left[ -\frac{1}{2}C_5N\log(1+\frac{\delta _J^2}{\sigma ^2}) \right],
    \end{align*}
    where $ (i) $ follows the same steps are in the proof of \autoref{appendix_lemma:prob_bound_on_tree}, which utilizes the summation from \autoref{appendix_lemma:sum_of_sequence}; $ (ii) $ uses \autoref{eq:J_star_condition_unbounded}.

    We finalize the proof by noting the relation between $ \delta _J $ and $ R $: 
    \begin{align*} 
        \delta _J&= \frac{d_m}{2^{J-1}(C+1)} = \dfrac{ R\cdot \frac{2c}{c-1} }{ 2^{J-1}(C+1) }= \dfrac{ 4R }{ 2^{J-1}(2C+1) }\leq \dfrac{ 4R }{ C }   .
    \end{align*}
    This implies
    \begin{align*}
         \exp\left[ -\dfrac{ N(1/2-\varepsilon )\gamma  }{ 16 } \log(1+\frac{R^2}{\sigma ^2}) \right] &\leq  \exp\left[ -\dfrac{ N(1/2-\varepsilon )\gamma  }{ 16 } \log(1+\frac{C^2\delta _J^2}{16\sigma ^2}) \right].
    \end{align*}
    Combining these results, we obtain the desired bound on $\mathbbm{P}_{  }\left( A_J | S\neq \varnothing \right)$.

\end{proof}

\begin{definition}\label{appendix_definition:three_conditions_unbounded}
    Similar to \autoref{appendix_definition:three_conditions} in the bounded case, we define three conditions and their corresponding quantities.

    Let $ C,C_0,C_1,C_5 $ be the constants from \autoref{appendix_lemma:prob_bound_on_tree_unbounded} and set $ c=2C+2 $. For any $ J\in \mathbbm{Z} $, define $ \delta _J = \frac{d_m}{2^{J-1}(C+1)} $. We define the following conditions:
    \begin{itemize}[topsep=2pt,itemsep=0pt]
        \item [\textbf{A}] $ C_5N \log(1+\frac{\delta _J^2}{\sigma ^2}) \geq 6\log M^\mathrm{ loc }(c\delta _J,c) \vee \log 2 $, i.e., \autoref{eq:J_star_condition_unbounded};
        \item [\textbf{B}] $ \frac{\delta _J^2}{\sigma ^2} \geq C_0N^{-1} $;
        \item [\textbf{C}] $ \frac{\delta _J^2}{\sigma ^2} \geq C_1^2\varepsilon $.
    \end{itemize}
    We set $ J^A $ be the largest $ J $ such that condition \textbf{A} holds, and similarly define $ J^B $ and $ J^C $ for conditions \textbf{B} and \textbf{C} respectively. The existence of such $ J^A $ is guaranteed using the same argument as in \autoref{appendix_remark:three_conditions_A_existence}.

    Using the definition, immediately we have $ J^* = J^A\vee 1 $ and $ J^\dagger =\min \{ J^A, J^B, J^C \} \vee 1 $ where $ J^* $ is from \autoref{theorem:expected_risk_upper_bound_unbounded} and $ J^\dagger $ is from \autoref{appendix_lemma:prob_bound_on_tree_unbounded}.
\end{definition}

\begin{lemma}\label{appendix_lemma:tail_bound_for_interation_and_risk_bound_nonemptyS_unbounded}
    Let constants $ C,C_0,C_1,C_5 $ be as in \autoref{appendix_lemma:prob_bound_on_tree_unbounded} and set $ c=2C+2 $. Let $ J^A, J^B, J^C $ be as defined in \autoref{appendix_definition:three_conditions_unbounded}. Denote $ \nu ^*= \Y_{J^*} $, and then $ \nu ^{**} $ to be the output of our tree algorithm after at least $ J^* $ iterations (say at step $ J^{**}\geq J^* $ so $ \nu ^{**} = \Y_{J^{**}+1} $)
    
    Then there exist constants $ \omega, C_8(\varepsilon), C_9(\varepsilon) $ depending only on $ \varepsilon$ and absolute constants such that: if $C_8(\varepsilon)N > 1$, for all $ x \geq \delta _{J^\dagger} = \frac{d_m}{2^{J^\dagger-1}(C+1)} $ it holds that
    \begin{align*}
        \mathbbm{P}\left( \left\Vert \mu -\nu ^{**} \right\Vert > \omega x | S\neq \varnothing  \right)  \leq 7 \exp\left[ -C_8(\varepsilon) N\log(1 + C_9(\varepsilon)\frac{x^2}{\sigma ^2}) \right].
    \end{align*}
    As a result, for any $ \delta \geq \delta _{J^\dagger} $, we have
    \begin{align*}
        \mathbbm{E}_W\mathbbm{E}_X\left[ \left\Vert \mu -\nu ^{**} \right\Vert ^2  \big| S\neq \varnothing  \right] \lesssim \delta ^2 + \frac{\sigma ^2}{N}.
    \end{align*}
    We also note that $ C_8(\varepsilon) $ is chosen such that $ C_8(\varepsilon) \leq \frac{C_5}{2}\wedge \frac{(1/2-\varepsilon )\gamma }{16} $.
\end{lemma}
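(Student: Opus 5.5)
The argument mirrors the proof of \autoref{appendix_lemma:tail_bound_for_interation_and_risk_bound}, with the bounded-case layer bound replaced by \autoref{appendix_lemma:prob_bound_on_tree_unbounded}, which carries an extra term from the localization event $\{\mu\notin S\}$.

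\textbf{Step 1: reduction to a single layer.} Fix $x\ge \delta_{J^\dagger}$ and choose $\tilde J\le J^\dagger$ with $x\in[\delta_{\tilde J},\delta_{\tilde J-1})$. Set $J=\tilde J\vee 1$, so that $1\le J\le J^\dagger\le J^*\le J^{**}$. Decompose
\[
\mu-\nu^{**}=(\mu-\Y_J)+(\Y_J-\nu^*)+(\nu^*-\nu^{**}).
\]
The chain-distance bound of \autoref{lemma:tree_properties_unbounded}, with $d_m$ in place of $d$, controls the last two terms deterministically by $\frac{5+4C}{1+C}\frac{d_m}{2^J}$ and $\frac{5+4C}{1+C}\frac{d_m}{2^{J^*}}$. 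Taking $\omega=6+5C$ exactly as before, the event $\{\|\mu-\nu^{**}\|>\omega x\}$ is contained in $\{\|\Y_J-\mu\|>d_m/2^{J-1}\}=\{\|\Y_J-\mu\|>(C+1)\delta_J\}$. Everything is conditioned on $S\neq\varnothing$; this is harmless because the deterministic inclusions do not depend on the conditioning.

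\textbf{Step 2: the layer tail.} Apply \autoref{appendix_lemma:prob_bound_on_tree_unbounded} to get two terms:
\[
\exp\Big[-\tfrac{N(1/2-\varepsilon)\gamma}{16}\log\big(1+\tfrac{C^2\delta_J^2}{16\sigma^2}\big)\Big]
+4\exp\Big[-\tfrac{C_5N}{2}\log\big(1+\tfrac{\delta_J^2}{\sigma^2}\big)\Big].
\]
Since $x<2\delta_J$ when $J>1$, we have $\delta_J^2\ge x^2/4$ and can replace $\delta_J$ by $x/2$ inside the logs. Now set $C_8(\varepsilon)=\frac{C_5}{2}\wedge\frac{(1/2-\varepsilon)\gamma}{16}$ and $C_9(\varepsilon)=\frac14\wedge\frac{C^2}{64}$. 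Monotonicity in both the prefactor and the argument of $\log$ then bounds each term by $\exp[-C_8N\log(1+C_9x^2/\sigma^2)]$, giving a total constant $\le 5\le 7$.

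The edge case $J=1$ is the one that differs from the bounded setting, because $\mathbbm{P}(A_1\mid S\neq\varnothing)$ is no longer zero. There the first term still applies directly, using $\delta_1\le 4R/C$ from the end of the proof of \autoref{appendix_lemma:prob_bound_on_tree_unbounded}. For $x\ge\delta_{J^\dagger}$ with $J^\dagger=1$ but $x$ large (i.e.\ $x\ge \delta_0$), this is where I would be careful: $\tilde J$ may be $\le 0$. I would handle it by noting $\|\mu-\nu^{**}\|\le\|\mu-\Y_1\|+\frac{5+4C}{1+C}\frac{d_m}{2}$. On $\{\mu\in S\}$ one has $\|\mu-\Y_1\|\le d_m$, so the event requires $\mu\notin S(R')$ for a larger radius $R'\asymp x$. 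The nestedness of $S(\cdot)$ together with \autoref{theorem:random_set_S_prob_bound} at radius $R'$ (valid since $R'\ge R\ge R_0$) then yields $\exp[-\frac{N(1/2-\varepsilon)\gamma}{16}\log(1+R'^2/\sigma^2)]$. Doubling for the conditioning via $\mathbbm{P}(S\neq\varnothing)\ge1/2$ stays within the constant $7$.

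\textbf{Step 3: integration.} Integrate the tail:
\[
\mathbbm{E}\big[\|\mu-\nu^{**}\|^2\mid S\neq\varnothing\big]=2\omega^2\int_0^\infty u\,\mathbbm{P}(\|\mu-\nu^{**}\|>\omega u\mid S\neq\varnothing)\,du .
\]
Bound the part $u\le\delta$ by $\omega^2\delta^2$. The remainder is at most
\[
14\omega^2\sigma^2 C_9^{-1}\int_0^\infty \frac{v}{(1+v^2)^{C_8N}}\,dv\lesssim \frac{\sigma^2}{C_8N-1}\lesssim\frac{\sigma^2}{N},
\]
using $C_8N>1$ and, when needed, choosing $C$ large as in \autoref{theorem:expected_risk_upper_bound}. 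The main obstacle is Step 2's large-$x$ regime, i.e.\ making the localization tail scale with $x$ rather than with the fixed $R$. Re-invoking \autoref{theorem:random_set_S_prob_bound} at radius $\asymp x$ is what I would try first.
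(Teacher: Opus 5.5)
Your proposal is correct and follows essentially the same route as the paper. The paper likewise treats moderate $x$ with the single-layer reduction ($\omega=6+5C$) plus \autoref{appendix_lemma:prob_bound_on_tree_unbounded}, and large $x$, following \cite[Lemma D.2 Part I]{prasadan_information_2024}, via nestedness and the diameter bound for $S(\cdot)$ together with \autoref{theorem:random_set_S_prob_bound} at a radius proportional to $x$ (doubling for the conditioning), then combines the terms into the constant $7$ and integrates; the only difference is cosmetic, namely that you put the regime boundary at $\delta_0$ rather than at the paper's $C_7R/\omega\le\delta_1$.
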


\begin{proof}

    Following a similar approach to \autoref{appendix_lemma:tail_bound_for_interation_and_risk_bound}, we first bound $ \mathbbm{P}_{  }\left( \left\Vert \mu -\nu ^{**} \right\Vert \gtrsim x | S(R) \neq \varnothing \right) $, then integrate the tail bound to obtain the risk bound. The tail bound is decomposed into two parts depending on whether $ x \gtrsim \delta _1 $ or $ \delta _{J^\dagger} \lesssim  x \lesssim \delta _1 $. We remark that the treatment differs from \autoref{appendix_lemma:tail_bound_for_interation_and_risk_bound} due to the unboundedness of $ K $. In the bounded case, the edge case of $ J < 1 $ (corresponding to large $x$) degenerates since the estimator cannot fall outside the bounded set $ K $. Here in the unbounded case, however, we use the localization set $ S(R) $ to handle the case of large $ x $.
    We first present the two inequalities valid in two regimes respectively, then show how to harmonize them by analyzing the boundary of the two regimes. 

    \begin{itemize}[topsep=2pt,itemsep=0pt]
        \item For $ x\geq hR $ where $h$ is some constant to be determined soon to harmonize the two parts, we have for $ C_7 := \dfrac{ 8c }{ c-1 } = \dfrac{ 16(C+1) }{ 2C+1 } $ that
        \begin{align}\label{equation:the_first_half_for_x}
            \mathbbm{P}_{  }\left( \left\Vert \mu -\nu ^{**} \right\Vert > \frac{C_7x}{h} | S\neq \varnothing \right) \leq 2\exp\left[ -\dfrac{ N(1/2-\varepsilon )\gamma  }{ 16 }\log(1+\frac{x^2}{h^2\sigma ^2})  \right] ,
        \end{align}
        following the same logic as \cite[Lemma D.2 Part I]{prasadan_information_2024} and using \autoref{theorem:random_set_S_prob_bound}.
        \item For $ x\in [\delta _{J^\dagger},\delta_1] $,  we follow the same steps as in \autoref{appendix_lemma:tail_bound_for_interation_and_risk_bound}. Let $ \omega = 6+5C $, and we have
        \begin{align*}
            \mathbbm{P}_{  }\left( \left\Vert \mu -\nu ^{**} \right\Vert > \omega x  | S\neq \varnothing \right) &\leq \exp\left[ -\dfrac{ N(1/2-\varepsilon )\gamma  }{ 16 } \log(1+\frac{C^2x^2}{64\sigma ^2}) \right] \\
            &+ 4 \cdot \mathbbm{1}(J^*>1) \cdot \exp\left[ -\frac{1}{2}C_5N\log(1+\frac{x^2}{4\sigma ^2}) \right]  .
        \end{align*}
    \end{itemize}

    To combine these two bounds, we choose a proper harmonizing constant $h$ so that the boundaries of the two inequalities match. To be specific, we notice the fact that
    \begin{align*}
        \omega \delta _1 &= \frac{12+10C}{2}\cdot \frac{d_m}{C+1} =  \frac{2(12+10C)}{2C+1}R\geq  \dfrac{ 16(C+1) }{ 2C+1 } R = C_7R ,
    \end{align*}
    which means $C_7R/\omega \leq \delta_1$. Therefore, we set $h = C_7/\omega$, which reformulates \autoref{equation:the_first_half_for_x} as
    \begin{align*}
         \mathbbm{P}_{  }\left( \left\Vert \mu - \nu ^{**} \right\Vert > \omega x \big| S\neq \varnothing \right) &\leq  2\exp\left[ -\dfrac{ N(1/2-\varepsilon )\gamma }{ 16 }\log(1+\frac{\omega ^{2}x^2}{C_7^2\sigma ^2})  \right] ,\quad x>\delta_1\geq C_7R/\omega.
    \end{align*} 

    Now we can put the two parts together. For any $x\geq \delta_{J^\dagger}$ it holds that
    \begin{align*}
         \mathbbm{P}_{  }\left( \left\Vert \mu - \nu ^{**} \right\Vert > \omega x \big| S\neq \varnothing \right) &\leq  2\exp\left[ -\dfrac{ N(1/2-\varepsilon )\gamma }{ 16 }\log(1+\frac{\omega ^{2}x^2}{C_7^2\sigma ^2})  \right] \\
         & + \exp\left[ -\dfrac{ N(1/2-\varepsilon )\gamma  }{ 16 } \log(1+\frac{C^2x^2}{64\sigma ^2}) \right] \\ & + 4 \cdot \mathbbm{1}(J^*>1) \cdot \exp\left[ -\frac{1}{2}C_5N\log(1+\frac{x^2}{4\sigma ^2}) \right] \\
         &\leq  7 \exp\left[ -C_8(\varepsilon) N\log(1+C_9(\varepsilon)\frac{x^2}{\sigma ^2}) \right],
    \end{align*} 
    where $ C_8(\varepsilon) ,C_9 (\varepsilon)$ are constants depending only on $ \varepsilon  $ and absolute constants. Without loss of generality, we may assume $ C_8(\varepsilon) \leq \frac{C_5}{2}\wedge \frac{(1/2-\varepsilon )\gamma }{16}$, which ensures the claimed tail bound holds.

    Then we integrate the tail bound to obtain the risk upper bound:
    \begin{align*}
        \mathbbm{ E }\left[ \left\Vert \mu -\nu ^{**} \right\Vert ^2  | S\neq \varnothing\right] &= \int_0^\infty \mathbbm{P}_{  }\left( \left\Vert \mu -\nu ^{**} \right\Vert ^2 > x | S\neq \varnothing \right) \,\mathrm{d}x\\
        &= 2\omega ^2 \int_0^{\infty} u\cdot \mathbbm{P}_{  }\left( \left\Vert \mu -\nu ^{**} \right\Vert > \omega u | S\neq \varnothing   \right) \,\mathrm{d}u\\
        &\leq  2\omega ^2 \int_0^{\delta } u \,\mathrm{d}u + 14\omega ^2 \int_{\delta }^{\infty} u\cdot \exp\left[ -C_8(\varepsilon) N\log(1+C_9(\varepsilon)\frac{u^2}{\sigma ^2}) \right] \,\mathrm{d}u,
    \end{align*}
    where $ \delta \geq \delta _{J^\dagger} $. Then under the condition $ C_8(\varepsilon) N > 1 $, we have 
    \begin{align*}
         \mathbbm{ E }_{ R }\mathbbm{E}_{ X }&\left[ \left\Vert \mu -\nu ^{**} \right\Vert ^2  | S\neq \varnothing\right] \\
         &\lesssim \omega ^2 \cdot \delta^2 + \omega ^2 \cdot \frac{\sigma ^2}{C_9(\varepsilon)(C_8(\varepsilon) N-1)}\cdot \frac{1}{(1+C_9(\varepsilon)\delta ^2/\sigma ^2)^{C_8(\varepsilon) N-1}} \\
         &\lesssim \delta ^2 + \frac{\sigma ^2}{N} ,\quad \delta \geq \delta _{J^\dagger}.
    \end{align*}
        
\end{proof}

In the above, the error bound under $\{S(R)\neq \varnothing\}$ is established. We now turn to the case that $ \{S(R) = \varnothing\} $, where the estimator is the lexicographically smallest point in $S(\hat{R})$ where $\hat{R}=\min\{t>0:S(t)\neq \varnothing\}$. We denote the point $\hat{p}$. Notice that $\{S\neq\varnothing\} = \{\hat{R}\leq R\}$, it suffice to investigate different values of $\hat{R}>R$. The following lemma does so by peeling the space into $\{\hat{R}\geq R\} = \biguplus_{k=1}^\infty \{2^{k-1}R<\hat{R}\leq 2^{k}R\}$ and derive an error bound for each peel. Then, as we will see in \autoref{appendix_lemma:total_expectation_for_risk_bound_unbounded}, the peels are combined with the previous parts using the law of total expectation to produce the final error bound.

\begin{lemma}\label{appendix_lemma:tail_bound_for_interation_and_risk_bound_emptyS_unbounded}
    Let the setting be the same as in \autoref{appendix_lemma:tail_bound_for_interation_and_risk_bound_nonemptyS_unbounded}, and consider the case that $S(R) = \varnothing$ where the estimator is denoted $\hat{p}$ as argued above. For any $ k\in \mathbbm{N} $ such that $ p_k:= \mathbbm{P}_{  }\left( 2^{k-1}R < \hat{R} \leq 2^kR \right) > 0 $ we have
    \begin{align*}
        \mathbbm{E}_{  }\left[ \left\Vert \hat{p}-\mu  \right\Vert ^2 \big| 2^{k-1}R < \hat{R} \leq 2^k R \right] &\lesssim  \sigma^2 \exp\left[ \frac{16}{N(1/2-\varepsilon )\gamma }\log (1/p_k) \right].
    \end{align*} 
\end{lemma}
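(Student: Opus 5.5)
The argument has three steps. First we reduce $\|\hat p-\mu\|$ to a tail statement about the uncorrupted data. Then we bound the probability of that tail event using the binomial computation from the proof of \autoref{theorem:random_set_S_prob_bound}. Finally we convert the small probability $p_k$ into a lower bound on how far out the tail must reach.

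On the event $\{2^{k-1}R<\hat R\le 2^kR\}$ we have $\hat p\in S(\hat R)$. So strictly more than half of the $X_i$ lie within $\hat R\le 2^kR$ of $\hat p$. Since $S(\hat R)$ has diameter at most $2\hat R$, it suffices to control the event $\{\mu\notin S(2^{k-1}R)\}$. Indeed $\hat R>2^{k-1}R$ means $S(2^{k-1}R)=\varnothing$, so in particular $\mu\notin S(2^{k-1}R)$. Now fix $t$ with $\|\hat p-\mu\|>t+2^kR$. Then each $X_i$ within $2^kR$ of $\hat p$ lies farther than $t$ from $\mu$. Hence strictly more than $N/2$ of the points are at distance greater than $t$ from $\mu$, and at least $N(1/2-\varepsilon)$ of these are uncorrupted. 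This gives the inclusion
\begin{align*}
\{\|\hat p-\mu\|>t+2^kR\}\subseteq\{\text{at least } N(\tfrac12-\varepsilon) \text{ of } \|\tilde X_i-\mu\|>t\}.
\end{align*}
Using \autoref{lemma:first_step_one_sample_prob_bound} together with the binomial and KL argument from the proof of \autoref{theorem:random_set_S_prob_bound}, for $t\ge R_0$ this event has probability at most $q(t):=\exp[-\frac{N(1/2-\varepsilon)\gamma}{16}\log(1+t^2/\sigma^2)]$. Applying the same bound with $t=2^{k-1}R$ gives $p_k\le q(2^{k-1}R)$. Inverting this inequality yields
\begin{align*}
(2^{k-1}R)^2\le\sigma^2\exp\Big[\tfrac{16}{N(1/2-\varepsilon)\gamma}\log(1/p_k)\Big]=:\sigma^2\Lambda_k,
\end{align*}
so $2^kR\lesssim\sigma\sqrt{\Lambda_k}$.

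Next we bound the conditional expectation by integrating the tail:
\begin{align*}
\mathbb E\big[\|\hat p-\mu\|^2\,\big|\,\cdot\,\big]\le \frac{1}{p_k}\int_0^\infty \mathbb P\big(\|\hat p-\mu\|^2>u,\ 2^{k-1}R<\hat R\le 2^kR\big)\,du.
\end{align*}
We split the integral at $u_0\asymp\sigma^2\Lambda_k$. Below $u_0$ the integrand is at most $p_k$, so this part contributes $O(\sigma^2\Lambda_k)$ after dividing by $p_k$. Above $u_0$, write $\sqrt u=t+2^kR$ with $t\gtrsim\sigma\sqrt{\Lambda_k}$, and bound the integrand by $q(t)$. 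Because $C_8(\varepsilon)N>1$, the exponent of $q$ exceeds $1$, so
\begin{align*}
\int_{u_0}^\infty q \lesssim \sigma^2\Lambda_k\, q\big(\sigma\sqrt{\Lambda_k}\big)\lesssim \sigma^2\Lambda_k\, p_k.
\end{align*}
The last inequality holds because $q(\sigma\sqrt{\Lambda_k})\le p_k$ by the very definition of $\Lambda_k$. Dividing by $p_k$ again gives $O(\sigma^2\Lambda_k)$, which is the claim.

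The main obstacle is the exact matching of exponents and constants, namely checking that $q(\sigma\sqrt{\Lambda_k})\le p_k$ with the same constant $\frac{16}{N(1/2-\varepsilon)\gamma}$ as in the statement. This requires using $\log(1+x)\le\log x+\log 2$ and absorbing the resulting factors into $\lesssim$. It also requires handling the regime $t<R_0$, where \autoref{theorem:random_set_S_prob_bound} is unavailable. That regime is harmless: there $t\le 2^{k-1}R$ anyway, so it falls inside the first piece of the integral.
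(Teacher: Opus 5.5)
Your proposal is correct and follows essentially the same route as the paper. You bound $p_k$ through $\{\mu\notin S(2^{k-1}R)\}$ and \autoref{theorem:random_set_S_prob_bound}, invert this to get $2^kR\lesssim\sigma\sqrt{\Lambda_k}$, and integrate the conditional tail split at a level $\asymp\sigma^2\Lambda_k$, where the tail is at most $p_k$. The differences are cosmetic. You reach the uncorrupted-binomial event by a direct triangle-inequality count at threshold $t+2^kR$, whereas the paper uses the nested and diameter properties of $S(\cdot)$ to get $\{\|\hat p-\mu\|>2t,\ \hat p\in S(2^kR)\}\subseteq\{\mu\notin S(t)\}$ for $t\ge 2^kR$. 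Also, $q(\sigma\sqrt{\Lambda_k})\le p_k$ holds exactly from $\log(1+x)\ge\log x$, so no constant absorption is needed there.
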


\begin{proof}

    Let $ p_0 := \mathbbm{P}_{  }\left( S\neq \varnothing \right) = \mathbbm{P}_{  }\left( \hat{R}\leq R \right) $ and $ p_k:= \mathbbm{P}_{  }\left( 2^{k-1}R < \hat{R} \leq 2^{k}R \right) $ for all $ k\in \mathbbm{N} $. We have
    \begin{align}\label{eq:peeling_prob_bound_for_pk}
        p_k &= \mathbbm{P}_{  }\left( \hat{R}\leq 2^k R \right) - \mathbbm{P}_{  }\left( \hat{R} \leq 2^{k-1} R \right) \notag\\ 
        &\mathop{ \leq }\limits^{(i)}   1 - \mathbbm{P}_{  }\left( \mu \in S(2^{k-1}R) \right) \notag\\
        &\mathop{ \leq }\limits^{(ii)}  \exp\left[ - \frac{N(1/2-\varepsilon )\gamma }{16}\log (1+ \frac{(2^{k-1}R)^2}{\sigma ^2}) \right] ,
    \end{align}
    where $(i)$ follows from the fact that $ \{\mu \in S(2^{k-1}R) \} \subseteq \{ S(2^{k-1}R) \neq \varnothing \} \subseteq \{\hat{R}\leq 2^{k-1}R\} $ and $(ii)$ uses \autoref{theorem:random_set_S_prob_bound}. Then for any $ t\geq 2^kR $, we have
    \begin{align*}
        \mathbbm{P}_{  }\left( \left\Vert \hat{p}-\mu  \right\Vert > 2t \big| 2^{k-1}R<\hat{R}\leq 2^{k}R \right) &= \dfrac{ \mathbbm{P}_{  }\left( \left\Vert \hat{p}-\mu  \right\Vert > 2t , 2^{k-1}R<\hat{R}\leq 2^{k}R \right) }{ p_k }  \\
        &\leq  \dfrac{ \mathbbm{P}_{  }\left( \left\Vert \hat{p}-\mu  \right\Vert > 2t , \hat{p}\in S(2^{k}R) \right) }{ p_k }\\
        &\mathop{ \leq }\limits^{(i)}   \dfrac{ \mathbbm{P}_{  }\left( \mu \not\in S(t) \right) }{ p_k }\\
        &\mathop{ \leq }\limits^{(ii)}   p_k^{-1} \exp\left[ -\frac{N(1/2-\varepsilon )\gamma }{16}\log(1+\frac{t^2}{\sigma ^2}) \right],
    \end{align*}
    where $ (i) $ by the nested property and diameter of set $ S(R) $ as argued in \autoref{definition:random_set_S}; $(ii)$ again uses \autoref{theorem:random_set_S_prob_bound}. 

    We observe that a non-trivial bound can be guaranteed by $ t\geq 2\sigma \exp\left[ \frac{8}{N(1/2-\varepsilon )\gamma }\log (1/p_k) \right] =: t_k \geq 2\cdot 2^{k-1}R = 2^{k}R $ where we plug in \autoref{eq:peeling_prob_bound_for_pk}, so that the above quantity is smaller than $ 1 $.

    We can therefore integrate the tail bound as follows:
    \begin{align*}
        \mathbbm{E}_{  }\big[ \left\Vert \hat{p}-\mu  \right\Vert ^2 \big|& 2^{k-1}R < \hat{R} \leq 2^kR \big] =  \int_0^\infty \mathbbm{P}_{  }\left( \left\Vert \hat{p} - \mu  \right\Vert ^2 > u \big| 2^{k-1}R < \hat{R} \leq 2^kR  \right)\,\mathrm{d}u\\
        &= 4\int_0^\infty 2t\cdot \mathbbm{P}_{  }\left( \left\Vert \hat{p}-\mu  \right\Vert > 2t \big| 2^{k-1}R<\hat{R}\leq 2^{k}R \right) \,\mathrm{d}t\\
        &\leq  8\int_0^{2\sigma \exp\left[ \frac{8}{N(1/2-\varepsilon )\gamma }\log (1/p_k) \right]} t\,\mathrm{d}t \\ &\quad +  8\int_{2\sigma \exp\left[ \frac{8}{N(1/2-\varepsilon )\gamma }\log (1/p_k) \right]}^\infty t\cdot p_k^{-1} \exp\left[ -\frac{N(1/2-\varepsilon )\gamma }{16}\log(1+\frac{t^2}{\sigma ^2}) \right] \,\mathrm{d}t\\
        &\lesssim  \sigma^2 \exp\left[ \frac{16}{N(1/2-\varepsilon )\gamma }\log (1/p_k) \right] \\
        &\quad + \frac{\sigma ^2}{N}p_k^{-1}\exp\left[ \big( 1-\frac{N(1/2-\varepsilon )\gamma }{16}\big)\log(1+\frac{t_k^2}{\sigma ^2})  \right]\\
        &\mathop{ \leq }\limits^{(i)}    \sigma^2 \exp\left[ \frac{16}{N(1/2-\varepsilon )\gamma }\log (1/p_k) \right] +  \frac{\sigma ^2}{N}\exp\left[ \frac{16}{N(1/2-\varepsilon )\gamma }\log (1/p_k) \right]\\
        &\lesssim  \sigma^2 \exp\left[ \frac{16}{N(1/2-\varepsilon )\gamma }\log (1/p_k) \right].
    \end{align*}
    where $(i)$ since $  \frac{N(1/2-\varepsilon )\gamma }{16} \geq C_8N >1 $ using \autoref{appendix_lemma:tail_bound_for_interation_and_risk_bound_nonemptyS_unbounded} and the fact that the integration boundary $ t_k = 2\sigma \exp\left[ \frac{8}{N(1/2-\varepsilon )\gamma }\log (1/p_k) \right] $ satisfies $ \log(1+\frac{t_k^2}{\sigma ^2} ) \geq \frac{16}{N(1/2-\varepsilon )\gamma }\log 1/p_k $, as a consequence of the choice of $ t_k $ to make the probability bound in the previous inequality less than $ 1 $.

\end{proof}

\begin{lemma}\label{appendix_lemma:total_expectation_for_risk_bound_unbounded}
    Let the setting be the same as in \autoref{theorem:expected_risk_upper_bound_unbounded}, with constants $ C,C_8 $ from \autoref{appendix_lemma:tail_bound_for_interation_and_risk_bound_nonemptyS_unbounded}. If $ C_8 N > 2 $, then we have
    \begin{align*}
        \mathbbm{E}_{  }\left[ \left\Vert \nu ^{**} - \mu  \right\Vert ^2 \right] &\lesssim  \delta _{J^\dagger}^2 + \frac{\sigma ^2}{N} .
    \end{align*}
\end{lemma}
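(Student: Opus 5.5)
The plan is to apply the law of total expectation over the partition $\{S(R)\neq\varnothing\}\uplus\biguplus_{k\ge1}\{2^{k-1}R<\hat R\le 2^kR\}$, writing
\begin{align*}
\mathbbm{E}\left\Vert \nu^{**}-\mu\right\Vert^2 = p_0\,\mathbbm{E}\big[\left\Vert \nu^{**}-\mu\right\Vert^2\,\big|\,S\neq\varnothing\big] + \sum_{k\ge1} p_k\,\mathbbm{E}\big[\left\Vert \hat p-\mu\right\Vert^2\,\big|\,2^{k-1}R<\hat R\le 2^kR\big],
\end{align*}
with $p_0,p_k$ as in \autoref{appendix_lemma:tail_bound_for_interation_and_risk_bound_emptyS_unbounded}. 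For the first term, $p_0\le 1$, and we invoke \autoref{appendix_lemma:tail_bound_for_interation_and_risk_bound_nonemptyS_unbounded} with $\delta=\delta_{J^\dagger}$. Since $C_8N>2>1$, this gives the bound $\delta_{J^\dagger}^2+\sigma^2/N$.

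For the peeled terms, \autoref{appendix_lemma:tail_bound_for_interation_and_risk_bound_emptyS_unbounded} gives
\begin{align*}
p_k\,\mathbbm{E}[\cdots]\lesssim \sigma^2\, p_k^{\,1-a},\qquad a:=\frac{16}{N(1/2-\varepsilon)\gamma}.
\end{align*}
Because $C_8\le (1/2-\varepsilon)\gamma/16$ and $C_8N>2$, we have $a\le 1/(C_8N)<1/2$, so $1-a\ge 1/2$; we may also use $1-a\ge 1-2/(C_8N)\cdot$(const). Next, inserting \autoref{eq:peeling_prob_bound_for_pk}, i.e. $p_k\le (1+4^{k-1}R^2/\sigma^2)^{-N(1/2-\varepsilon)\gamma/16}$, gives
\begin{align*}
p_k^{1-a}\le \Big(1+\frac{4^{k-1}R^2}{\sigma^2}\Big)^{-(b-1)},\qquad b:=\frac{N(1/2-\varepsilon)\gamma}{16}\ge C_8N>2,
\end{align*}
since $ab=1$. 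Summing over $k$ is a geometric-type sum in the spirit of \autoref{appendix_lemma:sum_of_sequence}: each successive term shrinks by a factor at most $((1+x)/(1+4x))^{b-1}\le 1/2$, where $x=R^2/\sigma^2$ is bounded below by the choice $R\ge R_0$. Hence
\begin{align*}
\sum_k p_k\,\mathbbm{E}[\cdots]\lesssim \sigma^2\Big(1+\frac{R^2}{\sigma^2}\Big)^{-(b-1)}.
\end{align*}

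The main obstacle is showing that this residual is $\lesssim \sigma^2/N$, or $\lesssim\delta_{J^\dagger}^2$, uniformly. First I would use $b-1\gtrsim N$ together with $\log(1+R^2/\sigma^2)\ge\log(1+R_0^2/\sigma^2)\ge 16\log2/(\gamma(1/2-\varepsilon))$ from \autoref{eq:condition_for_R}. This yields $(1+R^2/\sigma^2)^{-(b-1)}\le 2^{-(b-1)\cdot\text{const}}\le e^{-c'N}\lesssim 1/N$. Any leftover constant factors are absorbed into $\lesssim$, since $\varepsilon<1/32$ keeps $(1/2-\varepsilon)$ bounded away from zero. Combining the pieces gives $\mathbbm{E}\left\Vert\nu^{**}-\mu\right\Vert^2\lesssim\delta_{J^\dagger}^2+\sigma^2/N$.
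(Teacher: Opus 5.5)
Your proposal is correct and follows essentially the same route as the paper. Both arguments use total expectation over $\{S\neq\varnothing\}$ and the dyadic peels of $\hat R$, then rewrite $p_k\cdot p_k^{-a}=p_k^{1-a}$ and combine it with \autoref{eq:peeling_prob_bound_for_pk} to get exponent $b-1$. Both finish with the geometric summation of \autoref{appendix_lemma:sum_of_sequence} and with $b-1\ge b/2$ plus the third component of \autoref{eq:condition_for_R}, which leaves a residual $\sigma^2 e^{-N\log 2/2}\lesssim \sigma^2/N$.
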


\begin{proof}

    With \autoref{appendix_lemma:tail_bound_for_interation_and_risk_bound_nonemptyS_unbounded} and \autoref{appendix_lemma:tail_bound_for_interation_and_risk_bound_emptyS_unbounded}, we can apply the law of total expectation to obtain the risk bound. We have
    \begin{align*}
        \mathbbm{E}_{  }\left[ \left\Vert \nu ^{**} - \mu  \right\Vert ^2 \right] &= p_0 \cdot   \mathbbm{E}_{  }\left[ \left\Vert \nu ^{**} - \mu  \right\Vert ^2 \big| S\neq \varnothing \right] + \sum_{k\in \mathbbm{N}, p_k >0} p_k \cdot \mathbbm{E}_{  }\left[ \left\Vert \hat{p}-\mu  \right\Vert ^2 \big| 2^{k-1}R < \hat{R} \leq 2^kR \right] \\
        &\lesssim \underbrace{ \delta _{J^\dagger}^2 + \frac{\sigma ^2}{N}}_{\text{\autoref{appendix_lemma:tail_bound_for_interation_and_risk_bound_nonemptyS_unbounded}}} + \sum_{k\in \mathbbm{N}, p_k >0} p_k \cdot \underbrace{ \sigma^2 \exp\left[ \frac{16}{N(1/2-\varepsilon )\gamma }\log (1/p_k) \right]  }_{\text{\autoref{appendix_lemma:tail_bound_for_interation_and_risk_bound_emptyS_unbounded}}} \\
        &=  \delta _{J^\dagger}^2 + \frac{\sigma ^2}{N} + \sigma ^2 \underbrace{\sum_{k\in \mathbbm{N}, p_k >0}\exp\left[ \log p_k \cdot \big(1-\frac{16}{N(1/2-\varepsilon )\gamma }\big) \right]}_{\star}.
    \end{align*}
    For term $ {\star} $, we have 
    \begin{align*}
        \sum_{k\in \mathbbm{N}, p_k >0}\exp\left[ \log p_k \cdot \big(1-\frac{16}{N(1/2-\varepsilon )\gamma }\big) \right] &\mathop{ \leq }\limits^{(i)}   \sum_{k\in \mathbbm{N}}\exp\left[ -\big(\frac{N(1/2-\varepsilon )\gamma }{16} - 1\big) \cdot \log(1+\frac{(2^{k-1}R)^2}{\sigma ^2})  \right] \\
        &\mathop{ \leq }\limits^{(ii)}  2\cdot \exp\left[ -\big(\frac{N(1/2-\varepsilon )\gamma }{16} - 1\big) \cdot \log(1+\frac{R^2}{\sigma ^2})  \right],
    \end{align*}
    where $(i)$ follows by applying \autoref{eq:peeling_prob_bound_for_pk} to bound $ p_k $ and using $\frac{N(1/2-\varepsilon)\gamma}{16}>C_8N>2$; and $(ii)$ follows from the same summation argument as in \autoref{appendix_lemma:sum_of_sequence}.

    Now together we have
    \begin{align*}
         \mathbbm{E}_{  }\left[ \left\Vert \nu ^{**} - \mu  \right\Vert ^2 \right] & \lesssim \delta_{J^\dagger}^2 + \frac{\sigma ^2}{N} + \sigma ^2\exp\left[ -\big(\frac{N(1/2-\varepsilon )\gamma }{16} - 1\big) \cdot \log(1+\frac{R^2}{\sigma ^2})  \right]\\
         &\mathop{ \lesssim }\limits^{(i)}   \delta_{J^\dagger}^2 + \frac{\sigma ^2}{N} + \sigma ^2 \cdot \exp\left[ -\frac{N(1/2-\varepsilon )\gamma }{32}  \cdot \log(1+\frac{R^2}{\sigma ^2})  \right] \\
         &\mathop{ \leq }\limits^{(ii)}  \delta_{J^\dagger}^2 + \frac{\sigma ^2}{N} + \sigma ^2 \cdot \exp\left[ -N\frac{\log 2}{2} \right] \\
         &\lesssim  \delta_{J^\dagger}^2 + \frac{\sigma ^2}{N},
    \end{align*}
    where $ (i) $ since by \autoref{appendix_lemma:tail_bound_for_interation_and_risk_bound_nonemptyS_unbounded} we have $ \frac{N(1/2-\varepsilon )\gamma }{16} > C_8N > 2 $; $(ii)$ follows from the third component of \autoref{eq:condition_for_R}.
    
\end{proof}

\begin{proof}[Proof of \textbf{\autoref{theorem:expected_risk_upper_bound_unbounded}}.]

    The approach of this proof is similar to that of \autoref{theorem:expected_risk_upper_bound} by first discussing the edge case and then the non-trivial bound. We note that the second part would follow exactly the same discussion as in the proof of \autoref{theorem:expected_risk_upper_bound}[Part II] since we can just remove the trivial $ d^2 $ bound. Thus it suffices to discuss the edge case part at $ J=1 $. Here instead, we argue that conditions \textbf{A}, \textbf{B}, and \textbf{C} from \autoref{appendix_definition:three_conditions_unbounded} must hold at $ J=1 $, that is, $ \delta_1 $ satisfies \autoref{eq:J_star_condition_unbounded} and also $ \frac{\delta_1^2}{\sigma ^2}\geq C_0N^{-1}\vee C_1^2\varepsilon  $.
    
    \begin{itemize}[topsep=2pt,itemsep=0pt]
        \item[\textbf{A}] Observing that $ \delta _{1} = d_m/(C+1) = \frac{4R}{2C+1} $, we have
        \begin{align*}
            C_5N \log(1+\frac{\delta _1^2}{\sigma ^2}) & = C_5N \log(1+(\frac{4}{2C+1})^2\frac{R^2}{\sigma ^2})\\
            &\mathop{ \geq }\limits^{(i)}  C_5N(\frac{4}{2C+1})^2 \log(1+\frac{R^2}{\sigma ^2})\\
            &\mathop{ \geq }\limits^{(ii)}  C_5N \frac{n}{1-\gamma }\\
            &\geq  \frac{C_5n}{1-\gamma }\\
            &\mathop{ \geq }\limits^{(iii)} 6n\log 2 \vee \log 2\\
            &\mathop{ > }\limits^{(iv)}    6n\log(1+\frac{2}{\delta _1})\vee \log 2\\
            &\mathop{ \geq }\limits^{(v)}   6\log M^\mathrm{ loc }_K (c\delta _1 ,2c)\vee \log 2,
        \end{align*}
        where $ (i) $ since $ \log (1+ax) \geq a\log (1+x) $ for any $ a\in (0,1) $ and $ x>0 $; 
        where $(ii)$ follows from the first component of \autoref{eq:condition_for_R}; $(iii)$ from the first component of \autoref{eq:condition_for_gamma}; $(iv)$ noting that by the fourth component of \autoref{eq:condition_for_R} we have $ R\geq R_0 \geq 2C+1 = c $, thus we have $ \delta _1 = \frac{4R}{2C+1} > 2 $;
        and $(v)$ from the volume argument in \cite{wainwright_high-dimensional_2019}[Example 5.8].
        \item[\textbf{B}] We have that
        \begin{align*}
            \frac{\delta _1^2}{\sigma ^2} &= \frac{d_m^2}{(C+1)^2\sigma ^2} \geq \frac{R^2}{(C+1)^2\sigma ^2} \geq \frac{1}{(C+1)^2} \log (1+\frac{R^2}{\sigma ^2}) \\
            &\mathop{ \geq }\limits^{(i)}   \frac{4n\log 5}{(C+1)^2(1-\gamma )}\geq  \frac{4\log 5}{(C+1)^2(1-\gamma )}\\
            &\mathop{ \geq }\limits^{(ii)}  C_0 \geq C_0N^{-1},
        \end{align*}
        where $(i)$ follows from the second component of \autoref{eq:condition_for_R} and $(ii)$ from the second component of \autoref{eq:condition_for_gamma}.
        \item[\textbf{C}] Adapting the argument from condition \textbf{B}, we have
        \begin{align*}
            \frac{\delta _1^2}{\sigma ^2} &\geq \frac{4\log 5}{(C+1)^2(1-\gamma )} \\
            &\mathop{ \geq }\limits^{(i)}  \frac{C_1^2}{2}\geq  C_1^2\varepsilon,
        \end{align*}
        where $(i)$ follows from the third component of \autoref{eq:condition_for_R} and $(ii)$ from the third component of \autoref{eq:condition_for_gamma}.  
    \end{itemize}
    
    With the above conditions, we can apply \autoref{appendix_lemma:total_expectation_for_risk_bound_unbounded} to obtain the desired risk upper bound that
    \begin{align*}
        \mathbbm{E}_{  }\left[ \left\Vert \nu ^{**}-\mu  \right\Vert ^2  \right] &\lesssim \max\{\delta _{J^*}^2 , \varepsilon \sigma ^2\} .
    \end{align*}

\end{proof}

\textbf{Remark:} We again remark that condition $C_8N>2$ can be satisfied by choosing $C$ to be sufficiently large, so that we can apply \autoref{appendix_lemma:total_expectation_for_risk_bound_unbounded}. Recall that in \autoref{appendix_lemma:tail_bound_for_interation_and_risk_bound_nonemptyS_unbounded} we have 
\begin{align*}
    \frac{1}{C_8}\geq \frac{2}{C_5}\vee \frac{16}{1/2-\varepsilon}\frac{1}{\gamma}
\end{align*}
where the $C_5$ part does not involve $C$ as we argued in the proof of \autoref{theorem:expected_risk_upper_bound}; for the $\frac{1}{\gamma}$ part we notice that when $C$ is chosen sufficiently large, $\gamma$ is made arbitrarily close to $1$ according to its definition \autoref{eq:condition_for_gamma}. Thus, the above reduces to $\frac{1}{C_8}\gtrsim \frac{1}{C_5}$, which holds with large enough $C$ chosen.

\subsection*{Proof of \autoref{theorem:expected_risk_rate_unbounded_case}}

The proof follows a similar manner to that of its bounded constraint counterpart. However, as mentioned previously, we need to handle the case where $ K $ is replaced by its closure $ \bar{K} $. In \autoref{lemma:closed_K}, we argue that the same rate holds with this replacement.

\begin{proof}[Proof of \textbf{\autoref{theorem:expected_risk_rate_unbounded_case}}.]

    We first address the edge case where $ \delta ^* = 0 $. In this scenario, $ K $ must be a singleton set, the algorithm output is simply the trivial point, and the minimax rate is $ 0 $, yielding a trivial result. Therefore, in what follows, we assume $ \delta ^* > 0 $. 
    
    We also observe that $ M^\mathrm{loc}(\delta ,c)$ can be made arbitrarily large for any $ \delta > 0 $ by choosing $ c $ sufficiently large in the case of unbounded star-shaped sets $ K $. The argument is as follows: for any $ \delta > 0 $, we can always find a line segment of length $ \delta $ in $ K $, and we can divide this line segment into arbitrarily many parts, each of length $ \delta /c $, provided $ c $ is large enough, thus forming a local packing. Given the definition of $ \delta ^* $ as a supremum in \autoref{equation:definition_of_delta_star}, we can always ensure $ N\frac{\delta ^{*2}}{\sigma ^2} > 8\log 2 $ if $ c $ is large enough. Therefore, we can focus on $ N\frac{\delta ^{*2}}{\sigma ^2}> 8\log 2 $, which follows a similar argument to the proof of \autoref{theorem:expected_risk_rate_bounded_case}[case II].

    \paragraph{Lower bound} By the same argument as in case \textbf{II}, we have
    \begin{align*}
        \log M^\mathrm{loc}(\frac{\delta ^*}{2},c) \geq 8\log 2 \vee N\frac{\delta ^{*2}}{\sigma ^2} \geq 4 \big( \frac{N(\delta ^*/2)^2}{\sigma ^2} \vee \log 2 \big).
    \end{align*}
    By \autoref{lemma:packing_lower_bound}, we have $ \mathfrak{M}\gtrsim \delta ^{*2} $. As argued in \autoref{theorem:expected_risk_upper_bound_unbounded}, we also have $ \mathfrak{M} \gtrsim \varepsilon \sigma ^2 $. Combining these results, we obtain
    \begin{align*}
        \mathfrak{M} \gtrsim \max\{\delta ^{*2},\varepsilon \sigma ^2\} .
    \end{align*}

    \paragraph{Upper bound} Following the same argument as in \autoref{theorem:expected_risk_upper_bound_unbounded}, we find some $ \tilde{\delta  } $ that satisfies $ \tilde{\delta }\asymp \delta ^* $ while $ \tilde{\delta }\gtrsim \delta _{J^*} $ where $ J^* $ is from the \autoref{eq:J_star_condition_unbounded} in \autoref{theorem:expected_risk_upper_bound_unbounded}. The process is exactly the same except a trivial constant difference in the definition of $ J^* $. The proof would give the desired upper bound of $ \mathfrak{M}\lesssim \max\{\delta ^{*2},\varepsilon \sigma ^2\}  $.
       
\end{proof}

\autoref{theorem:expected_risk_rate_unbounded_case} establishes the minimax rate when $ K $ is replaced by its closure $ \bar{K} $. The following lemma explains that the rate is preserved even without such replacement. For clarity of the statement, we recall that $\delta^*$ in the above proof of \autoref{theorem:expected_risk_rate_unbounded_case} should now be re-written as
\begin{align*}
    \delta ^*&:= \sup\big\{ \delta \geq 0 : N\frac{\delta ^2}{\sigma ^2}\leq \log M^\mathrm{ loc }_{\bar{K}}(\delta ,c)  \big\},
\end{align*}
while its correspondence in terms of a possibly non-closed $K$ is defined as
\begin{align*}
    \tilde{\delta }&:=  \sup\big\{ \delta \geq 0 : N\frac{\delta ^2}{\sigma ^2}\leq \log M^\mathrm{ loc }_K(\delta ,c)  \big\}.
\end{align*}

\begin{lemma}\label{lemma:closed_K}
    Let $ \tilde{\delta } $ as defined above. Then we have 
    \begin{align*}
        \mathfrak{M} \asymp \max\big( \tilde{\delta }^{2}, \varepsilon \sigma ^2 \big) .
    \end{align*}
\end{lemma}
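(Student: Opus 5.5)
The plan is to show two things: the minimax risk over $K$ coincides with that over $\bar K$, and $\tilde\delta\asymp\delta^*$. Then \autoref{theorem:expected_risk_rate_unbounded_case}, applied to the closed set $\bar K$, gives the claim.

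\textbf{Step 1: comparing the minimax risks.} Since $K\subseteq\bar K$, we have $\mathfrak{M}(K)\le\mathfrak{M}(\bar K)$ trivially: the supremum over a smaller parameter set is smaller. For the reverse inequality, I will use the lower bound side directly. \autoref{lemma:packing_lower_bound} applies with packings of $K$ itself and only needs $\log M^\mathrm{loc}_K(\delta,c)$ large. \autoref{lemma:diakonikolas_lower_bound_unbounded} uses only segments inside $K$, and a star-shaped $K$ whose closure is unbounded is itself unbounded and contains segments from its center of arbitrary length. So it suffices to prove $\mathfrak{M}(K)\gtrsim\max(\tilde\delta^2,\varepsilon\sigma^2)$ and $\mathfrak{M}(K)\le\mathfrak{M}(\bar K)\lesssim\max(\delta^{*2},\varepsilon\sigma^2)$, and then to show $\delta^*\lesssim\tilde\delta$.

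\textbf{Step 2: comparing local packings.} This is the main step. I want
\[
M^\mathrm{loc}_{K}(\delta,c)\le M^\mathrm{loc}_{\bar K}(\delta,c)\le M^\mathrm{loc}_K(2\delta,4c).
\]
The first inequality is immediate. For the second, take $\nu\in\bar K$ and a $\delta/c$-packing $\{\nu_i\}$ of $B(\nu,\delta)\cap\bar K$. Approximate $\nu$ by some $\nu'\in K$ and each $\nu_i$ by some $\nu_i'\in K$, all within distance $\eta\ll\delta/c$. The perturbed points lie in $B(\nu',\delta+2\eta)\subseteq B(\nu',2\delta)$. Their pairwise distances exceed $\delta/c-2\eta\ge 2\delta/(4c)$, since the packing is strict and finite and so $\eta$ can be taken small enough. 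This gives a $(2\delta)/(4c)$-packing of $B(\nu',2\delta)\cap K$.

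The local entropy with constant $4c$ in place of $c$ changes only constants. The lower-bound lemma tolerates any fixed constant $c$, and the upper-bound argument uses the monotonicity in \autoref{lemma:monotonicity_of_local_entropy}, as in \autoref{remark:J_star_condition_modified}.

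\textbf{Step 3: comparing the critical radii.} From the definition of $\delta^*$ and the left-continuity argument used in case II of the proof of \autoref{theorem:expected_risk_rate_bounded_case}, any $\delta<\delta^*$ satisfies
\[
N\delta^2/\sigma^2\le\log M^\mathrm{loc}_{\bar K}(\delta,c)\le\log M^\mathrm{loc}_K(2\delta,4c).
\]
Then $\delta'=2\delta$ satisfies $N\delta'^2/(4\sigma^2)\le\log M^\mathrm{loc}_K(\delta',4c)$. This is the defining inequality of the critical radius for $K$, with constant $4c$ and sample size $N/4$. Redoing the upper-bound comparison in the proof of \autoref{theorem:expected_risk_rate_unbounded_case} with these modified constants, via \autoref{appendix_lemma:cancel_the_log_sign_star}, yields $\delta^*\lesssim\tilde\delta$ for the corresponding version of $\tilde\delta$. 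The lower bound for $K$ with constant $4c$ remains valid by \autoref{lemma:packing_lower_bound}.

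\textbf{Expected obstacle.} The hard part is handling this constant shuffling honestly. Both the $c$ in $M^\mathrm{loc}$ and the factor in front of $N$ must be shown to affect the rate only by constants. I plan to route this through the monotonicity lemma and the secant-line bound $\log(1+D^2x)\ge\phi x$, exactly as in the proof of \autoref{theorem:expected_risk_rate_bounded_case}.
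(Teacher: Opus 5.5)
Your proposal is correct and follows the same route as the paper: transfer the lower bounds directly to $K$, use $\mathfrak{M}(K)\le\mathfrak{M}(\bar K)$, compare $M^{\mathrm{loc}}_{\bar K}$ with $M^{\mathrm{loc}}_K$ at an inflated radius and constant, and conclude $\delta^*\lesssim\tilde\delta$. The paper cites external results for the packing comparison (with constants $\frac{2c+2}{2c+1}\delta$ and $2c$) and for the critical-radius comparison, whereas you prove a cruder comparison ($2\delta$, $4c$) by approximation and shift the radius inflation into the sample size. Two simplifications are available. First, replacing $N$ by $N/4$ costs at most a factor $2$ in the critical radius by monotonicity alone, so you do not need \autoref{appendix_lemma:cancel_the_log_sign_star}. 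Second, the $4c$ versus $c$ mismatch disappears if you apply \autoref{theorem:expected_risk_rate_unbounded_case} to $\bar K$ with constant $c/4$; this is legitimate because $c$ only needs to be sufficiently large, and your own comparison shows that the sample-size assumption transfers.
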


\begin{proof}
    The lower bound established in \autoref{lemma:packing_lower_bound} and \autoref{lemma:diakonikolas_lower_bound_unbounded} still holds. Therefore, we only need to address the upper bound. By \cite[Remark 5.13]{prasadan_information_2024}, we have the relation
    \begin{align*}
        \log M^\mathrm{ loc }_K(\delta ,c) \leq \log M^\mathrm{ loc }_{\bar{K}}(\delta ,c) \leq \log M^\mathrm{ loc }_{K}(\delta \frac{2c+2}{2c+1} ,2c).
    \end{align*}
    We further define $ \hat{\delta } $ by $ \hat{\delta }:= \sup\big\{ \delta \geq 0 : N\frac{\delta ^2}{\sigma ^2}\leq \log M^\mathrm{ loc }_K(\delta \frac{2c+2}{2c+1} ,2c)  \big\} $. By monotonicity, we have $ \delta ^*\leq \hat{\delta } $. Moreover, by \cite[Lemma 2]{prasadan_facts_2025}, we have $ \hat{\delta }\asymp \tilde{\delta } $ (the $ 2c $ term can be canceled using the same argument as \autoref{remark:J_star_condition_unbounded_modified}). Therefore, we have $\delta ^*\lesssim \tilde{\delta }$, and consequently
    \begin{align*}
        \mathfrak{M} = \mathfrak{M}(K) \leq \mathfrak{M}(\bar{K}) \lesssim \max\big( \delta ^{*2}, \varepsilon \sigma ^2 \big)\lesssim \max\big( \tilde{\delta }^{2}, \varepsilon \sigma ^2 \big).
    \end{align*}   
    Combining with the lower bound, we conclude the proof.
\end{proof}

\section{Proof for Symmetric or Known Distribution Case}\label{appendix:proof_for_symmetric_or_known_distribution_case}

\begin{proof}[Proof of \textbf{\autoref{theorem:novikov_prob_bound}}.]

    By the same argument as in the proof of \autoref{theorem:two_point_prob_bound}, it suffices to prove the statement with $ \sigma  $ replaced by $ \sigma _V $, since we have a direct relation between them. 

    To apply \cite[Theorem 1.5]{novikov_robust_2023}, we establish the following notation: the confidence level $ \delta _0 := \exp\left[ - C_{12}N\frac{\delta ^2}{\sigma _V^2} \right] $; the constant term in the big-O symbol $ Q_1 $, that is, with probability at least $ 1-\delta _0 $, we have
    \begin{align*}
        \left\Vert \hat{\mu }_{\mathrm{ Novikov }(\{V_i\}_1^N) - m } \right\Vert  \leq Q_1\big( \rho\cdot \big[ \sqrt{\dfrac{ n+\log(1/\delta _0) }{ N } } + \varepsilon  \big] \big)
    \end{align*}
    where dimension $ n=1 $ since the discriminant quantities $ V_i $s are one-dimensional as defined in \autoref{equation:definition_V_i}. We also remind that the $ Q_1 $ depends only on $ Q $\footnote{
        We refer readers to \cite[Appendix F, Theorem E.3, then Theorem C.1]{novikov_robust_2023} for the source of the constants.
    }; We choose $ \rho = \frac{10}{\sqrt{99}}\times \sigma _V $ so that by Chebyshev's inequality, we have $ \mathbbm{P}_{  }\left( \left\vert \tilde V_i-\mathbb{E}[\tilde V_i]  \right\vert \leq \rho \right) \geq 1/100   $ as desired. The absolute constants will later be absorbed into $ Q_1 $. By assumption, there are constants $C_{10}, C_{11}$ such that $\frac{\delta ^2}{\sigma_V^2} \geq C_{10}N^{-1}\vee C_{11}^2\varepsilon ^2$.
    
    With the above definitions, we apply the theorem and obtain that with probability at least $ 1-\delta _0 = 1-\exp\left[ - C_{12}N\frac{\delta ^2}{\sigma _V^2} \right] $,
    \begin{align*}
        \left\vert \hat{\mu }_\mathrm{ Novikov }\big( \{V_i\}_1^N \big) - m \right\vert &\mathop{ \leq }\limits^{(i)}  Q_1\sigma _V\Big( \sqrt{\dfrac{ 1 + C_{12}N\frac{\delta ^2}{\sigma _V^2} }{ N } } + \varepsilon  \Big)\\
        &\mathop{ \leq }\limits^{(ii)} Q_1\delta \Big( \sqrt{\dfrac{ 1 }{ C_{10} } + C_{12} } + \frac{1}{C_{11}} \Big)  \\
        &\mathop{ \leq }\limits^{(iii)}  (C-2)\delta ,
    \end{align*}
    where $ (i) $ follows from \cite[Theorem 1.5]{novikov_robust_2023}; $ (ii) $ follows from the condition $ C_{10}N^{-1} \vee C_{11}^2\varepsilon ^2 \leq \frac{\delta ^2}{\sigma _V^2} $; $ (iii) $ holds as long as we choose $ C $ large enough such that $ C-2 \geq Q_1 \Big( \sqrt{\frac{ 1 }{ C_{10} } + C_{12} } + \frac{1}{C_{11}} \Big) $. Then by the same argument as in \autoref{equation:reduce_from_mean_to_zero} we have the desired probabilistic bound. 
    
\end{proof}

\section{Proof for Examples}\label{appendix:proof_for_examples}

\begin{proof}[Proof of \textbf{\autoref{lemma:ell_1_ball_minimax_rate}}.]

    We first solve the condition for $ \delta ^{*2} $ given by $ N\delta ^{*2} \asymp \log M^\mathrm{ loc }_{B_1(1)}(\delta^* ,c)  $ (recall that we set $\sigma = 1$), which yields
    \begin{align*}
        \begin{cases}
            \frac{ \log(\delta^{*2}n) }{ \delta^{*4}n^2 }&\mathop{ \asymp }\limits^{(i)} \frac{N}{n^2},  \delta ^* \gnsim 1/\sqrt{n},\\
            \delta ^{*2} &\mathop{ \asymp }\limits^{(ii)}  \frac{ n }{ N },  \delta ^* \lesssim 1/\sqrt{n}.
        \end{cases}  
    \end{align*}

    We discuss the solution depending on the value of $ N $:

    \begin{itemize}[topsep=2pt,itemsep=0pt]
        \item If $ N\gtrsim n^2 $: For a solution falling into case $ (i) $ we should have $ 1 \gnsim \frac{\log (\delta^{* 2}n)}{\delta^{* 4}n^2} \asymp \frac{N}{n^2} \gtrsim 1 $, yielding a contradiction. Thus the solution of $ \delta ^* $ falls into case $ (ii) $, which gives $ \delta ^{*2} \asymp \frac{ n }{ N } $. We note that the solution is compatible with the condition of $ \delta^*\lesssim 1/\sqrt{n} $ since $ N\gtrsim n^2 $. 

        Thus, the minimax rate is $ \max\big( \frac{n}{N}, \varepsilon  \big)\wedge 1 $. This recovers the LSE rate for the low-dimensional setting. 
        \item If $ n\lesssim N\lnsim  n^2 $: For a solution falling into case $ (ii) $ we should have $ \delta ^* \asymp \sqrt{\frac{n}{N}} \gnsim \frac{1}{\sqrt{n}} $, which contradicts with the condition of case $ (ii) $. Thus the solution has to fall into case $ (i) $, which gives 

        \begin{align*}
            \delta ^{*2} \asymp \sqrt{\dfrac{ \log(n/\sqrt{N}) }{ N } }  ,
        \end{align*}
        matching the result in \cite[Corollary 4.16]{rigollet_high-dimensional_2023}
        \footnote{
            There is a typo in their textbook, with a square root missing.
        }.
        We can also check that the condition of $ \delta^* \gnsim 1/\sqrt{n} $ is compatible. Thus the minimax rate is $ \max\big(  \sqrt{\frac{\log (n/\sqrt{N})}{N}}, \varepsilon  \big) \wedge 1 $.
    \end{itemize}

\end{proof}

\end{document}